\documentclass[11pt]{article}

\usepackage[a4paper,margin=2.5cm]{geometry}
\usepackage{float}
\usepackage{graphics}
\usepackage{graphicx}
\usepackage{epsfig}
\usepackage{indentfirst}
\usepackage{amsmath, amsfonts, amssymb, amsthm, mathtools}
\usepackage{url}
\usepackage{csquotes}
\usepackage{epstopdf}
\usepackage{pdfpages}
\usepackage{multirow}
\usepackage{subcaption}
\usepackage{enumitem}

\newtheorem{thm}{Theorem}[section]

\newtheorem{definition}{Definition}[section]
\newtheorem{remark}{Remark}[section]

\usepackage[left]{lineno}  % "left" ou "right" define o lado da numeração

\title{Complex Dynamics of Wave-Character Transitions in Radially Symmetric Isentropic Euler Flows: Theory and Numerics}

\author{
Eduardo Abreu\thanks{University of Campinas, Brazil}
\and Geng Chen\thanks{University of Kansas, USA}
\and Faris El-Katri\thanks{University of Kansas, USA}
\and Erivaldo Lima\thanks{Federal University of Roraima, Brazil}
}

\date{}

\begin{document}
\maketitle

%\tableofcontents

\begin{abstract}
We investigate the qualitative dynamics of smooth solutions to the radially 
symmetric isentropic compressible Euler equations, focusing specifically on 
the evolution of rarefactive and compressive wave characters across three 
distinct configurations: the outward supersonic, subsonic, and inward 
supersonic regimes. For each case, we establish structural restrictions on 
wave-character transitions and identify invariant sign domains for gradient 
variables under specific initial data conditions. While our findings refine 
existing invariance properties in the outward supersonic regime, they reveal 
novel asymmetric transition mechanisms in the subsonic and inward regimes 
that are absent in purely supersonic expanding cases. Consequently, we derive 
sufficient conditions for finite-time singularity formation. To complement 
the analytical results where closed-form solutions are unavailable, we provide 
numerical experiments using a Semi-Discrete Lagrangian-Eulerian (SDLE) 
formulation. These simulations reproduce the predicted wave-character dynamics 
and offer qualitative evidence that supports our theoretical findings, 
providing a unified description of wave transitions in radially symmetric 
isentropic gas dynamics. \\

\noindent
\textbf{Keywords}: Compressible Euler equations; Radial symmetry; 
Wave-character transitions; Rarefaction and compression; Gradient variables; 
Characteristic analysis; Invariant domains; Shock formation; Singularity formation; 
Semi-Discrete Lagrangian-Eulerian (SDLE) scheme.\\

\noindent
\textbf{Mathematics Subject Classification}: 35L65 $\cdot$ 35L67 $\cdot$ 76N15 $\cdot$ 65M08.

%%%%%%%%%%%%%%%%%%%%%%%%%%%%%%%%%%%%%%%%%%%%%%%%%%%%%%%%%%
% Here is a suggestion:
% Nonlinearity often requires AMS Subject Classifications:
%    35L65: Conservation laws
%    35L67: Shocks and singularities
%    76N15: Gas dynamics, general
%    65M08: Finite volume methods
%%%%%%%%%%%%%%%%%%%%%%%%%%%%%%%%%%%%%%%%%%%%%%%%%%%%%%%%%%

\end{abstract}

% ------------------------------------------------
\section{Introduction}\label{sec:introduction}

The compressible Euler equations constitute a canonical class of nonlinear hyperbolic systems of conservation laws. A defining feature of these systems is their capacity to develop singularities in finite time, even when initialized from smooth data. The mechanisms precipitating gradient blow-up and subsequent shock formation have remained a central inquiry since the foundational developments of hyperbolic theory. For one-dimensional Cartesian flows, the criteria for singularity formation are well-established \cite{lax2, Liu1, CPZ, G9, G10, CCZ, CY, CYZ}, with extensive extensions to multi-dimensional settings provided by \cite{Ali, Schen, Kong, sideris, CCY, Christodoulou1, Christodoulou2, Speck1, Speck2, Speck3}. 

In the presence of radial symmetry, the dynamics are modulated by geometric source terms that encode the divergence of the flow field. These terms introduce a critical dependency on the spatial coordinate $r$, exerting a decisive influence on the dynamics near the coordinate origin and governing the formation of implosive scenarios.

For classical solutions---defined prior to the onset of singularities---the radially symmetric compressible Euler equations in Eulerian coordinates are expressed as:
\begin{align}
    (r^m\rho)_t + (r^m\rho u)_r &= 0, \\
    (r^m\rho u)_t + (r^m\rho u^2)_r + r^m p_r &= 0, \\
    S_t + uS_r &= 0,
\end{align}
supplemented by the equation of state $p = K\rho^\gamma e^{S/c_v}$, where $\rho(r,t)$ is the density, $u(r,t)$ the radial velocity, $S$ the entropy, and $p$ the pressure. The geometric parameter $m \in \{1, 2\}$ distinguishes cylindrical and spherical symmetries, respectively. 

In the present work, we restrict our attention to the isentropic regime, for the entropy is constant. Under this assumption, the pressure law reduces to the polytropic relation \(p=K\rho^\gamma\), and the compressible Euler system under radial 
symmetry takes the form
\begin{align} 
  (r^m\rho)_t + (r^m\rho u)_r &= 0, \label{EulerSystem1} \\
  (r^m\rho u)_t + (r^m\rho u^2)_r + r^m p_r &= 0, \label{EulerSystem2}\\
  p &= K\rho^\gamma. \label{EulerSystem3}
\end{align}
It is worth mentioning, in this regard, we restrict our analysis to the isentropic regime ($S = \text{constant}$), reducing the pressure law to the polytropic relation $p = K\rho^\gamma$. 
For smooth solutions, we utilize the sound speed 
\begin{equation}
    h=\sqrt{K\gamma}\,\rho^{\frac{\gamma-1}{2}},
    \label{eq:h_function}
\end{equation}
%$h = \sqrt{K\gamma}\,\rho^{(\gamma-1)/2}$ 
to define the characteristic speeds $c_1 = u - h$ and $c_2 = u + h$, with associated Riemann invariants:
\begin{equation}\label{riemann_variables}
z = u - \frac{2}{\gamma-1}h, \qquad w = u + \frac{2}{\gamma-1}h.
\end{equation}
The sign configuration of $\{c_1, c_2\}$ delineates the primary dynamical regimes---{outward supersonic}, {subsonic}, and {inward supersonic}---thereby dictating the directional transport of information across the domain boundaries.

Recent analytical advances have provided a refined description of solution behavior up to the point of first singularity. Studies on pre-shock formation have characterized the one-sided gradient catastrophe in related compressible models \cite{Vicol0, Vicol1, Vicol2, Vicol3, Vicol4, Vicol5, Vicol6}. In the radially symmetric framework, these dynamics are further complicated by inward-propagating waves that may focus at the origin, potentially triggering an implosion where physical variables themselves blow up \cite{Vicol7}.

A powerful diagnostic tool in this context is the formulation of {gradient variables} that quantify the compressive or rarefactive nature of each characteristic family. Following \cite{F1}, we define $\alpha$ and $\beta$ (associated with $c_2$ and $c_1$, respectively) as linear combinations of $u_r$ and $h_r$, augmented by symmetry-induced terms. These variables satisfy Riccati-type transport equations along the characteristics. In this framework, the rarefactive character is identified by $\alpha, \beta \geq 0$, whereas negative values signify compressive behavior and serve as precursors to shock formation.

The present paper pursues two complementary objectives:
\begin{enumerate}
    \item \textbf{Analytical Extension:} We refine the analysis of wave-character transitions across the three fundamental regimes. We establish structural constraints on the evolution of rarefaction and compression, proving invariant-sign results for $\alpha$ and $\beta$ within specific domains of influence. This offers a unified perspective that clarifies how the interaction between characteristic families is modulated by subsonic and inward supersonic geometries.
    \item \textbf{Numerical Synthesis:} Motivated by the gradient-variable framework, we perform a series of numerical experiments using a {Semi-Discrete Lagrangian-Eulerian (SDLE)} formulation \cite{ABREU_LE_LINEAR}. This scheme, based on a space-time control-volume approach with {no-flow curves}, allows for the robust capture of nonlinear wave propagation. In the absence of closed-form solutions, our study focuses on identifying observable signatures consistent with the theoretical predictions, specifically the localized concentration of gradients and the spatio-temporal evolution of wave characters.
\end{enumerate}

The paper is organized as follows: Section \ref{sec:formulation} establishes the mathematical formulation and the gradient-variable framework, while Section \ref{sec:analysis} provides a rigorous mathematical analysis of wave-character invariance and regime classification. Section \ref{sec:scheme} details the semi-discrete Lagrangian-Eulerian discretization used to support our theoretical findings, followed by Section \ref{sec:experiments}, which presents extensive computational experiments exploring wave-character transitions in supersonic and subsonic radially symmetric isentropic Euler flows. Finally, Section \ref{sec:conclusions} offers concluding remarks and outlines directions for future research.

\section{Mathematical Formulation}\label{sec:formulation}

We consider the compressible Euler system under radial symmetry, modeling an isentropic fluid governed by the conservation laws of mass and momentum. The physical domain corresponds to the radial interval \( b \leq r \leq L \) and the time interval \( 0 \leq t \leq T \), with \( m = 1 \) for cylindrical symmetry and \( m = 2 \) for spherical symmetry. The governing equations are those introduced in Section~1, namely \eqref{EulerSystem1}–\eqref{EulerSystem3}.

\subsection{Supersonic Regime and Characteristic Variables}

Following standard notation, we introduce the sound speed variable (\ref{eq:h_function}) which allows the system to be rewritten in terms of the variables \((h,u)\). In this formulation, the characteristic speeds are given by
\[
  c_{1} = u - h, \qquad \mbox{and} \qquad c_{2} = u + h,
\]
and the associated Riemann variables are (\ref{riemann_variables}).

These quantities naturally distinguish the main dynamical regimes of the flow, namely outward supersonic, subsonic, and inward supersonic configurations, according to the signs of \(c_1\) and \(c_2\).

Based on this characteristic structure, the authors in \cite{F1} introduce the gradient variables \(\alpha\) and \(\beta\), associated respectively with the characteristic families \(c_{2}\) and \(c_{1}\), defined by
\begin{align}
  \alpha &= u_{r} + \frac{2}{\gamma - 1}h_{r} + \frac{m}{r}\,\frac{h u}{c_{2}}, \label{eq:alpha}\\[4pt]
  \beta  &= u_{r} - \frac{2}{\gamma - 1}h_{r} - \frac{m}{r}\,\frac{h u}{c_{1}}. \label{eq:beta}
\end{align}
The signs of these variables encode the local wave character of each characteristic family. Nonnegative values correspond to rarefactive behavior, while negative values indicate compression and are associated with the formation of shock profiles in finite time.

\subsubsection{Initial Conditions}

The initial data are constructed to be consistent with the supersonic regime and with the gradient-variable framework proposed in \cite{F1}. Given a prescribed initial expansion rate \(v(r_{0}) = v_{a}\) and a sound speed value at the center \(h(r_{0}) = h_{c}\), related to the central density \(\rho_{c}\), the initial profiles are obtained by inverting the relations \eqref{eq:alpha}–\eqref{eq:beta}. More precisely, we set
\begin{align}
    u(r,0) & = u_{0}(r) = v_{a} + \int_{r_{0}}^{r}\left[\frac{1}{2}(\alpha+\beta)
    +\frac{m\,h_{0}(r')^{2}u_{0}(r')}{r'\left(u_{0}(r')^{2}-h_{0}(r')^{2}\right)}\right]dr', \label{eq:u0}\\[6pt]
    h_{0}(r) & = h_{c} + \int_{r_{0}}^{r}\frac{\gamma-1}{4}\left[(\alpha-\beta)
    -\frac{2m\,h_{0}(r')u_{0}(r')^{2}}{r'\left(u_{0}(r')^{2}-h_{0}(r')^{2}\right)}\right]dr', \label{eq:h0}\\[6pt]
    \rho(r,0) & = \left(\frac{h_{0}(r)^{2}}{K\gamma}\right)^{\frac{1}{\gamma-1}}, \label{eq:rho0}\\[6pt]
    p(r,0) & = K\rho(r,0)^{\gamma}. \label{eq:p0}
\end{align}
The integrals in \eqref{eq:u0}–\eqref{eq:h0} are evaluated numerically, yielding initial profiles consistent with the prescribed values of \(\alpha\) and \(\beta\), chosen positive for rarefactive configurations and negative for compressive ones.

\subsubsection{Boundary Conditions}

For the experiments involving rarefactive and compressive profiles, homogeneous Neumann boundary conditions are imposed at \(r=b\) and \(r=L\). In the subsonic configuration, periodic boundary conditions are employed.

The parameters \((v_{a}, h_{c}, \alpha, \beta, m, \gamma, K)\) are taken from paper \cite{F1}, ensuring reproducibility of the simulations and consistency with the physical scenarios discussed in the original work.

\subsection{Subsonic Regime}

We now briefly describe the subsonic regime, which plays a distinct role in the analysis of radially symmetric flows. In this configuration, the characteristic speeds satisfy
\[
c_1 < 0 < c_2,
\]
so that one family of characteristics propagates outward while the other propagates inward. As a consequence, the flow exhibits a mixed hyperbolic structure in which information simultaneously enters and leaves the spatial domain.

In contrast to the outward supersonic regime, where both characteristic families propagate in the same direction, the subsonic regime allows for stronger interactions between waves of different families. In particular, the inward–outward coupling induced by the radial geometry introduces mechanisms that may alter the evolution of the gradient variables even in the absence of shock formation. This feature distinguishes the subsonic case from its one-dimensional Cartesian counterpart and is responsible for several qualitative differences in the behavior of solutions.

From the viewpoint of the gradient variables $\alpha$ and $\beta$, the subsonic regime is characterized by the coexistence of compressive and rarefactive effects associated with different characteristic families. While the system remains strictly hyperbolic for smooth solutions, the sign structure of $c_1$ and $c_2$ leads to asymmetric contributions in the Riccati-type equations governing $\alpha$ and $\beta$. As a result, the preservation of wave character observed in the outward supersonic regime no longer holds in a straightforward manner.

The subsonic regime therefore provides a natural setting in which transitions between rarefaction and compression may occur, even for initially smooth data. This motivates a separate analysis of wave-character evolution in this regime, which is carried out in Section~3, as well as a dedicated set of numerical experiments aimed at illustrating the resulting dynamics.

\section{Mathematical Analysis}\label{sec:analysis}

Throughout this section we consider smooth solutions on a radial interval $\Omega=[a,b]\subset(0,\infty)$ and a time interval $[0,T]$. The main goal of this paper is to provide avenues of pursuit for future results in the field; therefore, we will limit ourselves to the case of $\gamma=3$. However, we note that many of these results can be extended to the case of a general $\gamma>1$.  

We now define the following notation for an arbitrary function $f(x,t)$ by
\begin{align}
	f\circ\eta := f(\eta(x,t),t).
\end{align} 
We define the flow maps corresponding to each family of characters 
\begin{align}
	&\partial_t\xi(r_0,t) = c_1(\xi(r_0,t),t) = c_1\circ\xi,
	\\
	&\partial_t\psi(r_0,t) = c_2(\psi(r_0,t),t) = c_2\circ\psi,
	\\
	&\xi(r_0,0) = \psi(r_0,0) = r_0.
\end{align} 
Therefore the following equations hold 
\begin{align}
	&\partial_t(c_1\circ\xi) = (\partial_1c_1)\circ\xi = \frac{m}{4r}(c_2^2-c_1^2) \circ\xi, 
	\\
	&\partial_t(c_2\circ\psi) = (\partial_2c_2)\circ\psi = - \frac{m}{4r}(c_2^2-c_1^2) \circ\psi, 
	\\
	&\partial_t(\beta\circ\xi) = (\partial_1\beta)\circ\xi =  \left(-\beta^2 + \frac{mc_2^2}{2rc_1}(\alpha-\beta)\right)\circ\xi, \label{Eqn:beta_evolve}
	\\
	&\partial_t(\alpha\circ\psi) = (\partial_2\alpha)\circ\psi = \left(-\alpha^2 + \frac{mc_1^2}{2rc_2}(\beta-\alpha) \right)\circ\psi. \label{Eqn:alpha_evolve} 
\end{align} 

We now define the domain of influence of our initial data.  
\begin{definition}
Let $\Omega := [a,b] \subset \mathbb{R}$, and $c_1(r,0), c_2(r,0) \in C^1(\Omega)$. We define the domain of influence, $D(\Omega,T)$, as the set of all $(r,t) \in \mathbb{R} \times [0,T]$ such that there exists\ some $r_1,r_2\in\Omega$ satisfying $\psi(r_1,t)=\xi(r_2,t)=r$. 
\end{definition} 

\begin{remark}
	For classical solutions to Equations \ref{EulerSystem1}--\ref{EulerSystem3}, the flow maps remain invertible; therefore, their domain of dependence may be equivalently determined by the set below. 
	\begin{align}
		D(\Omega,T) = \{(r,t) : \psi(a,t) \leq r \leq \xi(b, t) \}.
	\end{align}
\end{remark}

We now prove there exists an invariant domain over our gas velocities for supersonic expanding data. 

\begin{thm}
	Let $c_1(r,0), c_2(r,0) \in C^1(\Omega)$. Further suppose that $c_1(r,0) > 0$ for all $r\in\Omega$, and that $c_1(r,t), c_2(r,t) \in C^1(D(\Omega,T))$ for some $T>0$. Then, for all $(r,t) \in D(\Omega,T)$, we have that $c_1(r,t)\geq \min(c_1(r,0))$, and $c_2(r,t) \leq ||c_2(r,0)||_{L^\infty}$.
\end{thm}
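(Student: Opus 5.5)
Since $\gamma=3$, the Riemann invariants coincide with the characteristic speeds: $z=u-h=c_1$ and $w=u+h=c_2$. The whole argument therefore runs through the two transport identities recorded above,
\[
\partial_t(c_1\circ\xi)=\frac{m}{4r}(c_2^2-c_1^2)\circ\xi,\qquad \partial_t(c_2\circ\psi)=-\frac{m}{4r}(c_2^2-c_1^2)\circ\psi .
\]
On a classical solution $h>0$, so $c_2-c_1=2h>0$. Moreover, once we know $c_1>0$ we have $c_2>c_1>0$ and hence $c_2^2-c_1^2=(c_2-c_1)(c_2+c_1)>0$. We also have $r>0$, because $\xi$ and $\psi$ start in $\Omega\subset(0,\infty)$ and move outward while $c_1>0$.

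The plan is a continuity (bootstrap) argument on the sign of $c_1$.

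\textbf{Step 1: positivity propagates.} Fix any 1-characteristic $\xi(r_0,\cdot)$ with $r_0\in\Omega$. Let $t^*$ be the supremum of times $t\le T$ such that $c_1\circ\xi>0$ on $[0,t]$; by continuity and $c_1(r_0,0)>0$, we have $t^*>0$. On $[0,t^*)$ the right-hand side of the $c_1$ identity is positive, so $c_1\circ\xi$ is nondecreasing there. Hence $c_1\circ\xi(t)\ge c_1(r_0,0)\ge \min_\Omega c_1(\cdot,0)>0$, where the minimum is attained because $\Omega$ is compact and $c_1(\cdot,0)$ is continuous. This strict lower bound persists up to $t^*$ by continuity, which contradicts the definition of $t^*$ unless $t^*=T$.

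\textbf{Step 2: extend to every point of $D(\Omega,T)$.} By the Remark, every point of $D(\Omega,T)$ lies on a 1-characteristic issuing from $\Omega$, because the flow maps are invertible. So the bound $c_1(r,t)\ge\min_\Omega c_1(\cdot,0)$ holds on all of $D(\Omega,T)$.

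\textbf{Step 3: bound on $c_2$.} Every point of $D(\Omega,T)$ also lies on a 2-characteristic $\psi(r_1,\cdot)$ with $r_1\in\Omega$. Along it $c_1>0$ by Step 2, and $c_2>c_1$, so the right-hand side of the $c_2$ identity is $\le 0$. Thus $c_2\circ\psi$ is nonincreasing, and $c_2\circ\psi(t)\le c_2(r_1,0)\le\|c_2(\cdot,0)\|_{L^\infty}$.

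\textbf{Main obstacle.} The delicate point is the geometric bookkeeping in Steps 2 and 3, namely making sure the characteristics through points of $D(\Omega,T)$ stay inside $D(\Omega,T)$ on the whole time interval, where the $C^1$ hypothesis applies. I would handle this using the definition of $D(\Omega,T)$: a point $(r,t)$ lies in it precisely when it is reached by both a $\psi$-curve and a $\xi$-curve from $\Omega$. Together with the non-crossing of same-family characteristics for classical solutions, the earlier points on those curves satisfy the same description, so they also lie in $D(\Omega,T)$. I would then run Steps 1 and 3 jointly inside a single bootstrap on $\{t\le t^*\}\cap D(\Omega,T)$, so that no sign information is needed outside the set where it has already been established.
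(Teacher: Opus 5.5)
Your proposal is correct and follows the paper's proof. The paper uses the same bootstrap on the sign of $c_1$, based on monotonicity of $c_1$ along 1-characteristics, and then monotonicity of $c_2$ along 2-characteristics once $c_1>0$ (hence $u>0$) is known. Your extra care in restricting to backward characteristics through points of $D(\Omega,T)$ is a detail the paper leaves implicit in its Remark; note that this step also uses that a 2-characteristic can only cross a 1-characteristic from left to right, since $c_1<c_2$, and not just same-family non-crossing.
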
 

\begin{proof}
This follows from a bootstrap argument. Suppose that $c_1(r,t) \geq 0$ for all $(r,t) \in D$. Then
\begin{align}
	&\partial_t(c_1\circ\xi) = \frac{m}{4r}(c_2^2-c_1^2)\circ\xi > 0,
	\\
	&c_1\circ\xi = c_1(r,0) + \int_0^t \frac{m}{4r}(c_2^2-c_1^2)\circ\xi ds \geq \min_{r \in \Omega}c_1(r,0) > 0. 
\end{align} 

Now we improve our bootstrap assumption. We then apply this to the second claim: 
\begin{align}
	&\partial_t(c_2\circ\psi) = -\frac{m}{4r}(c_2^2-c_1^2) \circ \psi < 0, 
	\\
	&c_2\circ\psi = c_2(r,0) - \int_0^t \frac{m}{4r}(c_2^2-c_1^2)\circ\psi ds \leq ||c_2(r,0)||_{L^\infty}. 
\end{align} 
This completes the proof. 
\end{proof} 

It is known by previous analysis in \cite{F1} that for the expanding supersonic wave (defined by $u>h$), we have an invariant domain over rarefactive data ($\alpha \geq  0$, $\beta \geq 0$). Here, we seek to quantify the occasions in which rarefactive characters can become compressive, or vice versa for all three cases (the supersonic expanding wave, the subsonic wave, and the supersonic inward wave).

We first define the Rarefaction/Compression ($R/C$) character below.

\begin{definition}
	For any smooth solution of Equations \eqref{EulerSystem1}--\eqref{EulerSystem3}, we 
	call the solution as 1-rarefaction (compression) if $\beta>0$ ($\beta<0$), and 2-rarefaction (compression) if $\alpha>0$ ($\alpha<0$).
\end{definition}

\subsection{Case 1: supersonic expanding wave, when $0<c_1<c_2$}

\begin{thm}
	Consider the possible change of rarefaction ($R$) and compression ($C$) character in the smooth solution of \eqref{EulerSystem1}--\eqref{EulerSystem3}, in Case 1 when $0<c_1<c_2$.
	
	If the 1-wave (2-wave) is rarefaction ($R$), then the 2-wave  (1-wave) can only change from $C$ to $R$.
	
	If the 1-wave (2-wave) is compression ($C$), then the 2-wave  (1-wave) can only change from $R$ to $C$.
	\label{thm:inward_wave_supersonic}
\end{thm}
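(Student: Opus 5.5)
The argument is a sign analysis at the moment a gradient variable crosses zero, using the Riccati equations \eqref{Eqn:beta_evolve}--\eqref{Eqn:alpha_evolve}. In Case 1 we have $0<c_1<c_2$ throughout, so $r>0$ and the coefficients $\frac{mc_2^2}{2rc_1}$ and $\frac{mc_1^2}{2rc_2}$ are strictly positive. Along $\xi$ (resp.\ $\psi$), the quantity $\beta$ (resp.\ $\alpha$) is a $C^1$ function of $t$. A change of $R/C$ character of the 1-wave therefore requires a time $t_0$ with $\beta\circ\xi(r_0,t_0)=0$ at which $\beta\circ\xi$ changes sign. The direction of the crossing is governed by the sign of $\partial_t(\beta\circ\xi)$ at $t_0$, and similarly for $\alpha$.

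At a zero of $\beta$, \eqref{Eqn:beta_evolve} reduces to
\[
\partial_t(\beta\circ\xi)=\Big(\frac{mc_2^2}{2rc_1}\,\alpha\Big)\circ\xi ,
\]
which has the sign of $\alpha$ at that point. At a zero of $\alpha$, \eqref{Eqn:alpha_evolve} reduces to
\[
\partial_t(\alpha\circ\psi)=\Big(\frac{mc_1^2}{2rc_2}\,\beta\Big)\circ\psi ,
\]
which has the sign of $\beta$.

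These two identities give all four claims. If the 2-wave is $R$ ($\alpha>0$) at the crossing point, then $\partial_t(\beta\circ\xi)>0$ there, so $\beta$ can only cross zero upward, i.e.\ the 1-wave can only change from $C$ to $R$. If the 2-wave is $C$ ($\alpha<0$), then $\beta$ can only cross downward, from $R$ to $C$. The statements with the roles of the two families exchanged follow in the same way from the $\alpha$ identity with the sign of $\beta$.

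To make the crossing argument rigorous rather than relying on the derivative at a single point, I would argue by contradiction. Suppose, with $\alpha>0$ along the relevant part of the solution, that $\beta\circ\xi(r_0,t_1)>0$ and $\beta\circ\xi(r_0,t_2)<0$ for some $t_1<t_2$. Let $t_0$ be the last zero of $\beta\circ\xi$ in $[t_1,t_2]$. Then $\beta\circ\xi<0$ on $(t_0,t_2]$ while its derivative at $t_0$ is strictly positive, which is a contradiction.

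The main obstacle is interpreting the hypothesis ``the 2-wave is $R$''. The clean statement requires $\alpha>0$ at the crossing point, meaning at the spacetime point $(\xi(r_0,t_0),t_0)$. Since $\alpha$ is evaluated along the $1$-characteristic rather than its own $2$-characteristic, the hypothesis has to be understood pointwise in $D(\Omega,T)$, or at least along the relevant characteristic. I would state it that way.

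A degenerate case also needs care: $\alpha=\beta=0$ simultaneously. There the first derivative vanishes, and I would handle it by strictness of the hypotheses ($\alpha>0$, not $\geq 0$), which is consistent with the definition of $R/C$ using strict inequalities.

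The positivity of $c_1$ is guaranteed along the flow by the preceding invariant-domain theorem, which keeps $c_1\geq\min c_1(r,0)>0$. This ensures the coefficient $\frac{mc_2^2}{2rc_1}$ stays finite and positive throughout $D(\Omega,T)$.
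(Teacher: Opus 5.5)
Your proposal is correct and follows the paper's proof. Both arguments evaluate the Riccati equations \eqref{Eqn:beta_evolve}--\eqref{Eqn:alpha_evolve} at a zero of one gradient variable, where the derivative reduces to a positive multiple of the other variable, and this contradicts the assumed direction of the sign change. Your ``last zero'' argument is somewhat more careful than the paper's assumption of a clean crossing point, and your identity $\partial_t(\beta\circ\xi)=\frac{mc_2^2}{2rc_1}\,\alpha\circ\xi$ at $\beta=0$ correctly has $\alpha$ where the paper's Case 2 display has the typo $\beta$.
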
 

\begin{proof}
The proof follows from Equations \eqref{Eqn:beta_evolve}--\eqref{Eqn:alpha_evolve}. 
\\
Case 1: Suppose by way of contradiction that the 1-wave is rarefactive and the 2-wave changes from rarefactive to compressive. This implies that there exists some pair $r_0^*,t^*$ and $\epsilon>0$ such that the following hold for all $\tau \in (0,\epsilon)$. 
\begin{align}
	&\beta(\psi(r_0^*,t^*),t^*) > 0, 
	\\
	&\alpha(\psi(r_0^*,t^*),t^*)=0, 
	\\
	&\alpha(\psi(r_0^*,t^*-\tau),t^*-\tau) > 0, 
	\\
	&\alpha(\psi(r_0^*,t^*+\tau),t^*+\tau) < 0.
\end{align} 
We then compute the following. 
\begin{align}
	\partial_t(\alpha\circ\psi(r_0,t))|_{t=t^*} = \frac{mc_1^2}{2rc_2}\beta\circ\psi > 0. 
\end{align} 
Thus implying that there exists some $\delta>0$ such that for all $\tau\in(0,\delta)$, the following hold. 
\begin{align}
	&\alpha(\psi(r_0^*,t^*-\tau),t^*-\tau) < 0, 
	\\
	&\alpha(\psi(r_0^*,t^*+\tau),t^*+\tau) > 0. 
\end{align} 
This contradicts our assumption. 
\bigskip
\\
Case 2: Suppose by way of contradiction that the 2-wave is rarefactive and the 2-wave changes from rarefactive to compressive. This implies that there exists some pair $r_0^*,t^*$ and $\epsilon>0$ such that the following hold for all $\tau \in (0,\epsilon)$. 
\begin{align}
	&\alpha(\xi(r_0^*,t^*),t^*) > 0, 
	\\
	&\beta(\xi(r_0^*,t^*),t^*)=0, 
	\\
	&\beta(\xi(r_0^*,t^*-\tau),t^*-\tau) > 0, 
	\\
	&\beta(\xi(r_0^*,t^*+\tau),t^*+\tau) < 0. 
\end{align} 
We then compute the following. 
\begin{align}
	\partial_t(\beta\circ\xi(r_0,t))|_{t=t^*} = \frac{mc_2^2}{2rc_1}\beta\circ\xi > 0. 
\end{align} 
Thus implying that there exists some $\delta>0$ such that for all $\tau\in(0,\delta)$, the following hold. 
\begin{align}
	&\beta(\xi(r_0^*,t^*-\tau),t^*-\tau) < 0, 
	\\
	&\beta(\xi(r_0^*,t^*+\tau),t^*+\tau) > 0. 
\end{align} 
This contradicts to our assumption.

The proofs for the cases of the compressive characters are symmetric and will thus be omitted. 
\end{proof} 

The following result indicates to us that there exist invariant domains over $\alpha>0,\beta>0$ and $\alpha<0,\beta<0$. 

%%% Denote D omega T better. Bold all "Proof". Also no more "simple". Also make each case a subsection. 
\begin{thm}
	Suppose that our initial data satisfy the following over $\Omega=[a,b]$. 
	\begin{align}
		&\alpha(r,0) > 0, 
		\\
		&\beta(r,0) > 0.
	\end{align} 
	Then the following hold for all $(r,t) \in D(\Omega,T)$. 
	\begin{align}
		&\alpha(r,t) > 0, 
		\\
		&\beta(r,t) > 0. 
	\end{align} 
	Similarly, if our initial data satisfy the following, 
	\begin{align}
		&\alpha(r,0) < 0 ,
		\\
		&\beta(r,0) < 0.
	\end{align} 
	Then the following hold for all $(r,t) \in D(\Omega,T)$. 
	\begin{align}
		&\alpha(r,t) < 0, 
		\\
		&\beta(r,t) < 0. 
	\end{align} 
\end{thm}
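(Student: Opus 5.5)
We prove the positive case with a first-touching-time argument along characteristics; the negative case is symmetric. We work in Case 1 ($0<c_1<c_2$ on $D(\Omega,T)$, which holds there by the preceding theorem on $c_1,c_2$ when $c_1(r,0)>0$). Every point $(r,t)\in D(\Omega,T)$ lies on a $1$-characteristic $\xi(r_2,\cdot)$ and a $2$-characteristic $\psi(r_1,\cdot)$ with $r_1,r_2\in\Omega$. Hence the claim reduces to showing that $\alpha\circ\psi(r_1,\cdot)$ and $\beta\circ\xi(r_2,\cdot)$ stay positive on $[0,T]$ for every $r_1,r_2\in\Omega$.

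\textbf{Step 1 (first exit time).} Define
\[
t^*=\sup\{t\in[0,T]:\ \alpha>0,\ \beta>0 \text{ on } D(\Omega,t)\}.
\]
By continuity of $\alpha,\beta$ (the solution is $C^1$) and compactness of $\Omega$, $t^*>0$. Suppose $t^*<T$, or that a zero is attained at $t^*$. Then there is a point of $D(\Omega,t^*)$ where $\min(\alpha,\beta)=0$, while both are $\geq 0$ on $D(\Omega,t^*)$.

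\textbf{Step 2 (Gronwall-type lower bound instead of a pointwise derivative check).} The pointwise sign argument used in Theorem \ref{thm:inward_wave_supersonic} is delicate when $\alpha$ and $\beta$ vanish simultaneously. I would avoid it by integrating. Along $\psi$, equation \eqref{Eqn:alpha_evolve} gives
\[
\partial_t(\alpha\circ\psi)=-\Big(\alpha+\frac{mc_1^2}{2rc_2}\Big)\alpha\circ\psi+\frac{mc_1^2}{2rc_2}\,\beta\circ\psi .
\]
Set $A(t)=\big(\alpha+\frac{mc_1^2}{2rc_2}\big)\circ\psi$, which is bounded on $[0,t^*]$ since $\alpha$ is continuous and $c_2>0$, $r\ge a>0$. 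Then
\[
\alpha\circ\psi(t)=e^{-\int_0^tA}\,\alpha(r_1,0)+\int_0^t e^{-\int_s^tA}\,\frac{mc_1^2}{2rc_2}\,\beta\circ\psi(s)\,ds .
\]
On $[0,t^*]$ we have $\beta\ge 0$ and $c_1,c_2>0$, so the integral term is nonnegative. The first term is strictly positive because $\alpha(r_1,0)>0$. Thus $\alpha\circ\psi(t^*)>0$.

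The same argument applies along $\xi$ with \eqref{Eqn:beta_evolve}, using the coefficient $\frac{mc_2^2}{2rc_1}>0$, which requires $c_1>0$. This gives $\beta\circ\xi(t^*)>0$.

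\textbf{Step 3 (continuation and the negative case).} By uniform continuity on the compact set $D(\Omega,T)$, strict positivity at $t^*$ extends a little beyond $t^*$, contradicting maximality. Hence $t^*=T$ and both quantities stay positive.

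For the negative case, apply the same representation. The first term is now strictly negative. The source term has the sign of $\beta\le 0$ (respectively $\alpha\le 0$), so it is nonpositive. The conclusion follows identically; finite-time blow-up is excluded because we assume a smooth solution on $D(\Omega,T)$.

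\textbf{Main obstacle.} The key point is ensuring the coupling coefficients $\frac{mc_1^2}{2rc_2}$ and $\frac{mc_2^2}{2rc_1}$ are positive and bounded throughout $D(\Omega,T)$. This needs $c_1>0$ to persist, which I take from the earlier invariant-domain theorem for $c_1,c_2$. It also needs the domain to stay away from $r=0$, which holds since $r\ge a>0$ and characteristics move outward.
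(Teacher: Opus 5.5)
Your proposal is correct and is essentially the paper's argument. It is a bootstrap on $D(\Omega,t)$ in which, under $\alpha,\beta\ge 0$ (resp.\ $\le 0$) and $0<c_1<c_2$, the coupling terms $\frac{mc_1^2}{2rc_2}\beta$ and $\frac{mc_2^2}{2rc_1}\alpha$ in \eqref{Eqn:alpha_evolve}--\eqref{Eqn:beta_evolve} have the favorable sign, so an integrating factor along $\psi$ and $\xi$ yields strict sign preservation. Your Duhamel representation is just the exact form of the paper's Gronwall inequality, which discards that sign-definite term, and your first-exit-time formulation makes the paper's informal bootstrap step precise.
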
 

\begin{proof}
The proof follows from a bootstrap argument. 
\\
Case 1: 

\begin{align}
	&\alpha(r,0) > 0, 
	\\
	&\beta(r,0) > 0.
\end{align} 

Assume that the following bootstrap assumption holds for all $(r, t)$ within our domain of influence. 
\begin{align}
	&\alpha(r,t) \geq 0, 
	\\
	&\beta(r,t) \geq 0.
\end{align} 
We then apply Gronwall's inequality to our Riccati Equations. 
\begin{align}
	&\partial_t(\beta\circ\xi) \geq -\beta\left(\beta + \frac{mc_2^2}{2rc_1}\beta\right) \circ\xi, 
	\\
	&\beta\circ\xi \geq \beta_0 e^{-\int_0^t \left( \beta + \frac{mc_2^2}{2rc_1} \right)\circ\xi ds} > 0, 
	\\
	&\partial_t(\alpha\circ\psi) \geq -\alpha\left(\alpha + \frac{mc_1^2}{2rc_2}\alpha\right) \circ\psi, 
	\\
	&\alpha\circ\psi \geq \alpha_0 e^{-\int_0^t \left( \alpha + \frac{mc_1^2}{2rc_2} \right)\circ\psi ds} > 0. 
\end{align} 
This improves our bootstrap assumptions and completes the proof. 
\bigskip
\\
Case 2: 
\begin{align}
	&\alpha(r,0) < 0, 
	\\
	&\beta(r,0) < 0.
\end{align} 
Assume that the following bootstrap assumption holds for all $(r, t)$ within our domain of influence. 
\begin{align}
	&\alpha(r,t) \leq 0, 
	\\
	&\beta(r,t) \leq 0.
\end{align} 
We then apply Gronwall's inequality to our Riccati Equations. 
\begin{align}
	&\partial_t(\beta\circ\xi) \leq -\beta\left(\beta + \frac{mc_2^2}{2rc_1}\beta\right) \circ\xi, 
	\\
	&\beta\circ\xi \leq \beta_0 e^{-\int_0^t \left( \beta + \frac{mc_2^2}{2rc_1} \right)\circ\xi ds} < 0, 
	\\
	&\partial_t(\alpha\circ\psi) \leq -\alpha\left(\alpha + \frac{mc_1^2}{2rc_2}\alpha\right) \circ\psi, 
	\\
	&\alpha\circ\psi \leq \alpha_0 e^{-\int_0^t \left( \alpha + \frac{mc_1^2}{2rc_2} \right)\circ\psi ds} < 0. 
\end{align} 
Thus improving upon our bootstrap assumptions and completing the proof. 
\end{proof} 

%%%
%
%%%
Next,  we give a singularity formation result when the initial data includes strong compression.

\begin{thm}
	Suppose that we have initial data over $r\in[a,\infty)=:\Omega$ with both families of gradient variables compressive. 	
	\begin{align}
		&0<\inf_{r\in[a,\infty]} S_1 := {c_1(r,0)} <||c_2(r,0)||_{L^\infty} =: S_2 < \infty, 
		\\
		&\alpha(r,0) < 0, 
		\\
		&\beta(r,0) < 0. 
	\end{align} 
	
	Further suppose that for some $r^* \in \Omega$, a strong compression, as defined below. 
	\begin{align}
		\alpha(r^*, 0) < -\frac{mS_2^2}{2r^*S_1} =: -M.
	\end{align}	
	Then, singularity formation occurs in finite time. 
\end{thm}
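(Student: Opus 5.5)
We argue by contradiction: assume the solution stays $C^1$ on the domain of influence $D(\Omega,T)$ for every $T>0$. Then the Riccati equation \eqref{Eqn:alpha_evolve} along the 2-characteristic $\psi(r^*,t)$ issued from $r^*$ forces $\alpha\circ\psi$ to $-\infty$ in finite time.

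\textbf{Step 1: a priori bounds.} Since $c_1(r,0)\ge S_1>0$, the earlier theorem on the invariant domain for the gas velocities applies and gives
\[
c_1\ge S_1>0, \qquad c_2\le S_2 \quad \text{on } D(\Omega,T).
\]
In particular we stay in Case 1, $0<c_1<c_2$.

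\textbf{Step 2: sign control.} The invariant-domain theorem for $\alpha<0,\beta<0$ gives $\alpha<0$ and $\beta<0$ throughout $D(\Omega,T)$. Hence $\beta\,\frac{mc_1^2}{2rc_2}\le 0$, and \eqref{Eqn:alpha_evolve} yields
\[
\partial_t(\alpha\circ\psi)\le \Big(-\alpha^2-\frac{mc_1^2}{2rc_2}\alpha\Big)\circ\psi .
\]

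\textbf{Step 3: bound the linear coefficient.} Along $\psi(r^*,t)$ we have $r=\psi(r^*,t)\ge r^*$, because $c_2\ge c_1>0$, and $c_1<c_2$. Therefore
\[
0<\frac{mc_1^2}{2rc_2}\le \frac{mc_2}{2r}\le \frac{mS_2}{2r^*}\le \frac{mS_2^2}{2r^*S_1}=M,
\]
where the last step uses $S_2/S_1\ge 1$. Writing $A(t)=\alpha\circ\psi(r^*,t)<0$, the coefficient of $-A$ is at most $M$, so
\[
A'\le -A^2 + M|A| = -|A|\,(|A|-M).
\]

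\textbf{Step 4: comparison and blow-up.} Since $|A(0)|>M$, the right-hand side is negative, so $|A|$ increases and the condition $|A|>M$ persists. Comparison with the explicit solution of $y'=y(y-M)$, $y(0)=|A(0)|>M$, shows $|A|\ge y$. This $y$ blows up at
\[
t^*=\frac1M\log\frac{|A(0)|}{|A(0)|-M}<\infty .
\]
The characteristic $\psi(r^*,t)$ stays in $D(\Omega,T)$ for $T$ large because $\Omega=[a,\infty)$ is unbounded to the right and the 1-characteristics, which move at speed at most $S_2$, bound it. Thus $\alpha$ becomes unbounded, contradicting the assumed $C^1$ regularity. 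Hence singularity forms no later than $t^*$.

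\textbf{Main obstacle.} The hardest step is Step 3: the bound must hold uniformly along the characteristic, using both the monotonicity $\psi(r^*,t)\ge r^*$ and the bounds from the earlier theorem. One must also check that the definition of $M$, which carries the factor $S_2^2/S_1$, indeed dominates $\frac{mc_1^2}{2rc_2}$. A second, more technical point is that the domain of influence must contain the whole 2-characteristic from $r^*$. With $\Omega$ unbounded this should follow from the remark describing $D(\Omega,T)$, taking $b\to\infty$.
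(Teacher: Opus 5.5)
Your proposal is correct and follows the paper's argument. Both argue by contradiction, take the a priori bounds $S_1\le c_1<c_2\le S_2$ from the earlier velocity theorem, derive the Riccati inequality $\partial_t(\alpha\circ\psi)\le-\alpha(\alpha+M)\circ\psi$ along the 2-characteristic from $r^*$, and conclude by Riccati blow-up; you also make explicit the use of $\beta<0$ from the invariant-domain theorem, which the paper uses only tacitly. The only difference is the final integration: the paper bootstraps $\alpha\circ\psi\le\alpha(r^*,0)$ and absorbs the linear term to get $\partial_t(\alpha\circ\psi)\le-\epsilon\alpha^2\circ\psi$, giving blow-up before $-1/(\epsilon\,\alpha(r^*,0))$, whereas your comparison with $y'=y(y-M)$ gives the slightly sharper explicit time $\frac{1}{M}\log\frac{|A(0)|}{|A(0)|-M}$.
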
 
\begin{proof}
We proceed still by contradiction. Suppose that for any time, $T>0$, the classical solution exists over $D(\Omega,T)$. Then, the following hold for all $(r,t) \in D(\Omega,T)$. 
\begin{align}
	S_1 \leq c_1(r,t) < c_2(r,t) \leq S_2.
\end{align} 

We now claim that $\alpha(\psi(r^*,t),t) \leq \alpha(r^*,0)$. Indeed, using this as our bootstrap assumption gives us the following. 
\begin{align}
	\partial_t(\alpha\circ\psi) < -\alpha(\alpha + M)\circ\psi < 0. 
\end{align} 
Thus improving on our bootstrap assumption. By construction, there then exists some $\epsilon > 0$ such that the following holds for all time. 
\begin{align}
	&\partial_t(\alpha\circ\psi) \leq (-\epsilon\alpha^2 
	- (1-\epsilon)\alpha^2 + M\alpha)\circ\psi \leq -\epsilon\alpha^2 \circ\psi. 
\end{align} 

Separating and integrating gives us the following upper bound on the time of singularity formation. 
\begin{align}
	t^* \leq -\frac{1}{\epsilon\alpha_0(r^*)}.
\end{align}

\end{proof} 

\subsection{Case 2: subsonic expanding wave, when $c_1<0<c_2$}

\begin{thm}
	Consider the possible change of rarefaction ($R$) and compression ($C$) character in the smooth solution of \eqref{EulerSystem1}--\eqref{EulerSystem3}, in Case 2 when $c_1<0<c_2$.
	
	If the 1-wave is rarefaction ($R$), then the 2-wave  can only change from $C$ to $R$.
	
	If the 1-wave is compression ($C$), then the, 2-wave can only change from $R$ to $C$.
	
	If the 2-wave is rarefaction ($R$), then the 1-wave  can only change from $R$ to $C$.
	
	If the 2-wave is compression ($C$), then the, 1-wave can only change from $C$ to $R$.
	\label{Theorem:subsonic}
\end{thm}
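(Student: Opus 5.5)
The argument mirrors the proof of Theorem~\ref{thm:inward_wave_supersonic}: we read off the sign of the time derivative of $\alpha$ (resp. $\beta$) along its own characteristic at an instant where it vanishes, using the Riccati equations \eqref{Eqn:beta_evolve}--\eqref{Eqn:alpha_evolve}. The only new ingredient is that in Case 2 the coefficients $\frac{mc_1^2}{2rc_2}$ and $\frac{mc_2^2}{2rc_1}$ have different signs: since $r>0$, $c_2>0$ and $c_1<0$, we have
\begin{align}
\frac{mc_1^2}{2rc_2}>0, \qquad \frac{mc_2^2}{2rc_1}<0 .
\end{align}
This asymmetry will produce the reversed statements for the 1-wave.

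For the 2-wave, suppose $\alpha(\psi(r_0^*,t^*),t^*)=0$ at a crossing time. Equation \eqref{Eqn:alpha_evolve} evaluated there gives
\begin{align}
\partial_t(\alpha\circ\psi)\big|_{t=t^*}=\frac{mc_1^2}{2rc_2}\,\beta\circ\psi .
\end{align}
If the 1-wave is rarefactive ($\beta>0$), this derivative is strictly positive. Hence $\alpha\circ\psi$ is strictly increasing near $t^*$, so $\alpha$ can only cross zero from negative to positive, i.e.\ from $C$ to $R$. If instead $\beta<0$, the derivative is strictly negative and $\alpha$ can only cross from $R$ to $C$. As in Theorem~\ref{thm:inward_wave_supersonic}, each claim is phrased by contradiction: assume a crossing in the forbidden direction on some interval $(t^*-\epsilon,t^*+\epsilon)$ and contradict the sign of the derivative.

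For the 1-wave, suppose $\beta(\xi(r_0^*,t^*),t^*)=0$. Equation \eqref{Eqn:beta_evolve} gives
\begin{align}
\partial_t(\beta\circ\xi)\big|_{t=t^*}=\frac{mc_2^2}{2rc_1}\,\alpha\circ\xi .
\end{align}
Because $c_1<0$, this derivative is negative when $\alpha>0$ and positive when $\alpha<0$. So when the 2-wave is rarefactive, $\beta$ can only pass from $R$ to $C$; when the 2-wave is compressive, $\beta$ can only pass from $C$ to $R$. This is exactly the reversed pair of statements in Theorem~\ref{Theorem:subsonic}.

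The only delicate point is that a sign change of a $C^1$ function need not occur with nonzero derivative; a degenerate crossing could have vanishing derivative. Here that cannot happen, because the hypothesis gives a strict sign ($\beta>0$, $\beta<0$, $\alpha>0$ or $\alpha<0$) for the other wave at the crossing point. The derivative is therefore strictly nonzero there, so the zero is transversal and the direction of crossing is forced. We also need the characteristics to be well defined and the quantities $C^1$ along them. This holds in the classical regime on $D(\Omega,T)$, where $c_1$ stays bounded away from $0$ in the sense $c_1<0$, so $\frac{1}{c_1}$ is finite.
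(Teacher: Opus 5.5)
Your proof is correct and follows the paper's argument. Like the paper, you evaluate the Riccati equations at a zero of $\alpha$ (resp.\ $\beta$) along its own characteristic, and you use the signs $\frac{mc_1^2}{2rc_2}>0$ and $\frac{mc_2^2}{2rc_1}<0$ to force the direction of the crossing. You also write out the 1-wave cases that the paper omits as ``symmetric'', correctly using $\alpha\circ\xi$ in the derivative.
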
 

\begin{proof}
Case 1: Suppose by way of contradiction that the 1-wave is rarefactive and the 2-wave changes from rarefactive to compressive. This implies that there exists some pair $r_0^*,t^*$ and $\epsilon>0$ such that the following hold for all $\tau \in (0,\epsilon)$. 
\begin{align}
	&\beta(\psi(r_0^*,t^*),t^*) > 0, 
	\\
	&\alpha(\psi(r_0^*,t^*),t^*)=0, 
	\\
	&\alpha(\psi(r_0^*,t^*-\tau),t^*-\tau) > 0, 
	\\
	&\alpha(\psi(r_0^*,t^*+\tau),t^*+\tau) < 0. 
\end{align} 
We then compute the following. 
\begin{align}
	\partial_t(\alpha\circ\psi(r_0^*,t))|_{t=t^*} = \frac{mc_1^2}{2rc_2}\beta\circ\psi > 0. 
\end{align} 
Thus implying that there exists some $\delta>0$ such that for all $\tau\in(0,\delta)$, the following hold. 
\begin{align}
	&\alpha(\psi(r_0^*,t^*-\tau),t^*-\tau) < 0, 
	\\
	&\alpha(\psi(r_0^*,t^*+\tau),t^*+\tau) > 0. 
\end{align} 
This contradicts to our assumption. 
\vspace{.2cm}
\\
Case 2: Suppose by way of contradiction that the 1-wave is compressive and the 2-wave changes from compressive to rarefactive. This implies that there exists some pair $r_0^*,t^*$ and $\epsilon>0$ such that the following hold for all $\tau \in (0,\epsilon)$. 
\begin{align}
	&\beta(\psi(r_0^*,t^*),t^*) < 0, 
	\\
	&\alpha(\psi(r_0^*,t^*),t^*)=0, 
	\\
	&\alpha(\psi(r_0^*,t^*-\tau),t^*-\tau) < 0, 
	\\
	&\alpha(\psi(r_0^*,t^*+\tau),t^*+\tau) > 0. 
\end{align} 
We then compute the following. 
\begin{align}
	\partial_t(\alpha\circ\psi(r_0^*,t))|_{t=t^*} = \frac{mc_1^2}{2rc_2}\beta\circ\psi < 0. 
\end{align} 
Thus implying that there exists some $\delta>0$ such that for all $\tau\in(0,\delta)$, the following hold. 
\begin{align}
	&\alpha(\psi(r_0^*,t^*-\tau),t^*-\tau) > 0, 
	\\
	&\alpha(\psi(r_0^*,t^*+\tau),t^*+\tau) < 0. 
\end{align} 
This contradicts to our assumption. 
\\
The proofs for the cases of the 2-characters are symmetric and will thus be omitted. 
\end{proof} 
\bigskip

We observe in this case an asymmetry not previously observed in the supersonic outward case. This strongly indicates the possibility of determining a periodic solution for the subsonic case, as was found for the 1-D case in \cite{TY_Periodic}. This indication is further reinforced by the near-periodic nature of the numerical simulations.

\subsection{Case 3: supersonic inward wave, when $c_1<c_2<0$}

\begin{thm}
	Consider the possible change of rarefaction ($R$) and compression ($C$) character in the smooth solution of \eqref{EulerSystem1}--\eqref{EulerSystem3}, in Case 3 when $c_1<c_2<0$.
	
	If the 1-wave (2-wave) is rarefaction ($R$), then the 2-wave  (1-wave) can only change from $R$ to $C$.
	
	If the 1-wave (2-wave) is compression ($C$), then the 2-wave  (1-wave) can only change from $C$ to $R$.
	\label{Theorem:inward_wave}
\end{thm}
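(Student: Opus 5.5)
The plan follows the same template as the proofs of Theorem~\ref{thm:inward_wave_supersonic} and Theorem~\ref{Theorem:subsonic}. It rests on a sign check of the coupling terms in the Riccati equations \eqref{Eqn:beta_evolve}--\eqref{Eqn:alpha_evolve} at a point where one gradient variable vanishes. In Case 3 we have $c_1<c_2<0$ and $r>0$, so
\begin{align}
\frac{mc_1^2}{2rc_2}<0, \qquad \frac{mc_2^2}{2rc_1}<0 .
\end{align}
Both coupling coefficients therefore have the opposite sign from the outward supersonic case. This sign flip is what reverses every allowed transition direction.

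First consider a 2-wave transition while the 1-wave has a fixed character. Suppose, for contradiction, that $\beta>0$ (1-rarefaction) and that $\alpha$ changes from $C$ to $R$ along some 2-characteristic $\psi(r_0^*,\cdot)$. Then there is a time $t^*$ with $\alpha(\psi(r_0^*,t^*),t^*)=0$, $\alpha<0$ just before $t^*$, and $\alpha>0$ just after. At $t=t^*$ the quadratic term $-\alpha^2$ vanishes in \eqref{Eqn:alpha_evolve}, which gives
\begin{align}
\partial_t(\alpha\circ\psi)\big|_{t=t^*}=\frac{mc_1^2}{2rc_2}\,\beta\circ\psi<0 .
\end{align}
Hence $\alpha\circ\psi$ is strictly decreasing through zero near $t^*$: it is positive just before $t^*$ and negative just after. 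This contradicts the assumed crossing, so with $\beta>0$ the only possible transition for $\alpha$ is $R\to C$. The case $\beta<0$ is identical: the derivative at the zero is now positive, so $\alpha$ can only cross from $C$ to $R$.

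The statements with the roles of the families exchanged are handled the same way along a 1-characteristic $\xi$, using \eqref{Eqn:beta_evolve}. At a zero of $\beta$,
\begin{align}
\partial_t(\beta\circ\xi)=\frac{mc_2^2}{2rc_1}\,\alpha\circ\xi .
\end{align}
Since $c_1<0$, this derivative is negative when $\alpha>0$ and positive when $\alpha<0$. So under a 2-rarefaction $\beta$ can only go $R\to C$, and under a 2-compression $\beta$ can only go $C\to R$, which matches the parenthetical claims in the statement.

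The main technical point is making ``changes character'' rigorous when the crossing is degenerate, for instance when $\alpha$ touches zero tangentially or oscillates. Because the derivative at any zero is nonzero under the strict sign hypothesis on the other variable, every zero is a transversal crossing, and it crosses in one fixed direction. This fixed direction rules out any transition in the opposite direction, since such a transition would require a zero crossed the opposite way. I will state the argument in this form rather than with the $\epsilon,\delta$ neighbourhoods. The argument also uses that $c_1,c_2$ remain negative and $r$ stays positive along the characteristics; both are part of the Case 3 hypothesis on the region considered.
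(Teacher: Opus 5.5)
Your proposal is correct and follows the paper's proof: at a zero of $\alpha$ (resp.\ $\beta$) you evaluate the coupling term $\frac{mc_1^2}{2rc_2}\beta$ (resp.\ $\frac{mc_2^2}{2rc_1}\alpha$), note that $c_1<c_2<0$ makes both coefficients negative, and conclude that a crossing in the forbidden direction is impossible. Your version is slightly cleaner than the paper's. You correctly write $\alpha\circ\xi$ in the $\beta$-equation at a zero of $\beta$, where the paper's display has $\beta\circ\xi$, which is a typo, and you spell out the compressive cases and the transversality argument that the paper omits.
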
 
\begin{proof}
Case 1: Suppose by way of contradiction that the 1-wave is rarefactive and the 2-wave changes from compressive to rarefactive. This implies that there exists some pair $r_0^*,t^*$ and $\epsilon>0$ such that the following hold for all $\tau \in (0,\epsilon)$. 
\begin{align}
	&\beta(\psi(r_0^*,t^*),t^*) > 0, 
	\\
	&\alpha(\psi(r_0^*,t^*),t^*)=0, 
	\\
	&\alpha(\psi(r_0^*,t^*-\tau),t^*-\tau) < 0, 
	\\
	&\alpha(\psi(r_0^*,t^*+\tau),t^*+\tau) > 0. 
\end{align} 
We then compute the following. 
\begin{align}
	\partial_t(\alpha\circ\psi(r_0^*,t))|_{t=t^*} = \frac{mc_1^2}{2rc_2}\beta\circ\psi < 0. 
\end{align} 
Thus it implies that there exists some $\delta>0$ such that for all $\tau\in(0,\delta)$, then the following hold. 
\begin{align}
	&\alpha(\psi(r_0^*,t^*-\tau),t^*-\tau) > 0, 
	\\
	&\alpha(\psi(r_0^*,t^*+\tau),t^*+\tau) < 0. 
\end{align} 
This contradicts our assumption. 
\bigskip
\\
Case 2: Suppose by way of contradiction that the 2-wave is rarefactive and the 1-wave changes from compressive to rarefactive. This implies that there exists some pair $r_0^*,t^*$ and $\epsilon>0$ such that the following hold for all $\tau \in (0,\epsilon)$. 
\begin{align}
	&\alpha(\xi(r_0^*,t^*),t^*) > 0, 
	\\
	&\beta(\xi(r_0^*,t^*),t^*)=0, 
	\\
	&\beta(\xi(r_0^*,t^*-\tau),t^*-\tau) < 0, 
	\\
	&\beta(\xi(r_0^*,t^*+\tau),t^*+\tau) > 0. 
\end{align} 
We then compute the following. 
\begin{align}
	\partial_t(\beta\circ\xi(r_0,t))|_{t=t^*} = \frac{mc_2^2}{2rc_1}\beta\circ\xi < 0. 
\end{align} 
Thus implying that there exists some $\delta>0$ such that for all $\tau\in(0,\delta)$, the following hold. 
\begin{align}
	&\beta(\xi(r_0^*,t^*-\tau),t^*-\tau) > 0, 
	\\
	&\beta(\xi(r_0^*,t^*+\tau),t^*+\tau) < 0. 
\end{align} 

This contradicts our assumption. 
\bigskip
\\
The proofs for the cases of the compressive characters are symmetric and will thus be omitted. 
	\end{proof} 

The following theorem indicates to us that there exist invariant domains over $\alpha>0,\beta>0$ and $\alpha<0,\beta<0$. 

\begin{thm}
	Suppose that our initial data satisfy the following for all $r \in \Omega$. 
	\begin{align}
		&\alpha(r,0) > 0, 
		\\
		&\beta(r,0) < 0.
	\end{align} 
	Then the following hold for all $(r,t) \in D(\Omega,T)$. 
	\begin{align}
		&\alpha(r,t) > 0, 
		\\
		&\beta(r,t) < 0. 
	\end{align} 
	Similarly, if the initial data satisfy the following for all $r \in \Omega$
	\begin{align}
		&\alpha(r,0) < 0, 
		\\
		&\beta(r,0) > 0.
	\end{align} 
	Then the following hold for all $(r,t) \in D(\Omega,T)$.  
	\begin{align}
		&\alpha(r,t) < 0,
		\\
		&\beta(r,t) > 0. 
	\end{align} 
\end{thm}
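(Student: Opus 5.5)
The plan is a bootstrap (continuity) argument along characteristics, in the Case 3 setting $c_1<c_2<0$, using the Riccati equations \eqref{Eqn:beta_evolve}--\eqref{Eqn:alpha_evolve}. In this regime both coupling coefficients $\frac{mc_1^2}{2rc_2}$ and $\frac{mc_2^2}{2rc_1}$ are negative, since $c_1,c_2<0$ and $r>0$. This sign is what makes the mixed-sign region invariant.

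For the first claim, take the bootstrap assumption that $\alpha\ge 0$ and $\beta\le 0$ on $D(\Omega,T')$ for some $T'\le T$. Under this assumption $\beta-\alpha\le 0$, so along $\psi$
\begin{align}
\partial_t(\alpha\circ\psi) = \left(-\alpha^2 + \frac{mc_1^2}{2rc_2}(\beta-\alpha)\right)\circ\psi \geq \left(-\alpha^2 - \frac{mc_1^2}{2rc_2}\alpha\right)\circ\psi,
\end{align}
because $\frac{mc_1^2}{2rc_2}(\beta-\alpha)$ is a product of two nonpositive factors, hence nonnegative, and $-\frac{mc_1^2}{2rc_2}\alpha\ge 0$ as well. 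More directly, I drop the nonnegative term $\frac{mc_1^2}{2rc_2}\beta$ and write
\begin{align}
\partial_t(\alpha\circ\psi) \ge -\alpha\left(\alpha + \frac{mc_1^2}{2rc_2}\right)\circ\psi.
\end{align}
Gronwall then gives $\alpha\circ\psi \ge \alpha_0 e^{-\int_0^t(\alpha+\frac{mc_1^2}{2rc_2})\circ\psi\,ds}>0$. The exponent is finite because the solution is classical on $D(\Omega,T)$, so $c_1,c_2,\alpha$ are bounded there. Along $\xi$ the same reasoning applies: $\frac{mc_2^2}{2rc_1}\alpha\le 0$, so
\begin{align}
\partial_t(\beta\circ\xi) \le -\beta\left(\beta + \frac{mc_2^2}{2rc_1}\right)\circ\xi,
\end{align}
and the comparison gives $\beta\circ\xi\le \beta_0 e^{-\int_0^t(\cdots)\,ds}<0$.

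These strict inequalities improve the bootstrap assumption. Since $D(\Omega,T)$ is swept out by the characteristics $\psi(r_1,\cdot)$ and $\xi(r_2,\cdot)$ issuing from $\Omega$, they hold throughout the domain. Continuity then closes the argument up to $T$. The second claim ($\alpha<0$, $\beta>0$) follows by the identical computation with all inequalities reversed: the coupling terms $\frac{mc_1^2}{2rc_2}\beta\le 0$ and $\frac{mc_2^2}{2rc_1}\alpha\ge 0$ now have the helpful sign.

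An alternative route, matching Theorem~\ref{Theorem:inward_wave}, is a first-crossing argument. At a first time where, say, $\alpha=0$ while $\beta<0$, the derivative $\partial_t(\alpha\circ\psi)=\frac{mc_1^2}{2rc_2}\beta>0$, so $\alpha$ cannot cross to negative values. The analogous statement holds for $\beta$.

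The main obstacle is rigor rather than computation. First, the Gronwall step needs the coefficient $\alpha+\frac{mc_1^2}{2rc_2}$ to be integrable; this follows from boundedness of the classical solution on the compact set $D(\Omega,T)$. Second, one must ensure $c_1<c_2<0$ persists on all of $D(\Omega,T)$. I would take the persistence of $c_1<c_2<0$ as part of the Case 3 hypothesis, as the section heading implicitly does. Alternatively, it could be justified using $\partial_t(c_2\circ\psi)<0$ from the evolution equations, which keeps $c_2$ negative.
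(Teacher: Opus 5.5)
Your proof is correct and is essentially the paper's: a bootstrap on $\alpha\ge 0$, $\beta\le 0$ over $D(\Omega,T)$, closed by Gronwall along $\psi$ and $\xi$. You discard the coupling terms $\frac{mc_1^2}{2rc_2}\beta\ge 0$ and $\frac{mc_2^2}{2rc_1}\alpha\le 0$, which have the favorable sign because $c_1,c_2<0$. Your linear comparison $-\alpha\bigl(\alpha+\frac{mc_1^2}{2rc_2}\bigr)$ is in fact the correct form of the paper's displayed inequality, which carries a stray extra factor of $\alpha$.

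Drop your fallback justification of the regime, though. In Case 3 one has $\partial_t(c_2\circ\psi)=-\frac{m}{4r}(c_2^2-c_1^2)\circ\psi>0$ because $|c_1|>|c_2|$, so $c_2$ increases along 2-characteristics and can reach $0$. The persistence of $c_1<c_2<0$ on $D(\Omega,T)$ must therefore genuinely be assumed, as your primary route and the paper both do.
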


\begin{proof}
This follows from a bootstrap argument. 
\\
Case 1: 

\begin{align}
	&\alpha(r,0) > 0, 
	\\
	&\beta(r,0) < 0.
\end{align} 
Assume that the following bootstrap assumption holds for all $(r, t) \in D(\Omega,T)$. 
\begin{align}
	&\alpha(r,t) \geq 0, 
	\\
	&\beta(r,t) \leq 0.
\end{align} 
We then apply Gronwall's inequality to our Riccati Equations. 
\begin{align}
	&\partial_t(\beta\circ\xi) \leq -\beta\left(\beta + \frac{mc_2^2}{2rc_1}\beta\right) \circ\xi, 
	\\
	&\beta\circ\xi \leq \beta_0 e^{-\int_0^t \left( \beta + \frac{mc_2^2}{2rc_1} \right)\circ\xi ds} < 0, 
	\\
	&\partial_t(\alpha\circ\psi) \geq -\alpha\left(\alpha + \frac{mc_1^2}{2rc_2}\alpha\right) \circ\psi, 
	\\
	&\alpha\circ\psi \geq \alpha_0 e^{-\int_0^t \left( \alpha + \frac{mc_1^2}{2rc_2} \right)\circ\psi ds} > 0. 
\end{align} 
This improves upon our bootstrap assumptions and completes the proof. 
\bigskip
\\
Case 2: 
\begin{align}
	&\alpha(r,0) < 0, 
	\\
	&\beta(r,0) > 0.
\end{align} 
Assume that the following bootstrap assumption holds for all $(r, t) \in D(\Omega,T)$. 
\begin{align}
	&\alpha(r,t) \leq 0, 
	\\
	&\beta(r,t) \geq 0.
\end{align} 
We then apply Gronwall's inequality to our Riccati Equations. 
\begin{align}
	&\partial_t(\beta\circ\xi) \geq -\beta\left(\beta + \frac{mc_2^2}{2rc_1}\beta\right) \circ\xi, 
	\\
	&\beta\circ\xi \geq \beta_0 e^{-\int_0^t \left( \beta + \frac{mc_2^2}{2rc_1} \right)\circ\xi ds} > 0, 
	\\
	&\partial_t(\alpha\circ\psi) \leq -\alpha\left(\alpha + \frac{mc_1^2}{2rc_2}\alpha\right) \circ\psi, 
	\\
	&\alpha\circ\psi \leq \alpha_0 e^{-\int_0^t \left( \alpha + \frac{mc_1^2}{2rc_2} \right)\circ\psi ds} < 0. 
\end{align} 
This improves upon our bootstrap assumptions and completes the proof. 
\end{proof} 

Our result in \cite{F2} shows that if the $1$-wave has a strong compression, it will form a shock. From the current paper, in this case, a strong $1$-compression may induce a wave change in the $2$-character from $C$ to $R$. Once this occurs, a $2$-rarefaction will only make $1$-compression stronger, as evidenced by our analysis on the invariant domains. 

% ------------------------------------------------
\section{Numerical Scheme: Semi-Discrete Lagrangian-Eulerian Formulation}\label{sec:scheme}

In this section, we delineate the numerical architecture employed to approximate solutions of the radially symmetric compressible Euler system. The computational framework utilizes the Semi-Discrete Lagrangian-Eulerian (SDLE) formulation introduced in \cite{ABREU_LE_LINEAR}. This approach constitutes a hybrid methodology that integrates a Lagrangian transport stage with an Eulerian reconstruction step, maintaining a strictly conservative discretization of the underlying conservation laws.

A fundamental component of the SDLE framework is the utilization of no-flow curves, which induce a dynamic, time-dependent partition of the spatial domain. These curves delineate moving control volumes that track the local convective transport of the fluid. This Lagrangian-like motion allows the scheme to resolve nonlinear wave propagation - such as the steepening of compressive fronts - without the computational overhead of explicit spectral decompositions or the necessity of local Riemann solvers.

By evolving the boundaries of the cells in alignment with the physical flux, the SDLE scheme naturally adapts to the local characteristic speeds. This property is particularly advantageous in the radially symmetric context, where the $m/r$ geometric source terms and varying radial velocities create a non-uniform transport field. The resulting formulation ensures the preservation of the conservative structure for mass and momentum while maintaining robustness across subsonic and supersonic regimes involving high-gradient compression or rarefaction.

For numerical implementation, the Euler system is cast into a system of conservation 
laws for the weighted variables. 
\[
s = r^{m}\rho,
\qquad
w = r^{m}\rho u.
\]
This choice of variables facilitates the treatment of the radial geometry by absorbing the rm factor into the conserved quantities, effectively transforming the balance laws into a form amenable to high-resolution finite-volume techniques. Indeed, in terms of $(s,w)$, the system takes the form
\begin{align}
    s_{t} + (H_{1}(s))_{r} & = 0, \label{eq:sdle_s}\\
    w_{t} + (H_{2}(w))_{r} & = -\,r^{m}p_{r}, \label{eq:sdle_w}\\
    p & = K\left(\frac{s}{r^{m}}\right)^{\gamma}, \label{eq:sdle_p}
\end{align}
where $H_{1}(s)=su$ and $H_{2}(w)=wu$, with $u=w/s$.

The pressure-gradient contribution $r^{m}p_{r}$ is treated as a source term and is discretized separately from the convective fluxes. This splitting allows the Lagrangian transport to be determined solely by the convective part of the system, while the pressure effects are incorporated as internal forces at the semi-discrete level.

\subsection*{Semi-Discrete Lagrangian-Eulerian Formulation}

Let $\{\sigma_j^n(t)\}$ denote the family of no-flow curves introduced in \cite{ABREU_LE_LINEAR}, which delimit the moving control volumes over the time interval $[t_n,t_{n+1}]$. Denoting by $s_j(t)$ and $w_j(t)$ the cell averages associated with these volumes, the semi-discrete SDLE formulation reads
\begin{equation}
\begin{cases}
\dfrac{d s_j(t)}{dt}
= -\dfrac{1}{\Delta r}\Big[\mathcal{H}_{1}(s_j,s_{j+1})
- \mathcal{H}_{1}(s_{j-1},s_j)\Big] = 0, \\[2mm]
\dfrac{d w_j(t)}{dt}
= -\dfrac{1}{\Delta r}\Big[\mathcal{H}_{2}(w_j,w_{j+1})
- \mathcal{H}_{2}(w_{j-1},w_j)\Big]
- r_j^{m}(p_r)_j,
\end{cases}
\label{eq:sdle-semidiscrete}
\end{equation}
where $(p_r)_j$ denotes a discrete approximation of the radial pressure derivative.

The numerical fluxes $\mathcal{H}_1$ and $\mathcal{H}_2$ are constructed following \cite{ABREU_LE_LINEAR} as
\begin{align}
\mathcal{H}_{1}(s_j,s_{j+1})
&= \frac{1}{4}\Big[
b^{s}_{j+\frac12}\big(s^{-}_{j+\frac12}-s^{+}_{j+\frac12}\big)
+ (f_j+f_{j+1})\big(s^{-}_{j+\frac12}+s^{+}_{j+\frac12}\big)
\Big], \label{eq:fluxle1}\\
\mathcal{H}_{2}(w_j,w_{j+1})
&= \frac{1}{4}\Big[
b^{w}_{j+\frac12}\big(w^{-}_{j+\frac12}-w^{+}_{j+\frac12}\big)
+ (g_j+g_{j+1})\big(w^{-}_{j+\frac12}+w^{+}_{j+\frac12}\big)
\Big], \label{eq:fluxle2}
\end{align}
where $f_j=H_1(s_j)/s_j$, $g_j=H_2(w_j)/w_j$, and $b^{s}_{j+\frac12}$, $b^{w}_{j+\frac12}$ are order approximations of the local transport velocity $\mathcal{O}(H_{1}(s)/s)$ and $\mathcal{O}(H_{2}(w)/w)$, respectively, defined by
\begin{align}
    b^{s}_{j+\frac12} = \displaystyle\max_{j}\left\{\vert \zeta_{1}f_{j} + \zeta_{2}f_{j+1} \vert\right\} \quad \mbox{and} \quad b^{w}_{j+\frac12} = \displaystyle\max_{j}\left\{\vert \zeta_{1}g_{j} + \zeta_{2}g_{j+1} \vert\right\}.
\end{align}

The interface values are obtained by linear reconstruction,
\begin{align*}
s^{-}_{j+\frac12} = s_j + \frac{\Delta r}{4}(s_r)_j,
\qquad
s^{+}_{j+\frac12} = s_{j+1} - \frac{\Delta r}{4}(s_r)_{j+1},\\
w^{-}_{j+\frac12} = w_j + \frac{\Delta r}{4}(w_r)_j,
\qquad
w^{+}_{j+\frac12} = w_{j+1} - \frac{\Delta r}{4}(w_r)_{j+1},
\end{align*}
where the slopes are limited using a minmod-type operator,
\begin{equation}
    (s_{r})_{j} = minmod \left(\varrho\theta\dfrac{s_{j}-s_{j-1}}{\Delta r}, \varrho\dfrac{s_{j+1}-s_{j-1}}{2\Delta r}, \varrho\theta\dfrac{s_{j+1}-s_{j}}{\Delta r} \right), \quad 1 \leqslant \theta \leqslant 2,
    \label{midmod}
\end{equation}
with $\varrho$ an adjustable parameter, and the $minmod$ function is defined by $minmod(a,b,c) = \mbox{MM}(\mbox{MM}(a,b),c)$,
with
\begin{equation*}
    \mbox{MM}(a,b) = \frac{1}{2}\left[sign(a)+sign(b)\right]\min(\vert a \vert,\vert b \vert).
    \label{MM_mod}
\end{equation*}
An analogous expression is used for $(w_r)_j$.

Time integration of the semi-discrete system \eqref{eq:sdle-semidiscrete} is performed using a Runge-Kutta-Shu scheme \cite{shu1988,shu1989}.

\subsection*{Source-Term Discretization}

The pressure-gradient source term $-r_j^{m}(p_r)_j$ is discretized explicitly and coupled with the thermodynamic variables through the equation of state. The same slope-limiting strategy is employed in the approximation of $p_r$, which contributes to the stability of the scheme in regions where strong gradients are present.

\subsection*{Stability Condition}

The time step $\Delta t^n$ is chosen to satisfy the CFL condition
\[
\frac{\Delta t^n}{\Delta r}
\max\!\left\{\max_j |f_j|,\max_j |g_j|\right\}
%%\leqslant 
< \frac{1}{2}.
\]
This condition prevents the intersection of no-flow curves between successive time steps and ensures the stability of the numerical method.

%----------------------------------------------------
\section{Numerical Experiments}\label{sec:experiments}

In this section, we present a series of computational experiments for the radially symmetric isentropic Euler system \eqref{EulerSystem1}--\eqref{EulerSystem3}, meticulously designed to corroborate the theoretical wave-character transitions established in \cite{F1}. The simulations are executed using the Semi-Discrete Lagrangian-Eulerian (SDLE) framework detailed in Section \ref{sec:scheme}, formulated in terms of the weighted conservative variables $s=r^m\rho$ and $w=r^m\rho u$. Given the absence of closed-form analytical solutions for the multidimensional regimes under consideration, our validation strategy focuses on identifying qualitative signatures that are strictly consistent with the underlying hyperbolic theory:
\begin{itemize}
\item[1)] \textbf{Spatio-temporal Regularity:} The monitoring of localized steepening or the preservation of smoothness in the primitive and thermodynamic fields $(\rho,u,p,h)$.

\item[2)] \textbf{Phase-Space Topology:} The evolution of the invariant manifolds (the invariant curve) within the $(u,h)$-plane.

\item[3)] \textbf{Gradient Dynamics:} The spatio-temporal evolution of the gradient variables $\alpha$ and $\beta$, specifically tracking the onset of a gradient catastrophe as defined by the concentration of negative values along characteristic trajectories.
\end{itemize}

\textbf{Computational Parameters.} To ensure high-fidelity resolution of nonlinear wave structures, all simulations utilize a uniform discretization of $N=8192$ cells. For domains posed on the radial interval $[10,20]$, this corresponds to a spatial resolution of $\Delta r = 8.308900\times 10^{-3}$. The temporal evolution is governed by the CFL condition specified in Section \ref{sec:scheme}, with $CFL=0.1$ (the Courant number) to maintain stability in high-gradient regions. The linear reconstruction employs a slope-limiting framework with parameters $\varrho = \theta=1$, and the dissipation coefficients are set to $\zeta_1 = \zeta_{2} = 2$ to prevent spurious oscillations in the numerical solutions.

The initial profiles are synthesized from prescribed wave-character data via the mappings \eqref{eq:u0}--\eqref{eq:p0}, adhering to the parameter sets specified in \cite{F1}. For each test case, we provide temporal snapshots of the state variables $(\rho,u,p,h)$, the corresponding phase-space invariant curves, and heat maps illustrating the evolution of $\alpha$ and $\beta$ in the $(r,t)$-plane. These diagnostics offer a comprehensive view of the interplay between radial geometry and the nonlinear transition mechanisms.

\subsection*{Case 1: strong supersonic compression}

We initiate our numerical study with an outward-propagating supersonic configuration characterized by a strongly compressive state across both characteristic families. The system parameters and initial data are prescribed as follows:
\[
m=1,\quad K=7.75\times 10^{4},\quad \gamma=1.4,\quad \alpha=\beta=-3,\quad h_c=1,\qquad v_a=10,\quad r\in[10,20].
\]
This initialization establishes a genuinely compressive regime where the nonlinear steepening of the wave fronts is expected to dominate. According to the analytical criteria established in \cite{F1}, such initial data must precipitate a gradient catastrophe and the subsequent formation of shocks within a finite temporal horizon.

The numerical simulations corroborate this theoretical prediction with high fidelity. The evolution of the primitive and thermodynamic variables $(\rho,u,p,h)$ exhibits a rapid steepening of the profiles, indicating a transition from smooth to discontinuous states. This behavior is precisely reflected in the gradient variables $\alpha$ and $\beta$, which exhibit a sharp localized growth in negative magnitude along the characteristic curves.

In the present computation, the onset of shock formation is resolved within the interval $t \in [1.5,10]$ (see Figures \ref{fig:density_case1}--\ref{fig:invariant-curve_case1}). Furthermore, the spatio-temporal diagnostics provided by the heat maps in Figure \ref{fig:heat-map_case1} delineate the morphology of the gradient concentration in the $(r,t)$-plane, allowing for a precise visualization of the coordinate loci where the classical solution ceases to exist and the singularity emerges.

% ------------------------------------------------------------------------

\begin{figure}[H]
  \centering
  \begin{subfigure}{0.32\textwidth}
    \centering
    \includegraphics[width=1.0\textwidth]{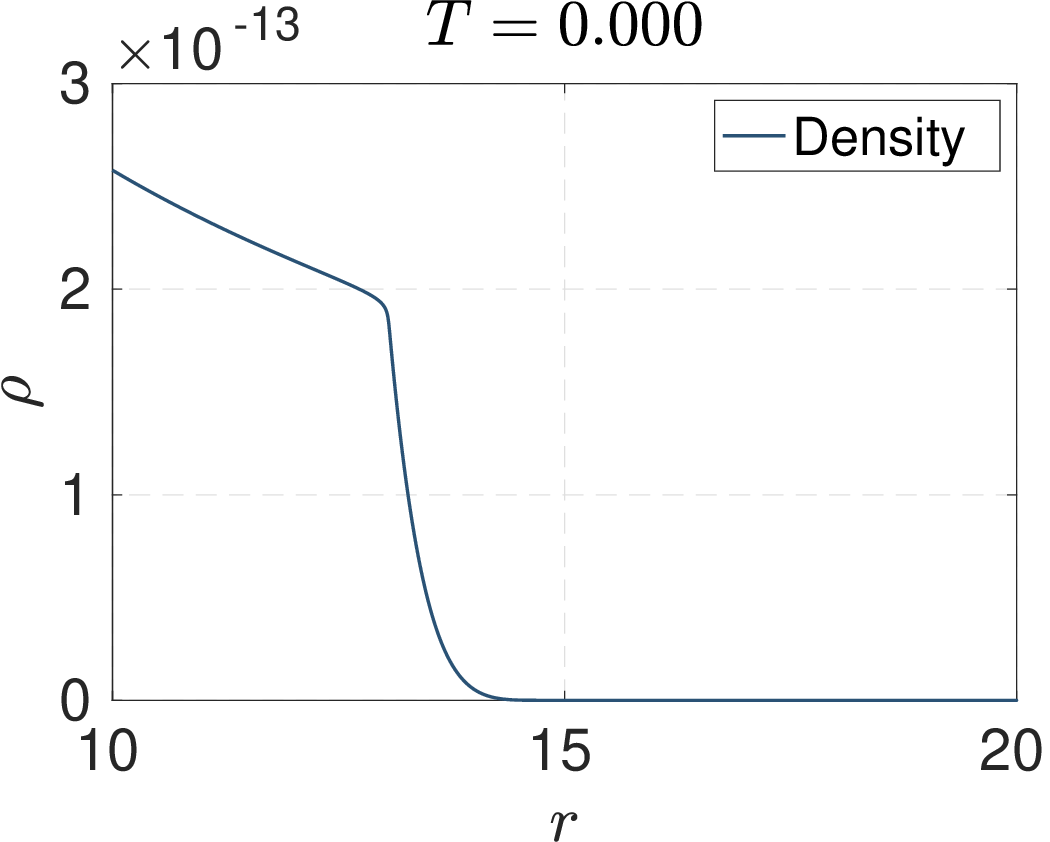}
  \end{subfigure}
    \begin{subfigure}{0.32\textwidth}
    \centering
    \includegraphics[width=1.0\textwidth]{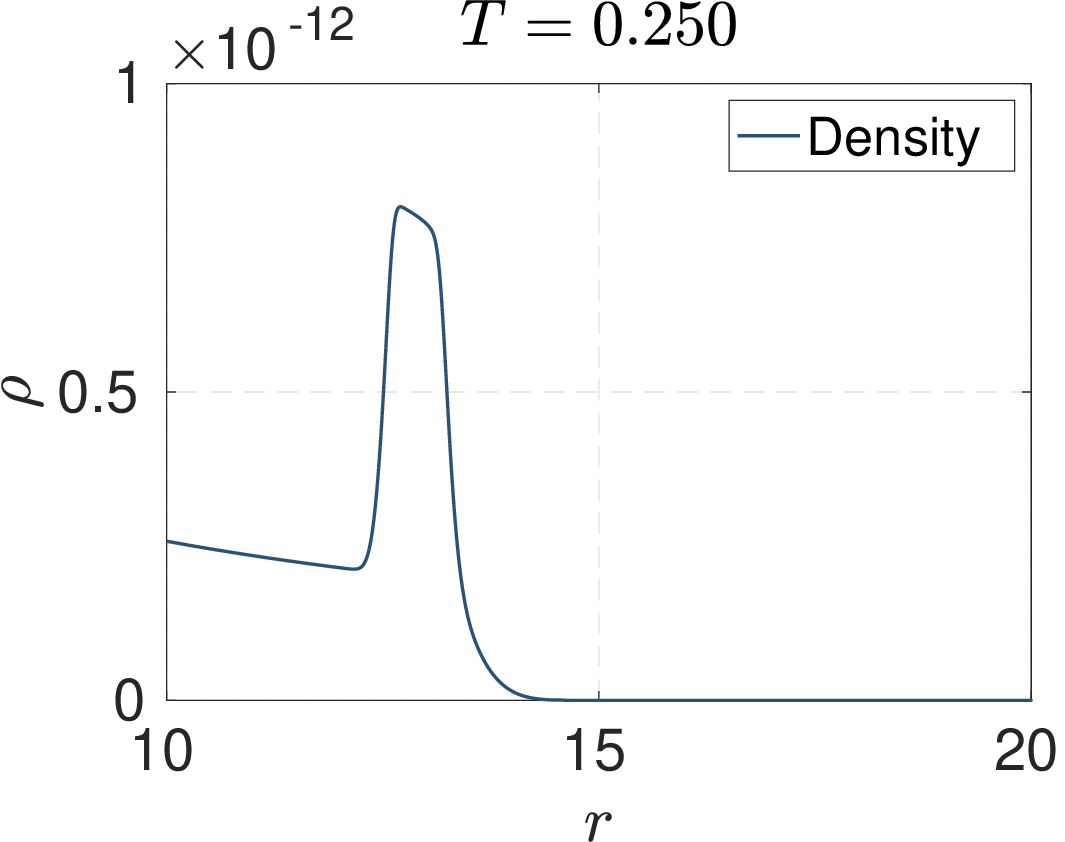}
  \end{subfigure}
  \begin{subfigure}{0.32\textwidth}
    \centering
    \includegraphics[width=1.0\textwidth]{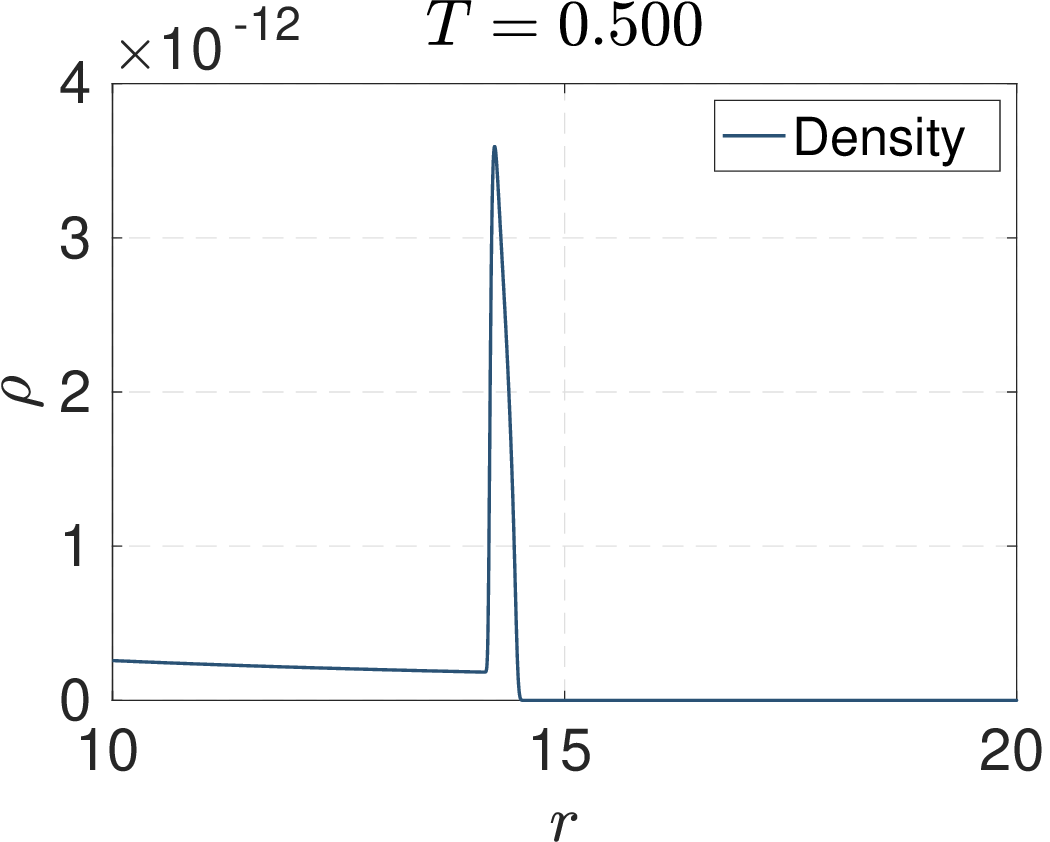}
  \end{subfigure}
  \caption{The initial \textit{density} is shown on the left and its time-evolved state on the center and on the right.}
  \label{fig:density_case1}
\end{figure}

% ------------------------------------------------------------------------

\begin{figure}[H]
  \centering
  \begin{subfigure}{0.32\textwidth}
    \centering
    \includegraphics[width=1.0\textwidth]{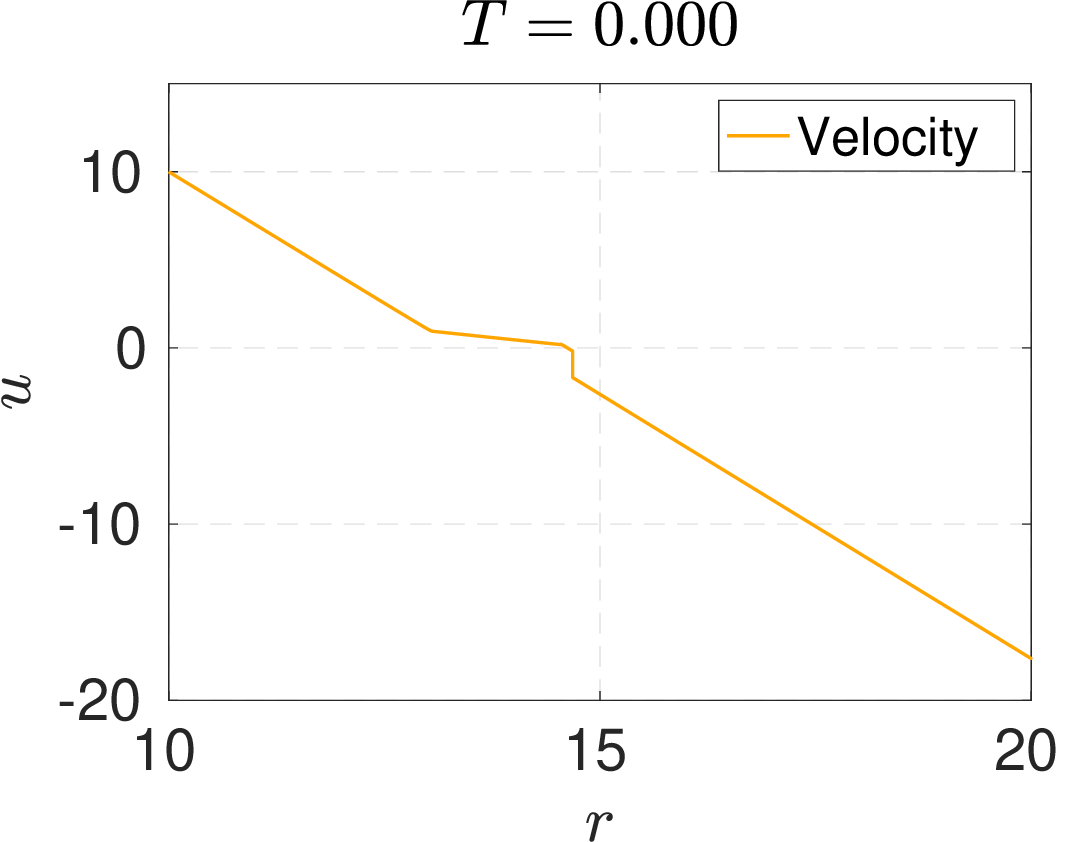}
  \end{subfigure}
    \begin{subfigure}{0.32\textwidth}
    \centering
    \includegraphics[width=1.0\textwidth]{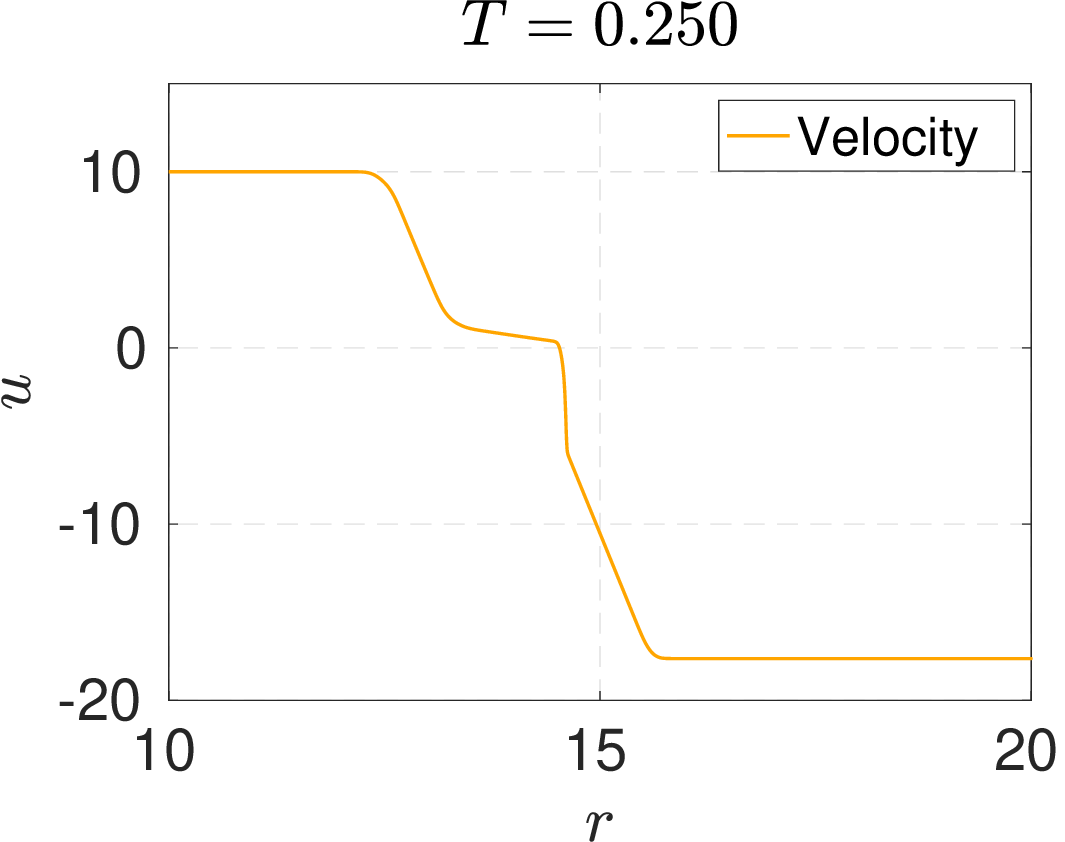}
  \end{subfigure}
  \begin{subfigure}{0.32\textwidth}
    \centering
    \includegraphics[width=1.0\textwidth]{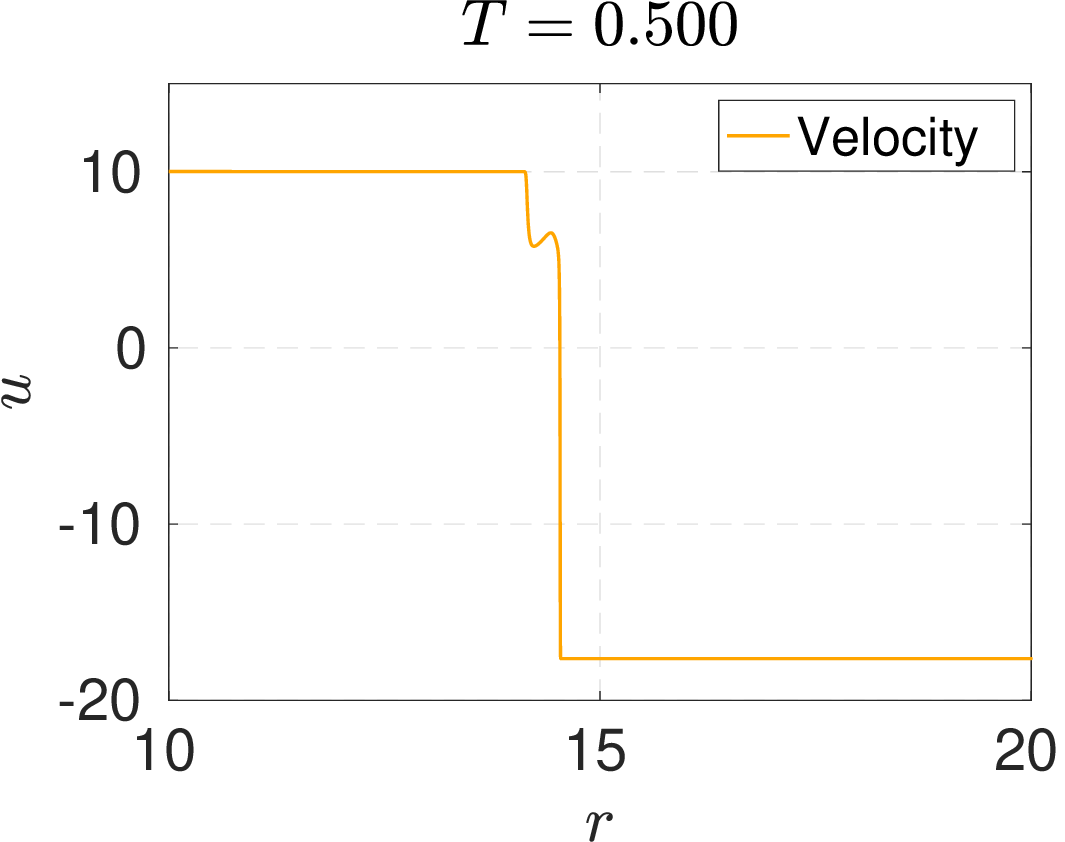}
  \end{subfigure}
  \caption{The initial \textit{velocity} is shown on the left and its time-evolved state on the center and on the right.}
  \label{fig:velocity_case1}
\end{figure}

% ------------------------------------------------------------------------

\begin{figure}[H]
  \centering
  \begin{subfigure}{0.32\textwidth}
    \centering
    \includegraphics[width=1.0\textwidth]{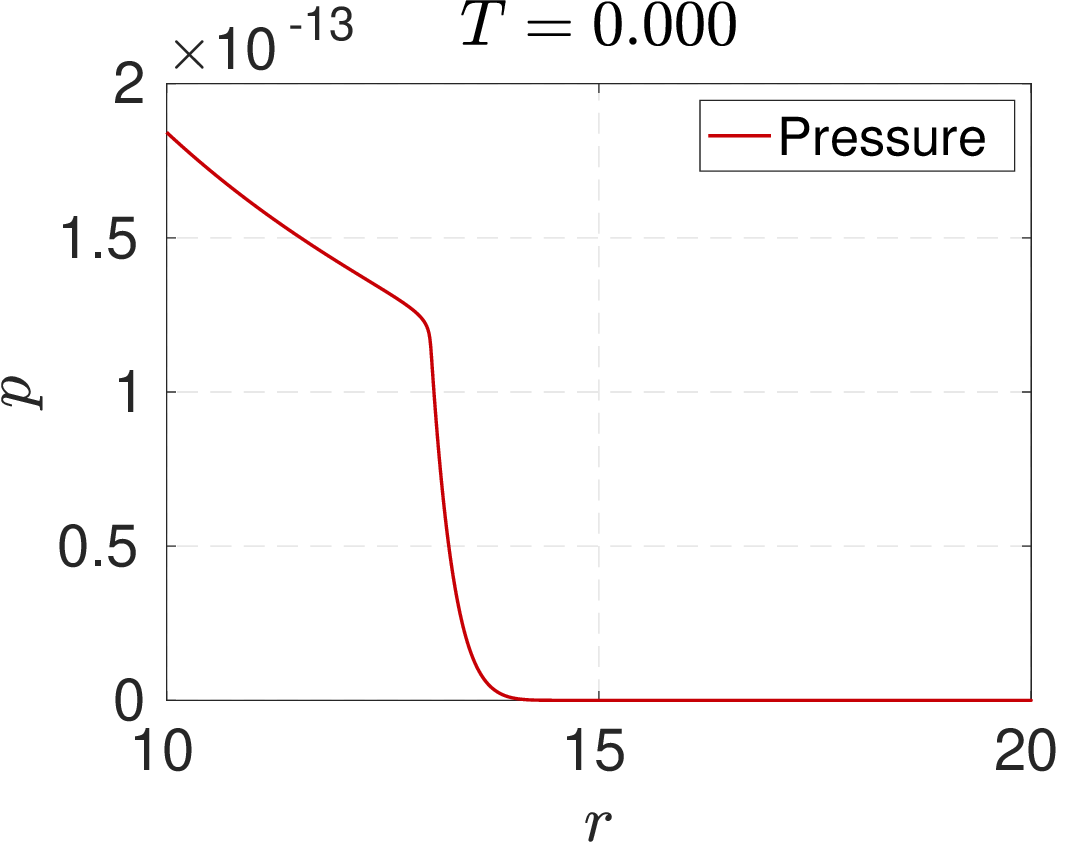}
  \end{subfigure}
    \begin{subfigure}{0.32\textwidth}
    \centering
    \includegraphics[width=1.0\textwidth]{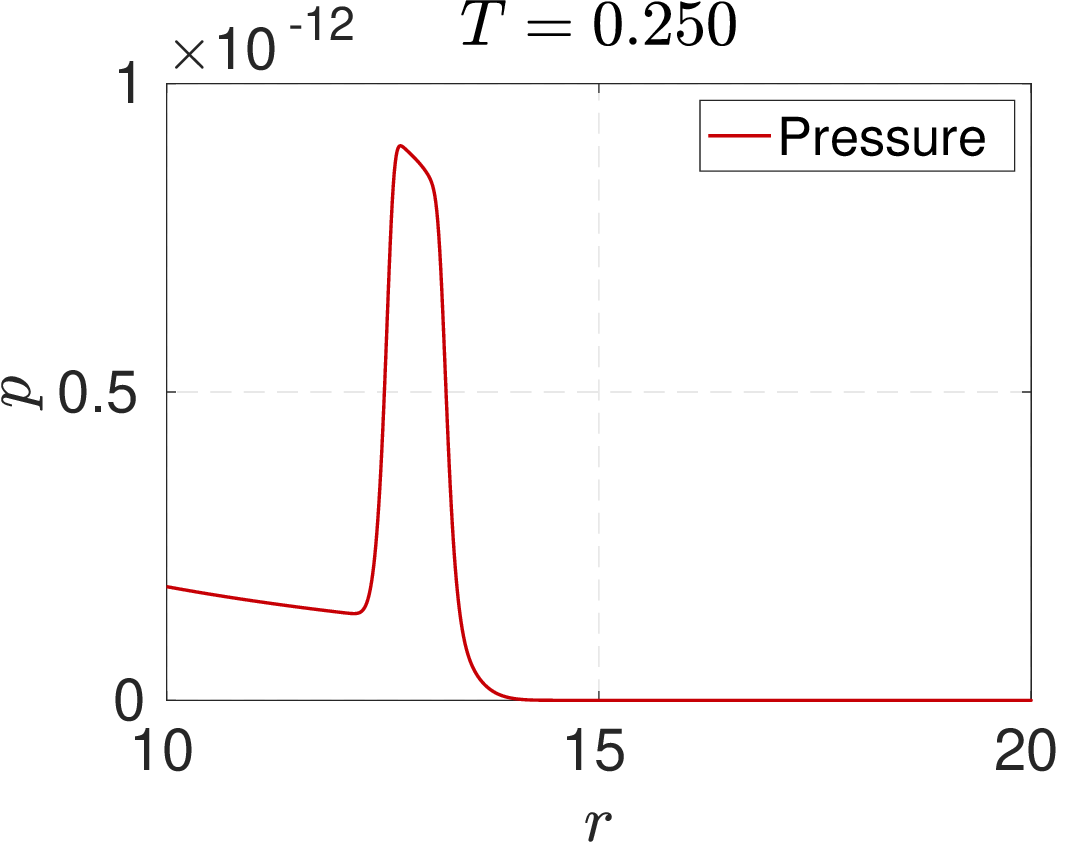}
  \end{subfigure}
  \begin{subfigure}{0.32\textwidth}
    \centering
    \includegraphics[width=1.0\textwidth]{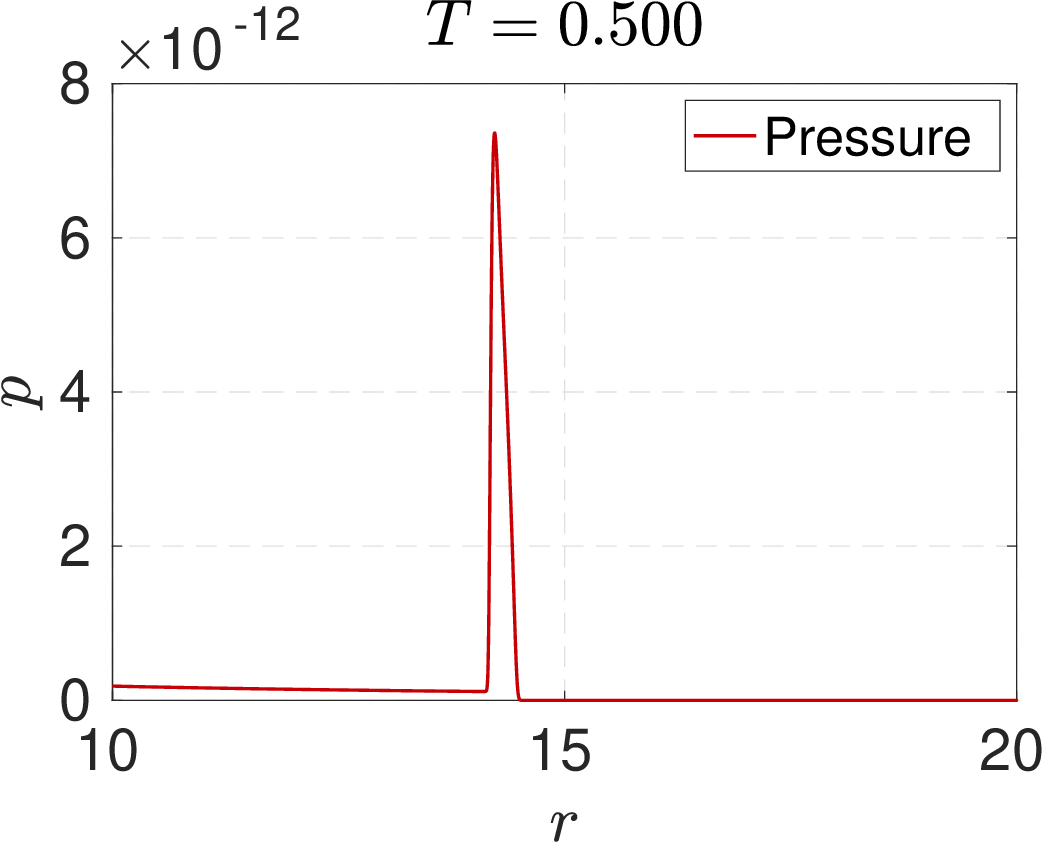}
  \end{subfigure}
  \caption{The initial \textit{pressure} is shown on the left and its time-evolved state on the center and on the right.}
  \label{fig:pressure_case1}
\end{figure}

% ------------------------------------------------------------------------

\begin{figure}[H]
  \centering
  \begin{subfigure}{0.32\textwidth}
    \centering
    \includegraphics[width=1.0\textwidth]{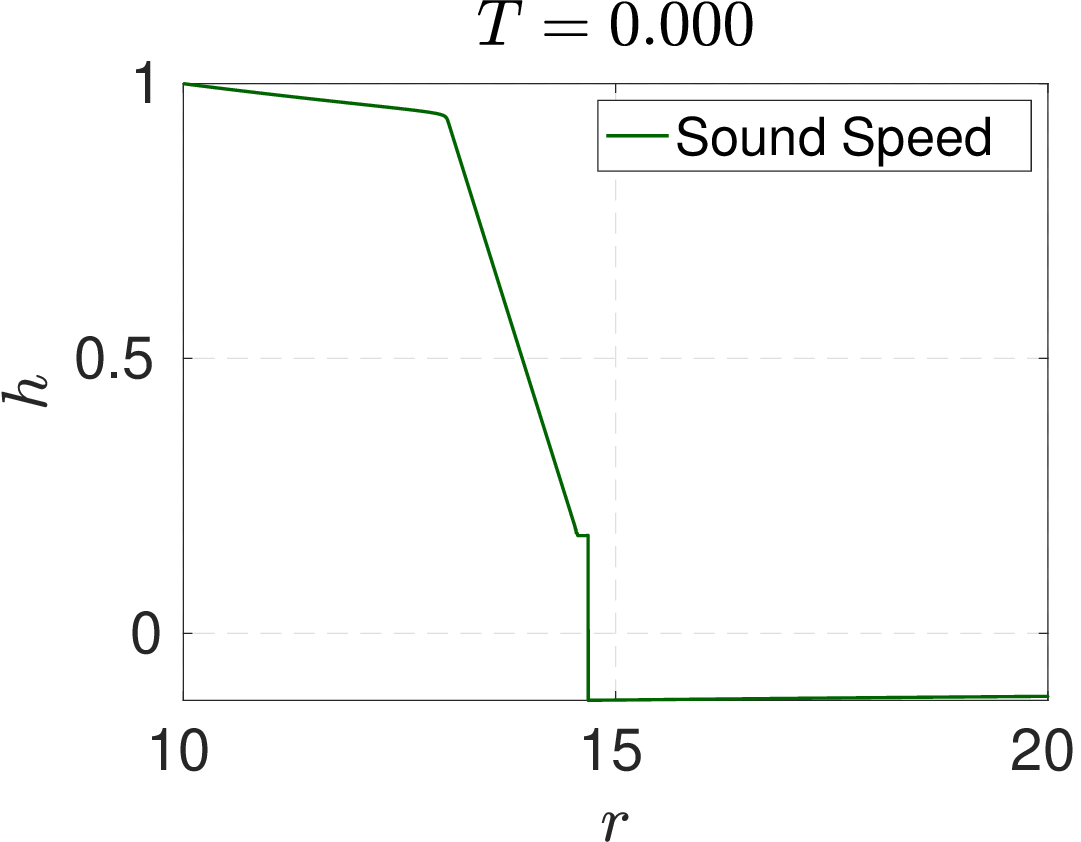}
  \end{subfigure}
    \begin{subfigure}{0.32\textwidth}
    \centering
    \includegraphics[width=1.0\textwidth]{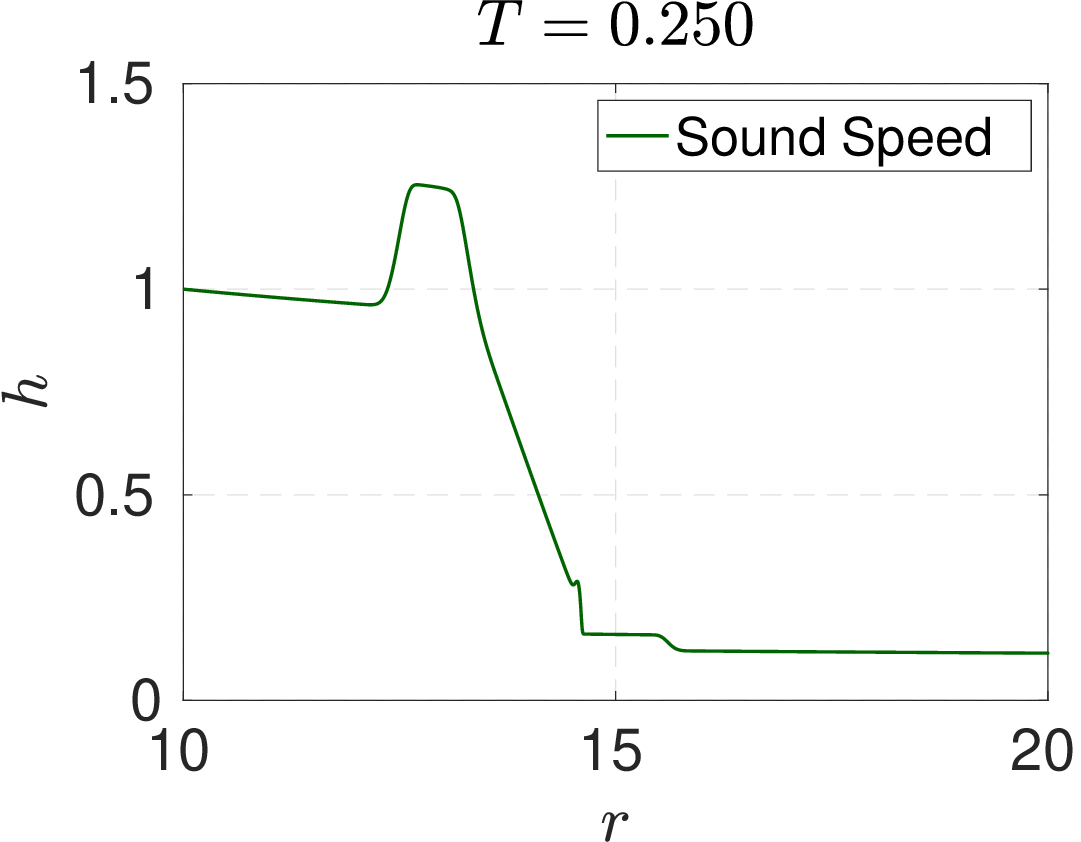}
  \end{subfigure}
  \begin{subfigure}{0.32\textwidth}
    \centering
    \includegraphics[width=1.0\textwidth]{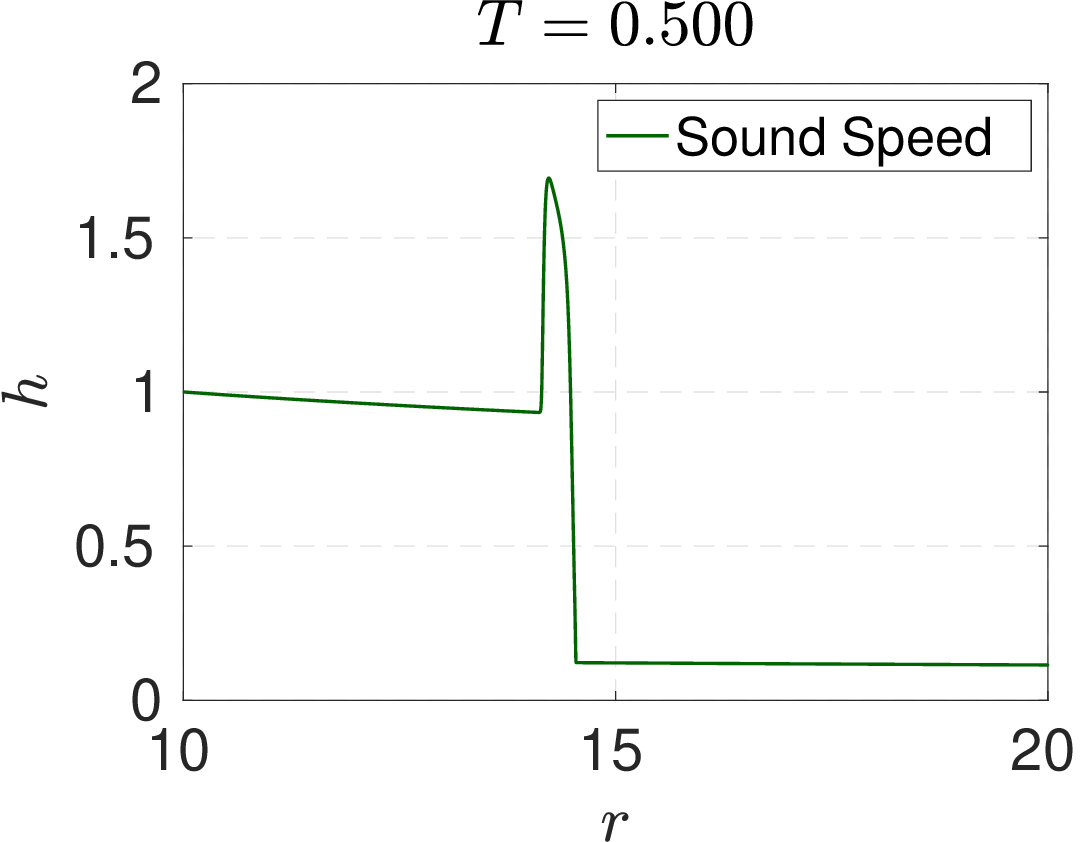}
  \end{subfigure}
  \caption{The initial \textit{sound speed} is shown on the left and its time-evolved state on the center and on the right.}
  \label{fig:sound-speed_case1}
\end{figure}

% ------------------------------------------------------------------------

\begin{figure}[H]
  \centering
  \begin{subfigure}{0.32\textwidth}
    \centering
    \includegraphics[width=1.0\textwidth]{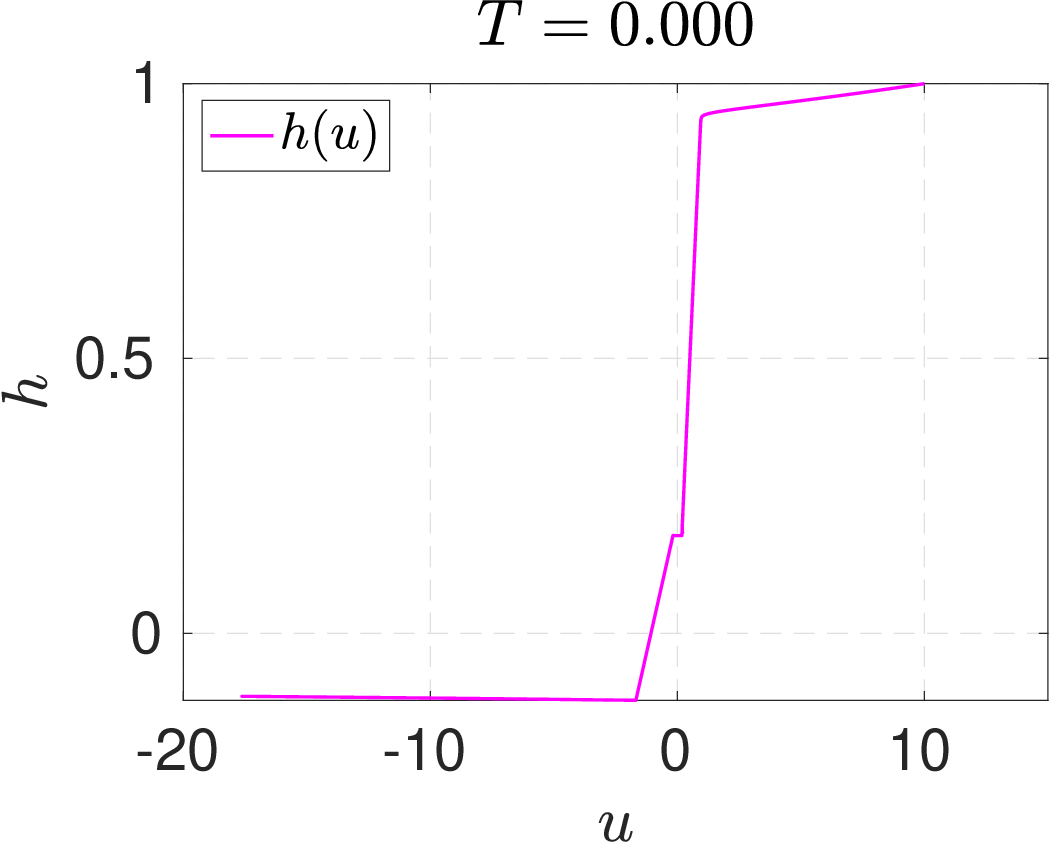}
  \end{subfigure}
    \begin{subfigure}{0.32\textwidth}
    \centering
    \includegraphics[width=1.0\textwidth]{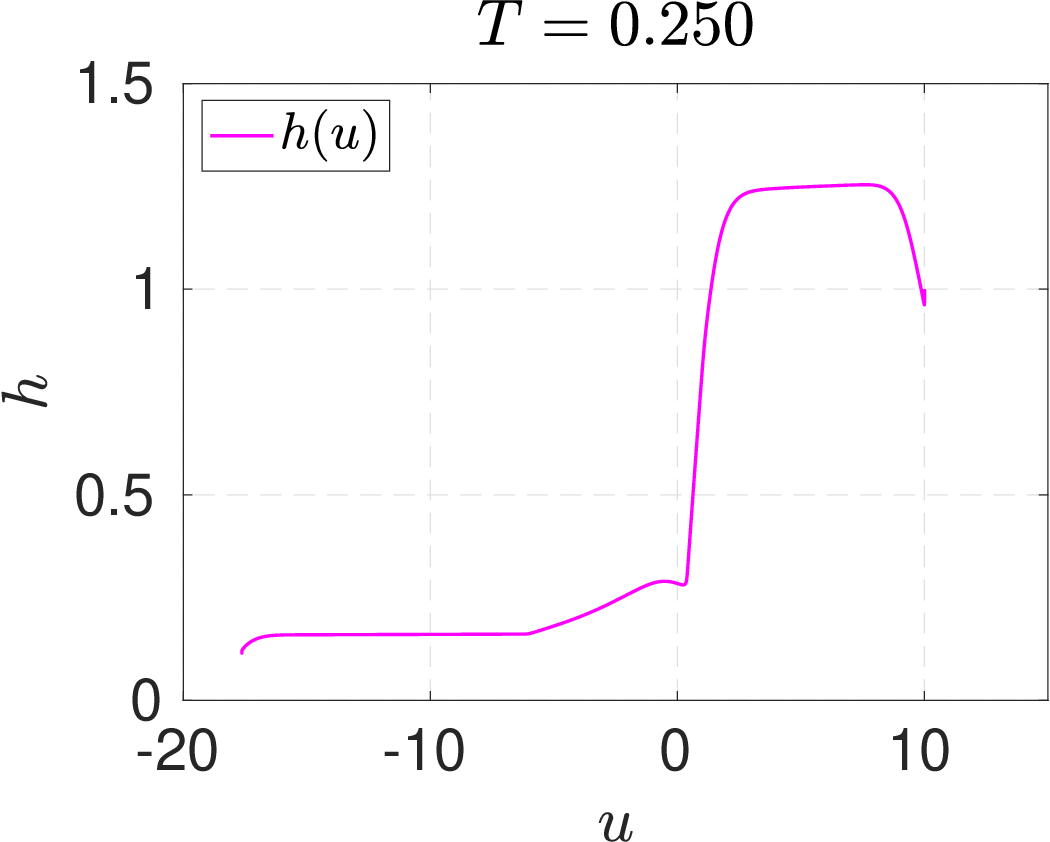}
  \end{subfigure}
  \begin{subfigure}{0.32\textwidth}
    \centering
    \includegraphics[width=1.0\textwidth]{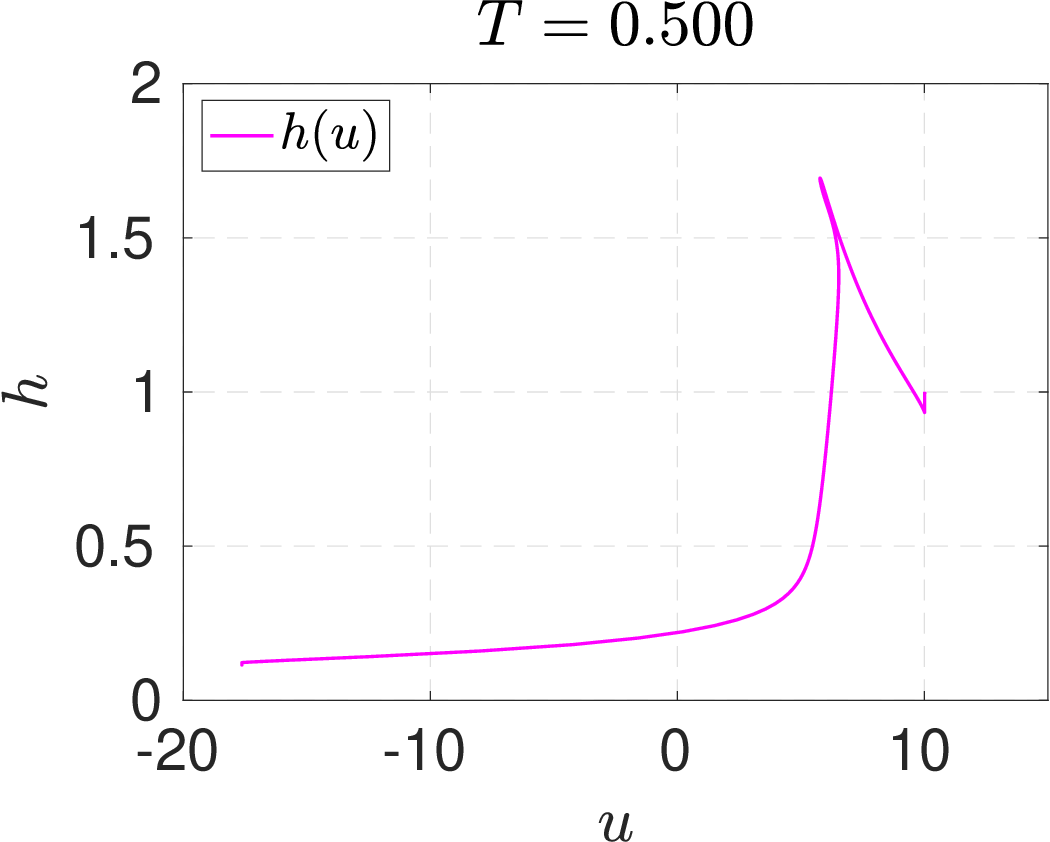}
  \end{subfigure}
  \caption{The initial \textit{invariant curve in \((u,h)-\)plane} is shown on the left and its time-evolved state on the center and on the right.}
  \label{fig:invariant-curve_case1}
\end{figure}

% ------------------------------------------------------------------------

\begin{figure}[H]
  \centering
  \begin{subfigure}{0.49\textwidth}
    \centering
    \includegraphics[width=1.0\textwidth]{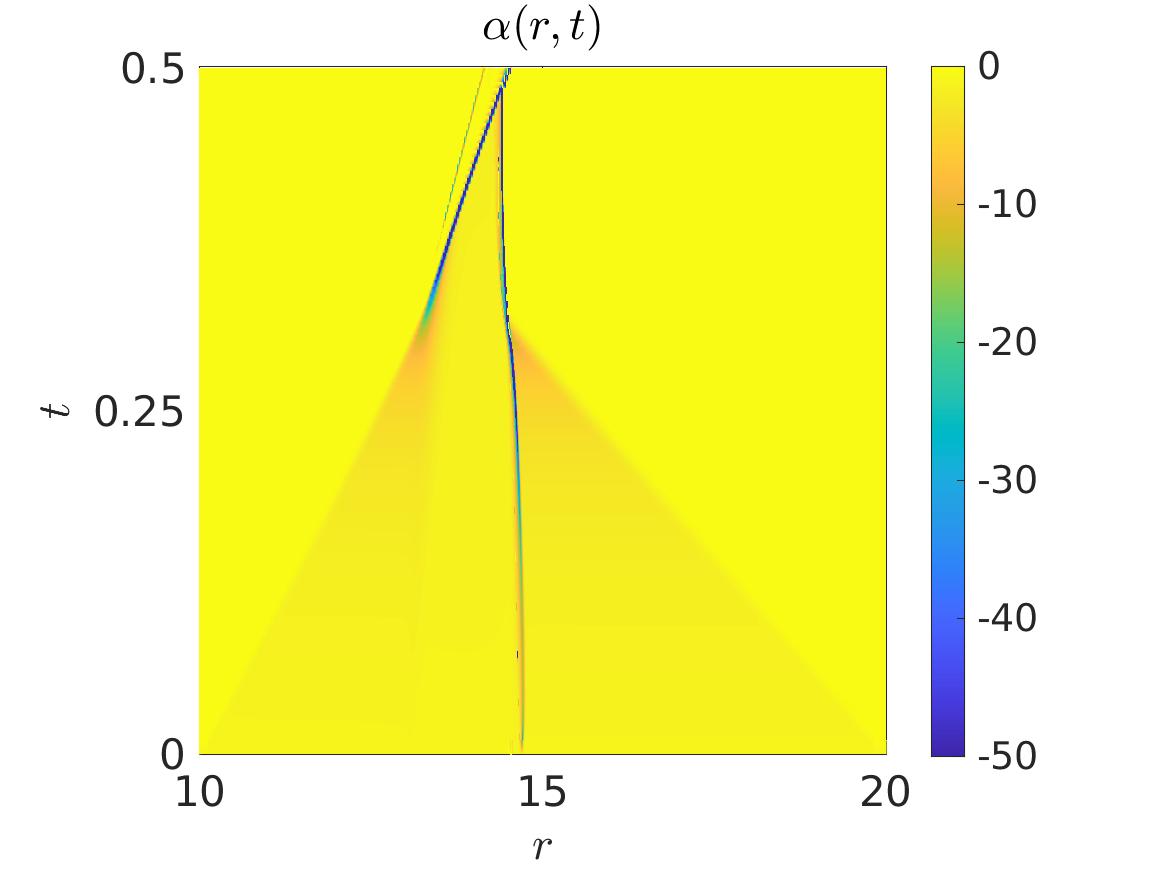}
  \end{subfigure}
  \begin{subfigure}{0.49\textwidth}
    \centering
	\includegraphics[width=1.0\textwidth]{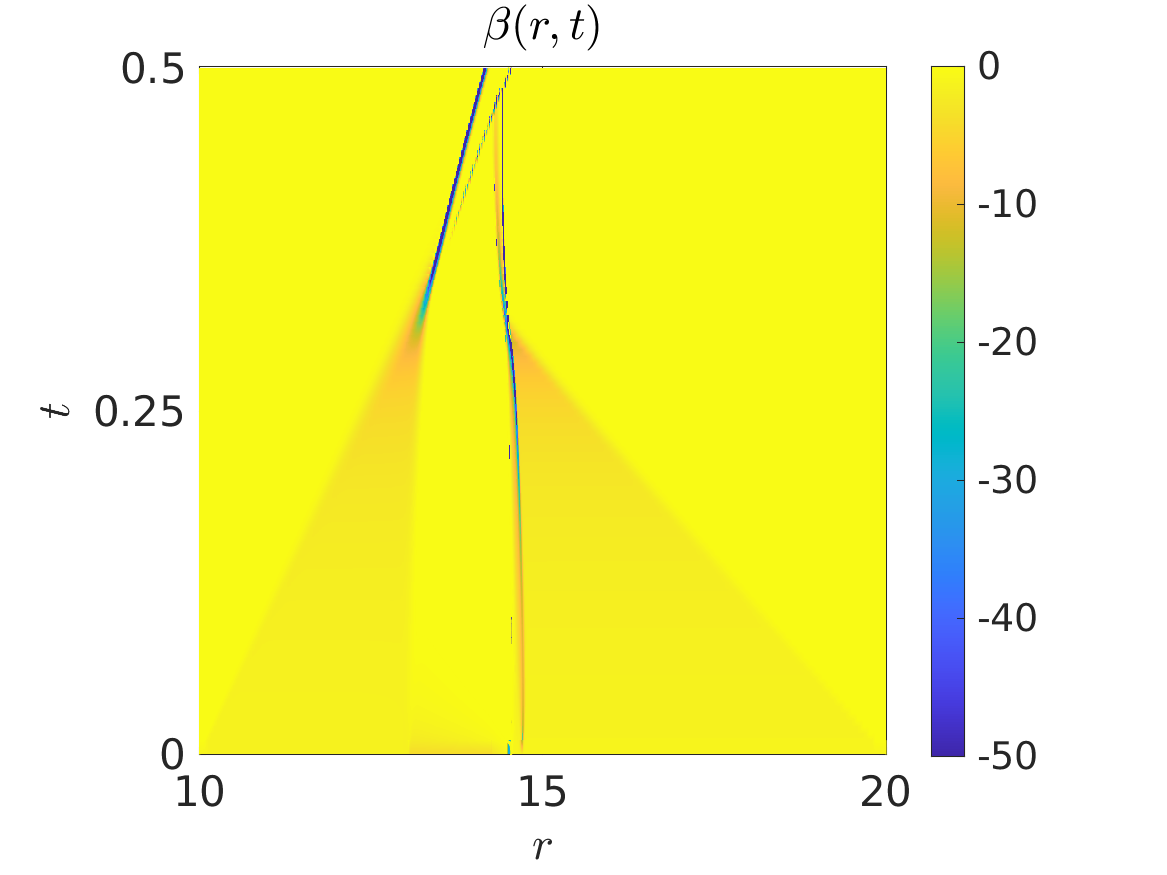}
  \end{subfigure}
  \caption{The \textit{heat map of $\alpha$ in \((r,t)-\)plane} is shown on the left and the heat map of $\beta$ on the right.}
  \label{fig:heat-map_case1}
\end{figure}

% ------------------------------------------------------------------------

\subsection*{Case 2: supersonic rarefaction}

We next reverse the wave character while keeping the same outward supersonic setting:
\[
m=1,\quad K=7.75\times 10^{4},\quad \gamma=1.4,\qquad
\alpha=\beta=3,\quad h_c=1,\quad v_a=10,\quad r\in[10,20].
\]
In this case both characteristic families are initially rarefactive, and the theory predicts persistence of smoothness (in particular, no shock formation triggered by compressive steepening). The numerical solution remains regular on the simulated time interval: the fields $(\rho,u,p,h)$ evolve smoothly (figures \ref{fig:density_case2}-\ref{fig:sound-speed_case2}), the invariant curve in the $(u,h)$-plane exhibits a coherent deformation without folding (Figure \ref{fig:invariant-curve_case2}), and the heat maps confirm that $\alpha$ and $\beta$ stay nonnegative throughout the domain (Figure \ref{fig:heat-map_case2}). 

% ------------------------------------------------------------------------

\begin{figure}[H]
  \centering
  \begin{subfigure}{0.32\textwidth}
    \centering
    \includegraphics[width=1.0\textwidth]{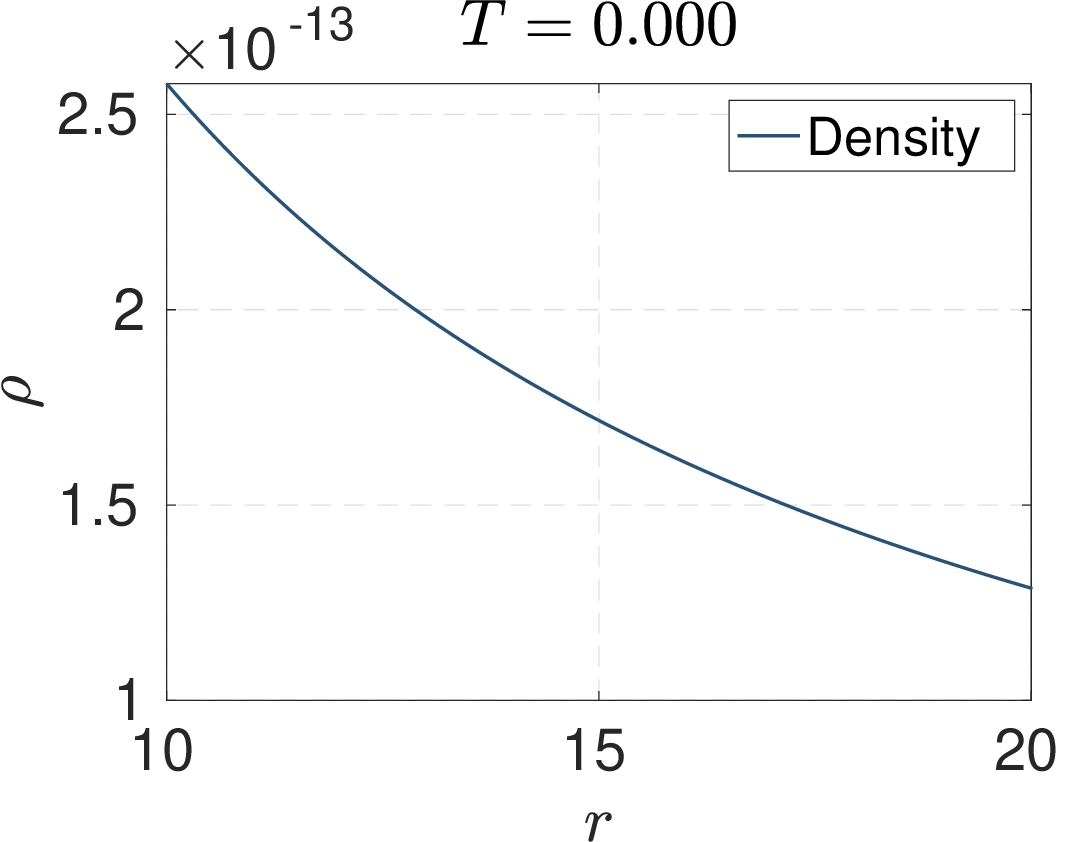}
  \end{subfigure}
    \begin{subfigure}{0.32\textwidth}
    \centering
    \includegraphics[width=1.0\textwidth]{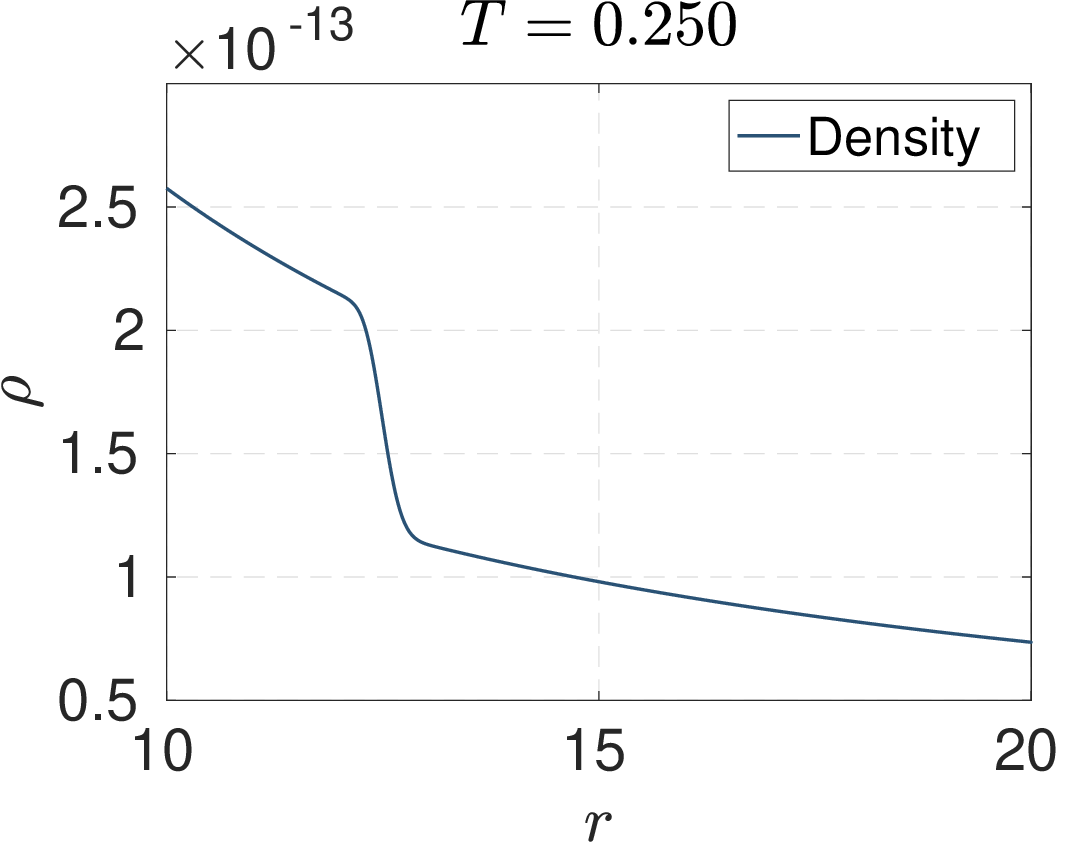}
  \end{subfigure}
  \begin{subfigure}{0.32\textwidth}
    \centering
    \includegraphics[width=1.0\textwidth]{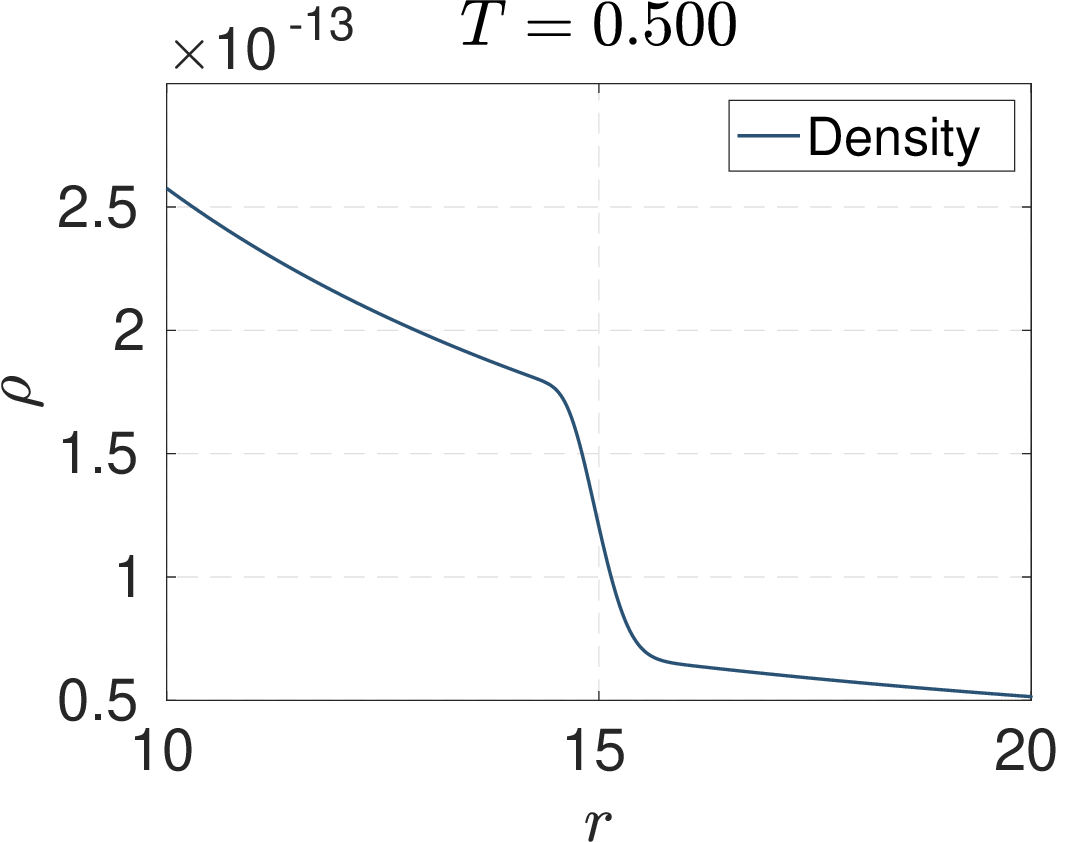}
  \end{subfigure}
  \caption{The initial \textit{density} is shown on the left and its time-evolved state on the center and on the right.}
  \label{fig:density_case2}
\end{figure}

% ------------------------------------------------------------------------

\begin{figure}[H]
  \centering
  \begin{subfigure}{0.32\textwidth}
    \centering
    \includegraphics[width=1.0\textwidth]{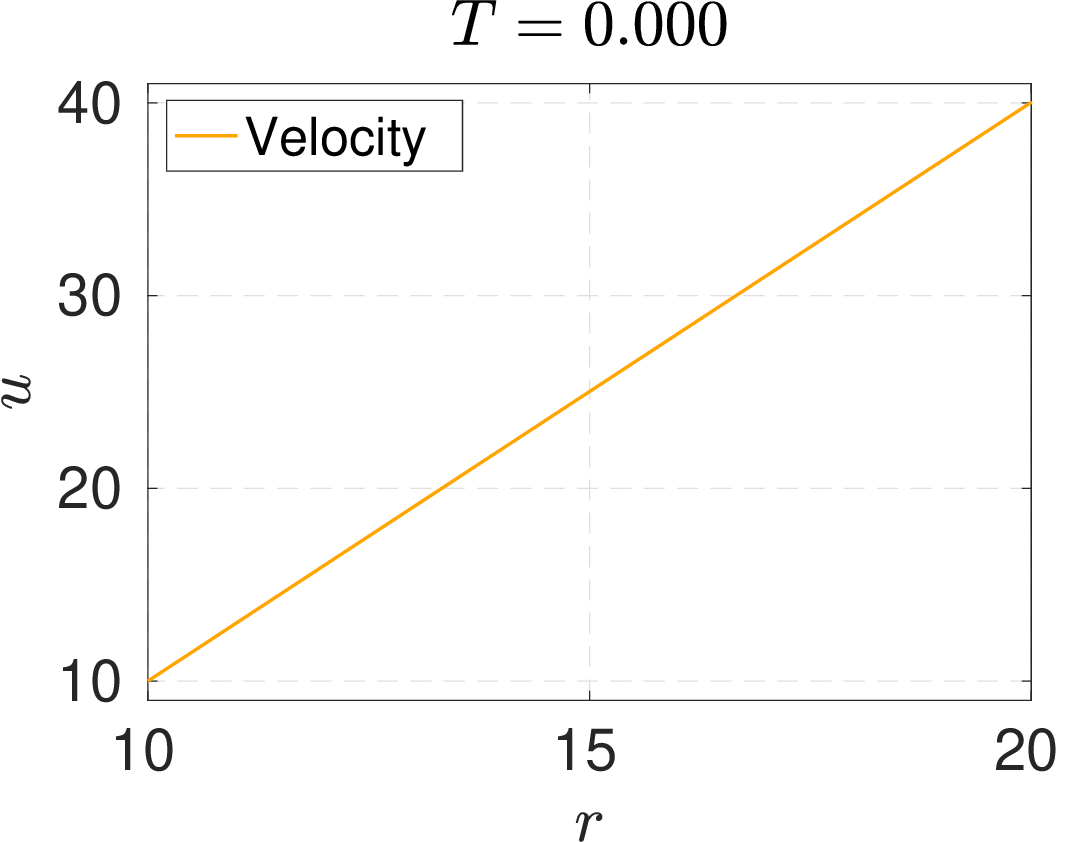}
  \end{subfigure}
    \begin{subfigure}{0.32\textwidth}
    \centering
    \includegraphics[width=1.0\textwidth]{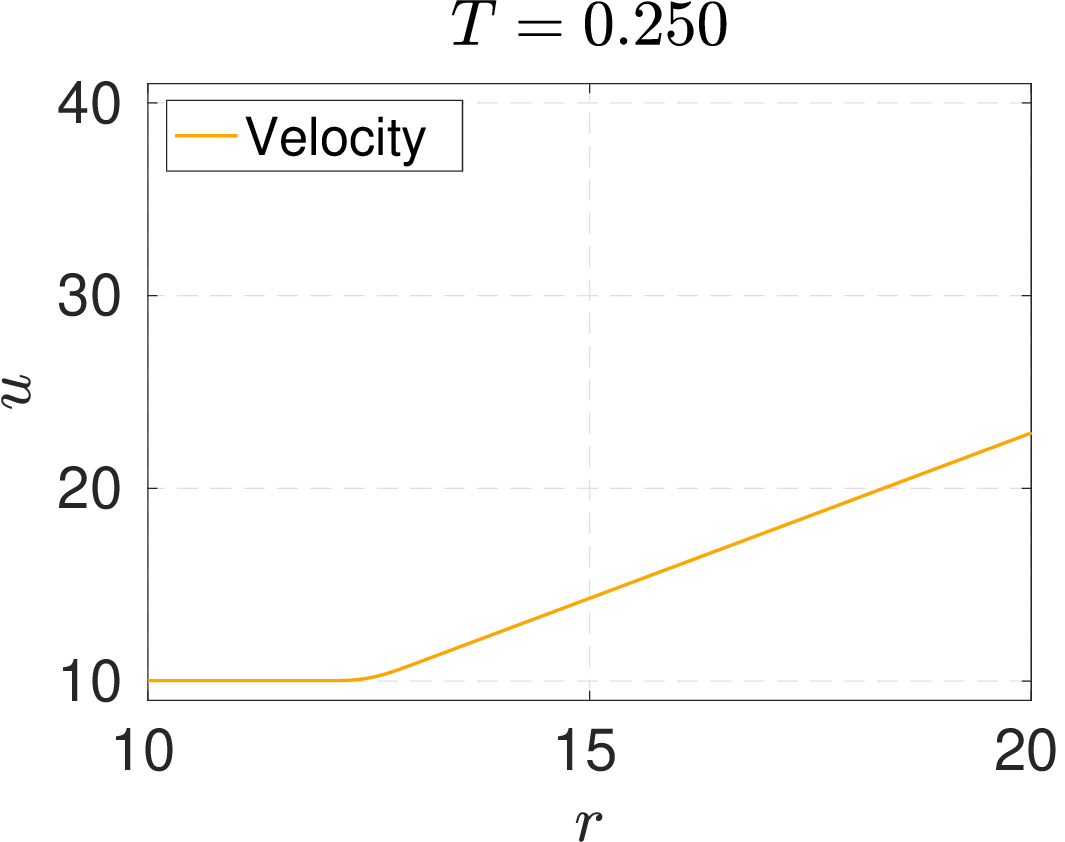}
  \end{subfigure}
  \begin{subfigure}{0.32\textwidth}
    \centering
    \includegraphics[width=1.0\textwidth]{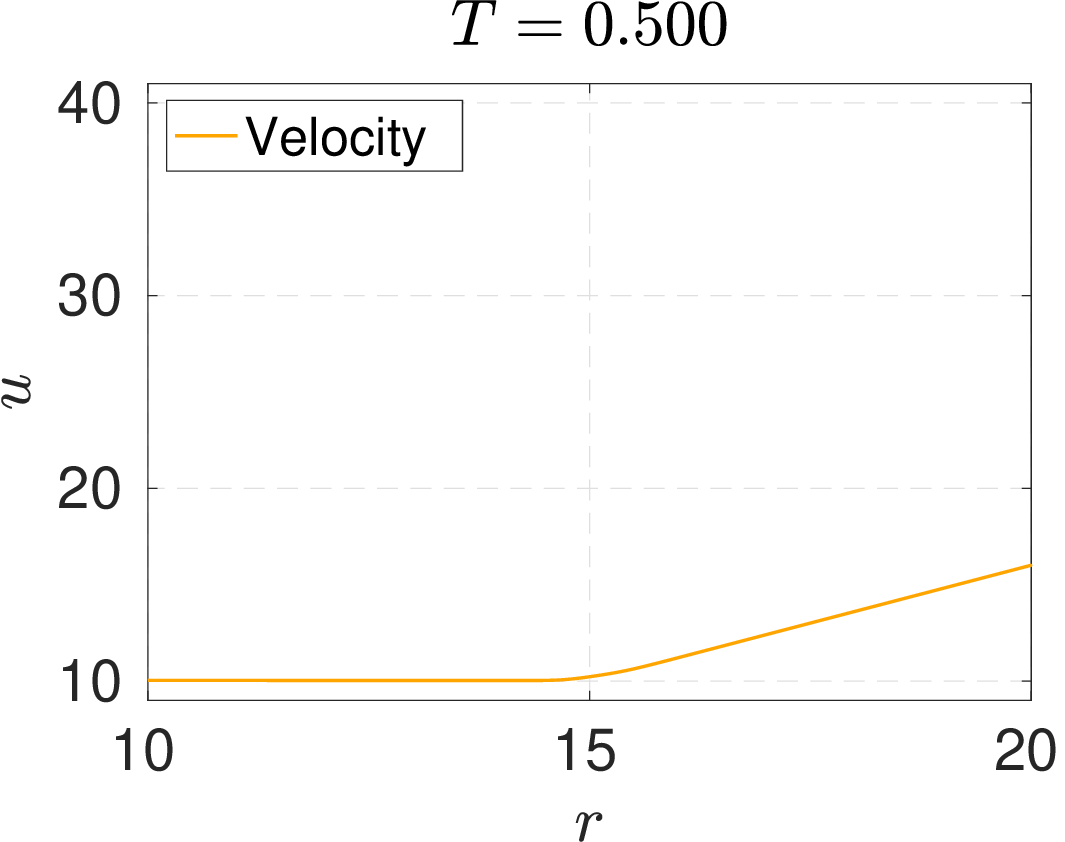}
  \end{subfigure}
  \caption{The initial \textit{velocity} is shown on the left and its time-evolved state on the center and on the right.}
  \label{fig:velocity_case2}
\end{figure}

% ------------------------------------------------------------------------

\begin{figure}[H]
  \centering
  \begin{subfigure}{0.32\textwidth}
    \centering
    \includegraphics[width=1.0\textwidth]{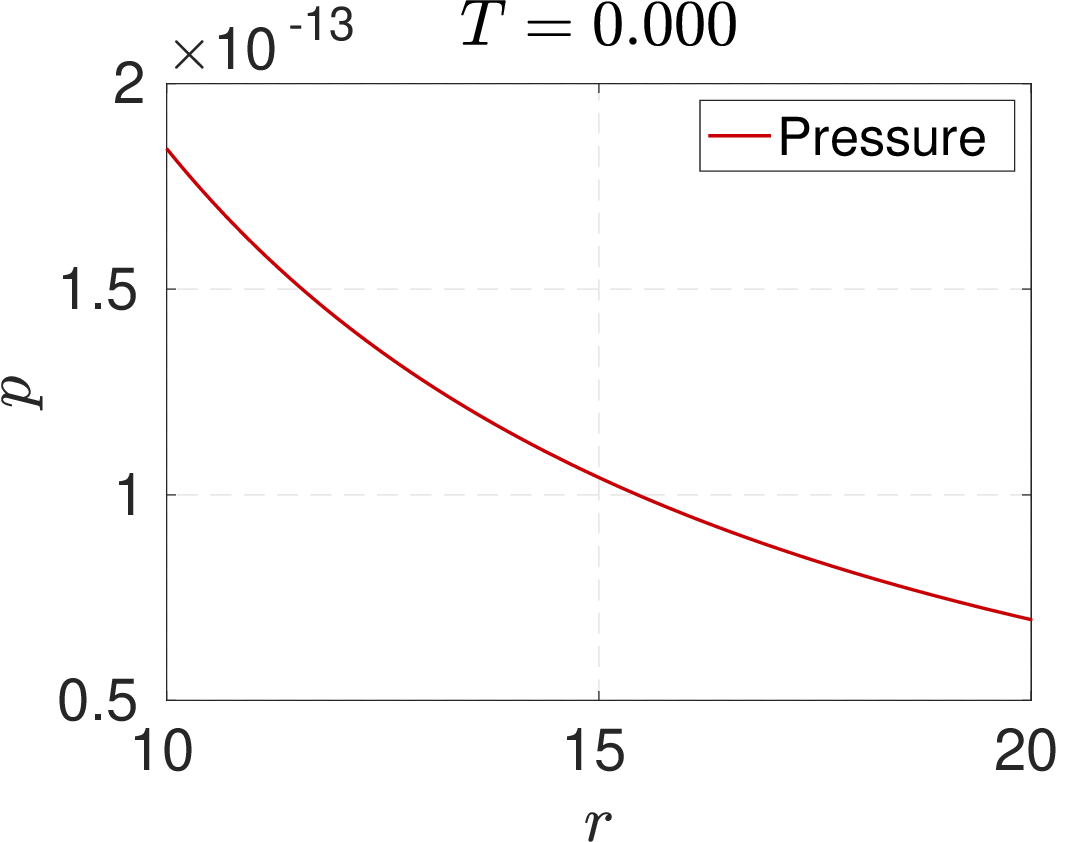}
  \end{subfigure}
    \begin{subfigure}{0.32\textwidth}
    \centering
    \includegraphics[width=1.0\textwidth]{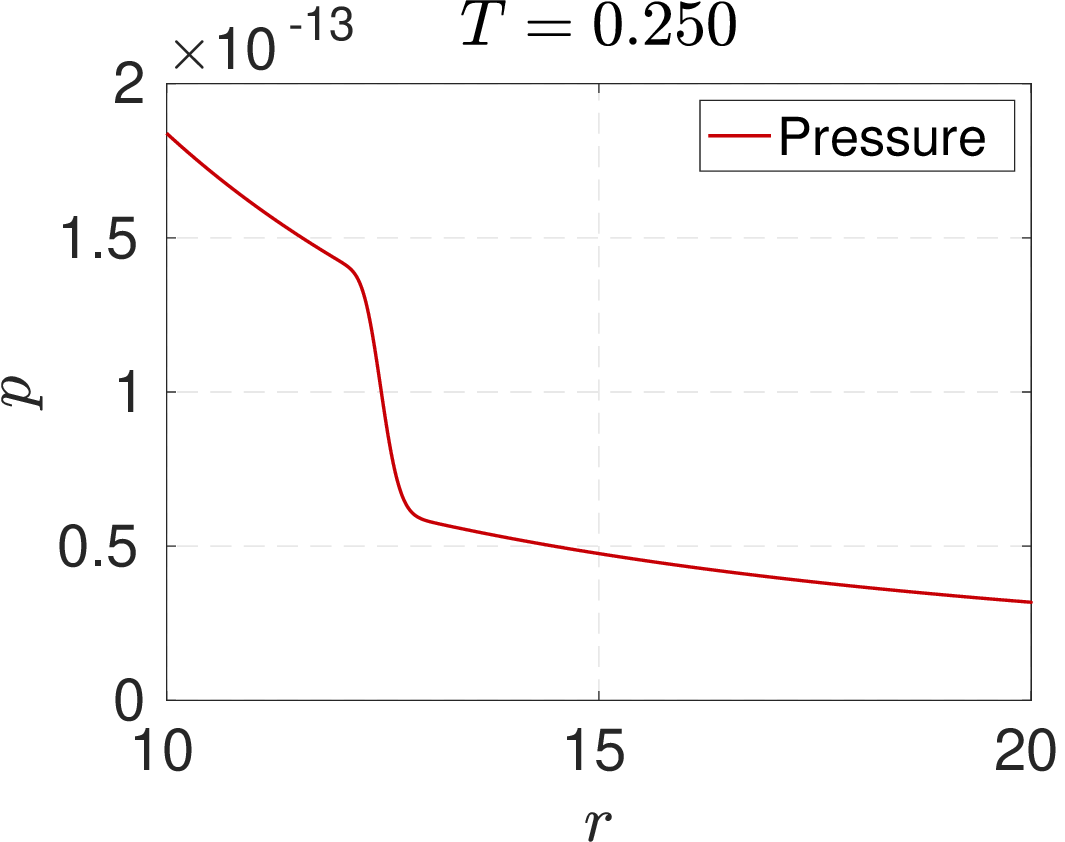}
  \end{subfigure}
  \begin{subfigure}{0.32\textwidth}
    \centering
    \includegraphics[width=1.0\textwidth]{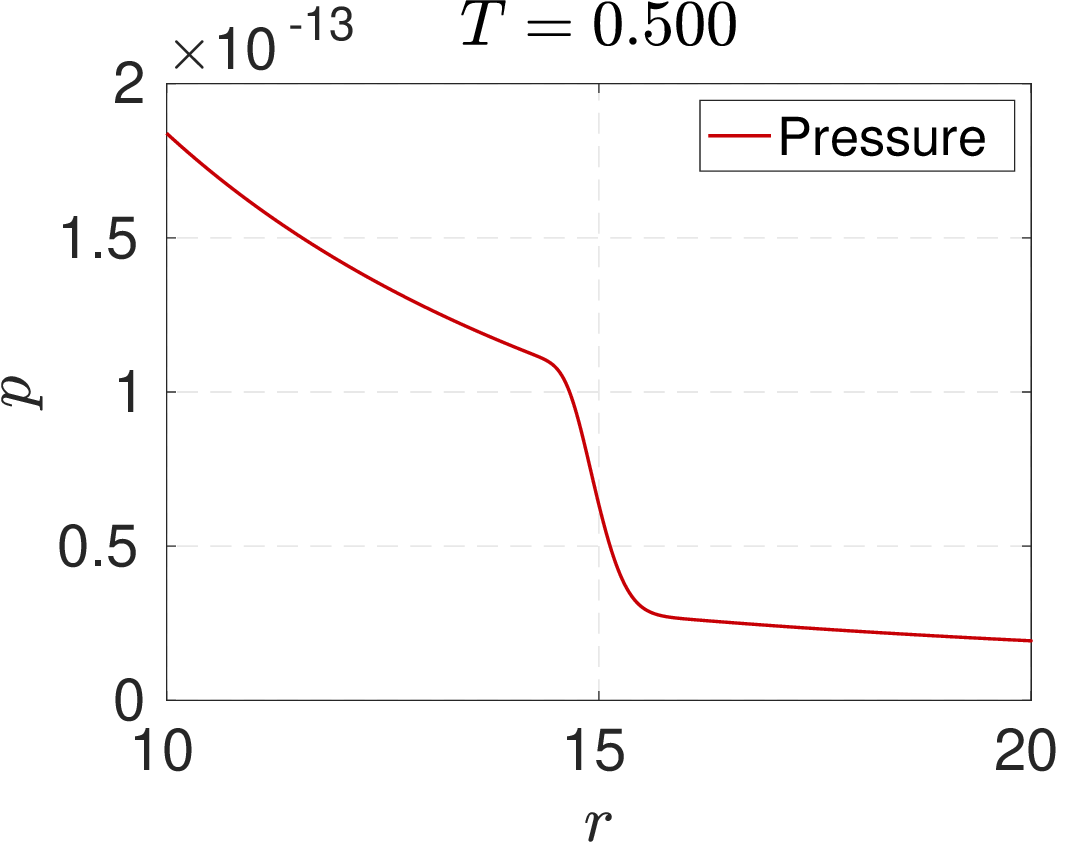}
  \end{subfigure}
  \caption{The initial \textit{pressure} is shown on the left and its time-evolved state on the center and on the right.}
  \label{fig:pressure_case2}
\end{figure}

% ------------------------------------------------------------------------

\begin{figure}[H]
  \centering
  \begin{subfigure}{0.32\textwidth}
    \centering
    \includegraphics[width=1.0\textwidth]{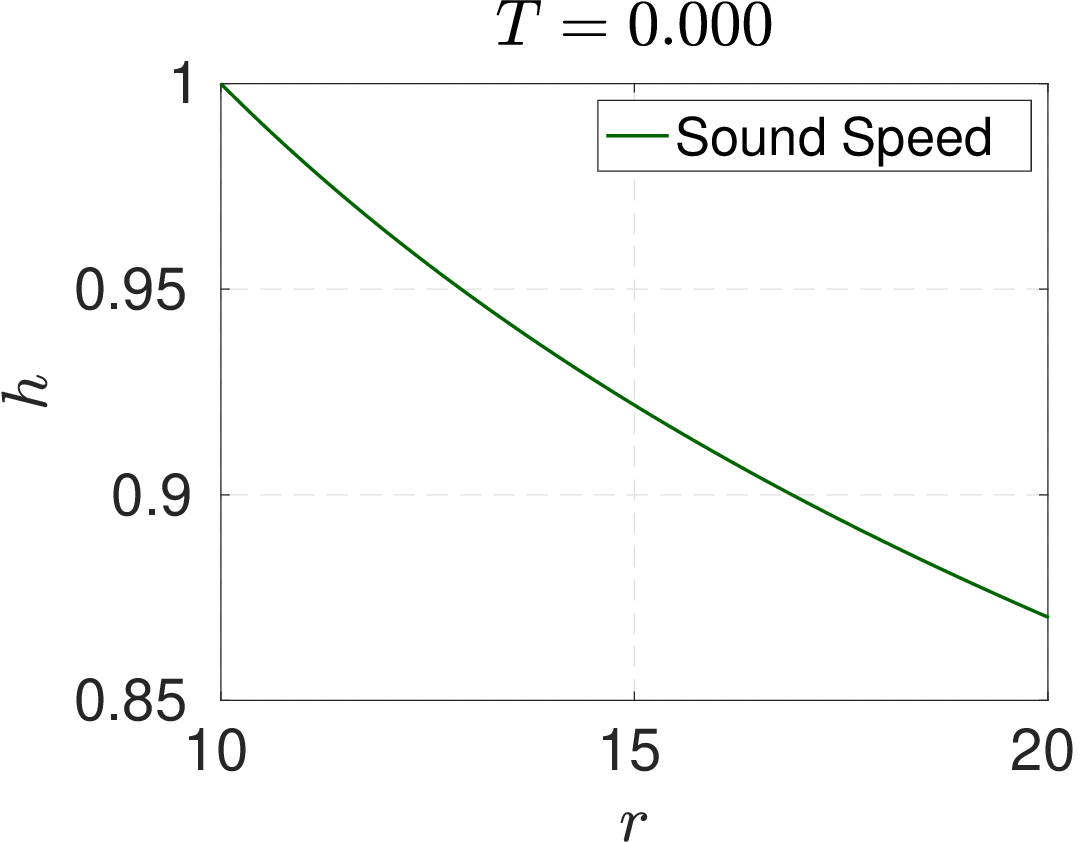}
  \end{subfigure}
    \begin{subfigure}{0.32\textwidth}
    \centering
    \includegraphics[width=1.0\textwidth]{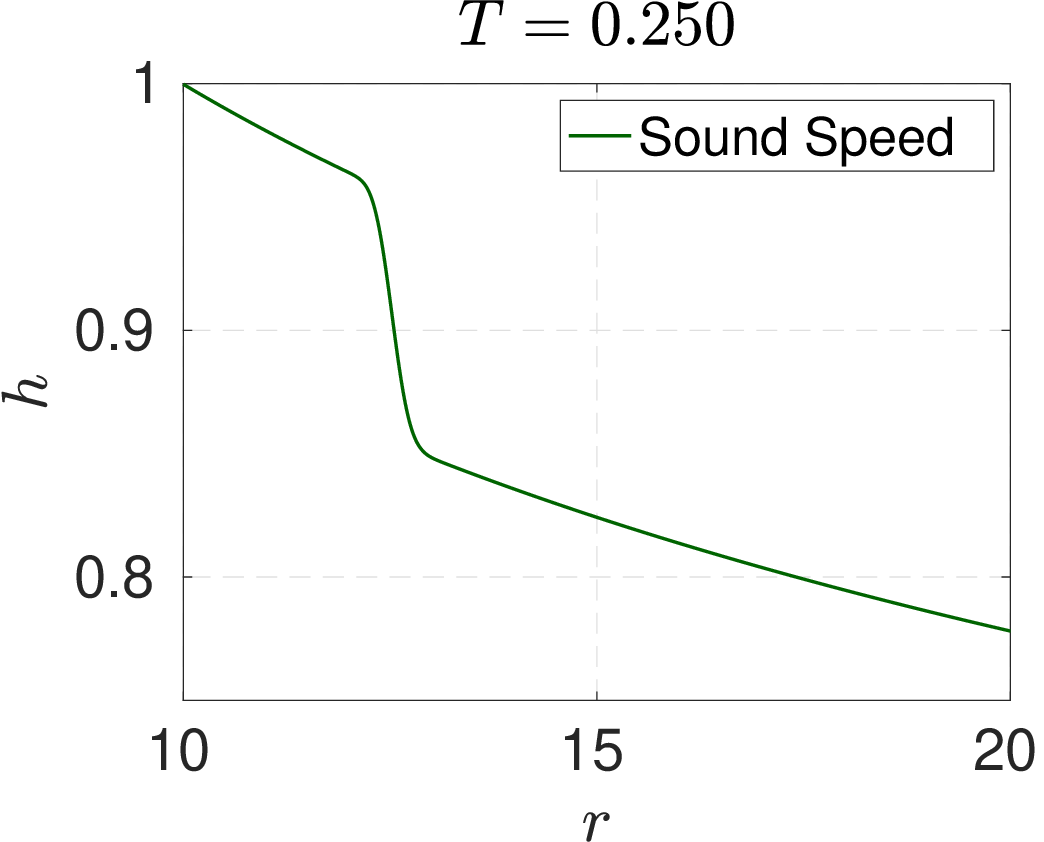}
  \end{subfigure}
  \begin{subfigure}{0.32\textwidth}
    \centering
    \includegraphics[width=1.0\textwidth]{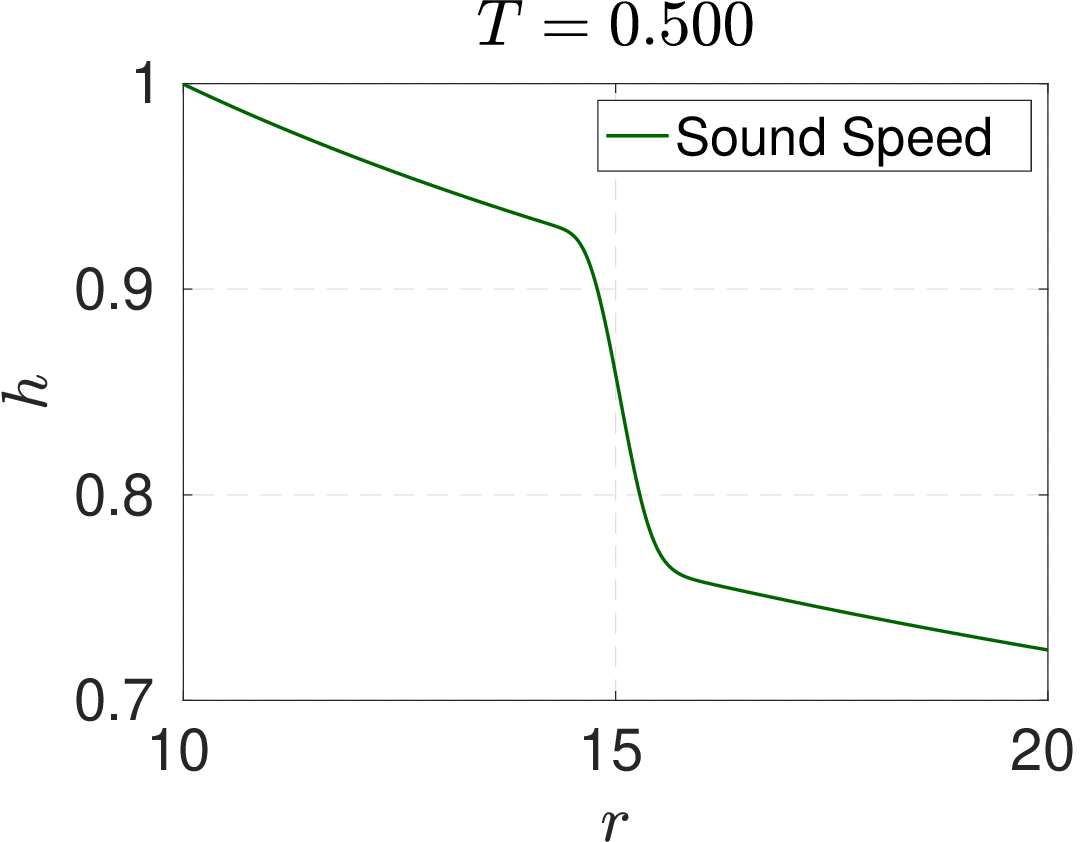}
  \end{subfigure}
  \caption{The initial \textit{sound speed} is shown on the left and its time-evolved state on the center and on the right.}
  \label{fig:sound-speed_case2}
\end{figure}

% ------------------------------------------------------------------------

\begin{figure}[H]
  \centering
  \begin{subfigure}{0.32\textwidth}
    \centering
    \includegraphics[width=1.0\textwidth]{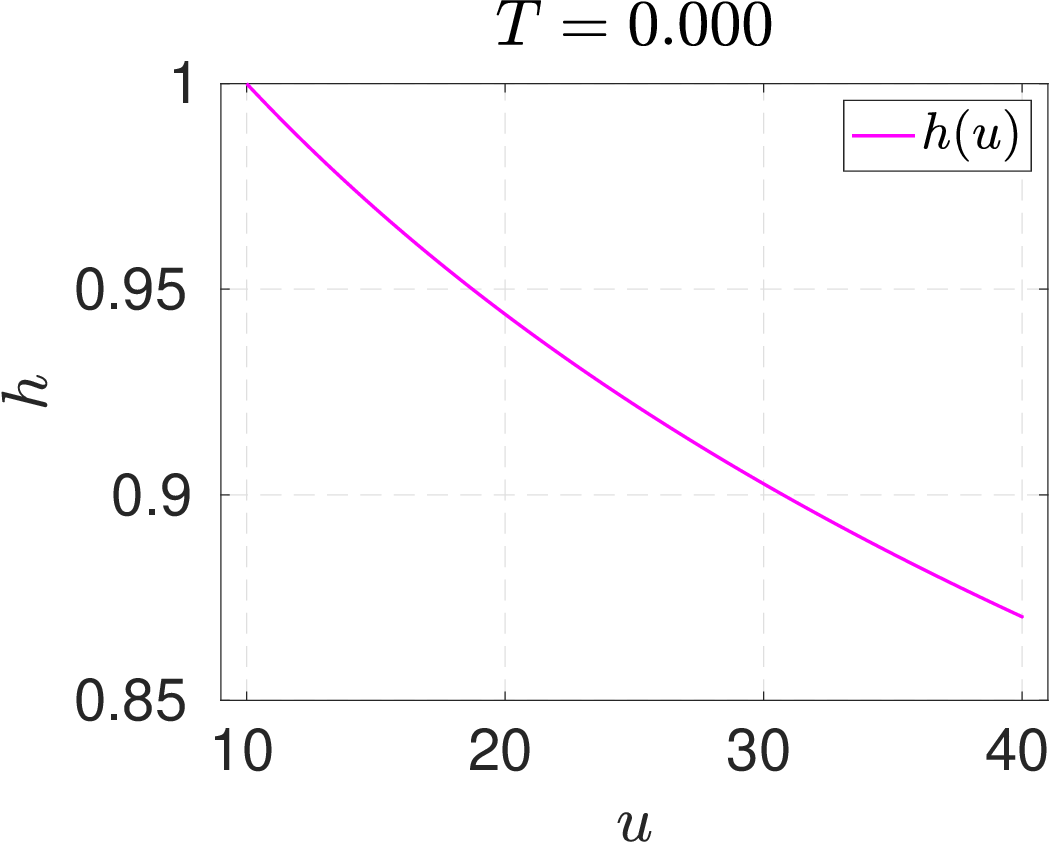}
  \end{subfigure}
    \begin{subfigure}{0.32\textwidth}
    \centering
    \includegraphics[width=1.0\textwidth]{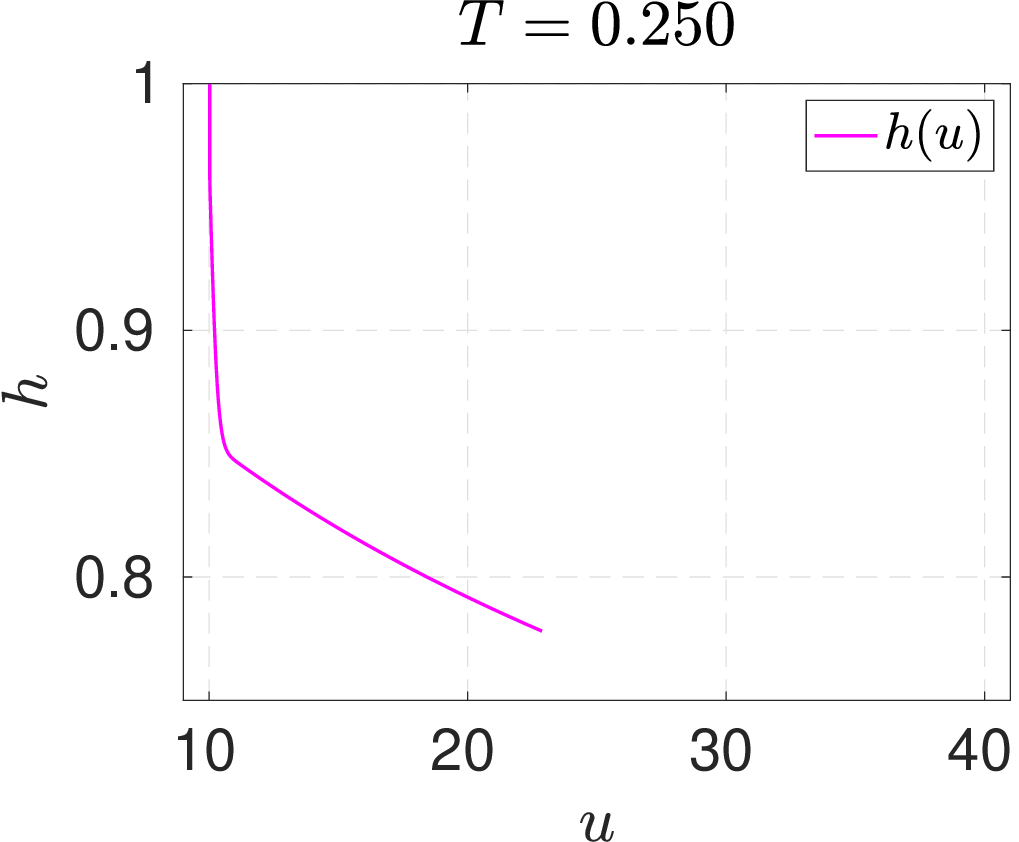}
  \end{subfigure}
  \begin{subfigure}{0.32\textwidth}
    \centering
    \includegraphics[width=1.0\textwidth]{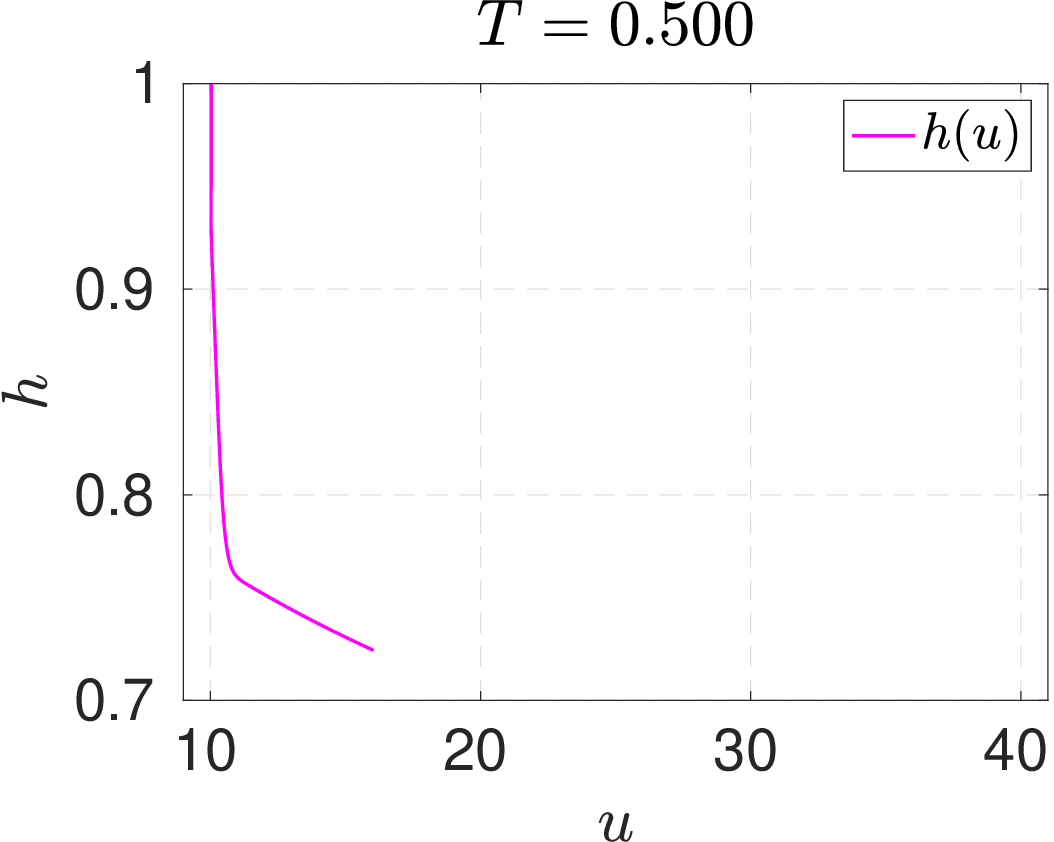}
  \end{subfigure}
  \caption{The initial \textit{invariant curve in \((u,h)-\)plane} is shown on the left and its time-evolved state on the center and on the right.}
  \label{fig:invariant-curve_case2}
\end{figure}

% ------------------------------------------------------------------------

\begin{figure}[H]
  \centering
  \begin{subfigure}{0.49\textwidth}
    \centering
    \includegraphics[width=1.0\textwidth]{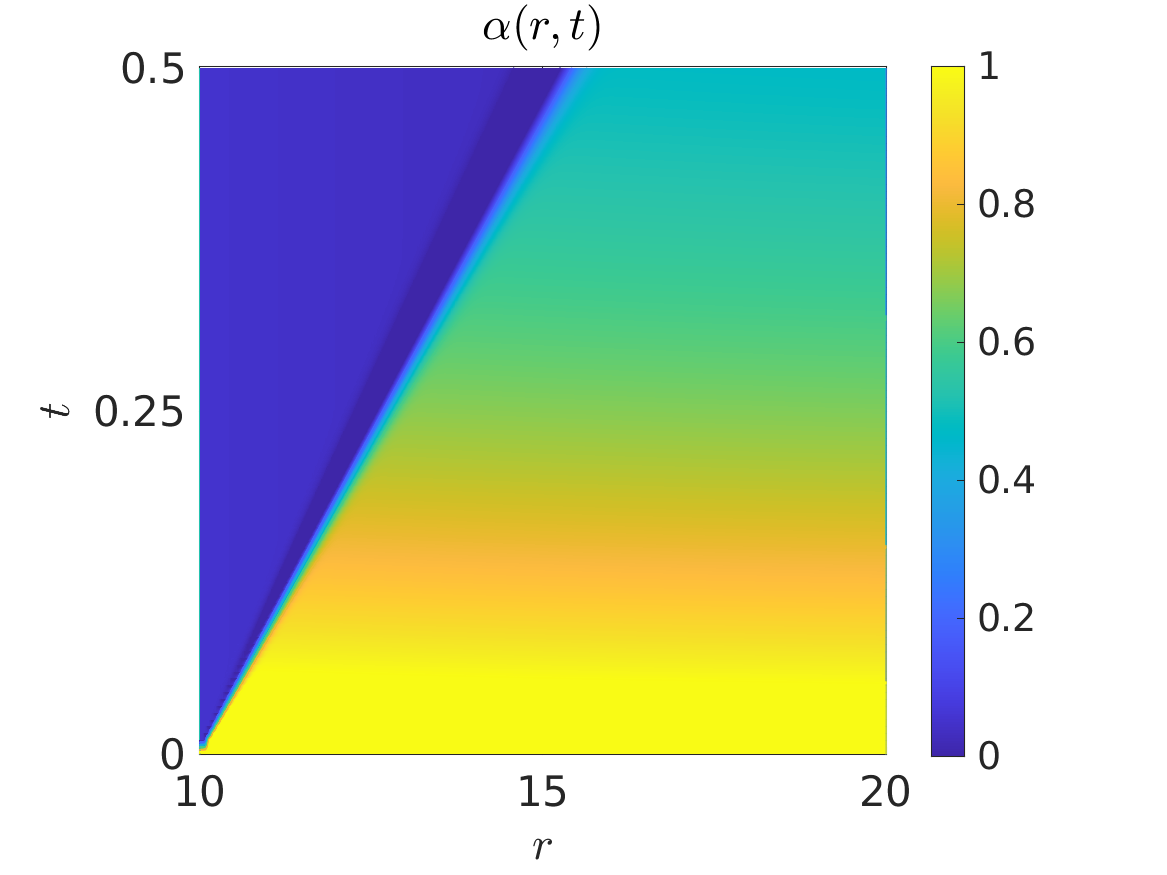}
  \end{subfigure}
  \begin{subfigure}{0.49\textwidth}
    \centering
	\includegraphics[width=1.0\textwidth]{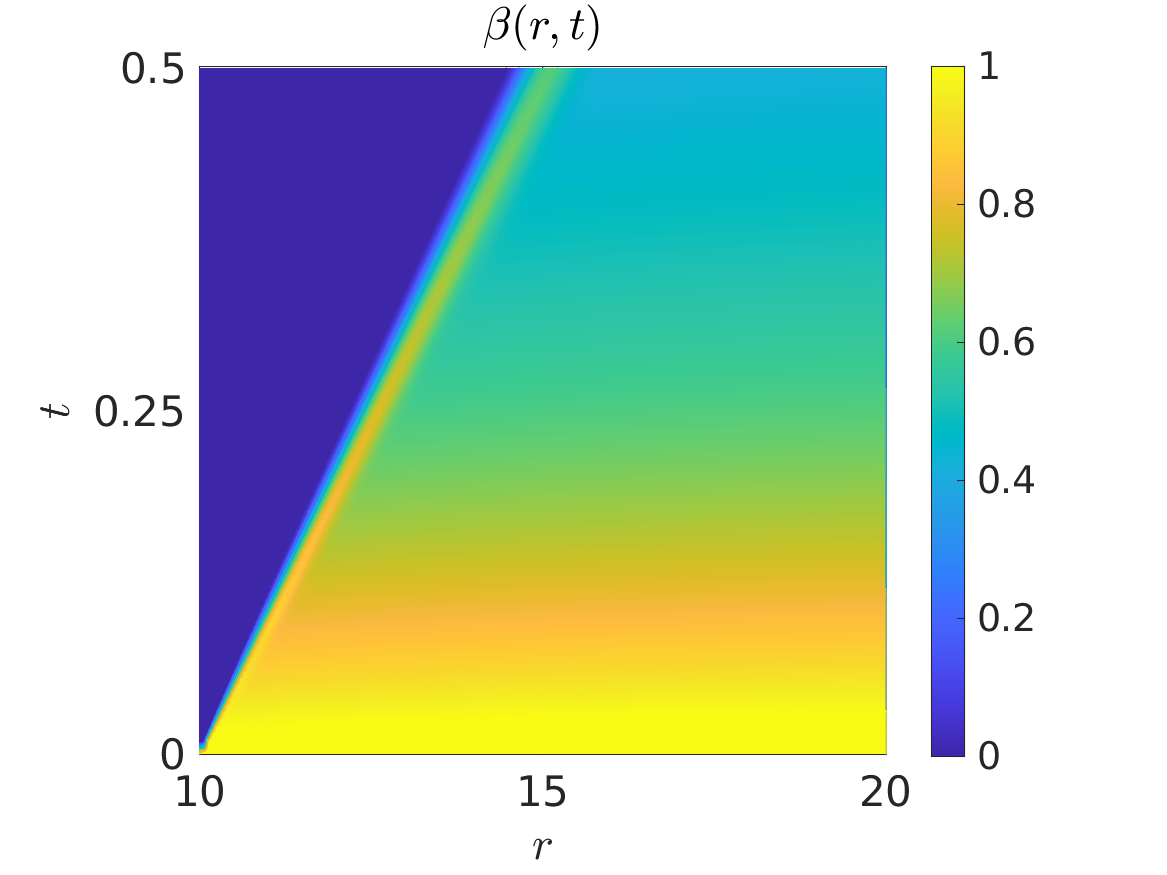}
  \end{subfigure}
  \caption{The \textit{heat map of $\alpha$ in \((r,t)-\)plane} is shown on the left and the heat map of $\beta$ on the right.}
  \label{fig:heat-map_case2}
\end{figure}

% ------------------------------------------------------------------------

\subsection*{Case 3: subsonic oscillatory regime (periodic boundary conditions)}

To probe the subsonic mechanism and the near-periodic behaviour suggested in \cite{F1}, we consider
\[
K=1,\quad \gamma=3,\quad \rho(r,0)=5,\quad u(r,0)=\frac{1}{\varepsilon}\sin(r-10),\quad r\in[10,10+2\pi],
\]
with periodic boundary conditions. We test three magnitudes of the oscillatory perturbation, $\varepsilon\in\{10.0,1.0,0.1\}$, to assess robustness with respect to amplitude.

For $\varepsilon=10$ (Case 3.1) and $\varepsilon=1$ (Case 3.2), the solution remains smooth and oscillatory (for Case 1, see figures \ref{fig:density_case3a}-\ref{fig:invariant-curve_case3a} and for Case 2, see figures \ref{fig:density_case3b}-\ref{fig:invariant-curve_case3b}), and the heat maps show bounded, structured patterns for $\alpha$ and $\beta$ compatible with sustained subsonic dynamics (see figures \ref{fig:heat-map_case3a} and \ref{fig:heat-map_case3b}). In contrast, for $\varepsilon=0.1$ (Case 3.3) the increased amplitude induces strong steepening and the onset of a shock-like breakdown in finite time (see the solutions in figures \ref{fig:density_case3c}-\ref{fig:invariant-curve_case3c}); this transition is visible both in the physical variables and in the corresponding concentration of negative gradients captured by $\alpha$ and $\beta$ (see Figure \ref{fig:heat-map_case3c}).

\subsubsection*{Case 3.1: $\varepsilon = 10$.}

% ------------------------------------------------------------------------

\begin{figure}[H]
  \centering
  \begin{subfigure}{0.32\textwidth}
    \centering
    \includegraphics[width=1.0\textwidth]{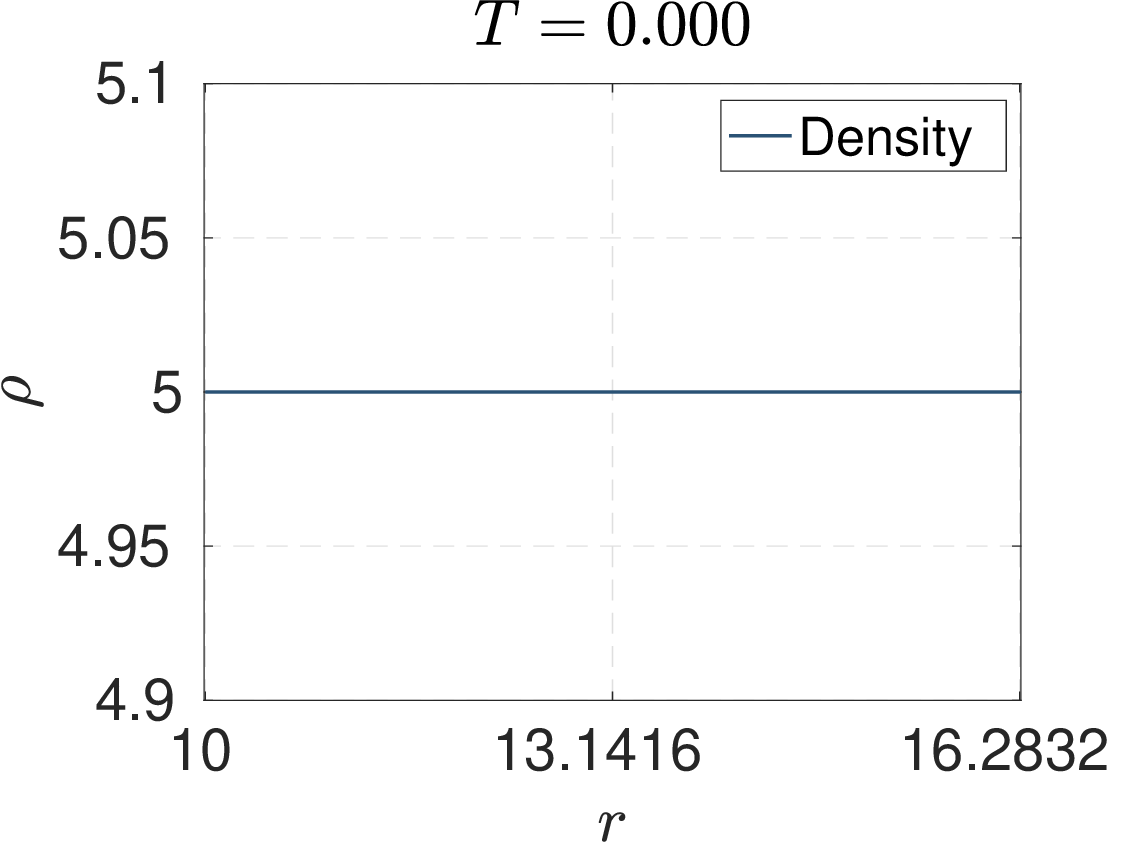}
  \end{subfigure}
    \begin{subfigure}{0.32\textwidth}
    \centering
    \includegraphics[width=1.0\textwidth]{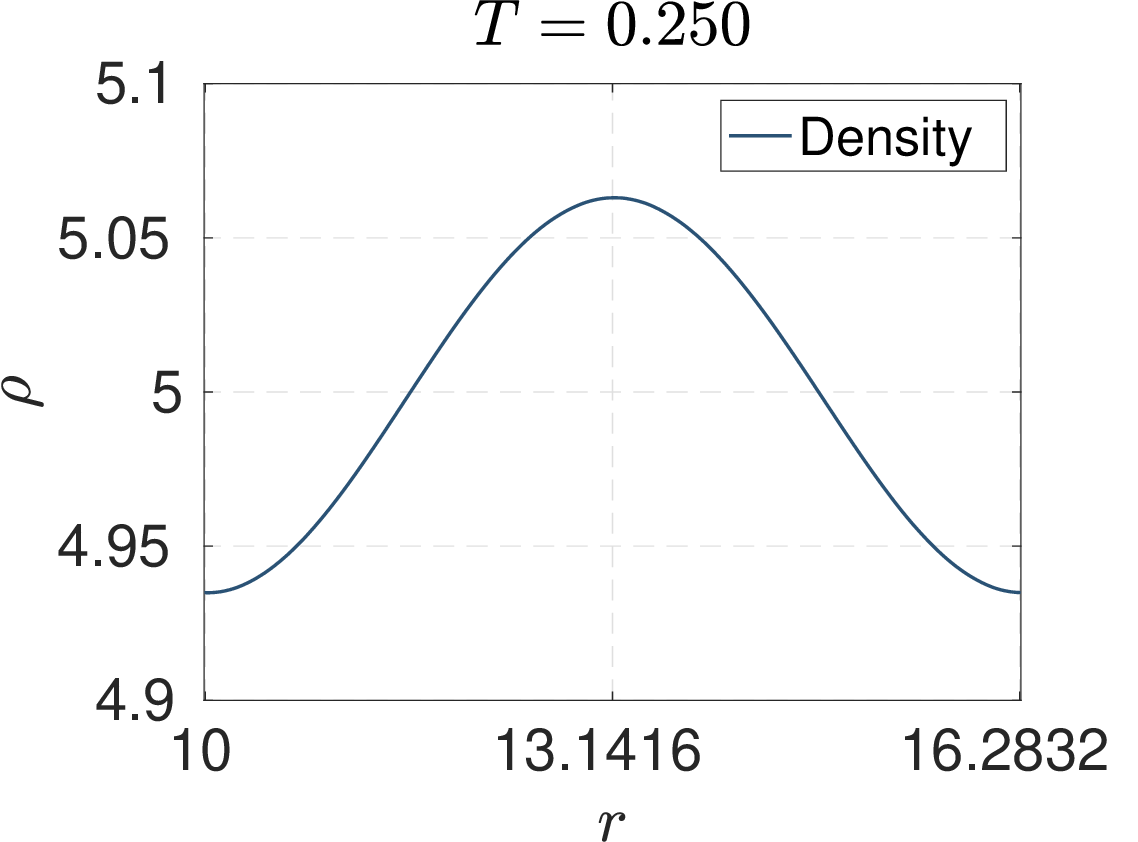}
  \end{subfigure}
  \begin{subfigure}{0.32\textwidth}
    \centering
    \includegraphics[width=1.0\textwidth]{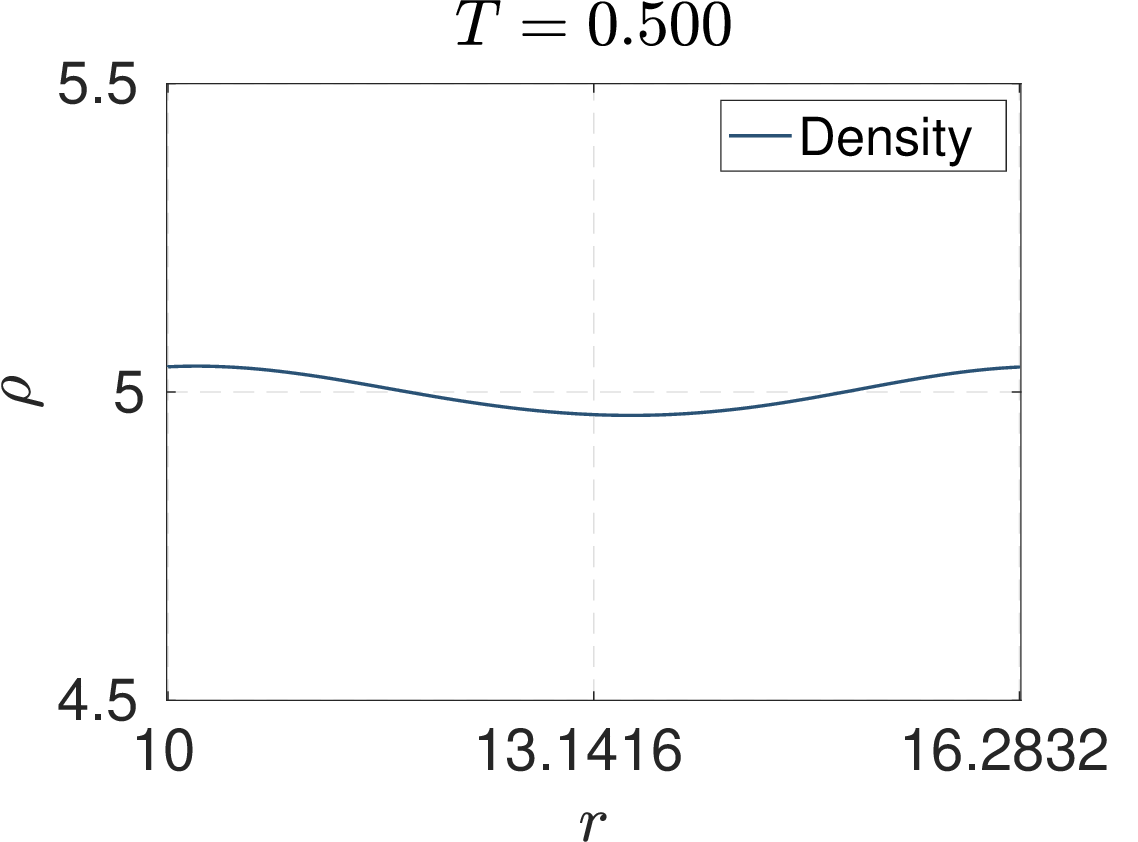}
  \end{subfigure}
  \caption{The initial \textit{density} is shown on the left and its time-evolved state on the center and on the right.}
  \label{fig:density_case3a}
\end{figure}

% ------------------------------------------------------------------------

\begin{figure}[H]
  \centering
  \begin{subfigure}{0.32\textwidth}
    \centering
    \includegraphics[width=1.0\textwidth]{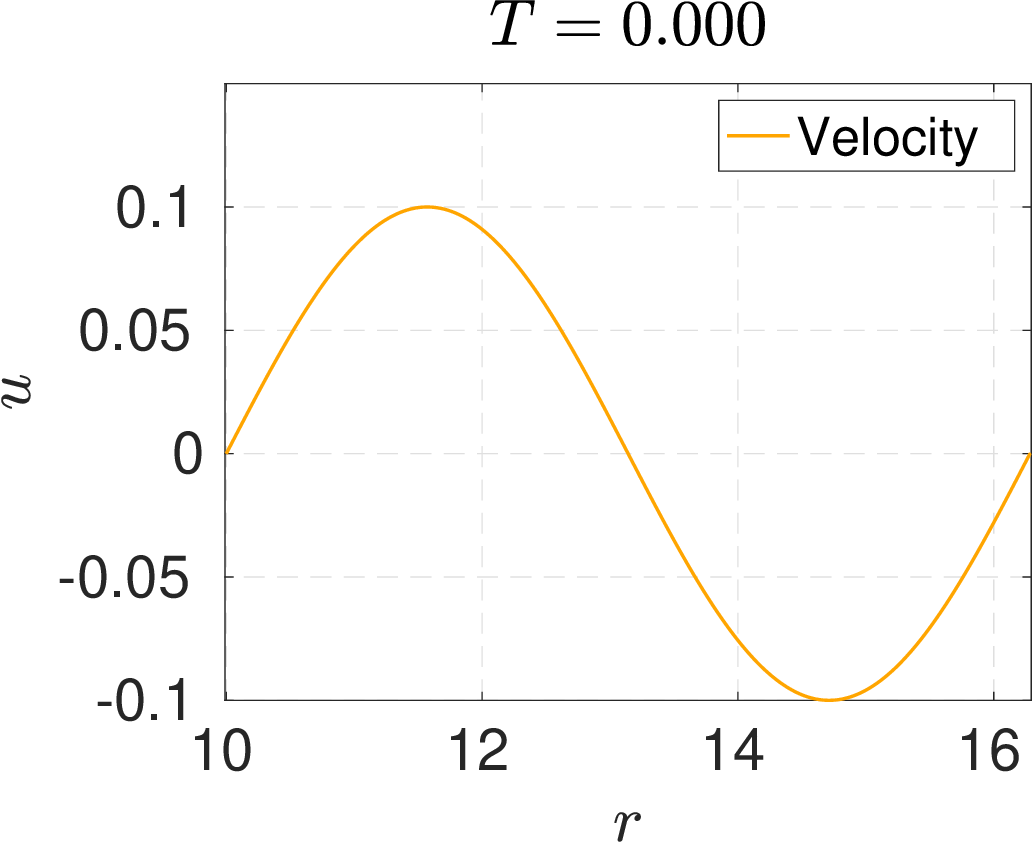}
  \end{subfigure}
    \begin{subfigure}{0.32\textwidth}
    \centering
    \includegraphics[width=1.0\textwidth]{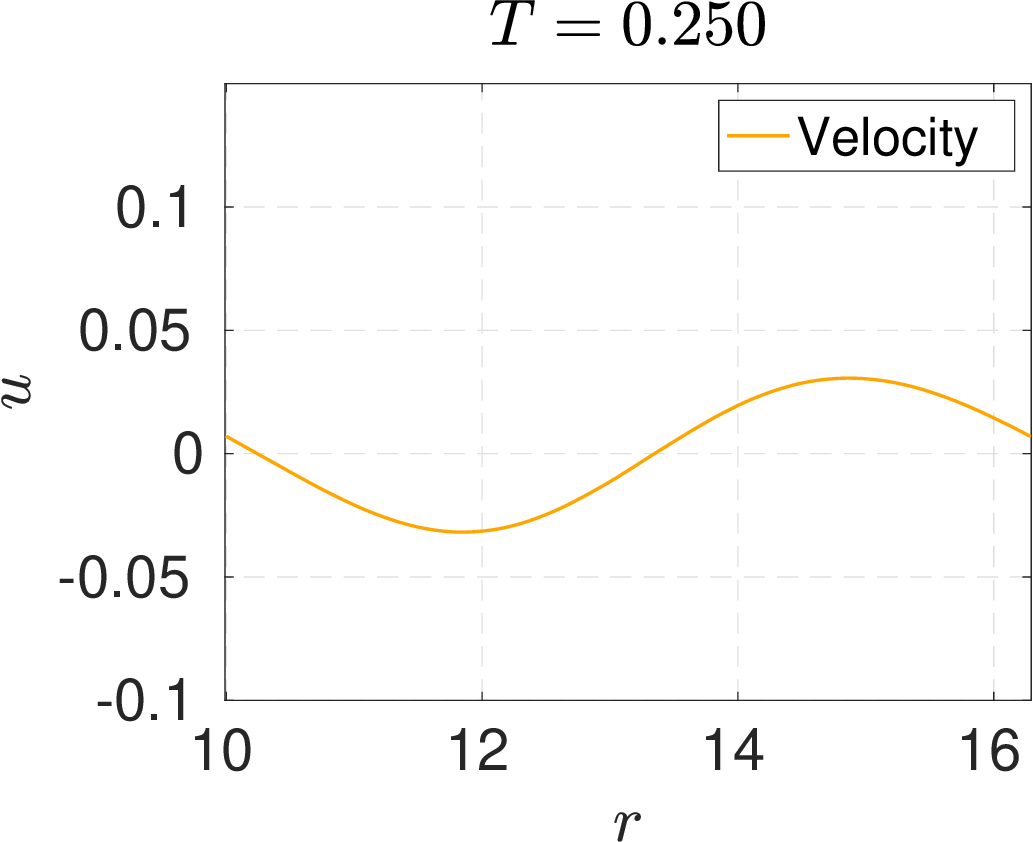}
  \end{subfigure}
  \begin{subfigure}{0.32\textwidth}
    \centering
    \includegraphics[width=1.0\textwidth]{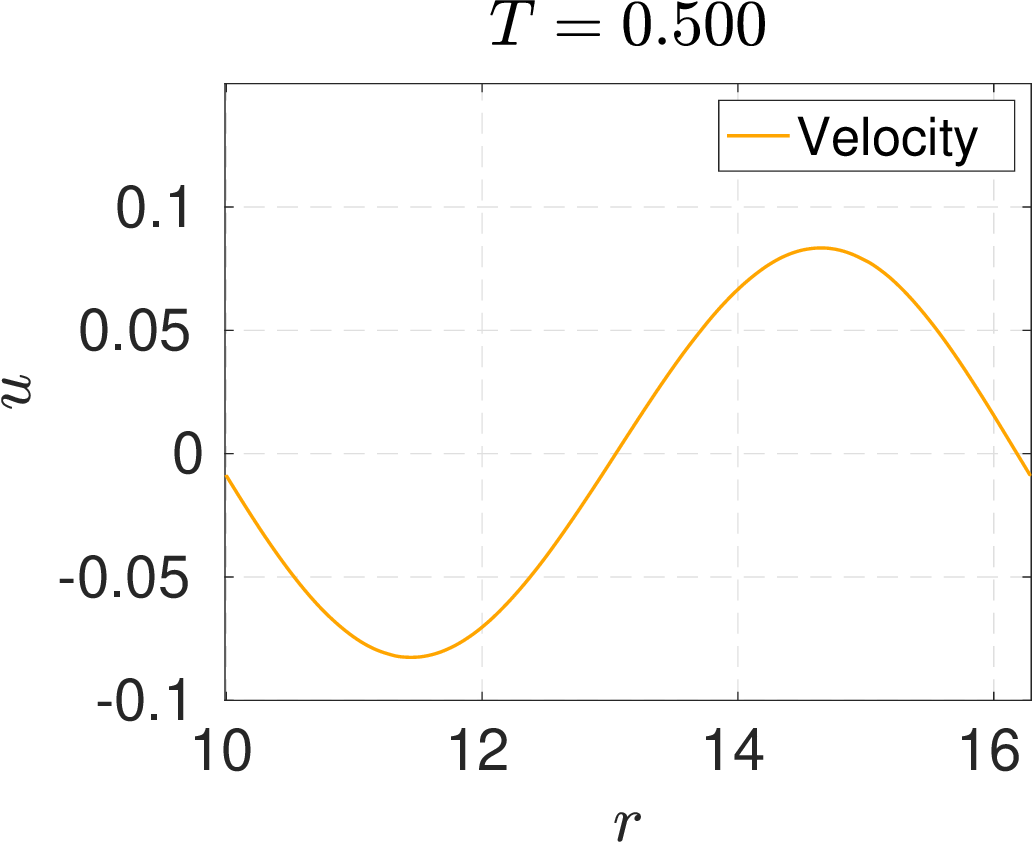}
  \end{subfigure}
  \caption{The initial \textit{velocity} is shown on the left and its time-evolved state on the center and on the right.}
  \label{fig:velocity_case3a}
\end{figure}

% ------------------------------------------------------------------------

\begin{figure}[H]
  \centering
  \begin{subfigure}{0.32\textwidth}
    \centering
    \includegraphics[width=1.0\textwidth]{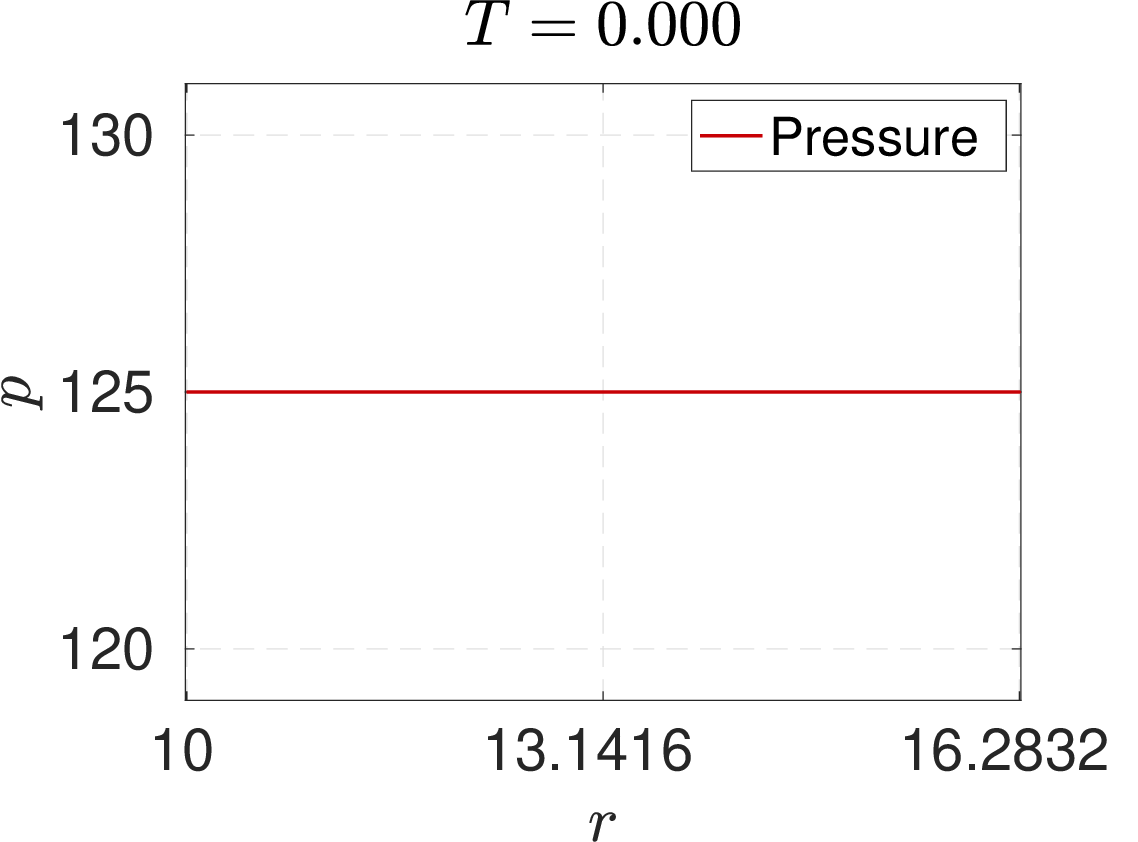}
  \end{subfigure}
    \begin{subfigure}{0.32\textwidth}
    \centering
    \includegraphics[width=1.0\textwidth]{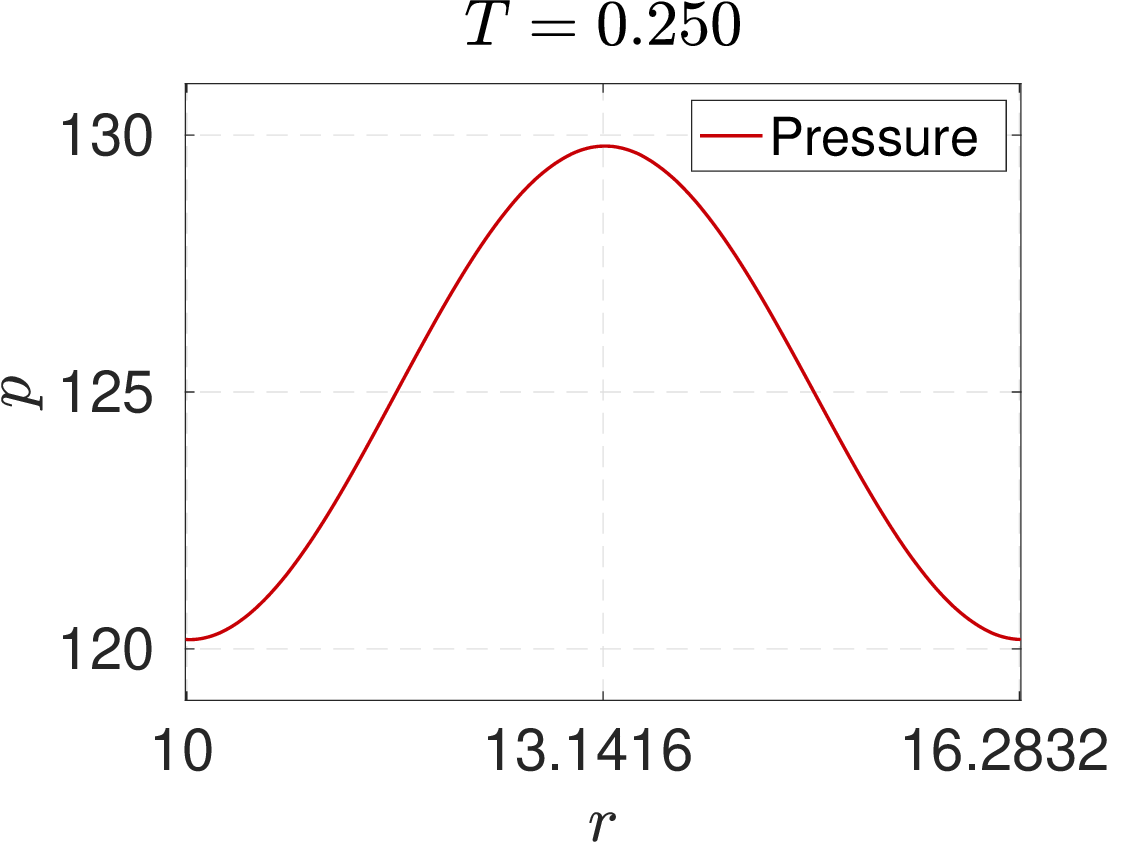}
  \end{subfigure}
  \begin{subfigure}{0.32\textwidth}
    \centering
    \includegraphics[width=1.0\textwidth]{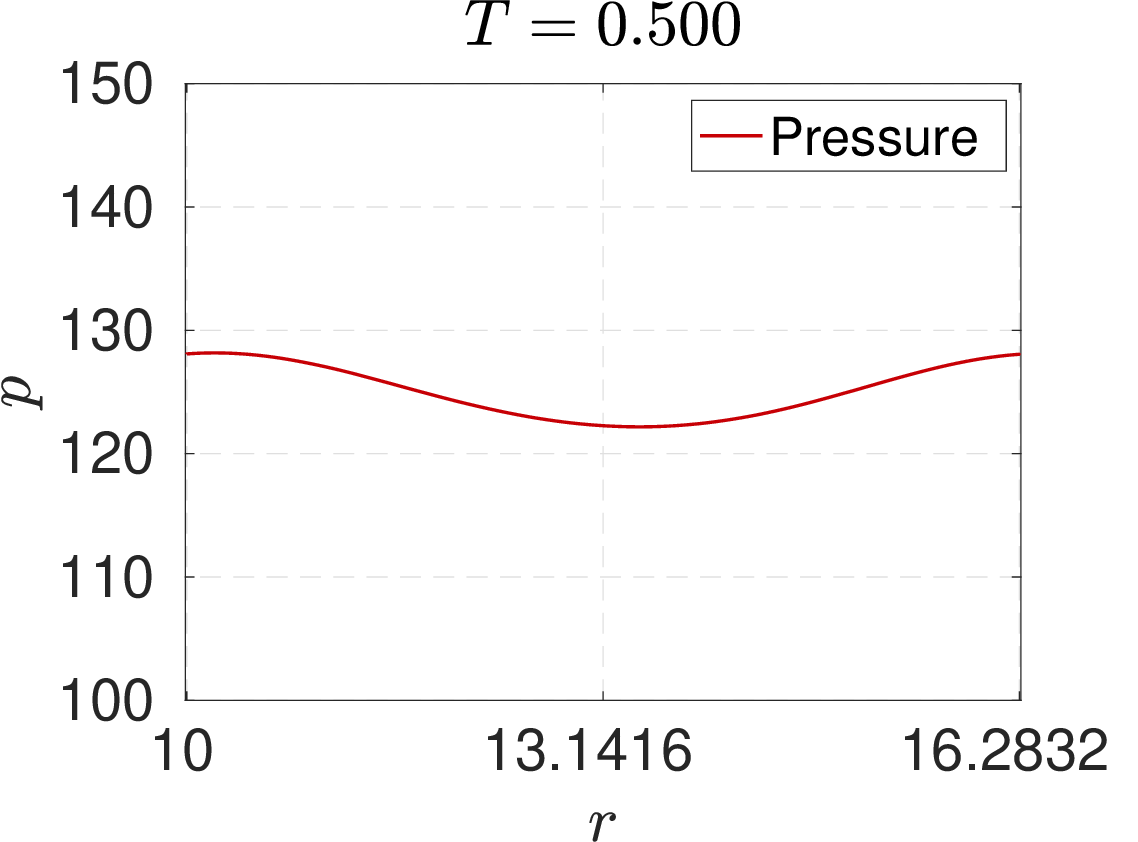}
  \end{subfigure}
  \caption{The initial \textit{pressure} is shown on the left and its time-evolved state on the center and on the right.}
  \label{fig:pressure_case3a}
\end{figure}

% ------------------------------------------------------------------------

\begin{figure}[H]
  \centering
  \begin{subfigure}{0.32\textwidth}
    \centering
    \includegraphics[width=1.0\textwidth]{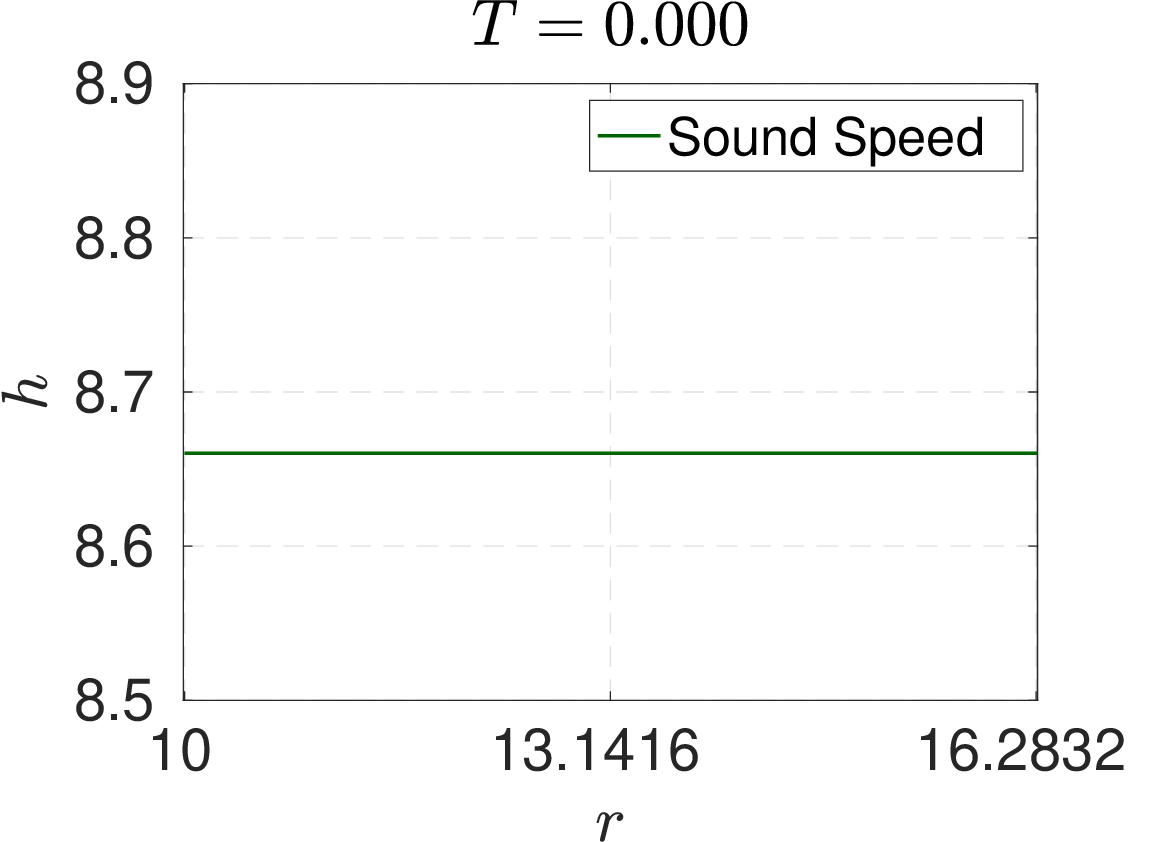}
  \end{subfigure}
    \begin{subfigure}{0.32\textwidth}
    \centering
    \includegraphics[width=1.0\textwidth]{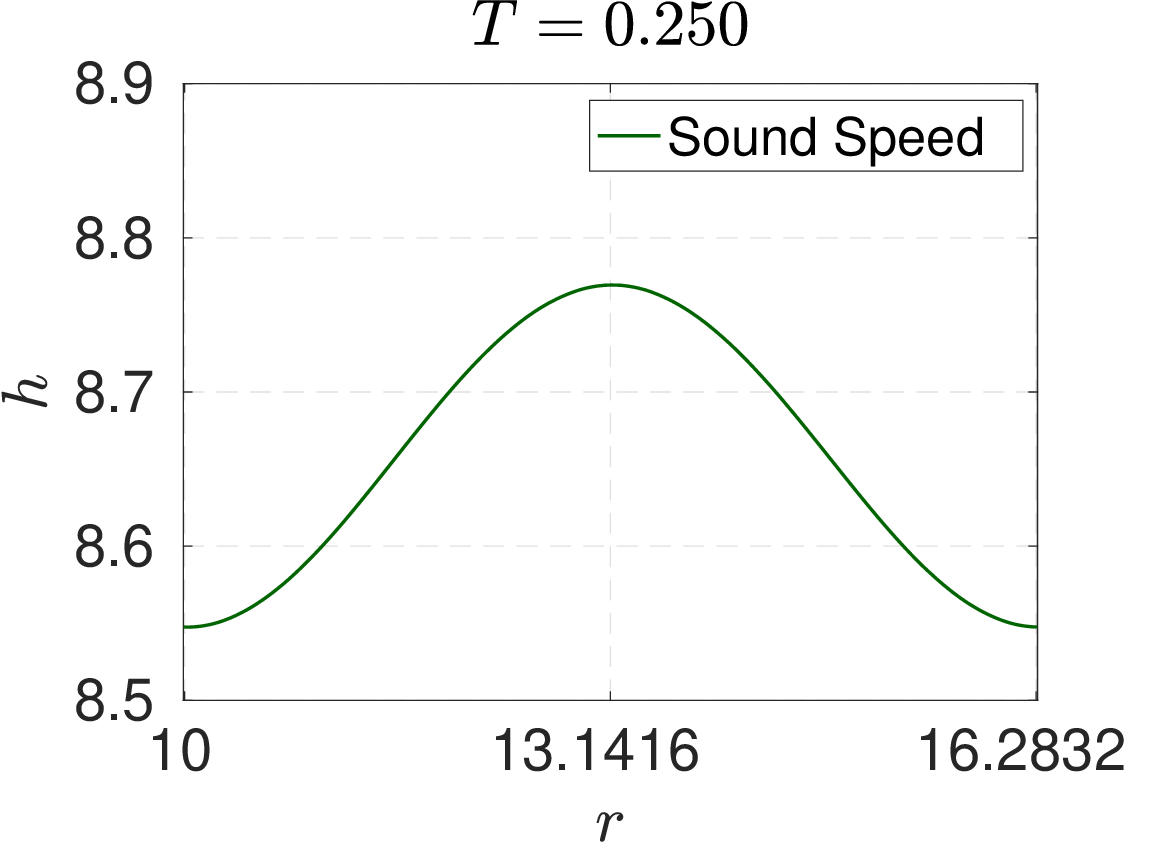}
  \end{subfigure}
  \begin{subfigure}{0.32\textwidth}
    \centering
    \includegraphics[width=1.0\textwidth]{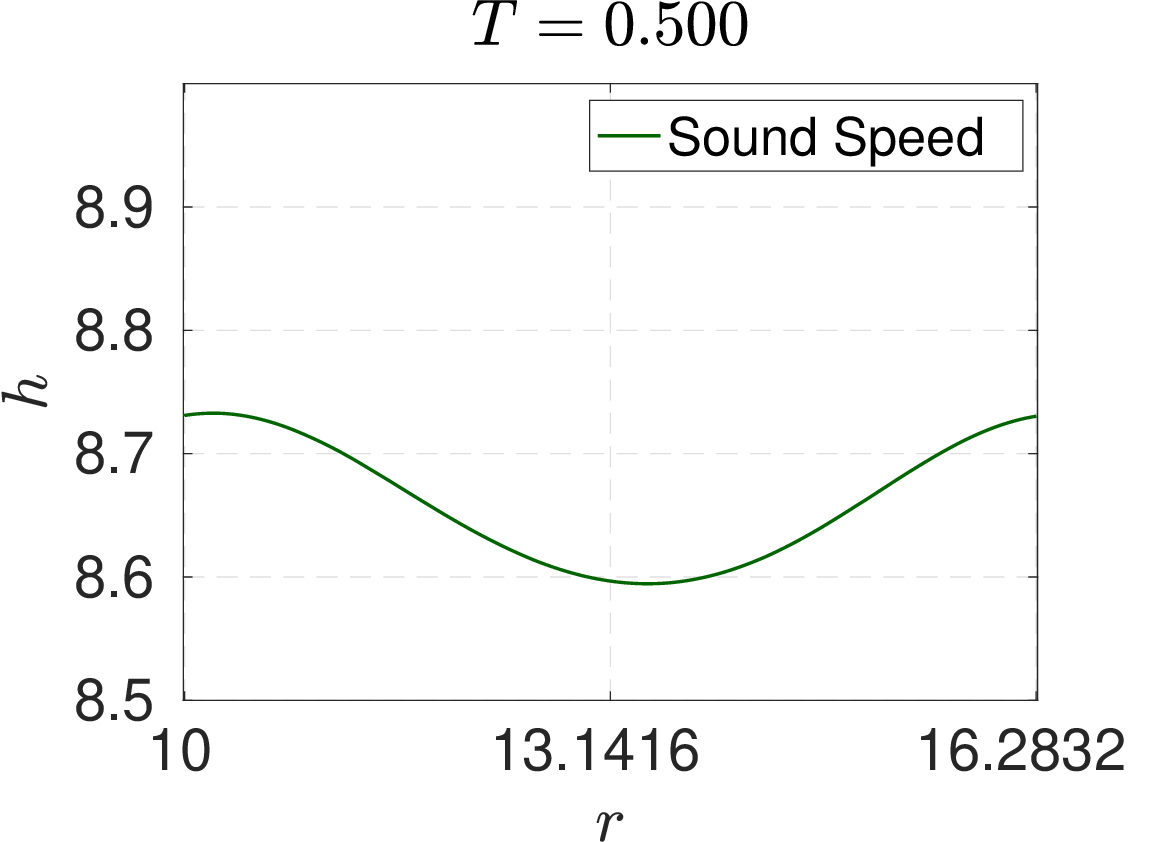}
  \end{subfigure}
  \caption{The initial \textit{sound speed} is shown on the left and its time-evolved state on the center and on the right.}
  \label{fig:sound-speed_case3a}
\end{figure}

% ------------------------------------------------------------------------

\begin{figure}[H]
  \centering
  \begin{subfigure}{0.32\textwidth}
    \centering
    \includegraphics[width=1.0\textwidth]{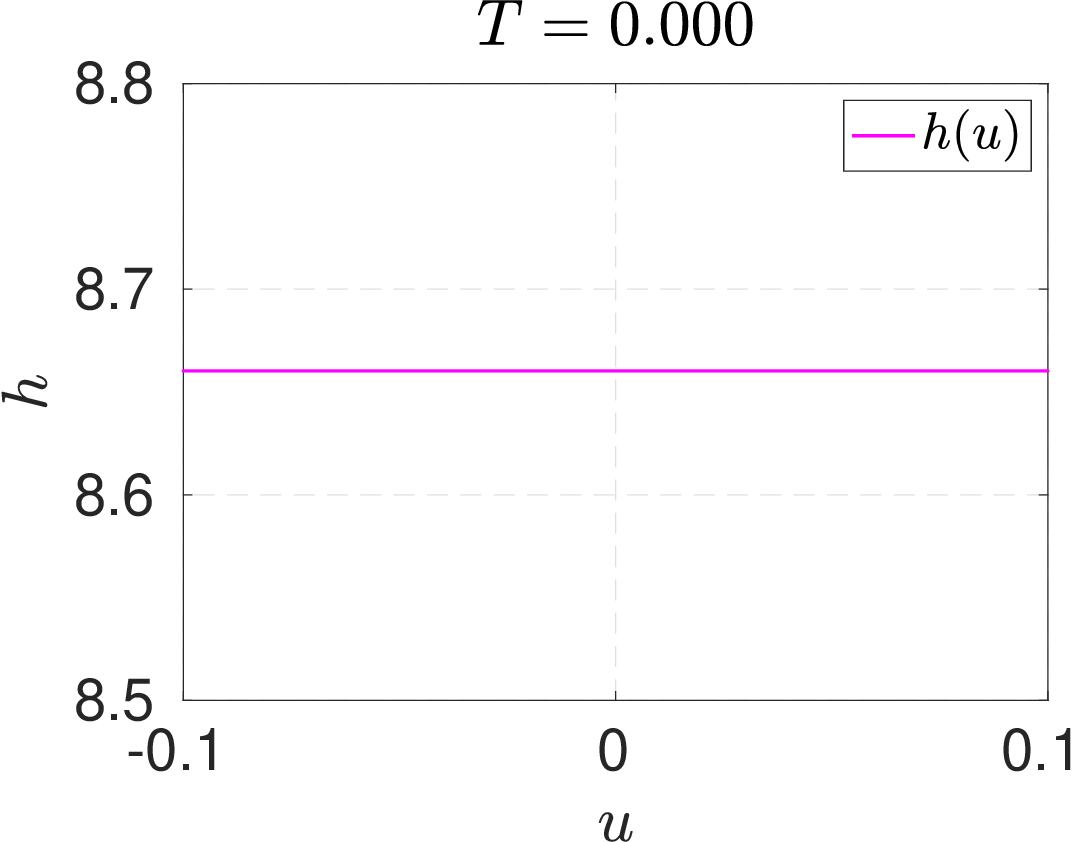}
  \end{subfigure}
    \begin{subfigure}{0.32\textwidth}
    \centering
    \includegraphics[width=1.0\textwidth]{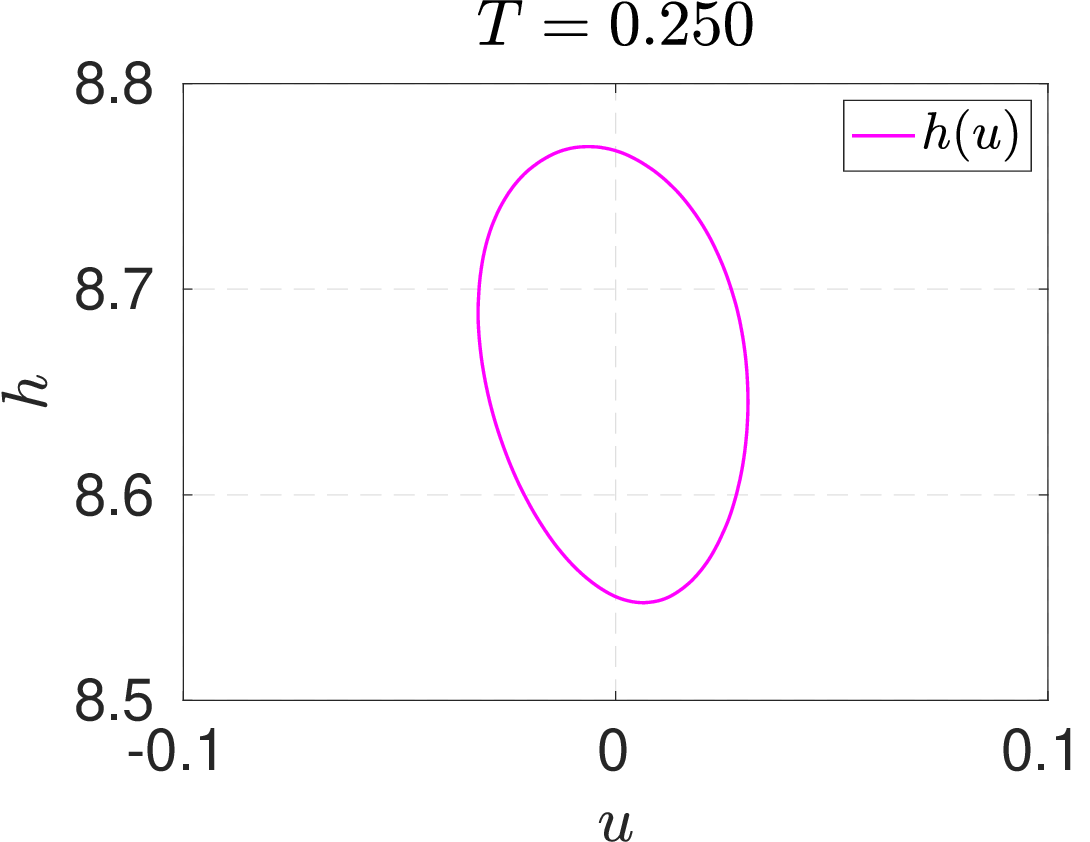}
  \end{subfigure}
  \begin{subfigure}{0.32\textwidth}
    \centering
    \includegraphics[width=1.0\textwidth]{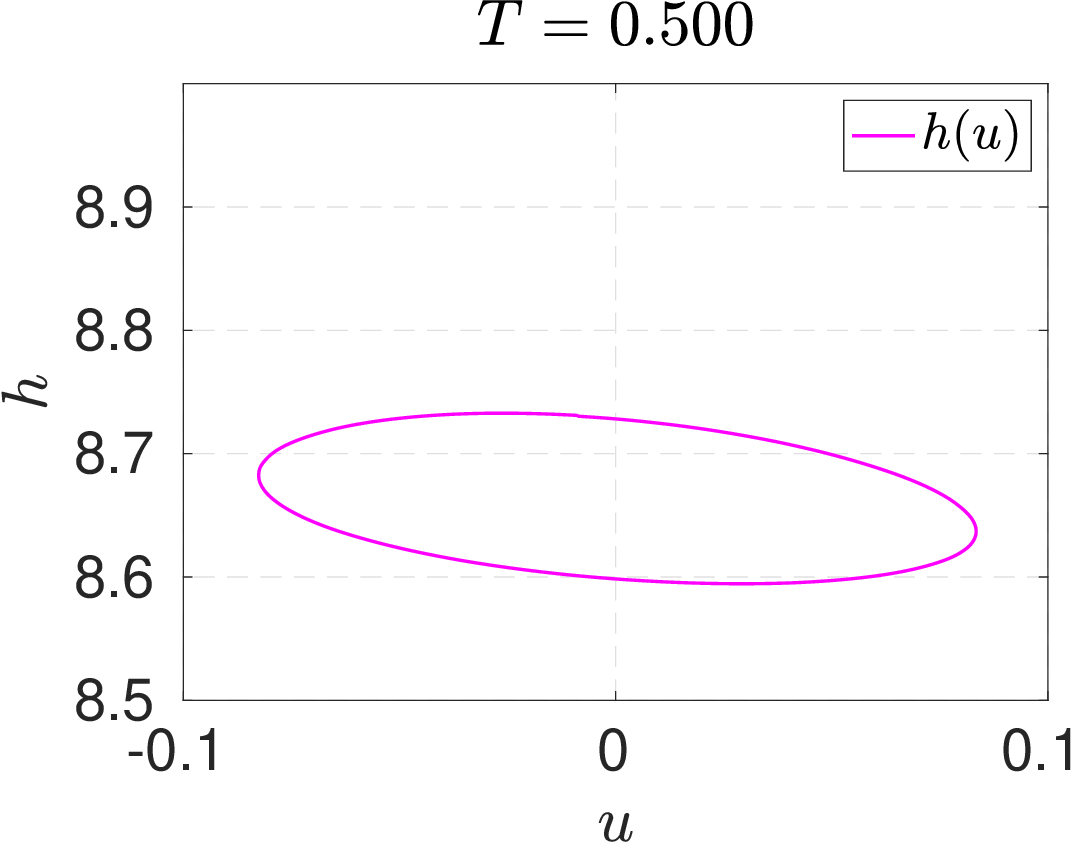}
  \end{subfigure}
  \caption{The initial \textit{invariant curve in \((u,h)-\)plane} is shown on the left and its time-evolved state on the center and on the right.}
  \label{fig:invariant-curve_case3a}
\end{figure}

% ------------------------------------------------------------------------

\begin{figure}[H]
  \centering
  \begin{subfigure}{0.49\textwidth}
    \centering
    \includegraphics[width=1.0\textwidth]{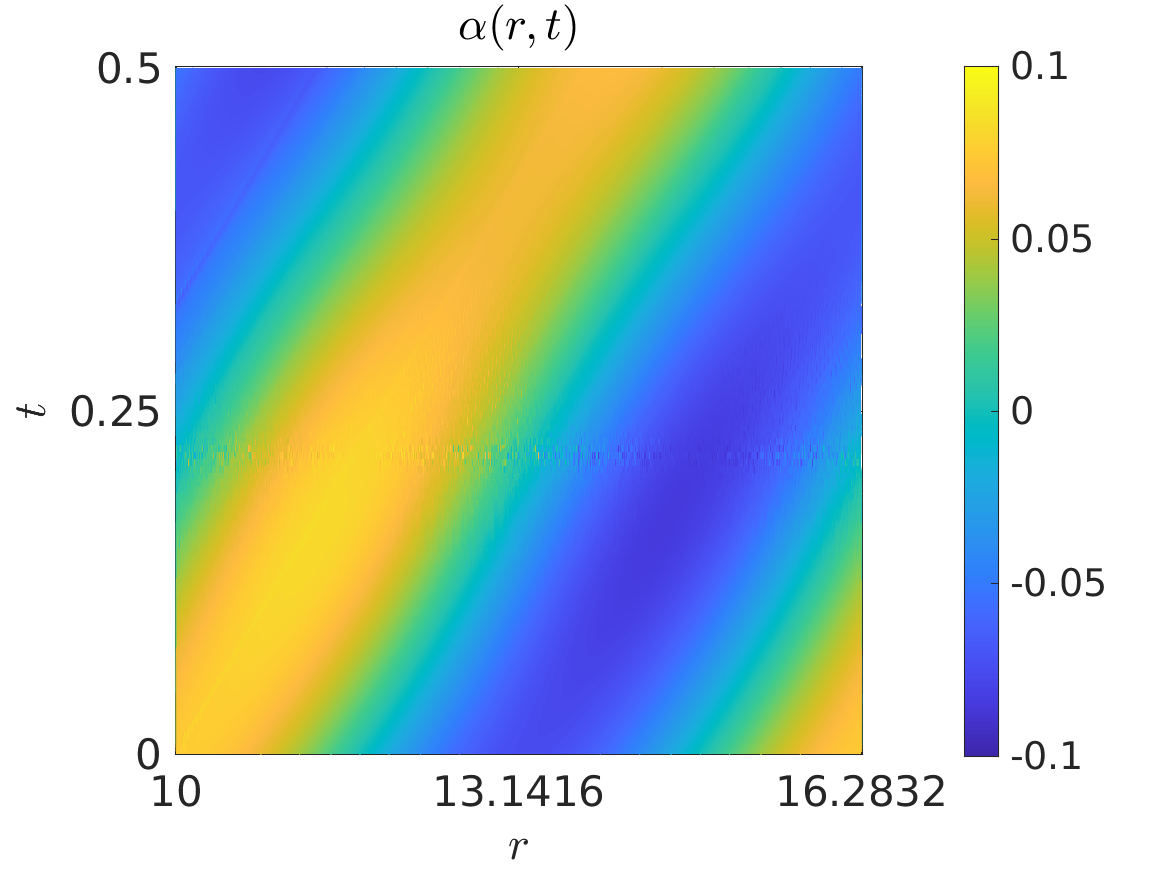}
  \end{subfigure}
  \begin{subfigure}{0.49\textwidth}
    \centering
	\includegraphics[width=1.0\textwidth]{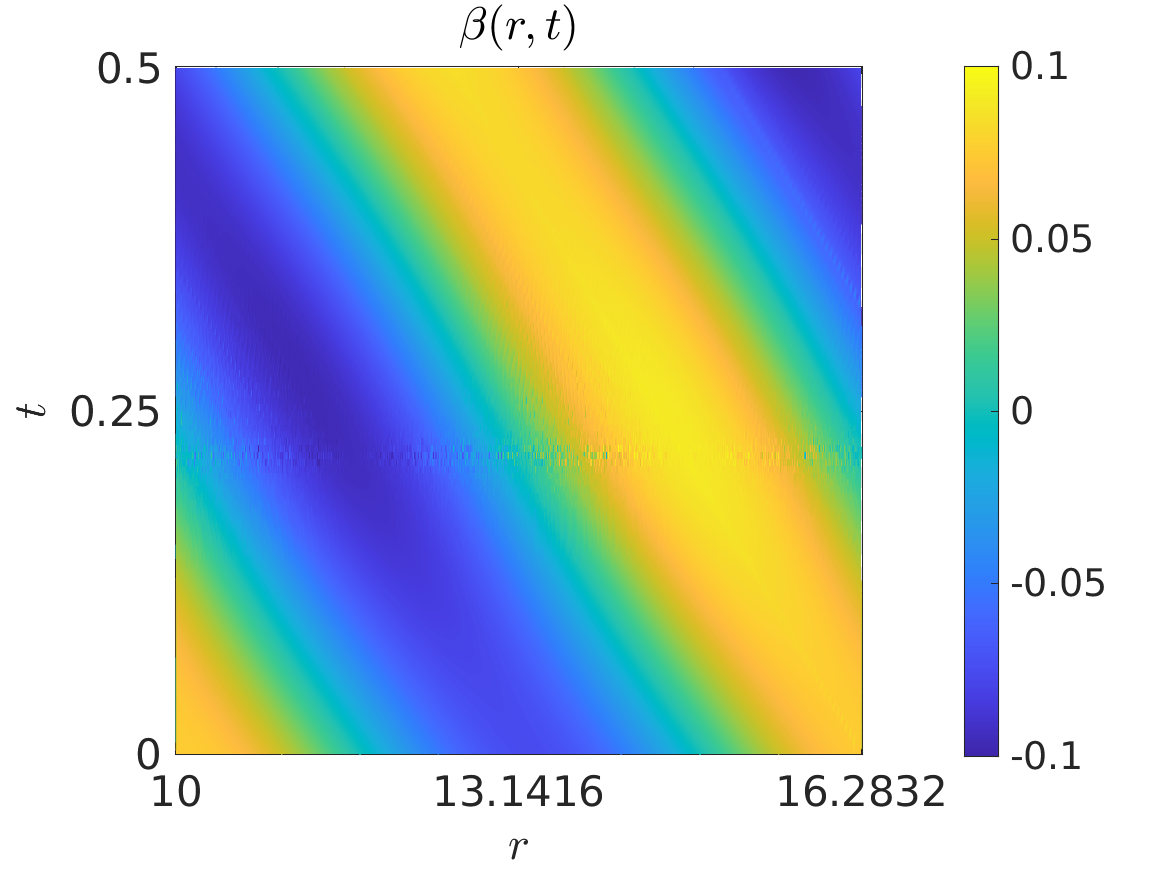}
  \end{subfigure}
  \caption{The \textit{heat map of $\alpha$ in \((r,t)-\)plane} is shown on the left and the heat map of $\beta$ on the right.}
  \label{fig:heat-map_case3a}
\end{figure}

% ------------------------------------------------------------------------

\subsubsection*{Case 3.2: $\varepsilon = 1.0$.}

% ------------------------------------------------------------------------

\begin{figure}[H]
  \centering
  \begin{subfigure}{0.32\textwidth}
    \centering
    \includegraphics[width=1.0\textwidth]{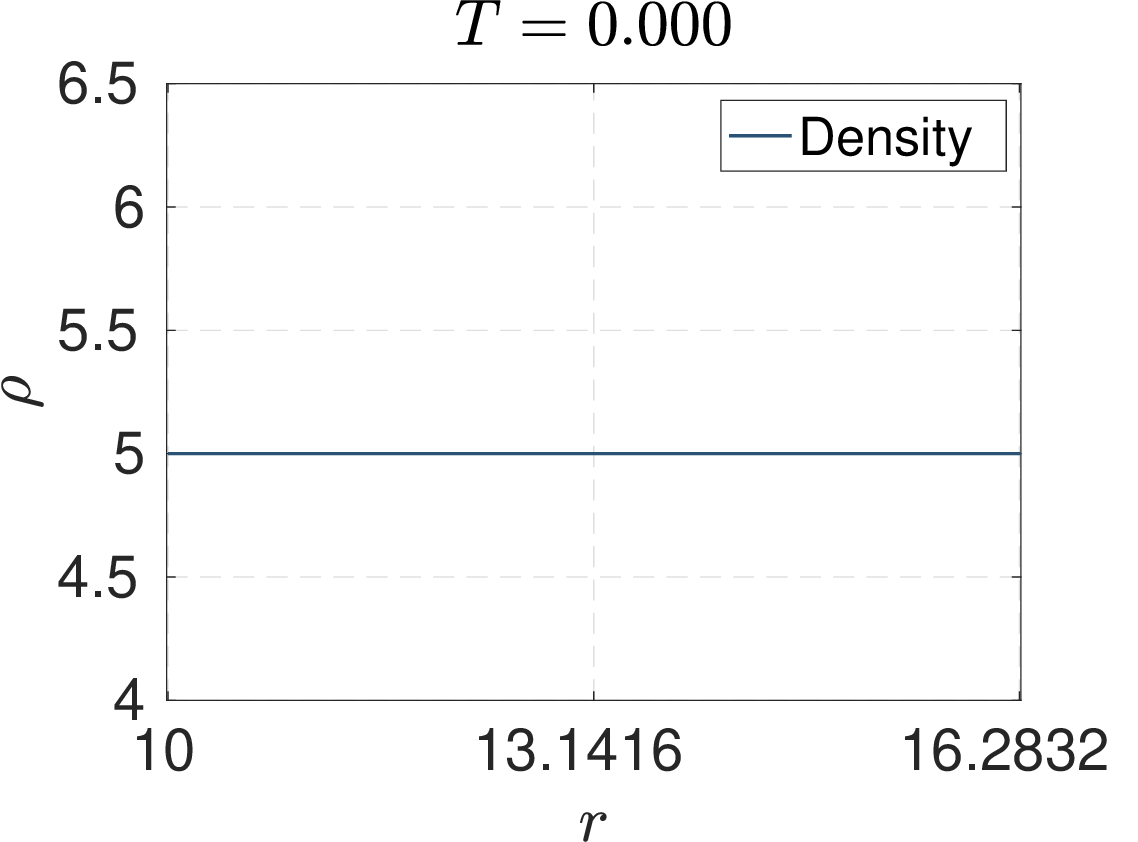}
  \end{subfigure}
    \begin{subfigure}{0.32\textwidth}
    \centering
    \includegraphics[width=1.0\textwidth]{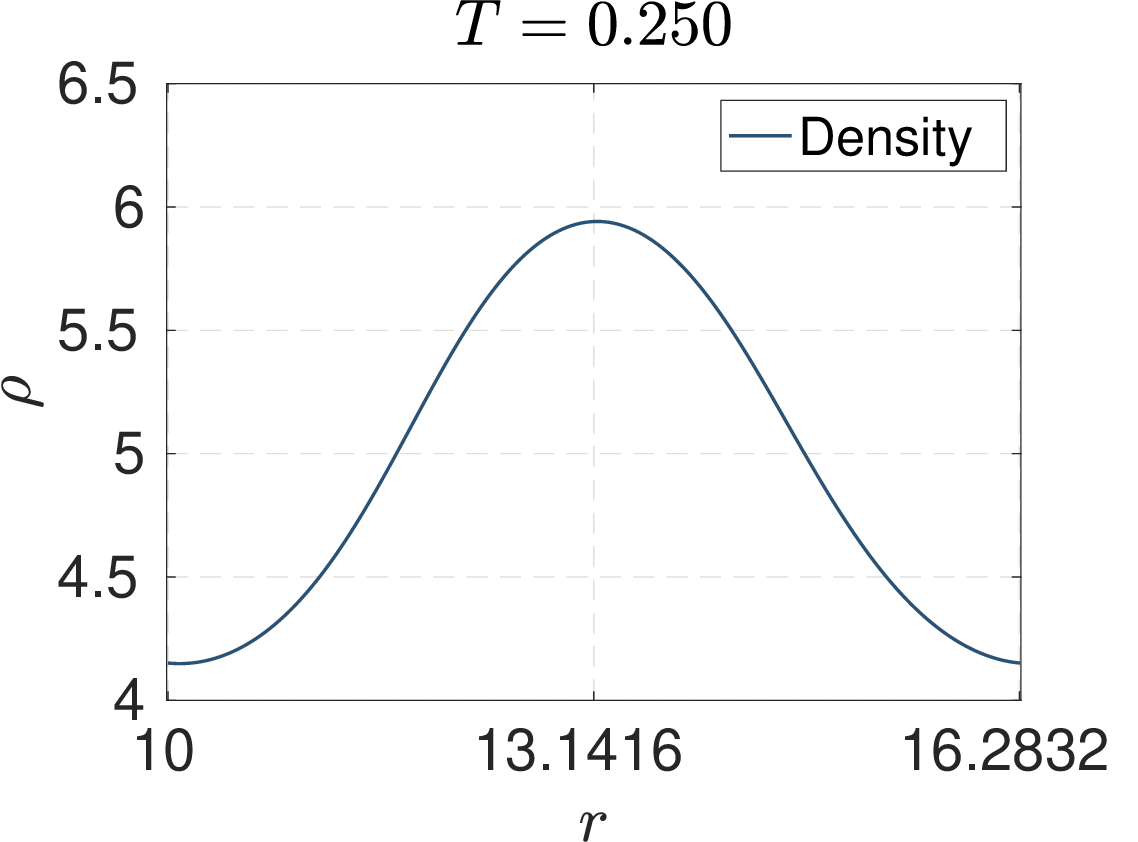}
  \end{subfigure}
  \begin{subfigure}{0.32\textwidth}
    \centering
    \includegraphics[width=1.0\textwidth]{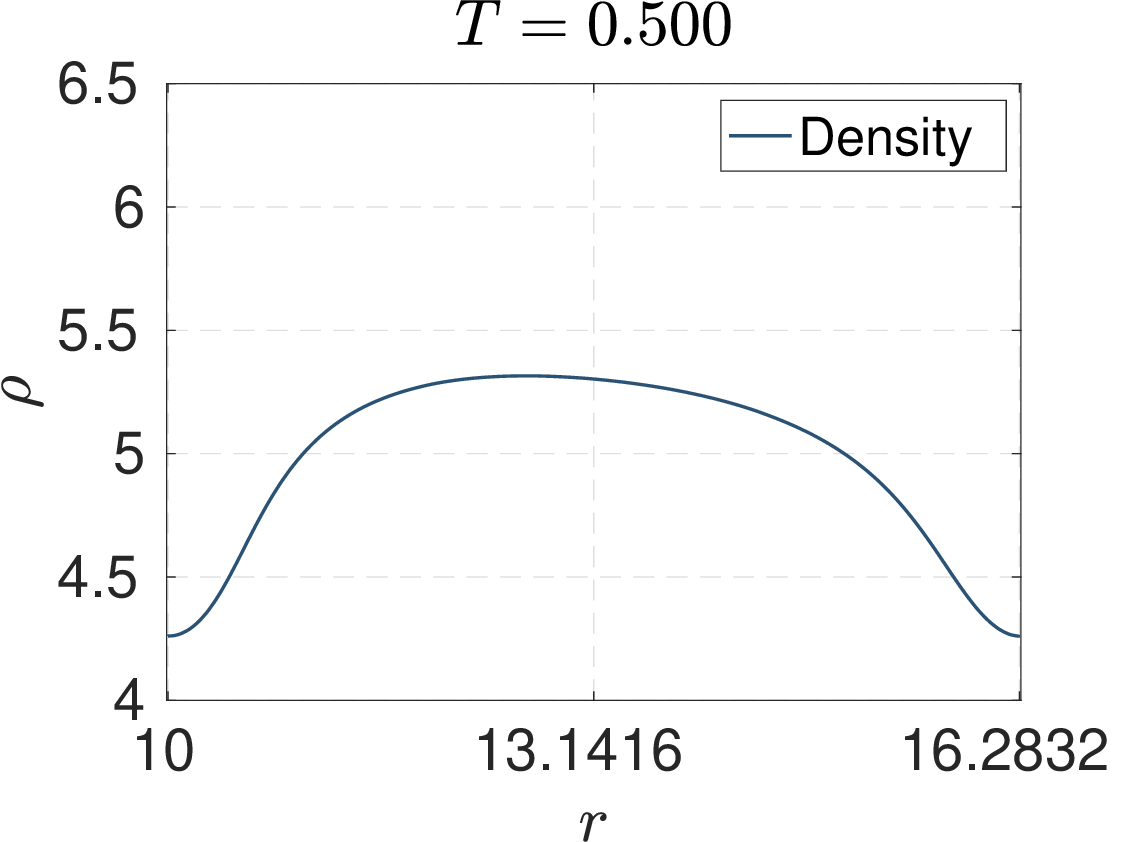}
  \end{subfigure}
  \caption{The initial \textit{density} is shown on the left and its time-evolved state on the center and on the right.}
  \label{fig:density_case3b}
\end{figure}

% ------------------------------------------------------------------------

\begin{figure}[H]
  \centering
  \begin{subfigure}{0.32\textwidth}
    \centering
    \includegraphics[width=1.0\textwidth]{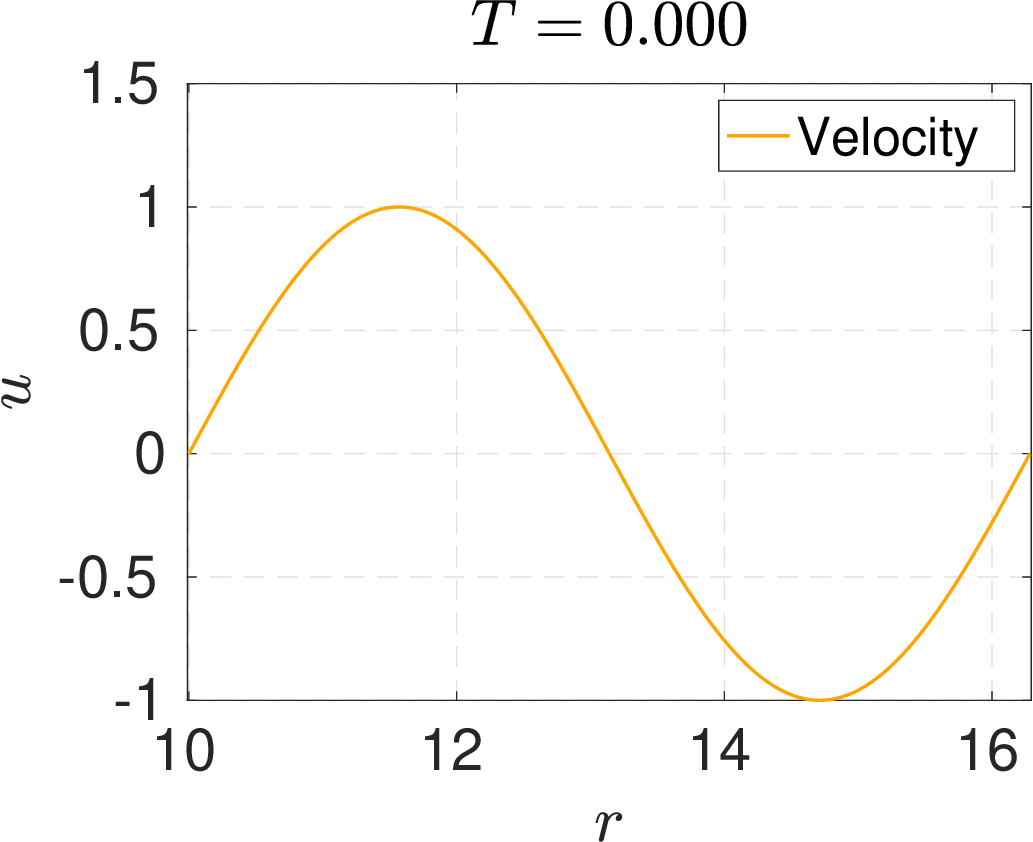}
  \end{subfigure}
    \begin{subfigure}{0.32\textwidth}
    \centering
    \includegraphics[width=1.0\textwidth]{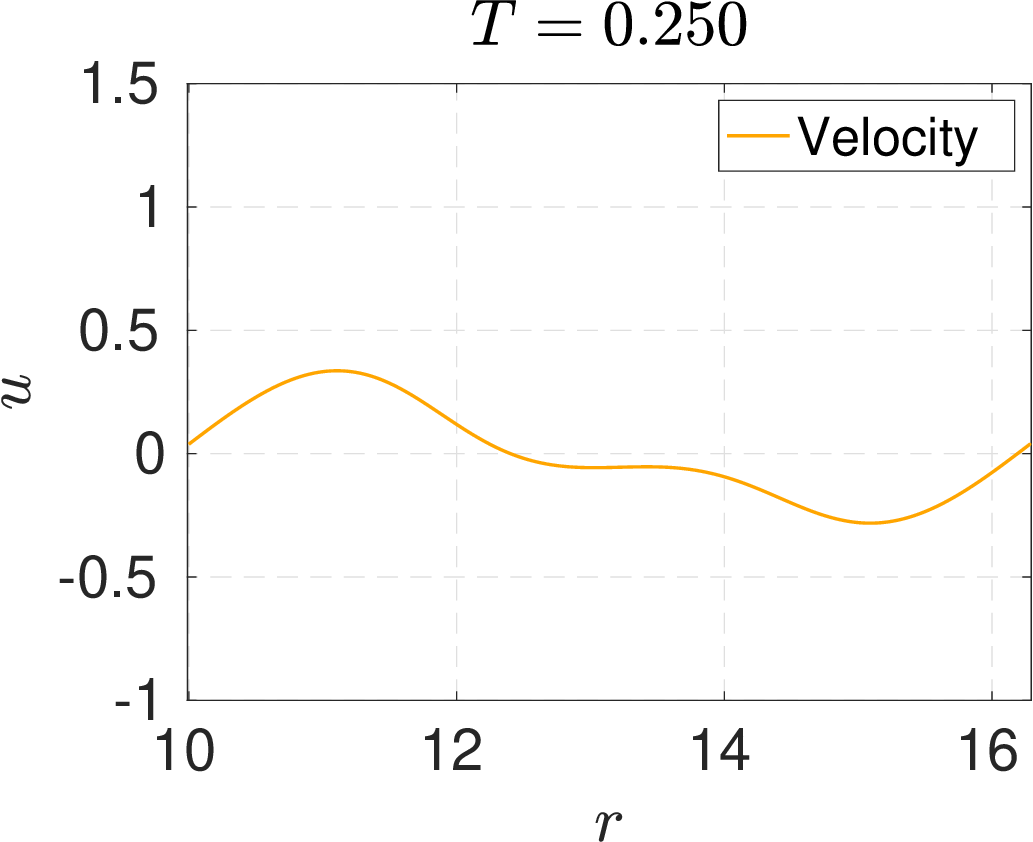}
  \end{subfigure}
  \begin{subfigure}{0.32\textwidth}
    \centering
    \includegraphics[width=1.0\textwidth]{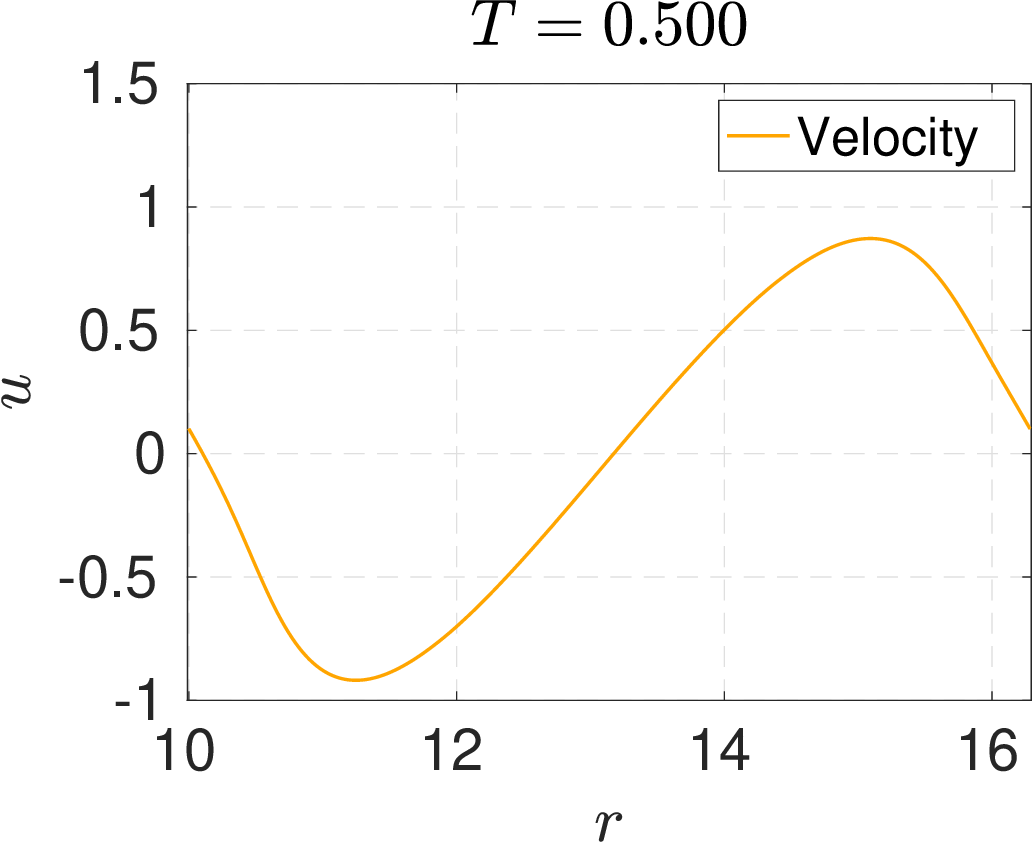}
  \end{subfigure}
  \caption{The initial \textit{velocity} is shown on the left and its time-evolved state on the center and on the right.}
  \label{fig:velocity_case3b}
\end{figure}

% ------------------------------------------------------------------------

\begin{figure}[H]
  \centering
  \begin{subfigure}{0.32\textwidth}
    \centering
    \includegraphics[width=1.0\textwidth]{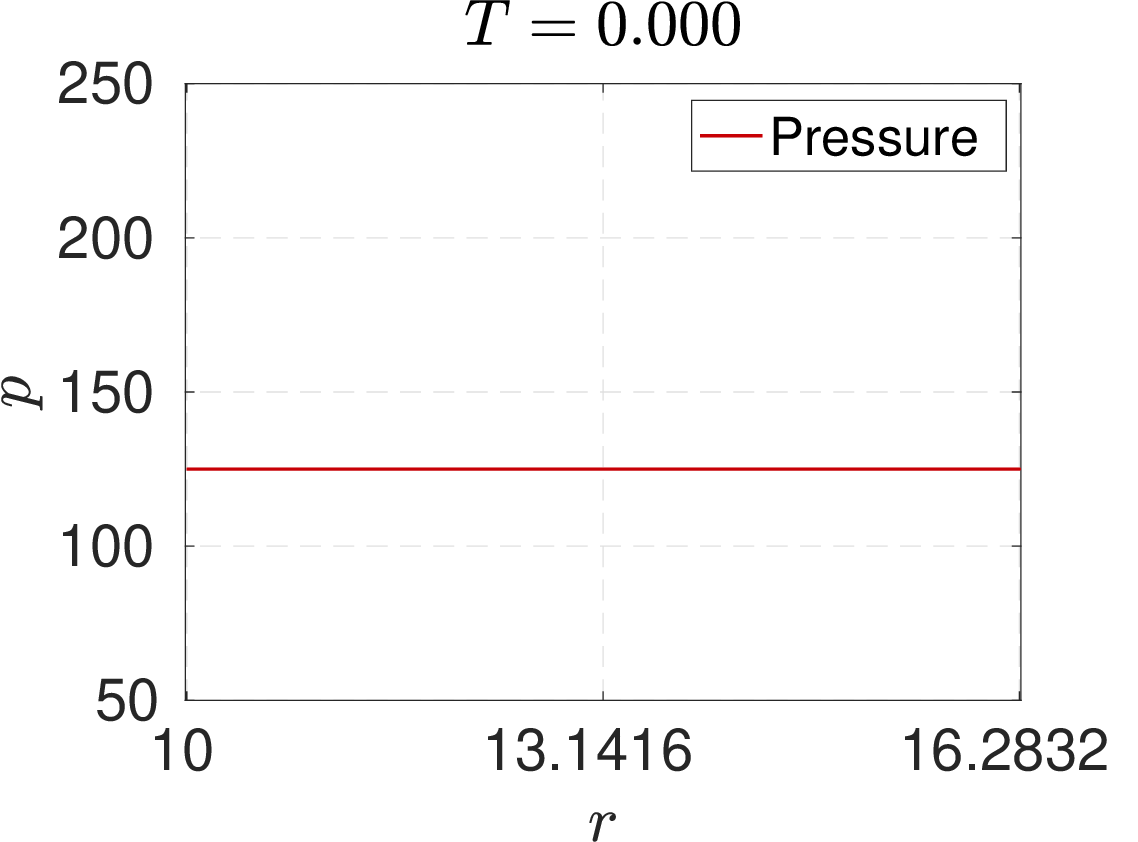}
  \end{subfigure}
    \begin{subfigure}{0.32\textwidth}
    \centering
    \includegraphics[width=1.0\textwidth]{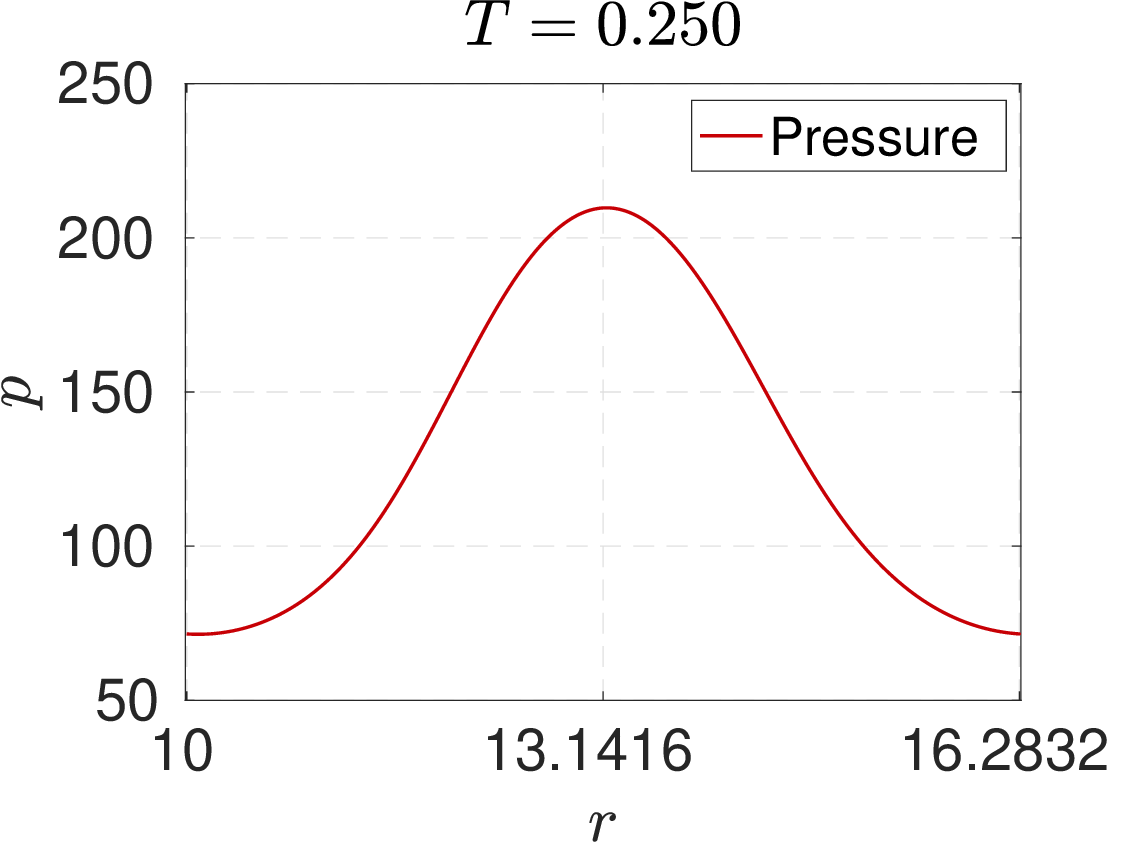}
  \end{subfigure}
  \begin{subfigure}{0.32\textwidth}
    \centering
    \includegraphics[width=1.0\textwidth]{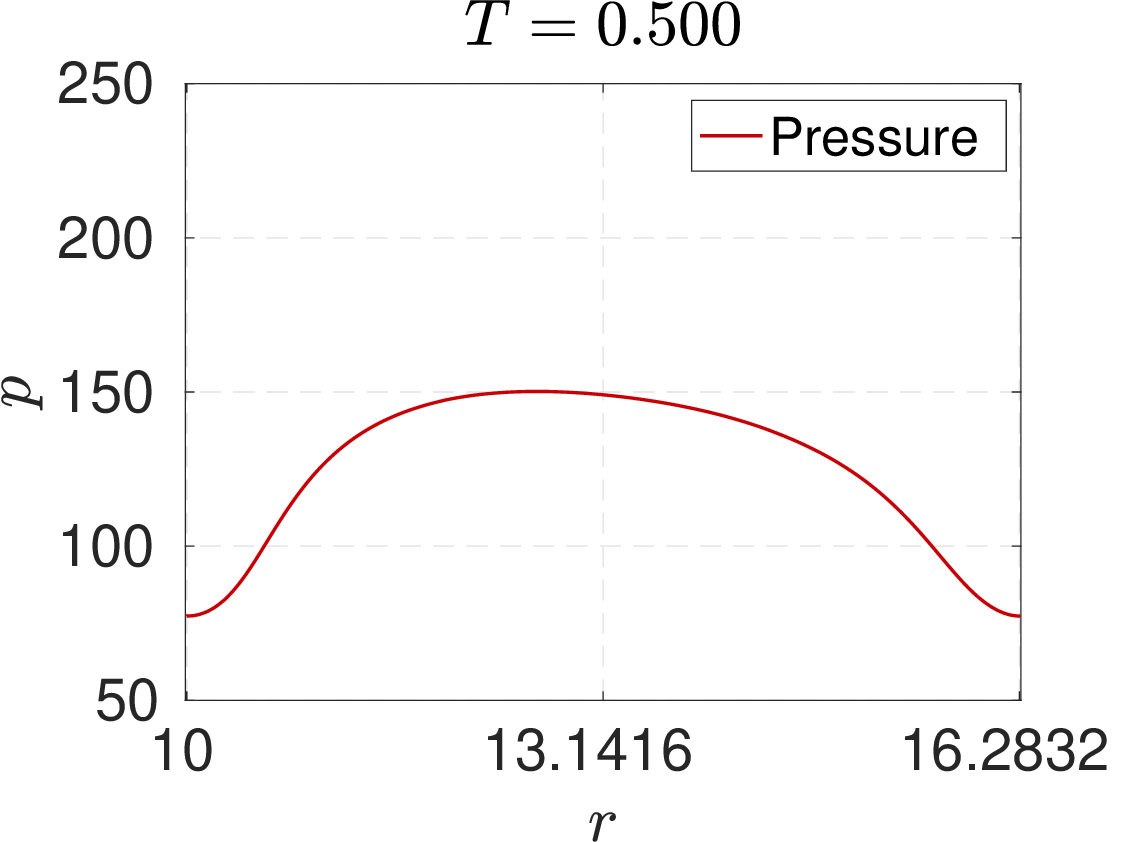}
  \end{subfigure}
  \caption{The initial \textit{pressure} is shown on the left and its time-evolved state on the center and on the right.}
  \label{fig:pressure_case3b}
\end{figure}

% ------------------------------------------------------------------------

\begin{figure}[H]
  \centering
  \begin{subfigure}{0.32\textwidth}
    \centering
    \includegraphics[width=1.0\textwidth]{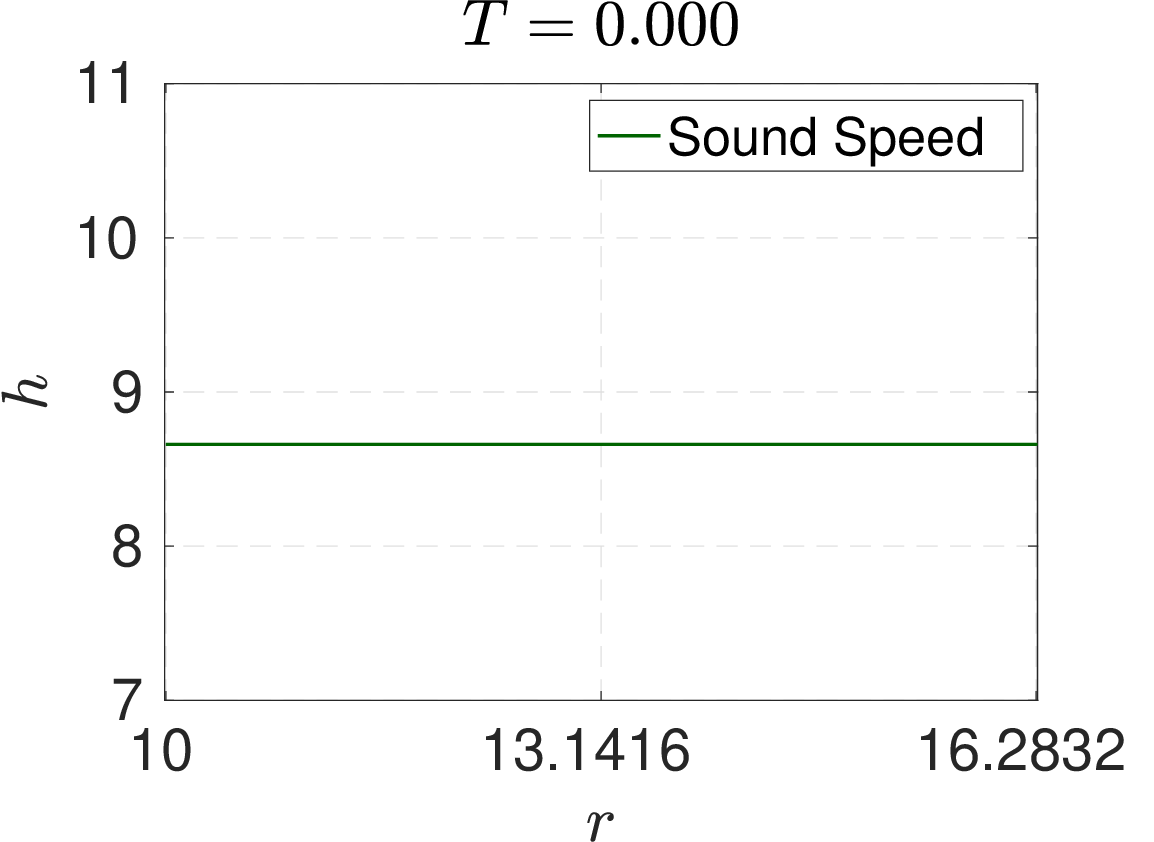}
  \end{subfigure}
    \begin{subfigure}{0.32\textwidth}
    \centering
    \includegraphics[width=1.0\textwidth]{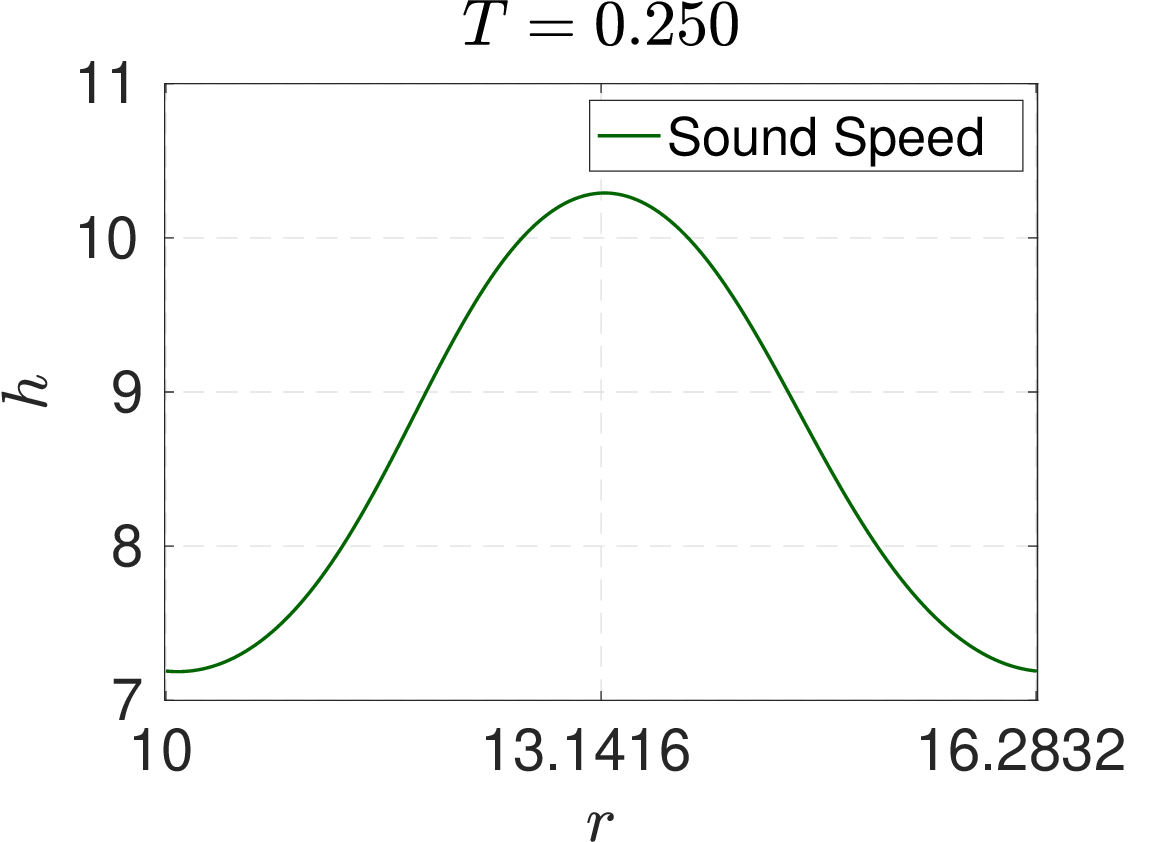}
  \end{subfigure}
  \begin{subfigure}{0.32\textwidth}
    \centering
    \includegraphics[width=1.0\textwidth]{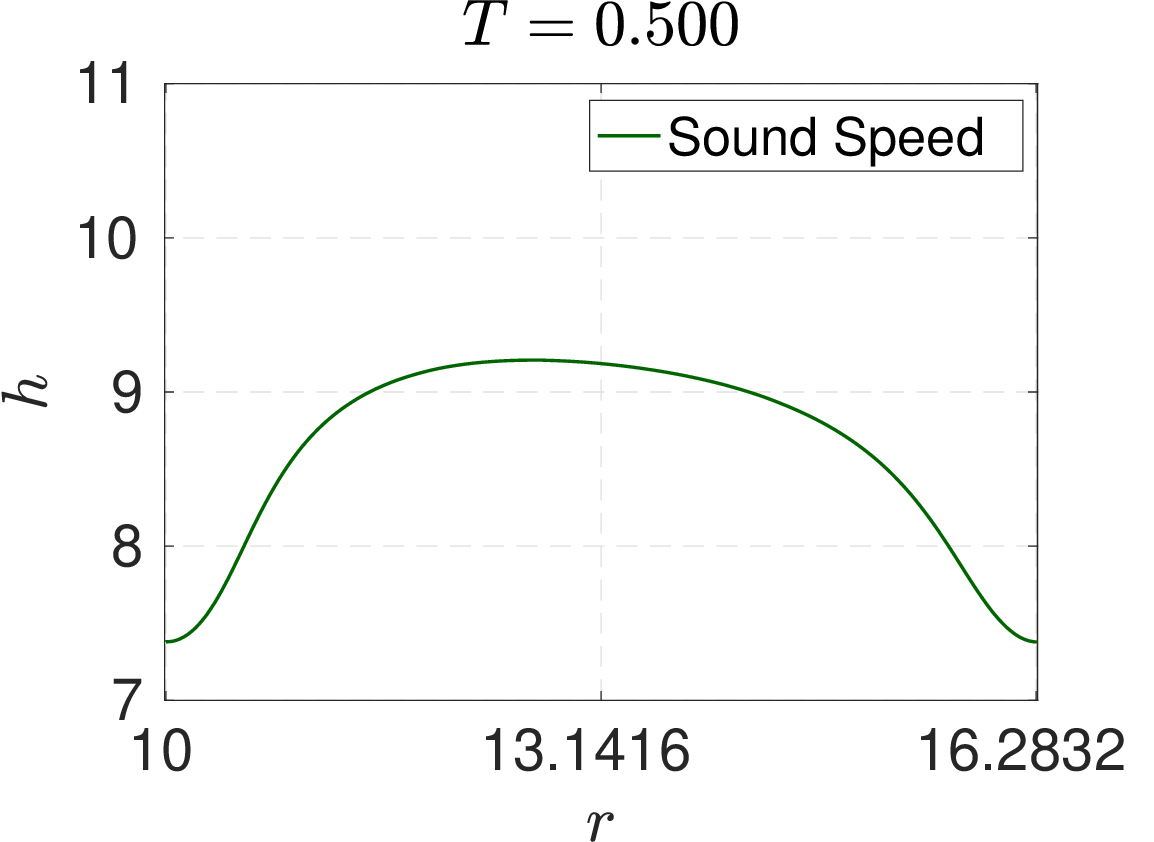}
  \end{subfigure}
  \caption{The initial \textit{sound speed} is shown on the left and its time-evolved state on the center and on the right.}
  \label{fig:sound-speed_case3b}
\end{figure}

% ------------------------------------------------------------------------

\begin{figure}[H]
  \centering
  \begin{subfigure}{0.32\textwidth}
    \centering
    \includegraphics[width=1.0\textwidth]{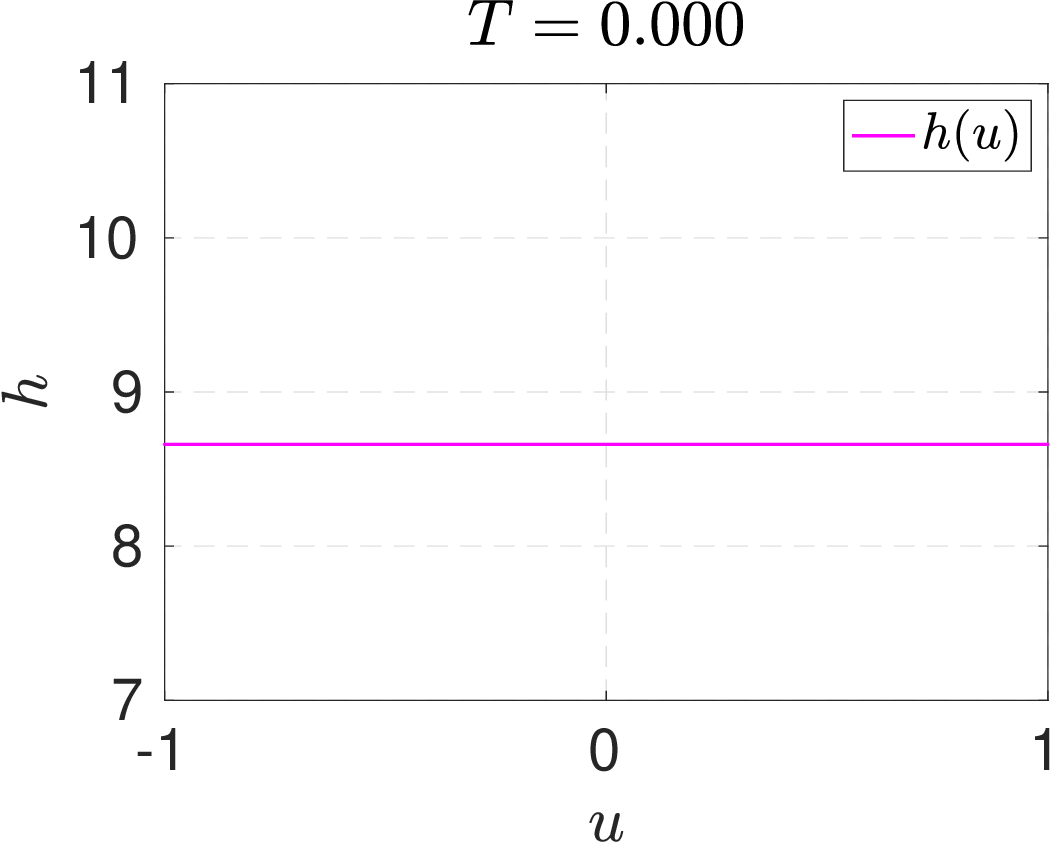}
  \end{subfigure}
    \begin{subfigure}{0.32\textwidth}
    \centering
    \includegraphics[width=1.0\textwidth]{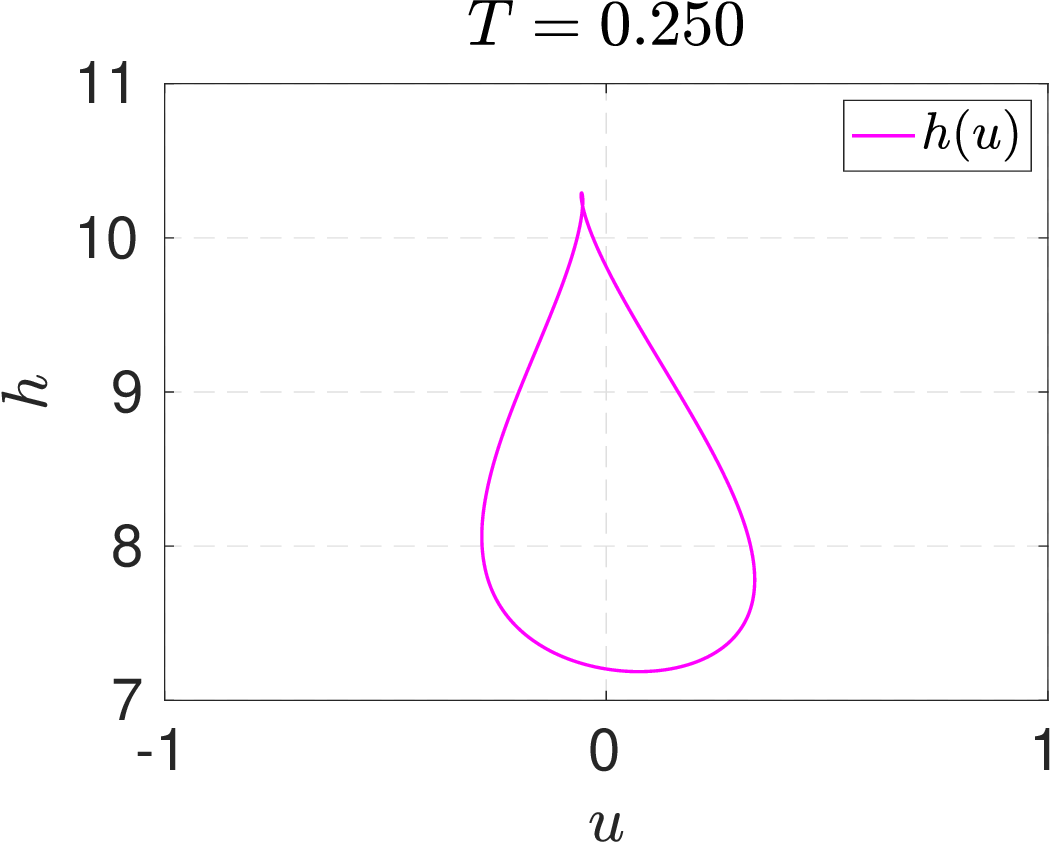}
  \end{subfigure}
  \begin{subfigure}{0.32\textwidth}
    \centering
    \includegraphics[width=1.0\textwidth]{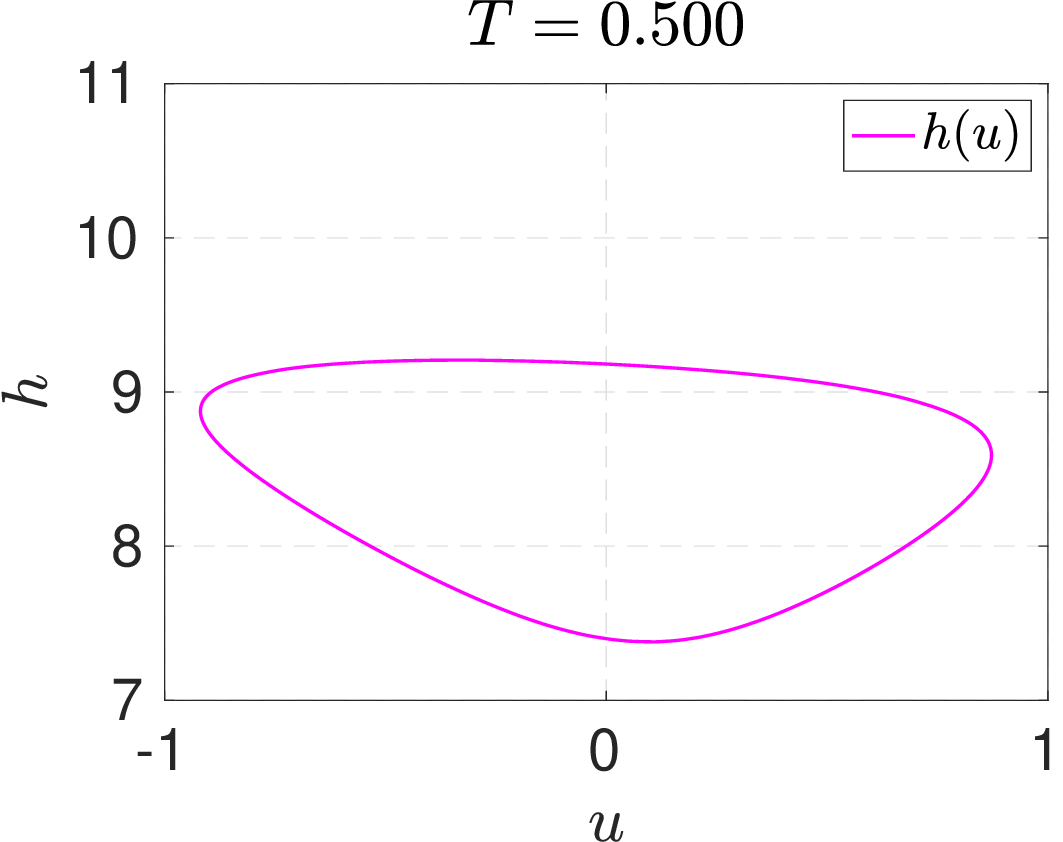}
  \end{subfigure}
  \caption{The initial \textit{invariant curve in \((u,h)-\)plane} is shown on the left and its time-evolved state on the center and on the right.}
  \label{fig:invariant-curve_case3b}
\end{figure}

% ------------------------------------------------------------------------

\begin{figure}[H]
  \centering
  \begin{subfigure}{0.49\textwidth}
    \centering
    \includegraphics[width=1.0\textwidth]{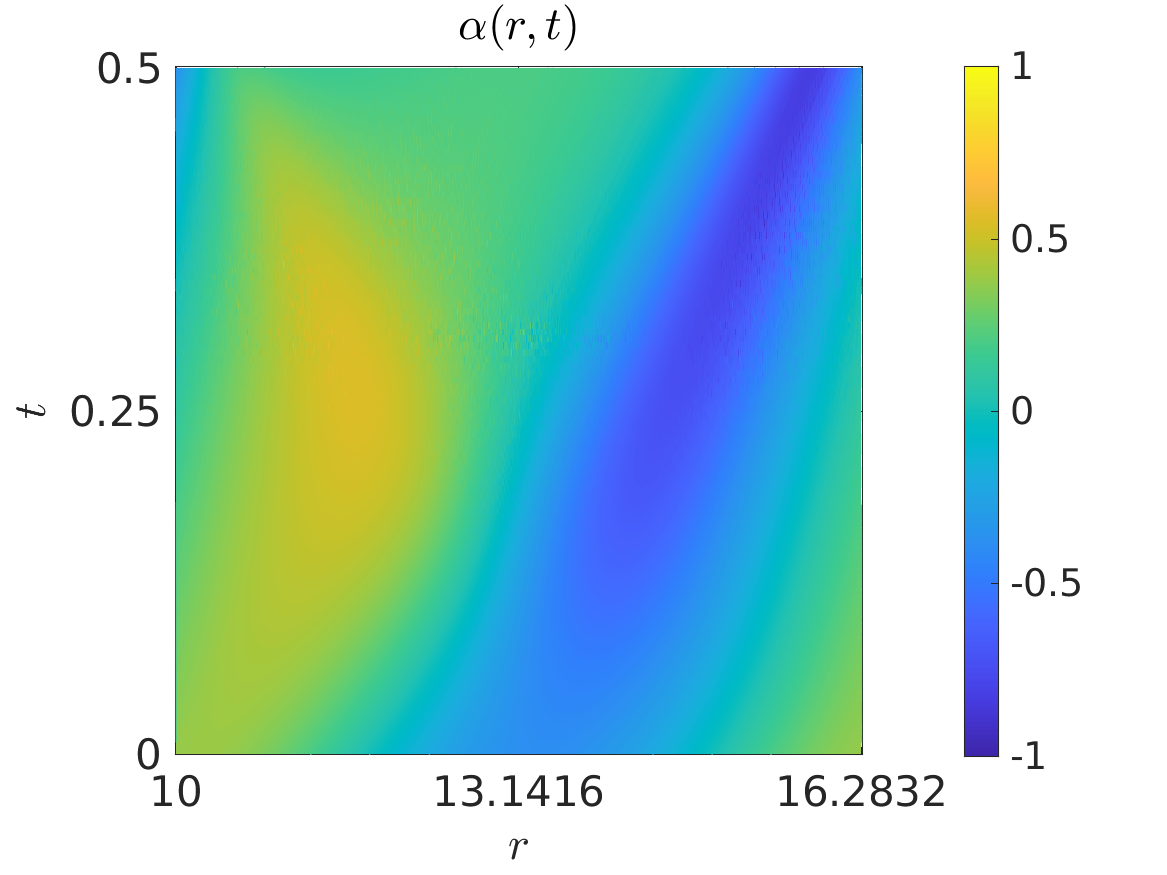}
  \end{subfigure}
  \begin{subfigure}{0.49\textwidth}
    \centering
	\includegraphics[width=1.0\textwidth]{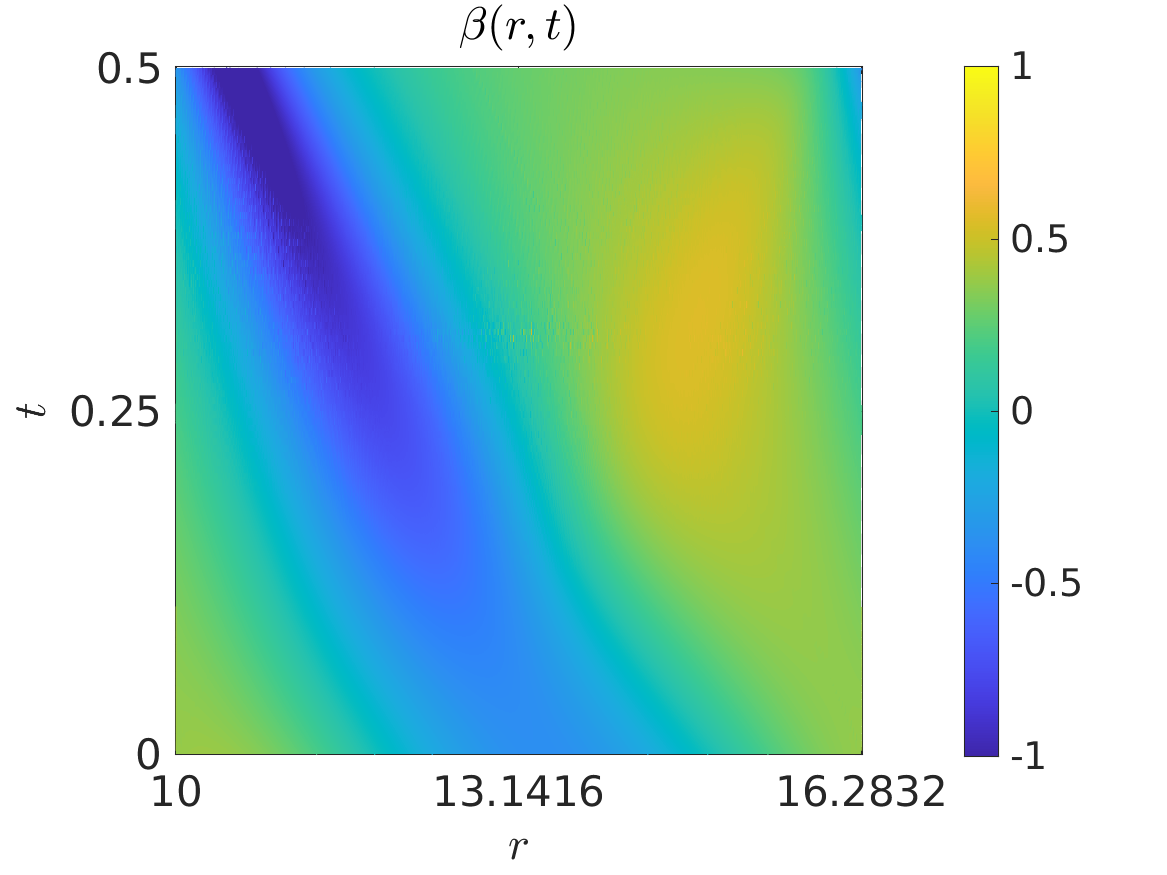}
  \end{subfigure}
  \caption{The \textit{heat map of $\alpha$ in \((r,t)-\)plane} is shown on the left and the heat map of $\beta$ on the right.}
  \label{fig:heat-map_case3b}
\end{figure}

% ------------------------------------------------------------------------

\subsubsection*{Case 3.3: $\varepsilon = 0.1$.}

% ------------------------------------------------------------------------

\begin{figure}[H]
  \centering
  \begin{subfigure}{0.32\textwidth}
    \centering
    \includegraphics[width=1.0\textwidth]{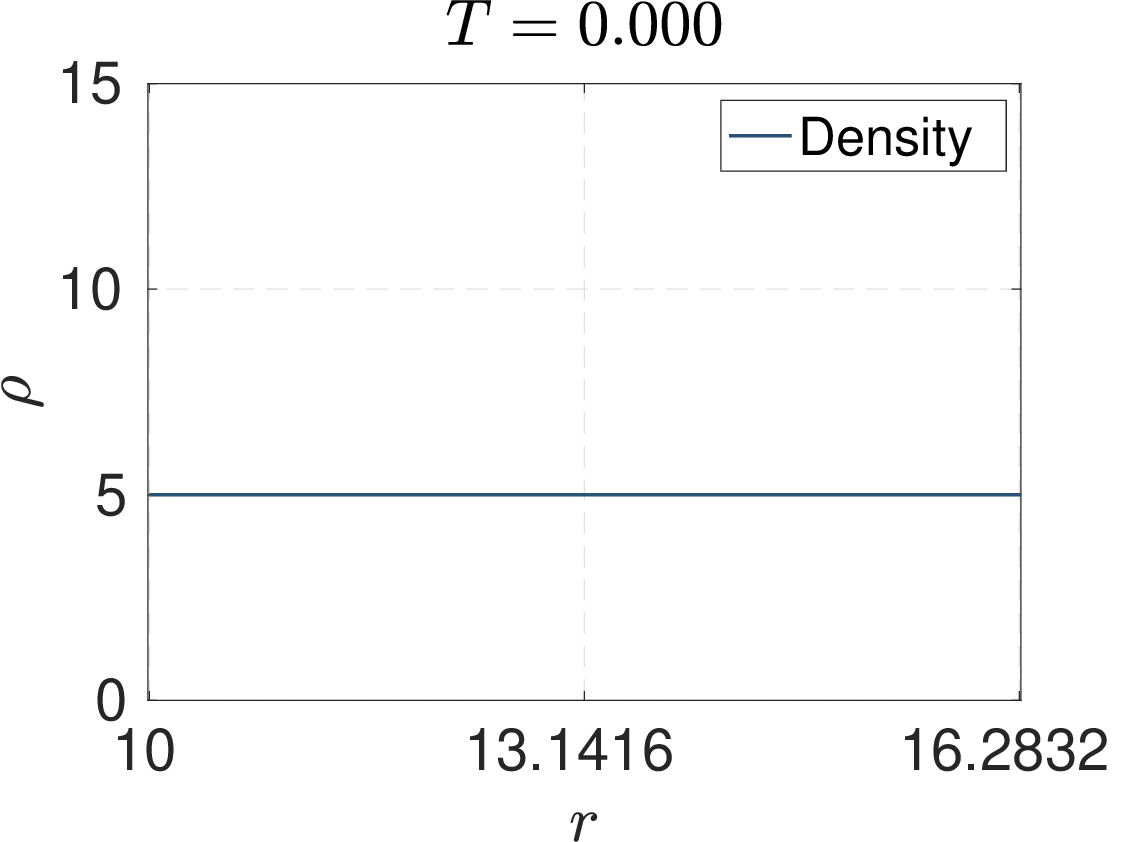}
  \end{subfigure}
    \begin{subfigure}{0.32\textwidth}
    \centering
    \includegraphics[width=1.0\textwidth]{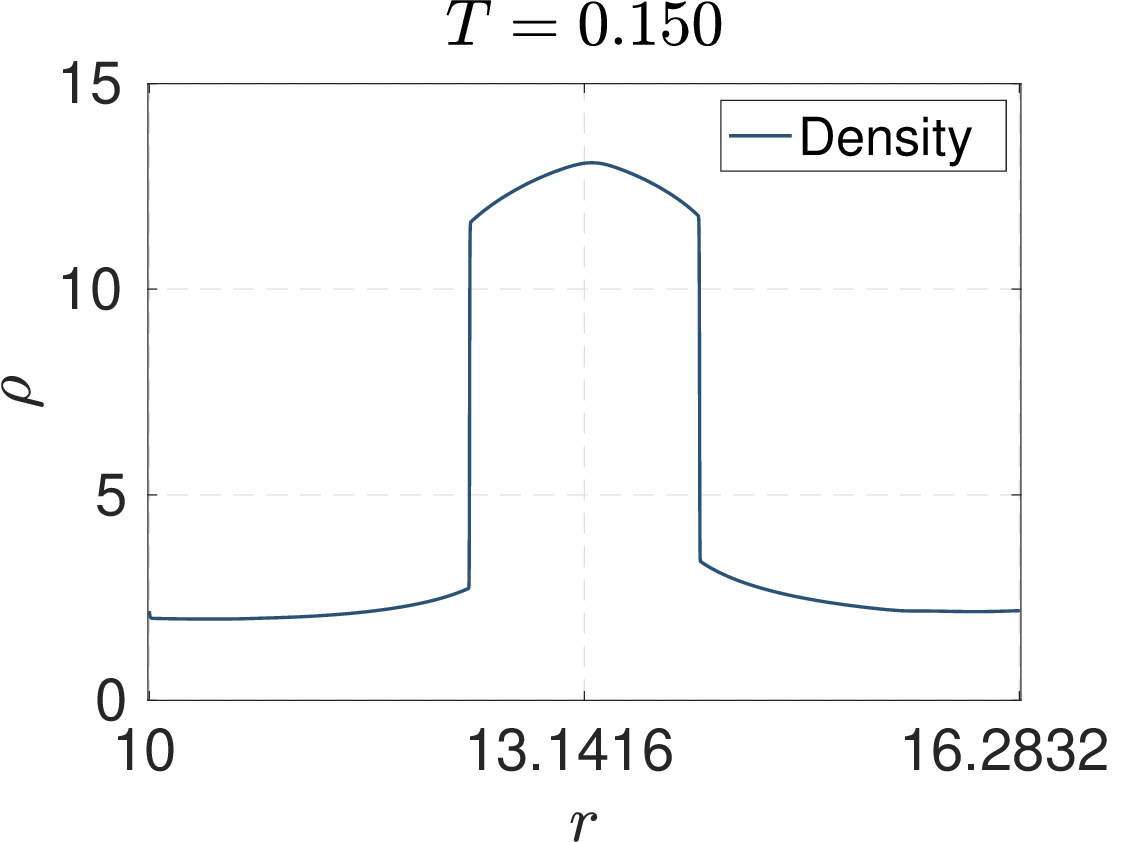}
  \end{subfigure}
  \begin{subfigure}{0.32\textwidth}
    \centering
    \includegraphics[width=1.0\textwidth]{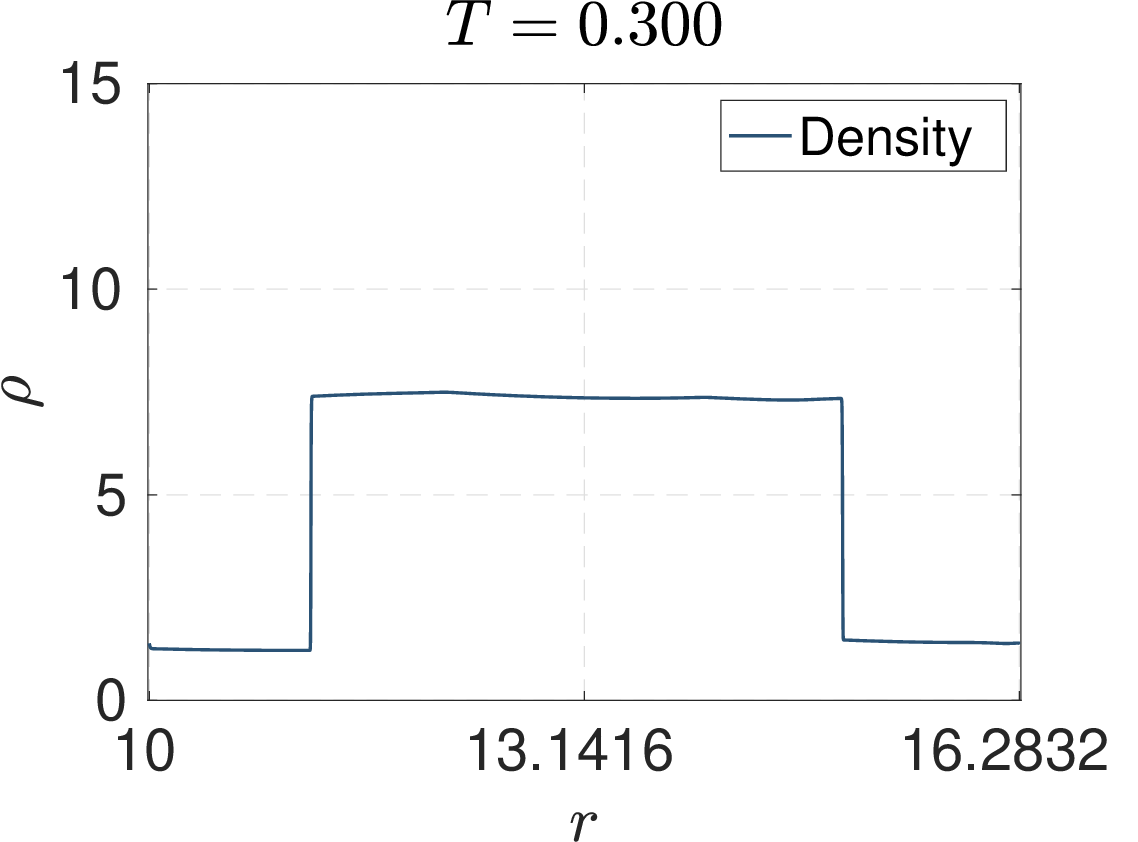}
  \end{subfigure}
  \caption{The initial \textit{density} is shown on the left and its time-evolved state on the center and on the right.}
  \label{fig:density_case3c}
\end{figure}

% ------------------------------------------------------------------------

\begin{figure}[H]
  \centering
  \begin{subfigure}{0.32\textwidth}
    \centering
    \includegraphics[width=1.0\textwidth]{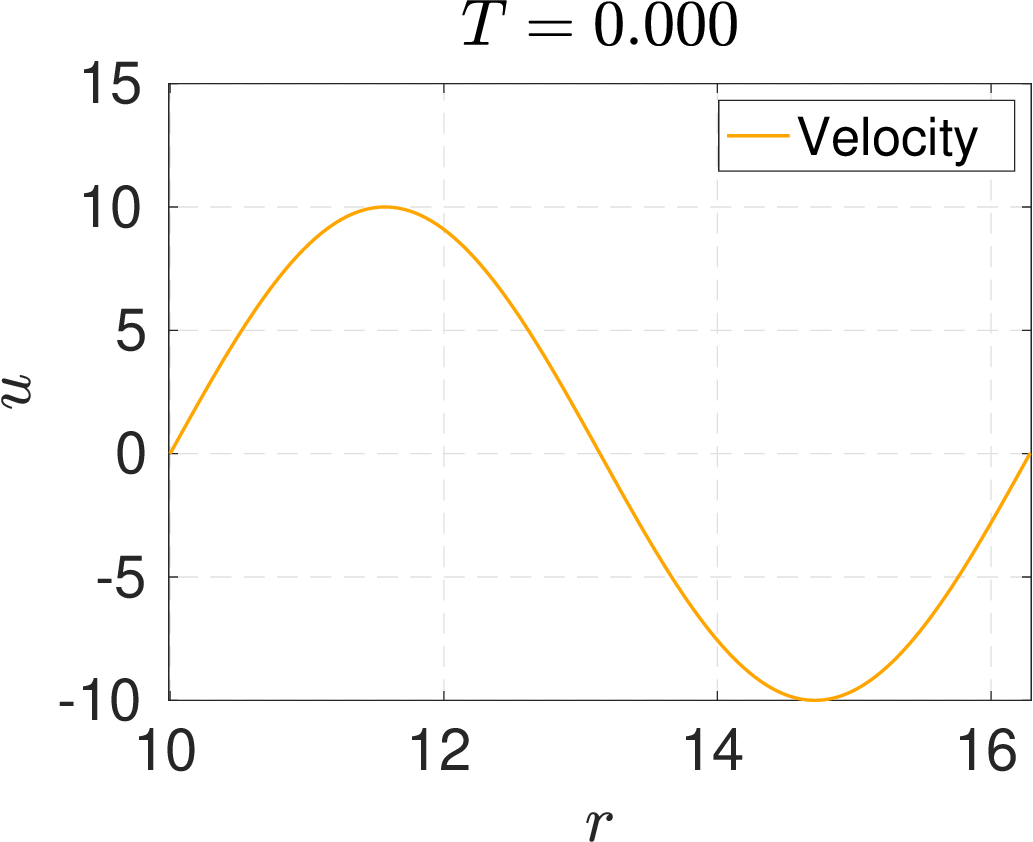}
  \end{subfigure}
    \begin{subfigure}{0.32\textwidth}
    \centering
    \includegraphics[width=1.0\textwidth]{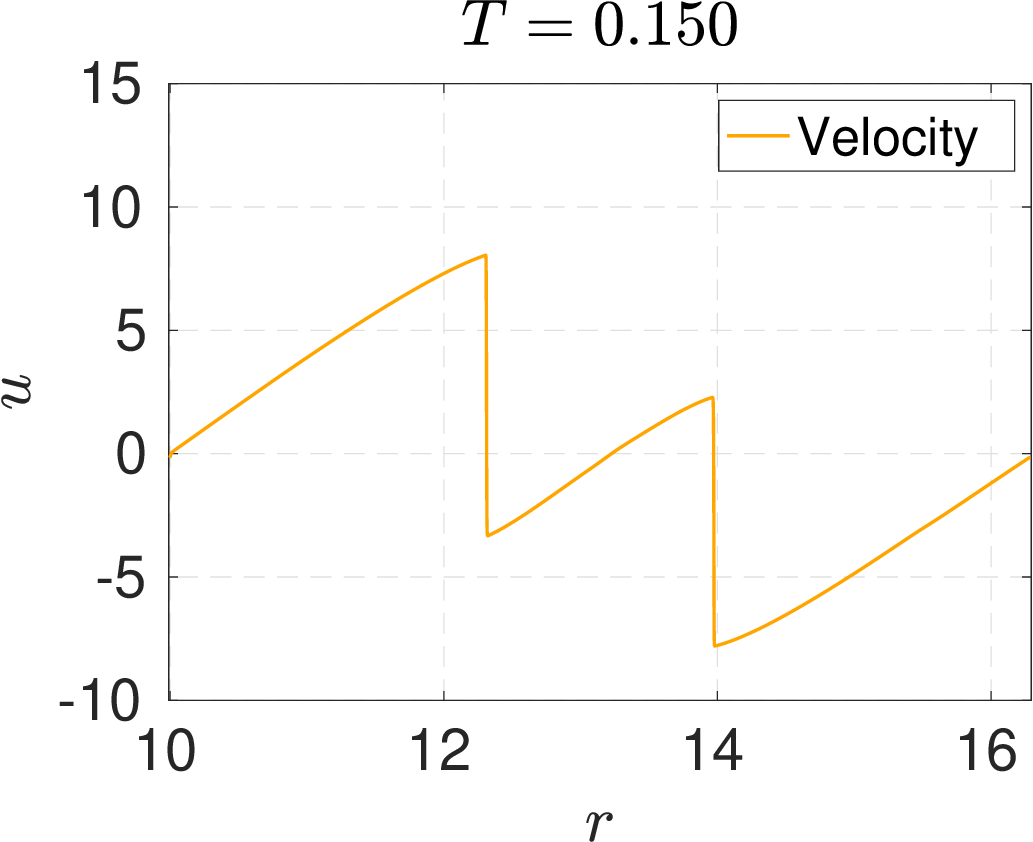}
  \end{subfigure}
  \begin{subfigure}{0.32\textwidth}
    \centering
    \includegraphics[width=1.0\textwidth]{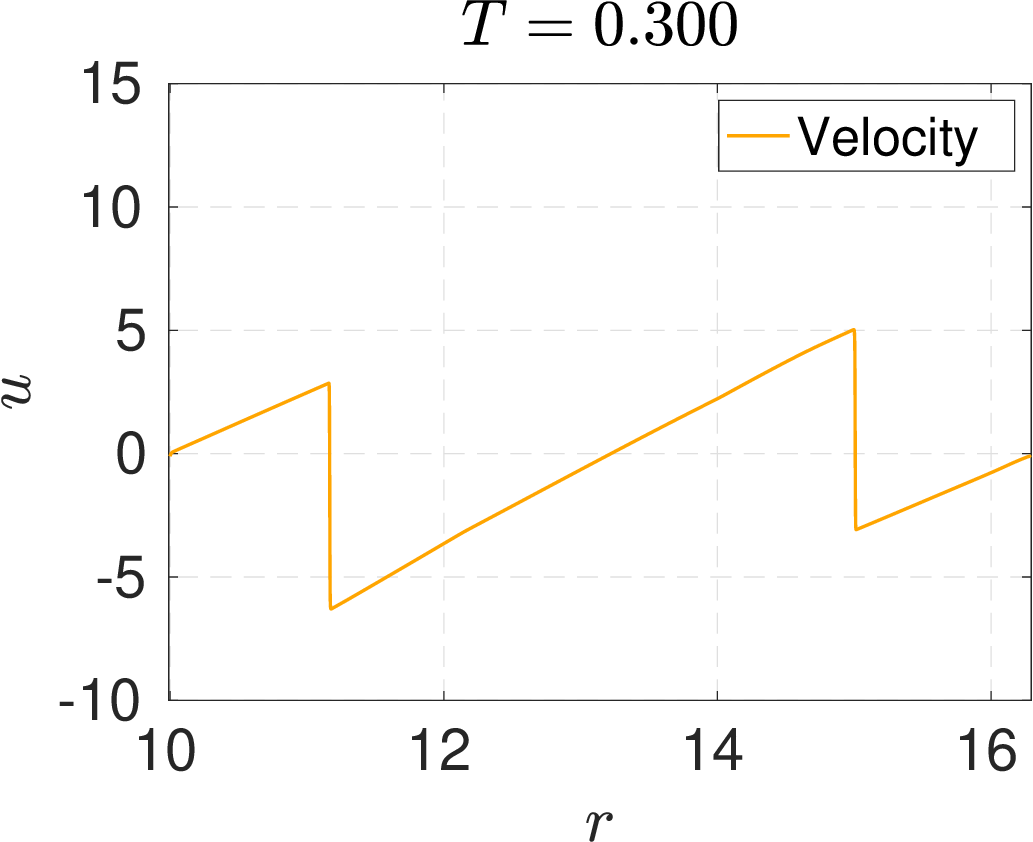}
  \end{subfigure}
  \caption{The initial \textit{velocity} is shown on the left and its time-evolved state on the center and on the right.}
  \label{fig:velocity_case3c}
\end{figure}

% ------------------------------------------------------------------------

\begin{figure}[H]
  \centering
  \begin{subfigure}{0.32\textwidth}
    \centering
    \includegraphics[width=1.0\textwidth]{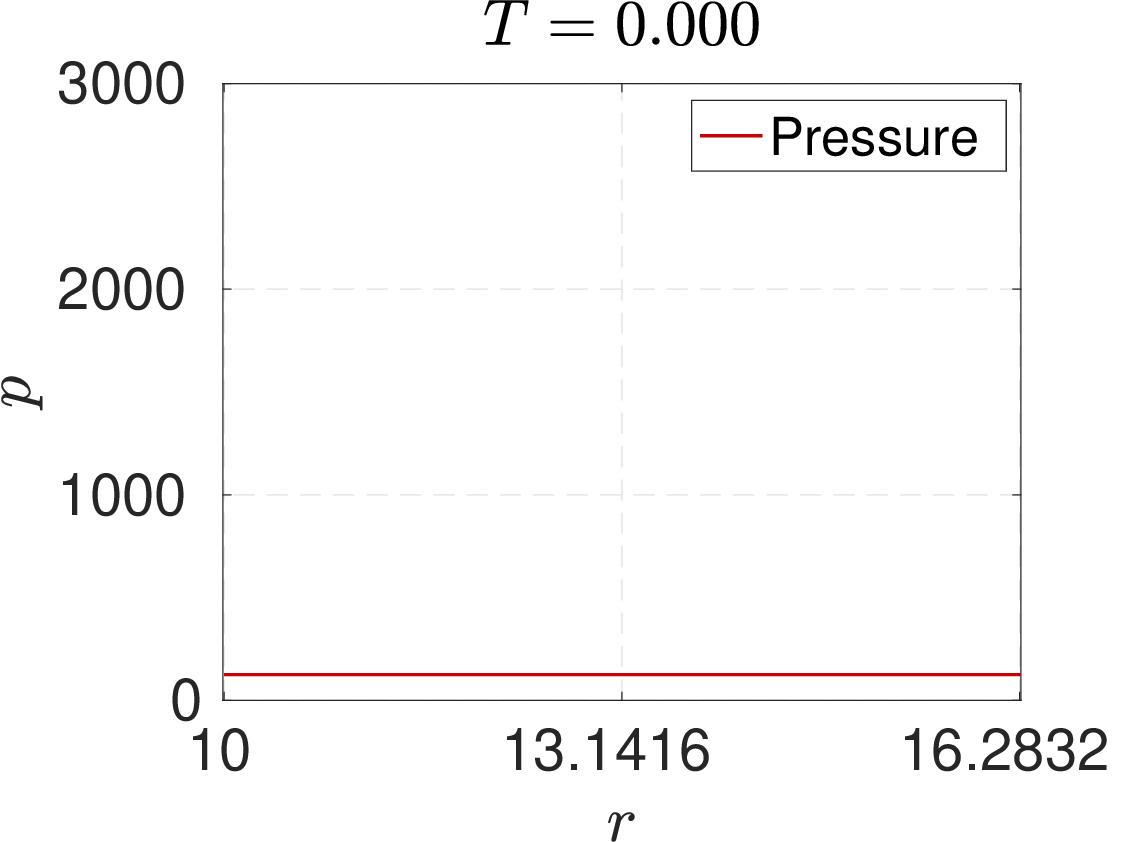}
  \end{subfigure}
    \begin{subfigure}{0.32\textwidth}
    \centering
    \includegraphics[width=1.0\textwidth]{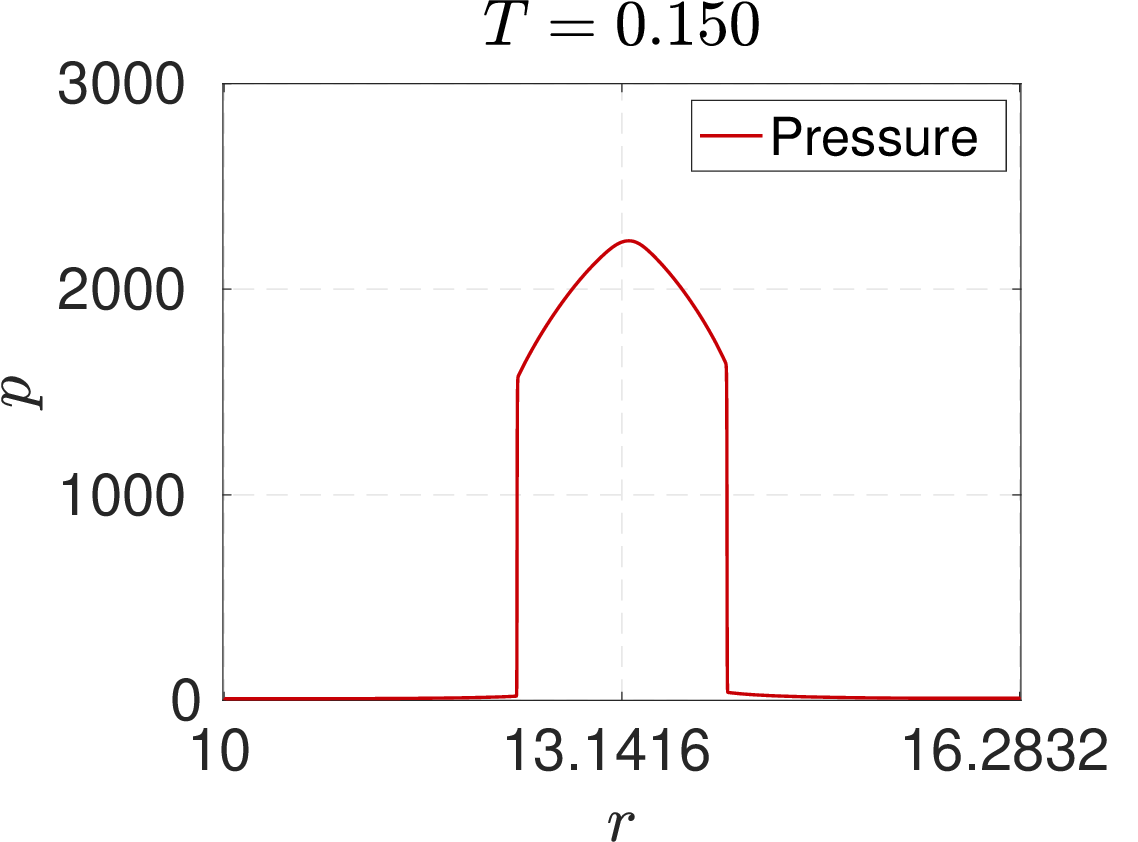}
  \end{subfigure}
  \begin{subfigure}{0.32\textwidth}
    \centering
    \includegraphics[width=1.0\textwidth]{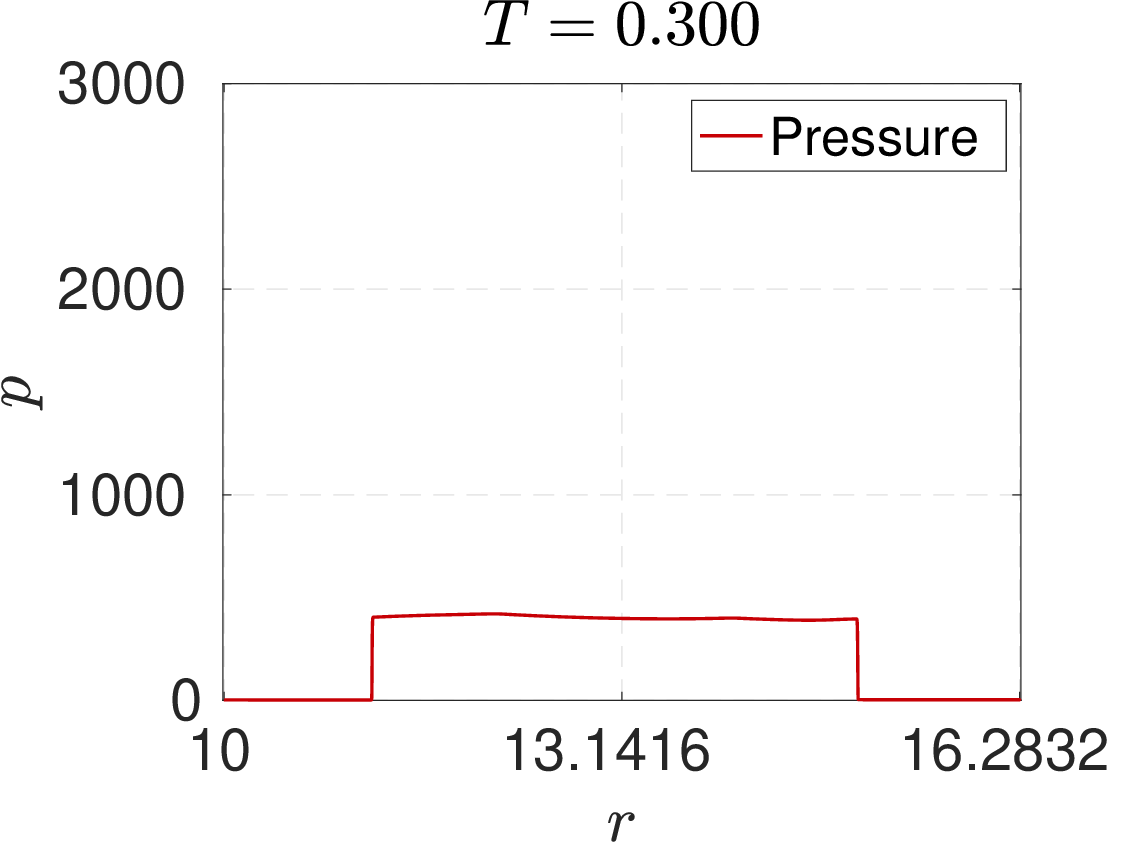}
  \end{subfigure}
  \caption{The initial \textit{pressure} is shown on the left and its time-evolved state on the center and on the right.}
  \label{fig:pressure_case3c}
\end{figure}

% ------------------------------------------------------------------------

\begin{figure}[H]
  \centering
  \begin{subfigure}{0.32\textwidth}
    \centering
    \includegraphics[width=1.0\textwidth]{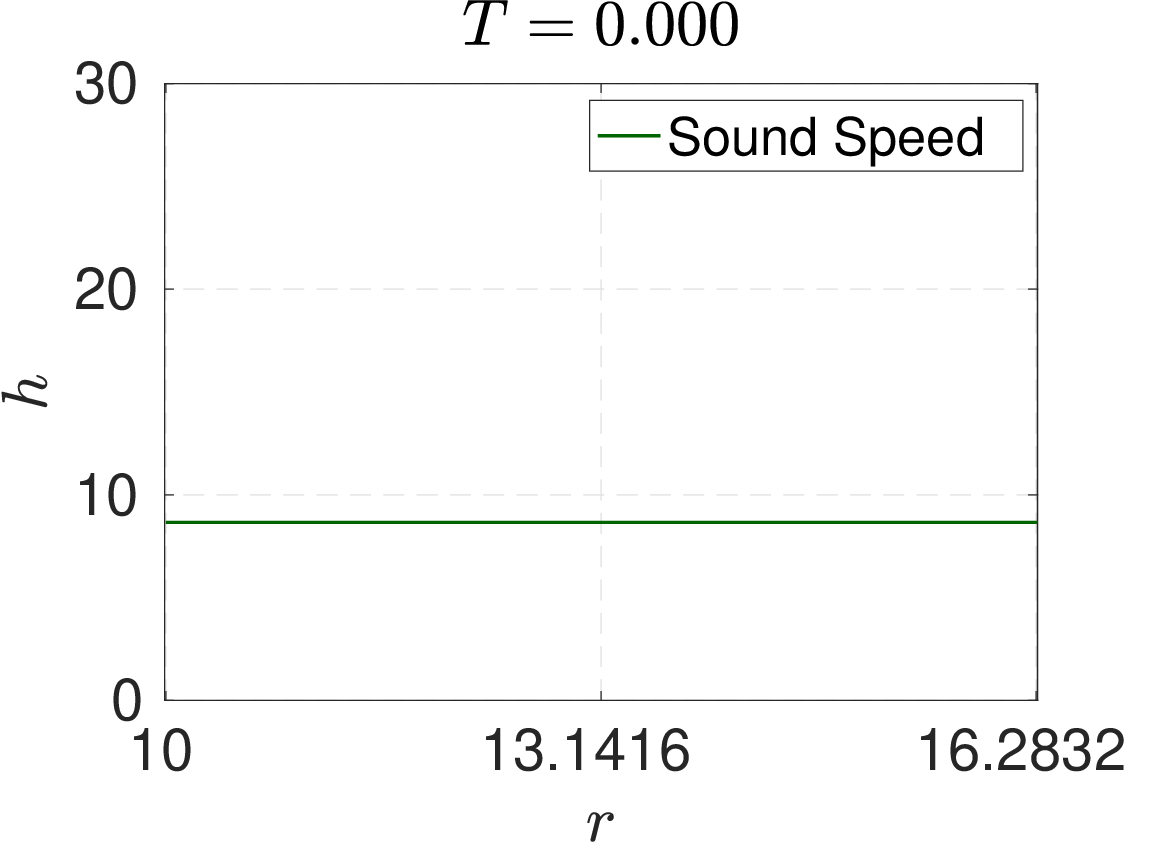}
  \end{subfigure}
    \begin{subfigure}{0.32\textwidth}
    \centering
    \includegraphics[width=1.0\textwidth]{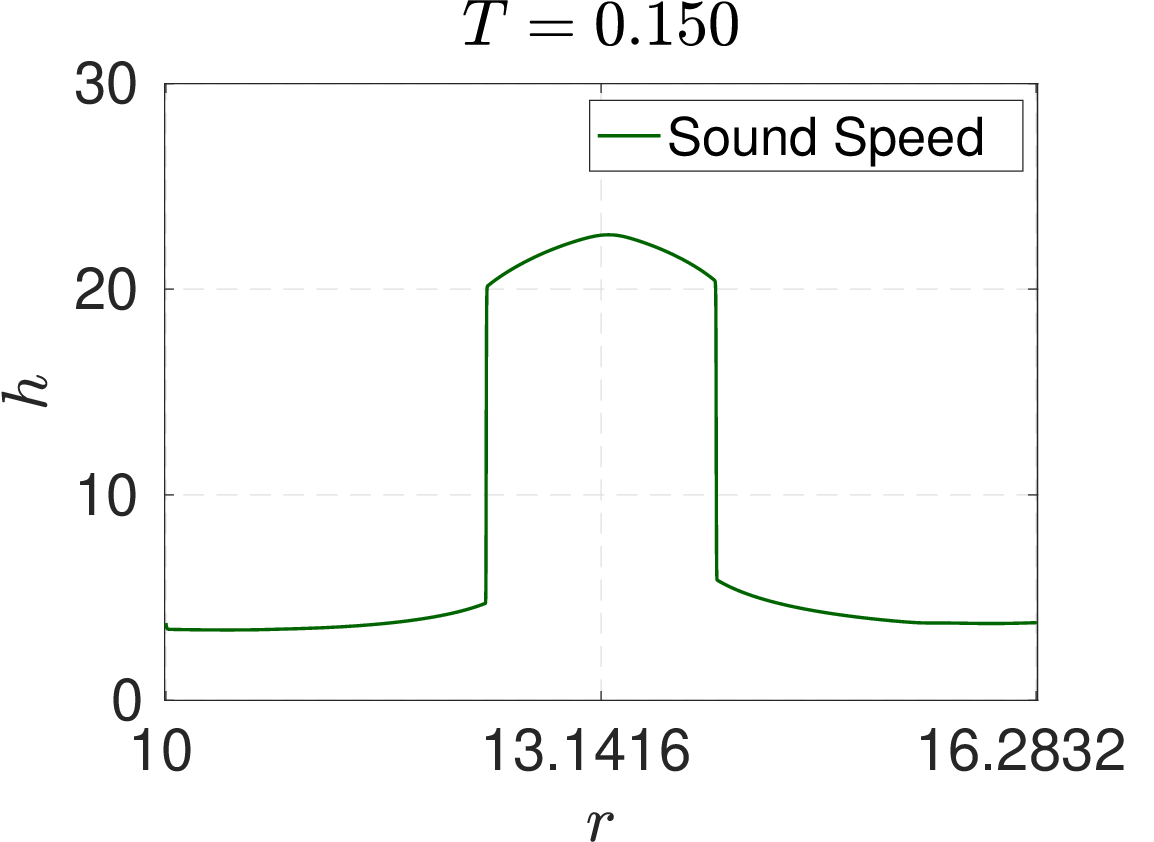}
  \end{subfigure}
  \begin{subfigure}{0.32\textwidth}
    \centering
    \includegraphics[width=1.0\textwidth]{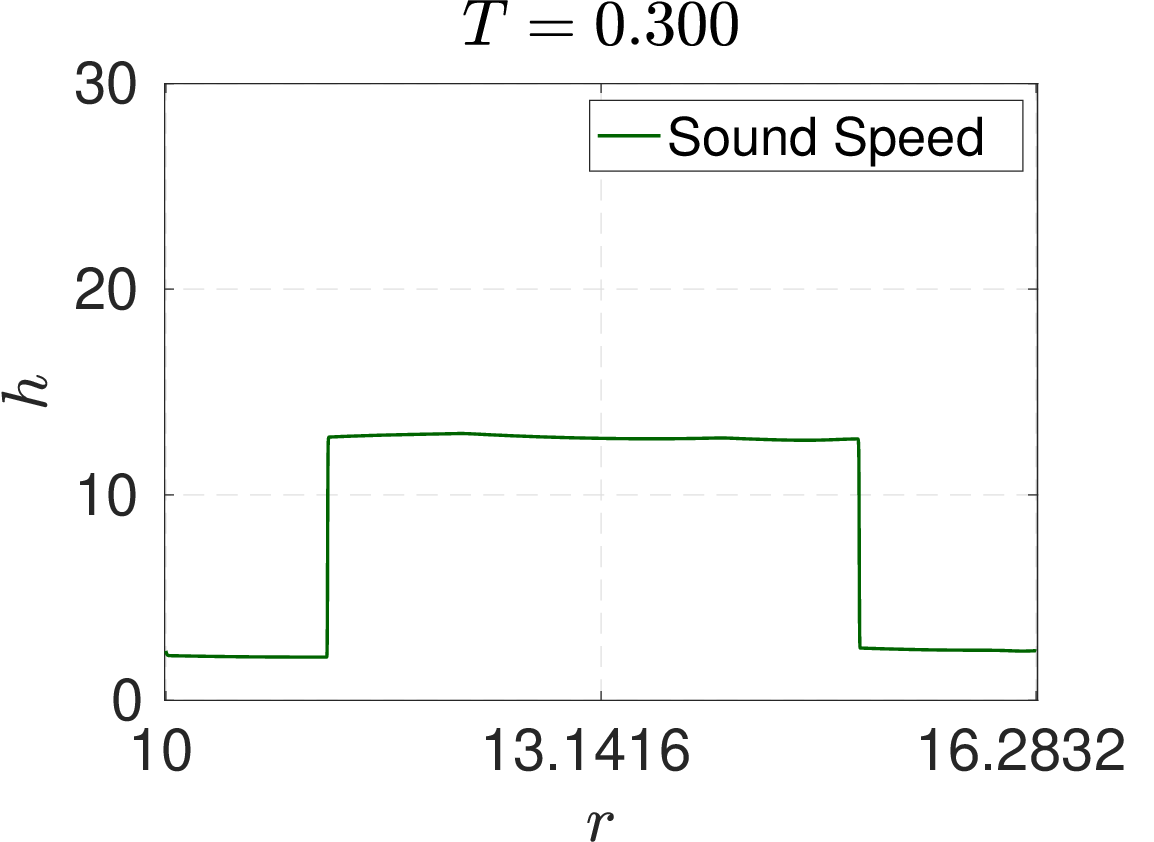}
  \end{subfigure}
  \caption{The initial \textit{sound speed} is shown on the left and its time-evolved state on the center and on the right.}
  \label{fig:sound-speed_case3c}
\end{figure}

% ------------------------------------------------------------------------

\begin{figure}[H]
  \centering
  \begin{subfigure}{0.32\textwidth}
    \centering
    \includegraphics[width=1.0\textwidth]{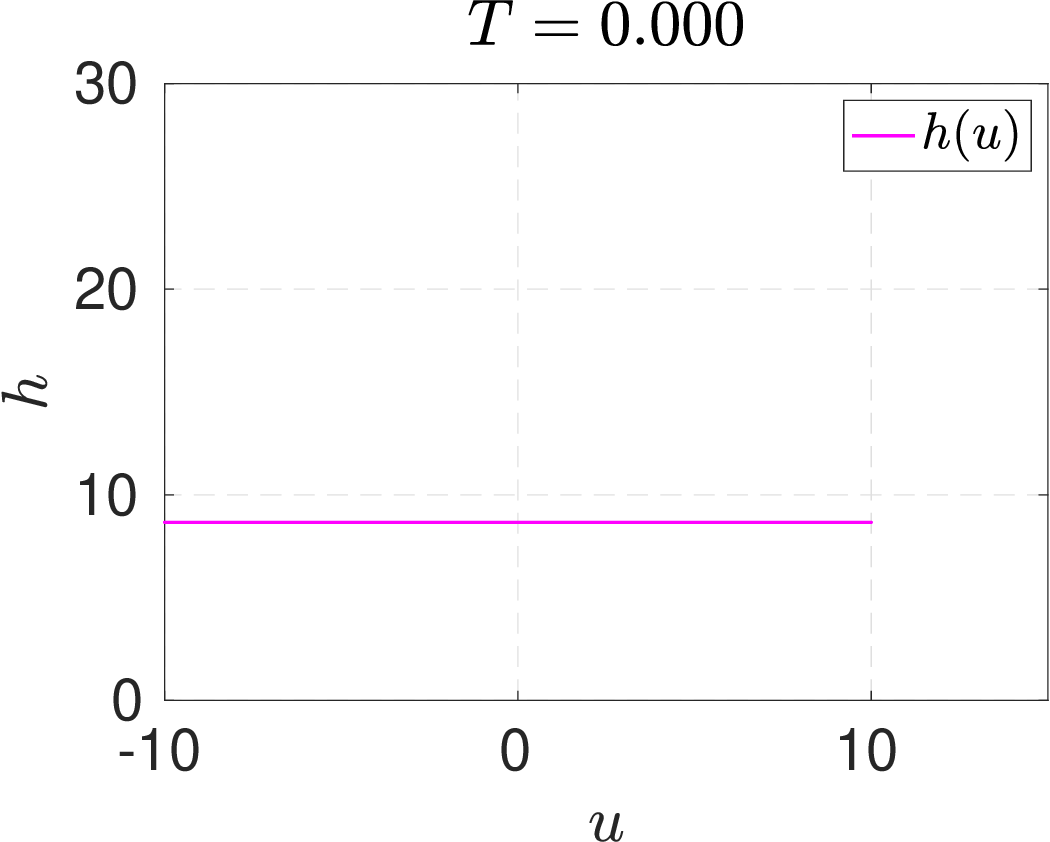}
  \end{subfigure}
    \begin{subfigure}{0.32\textwidth}
    \centering
    \includegraphics[width=1.0\textwidth]{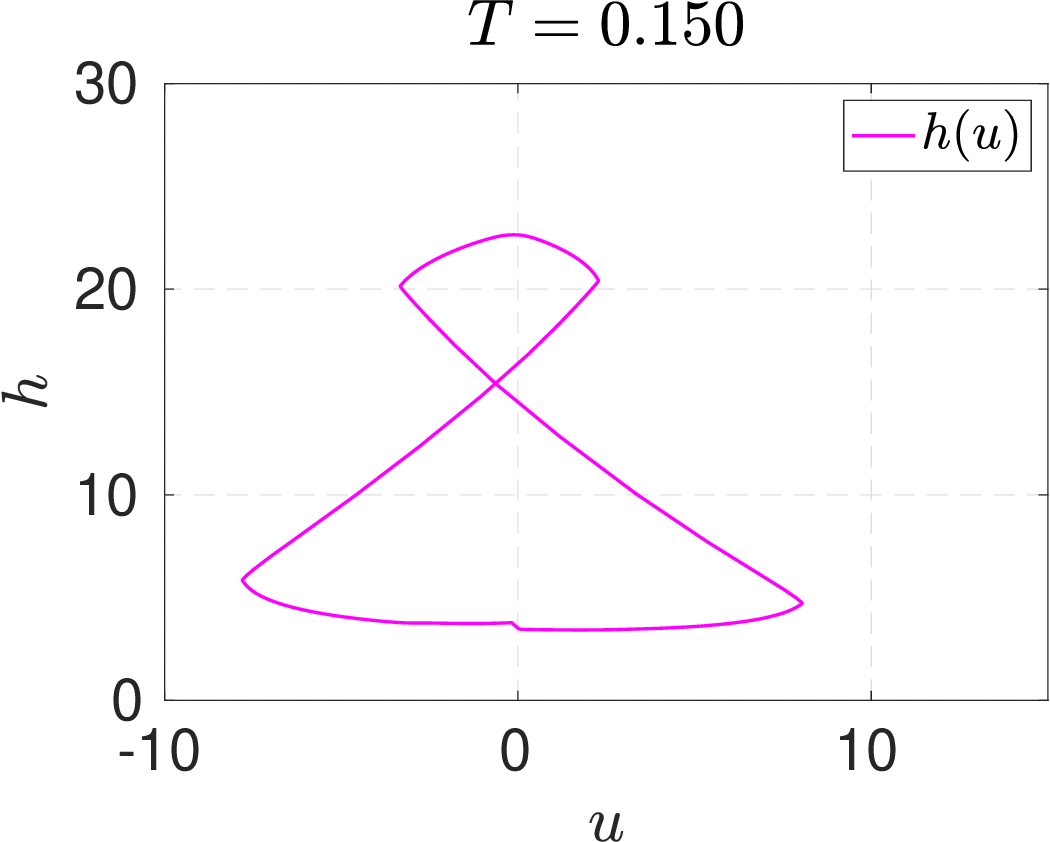}
  \end{subfigure}
  \begin{subfigure}{0.32\textwidth}
    \centering
    \includegraphics[width=1.0\textwidth]{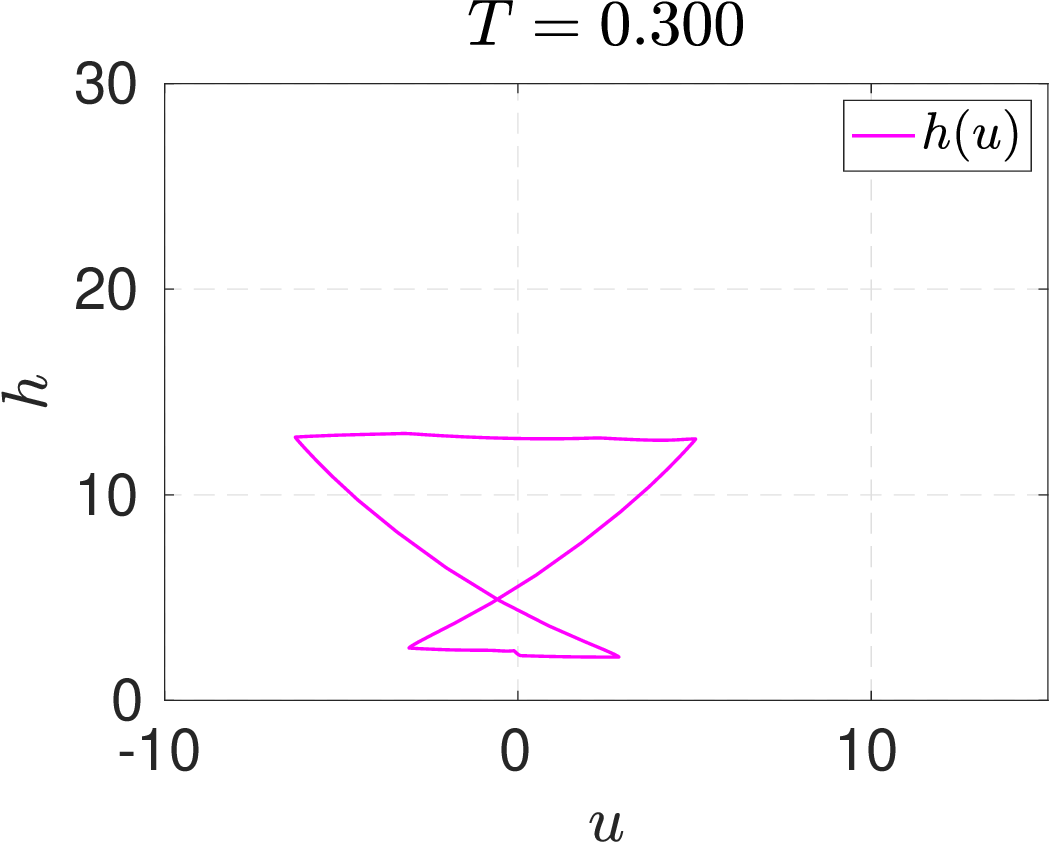}
  \end{subfigure}
  \caption{The initial \textit{invariant curve in \((u,h)-\)plane} is shown on the left and its time-evolved state on the center and on the right.}
  \label{fig:invariant-curve_case3c}
\end{figure}

% ------------------------------------------------------------------------

\begin{figure}[H]
  \centering
  \begin{subfigure}{0.49\textwidth}
    \centering
    \includegraphics[width=1.0\textwidth]{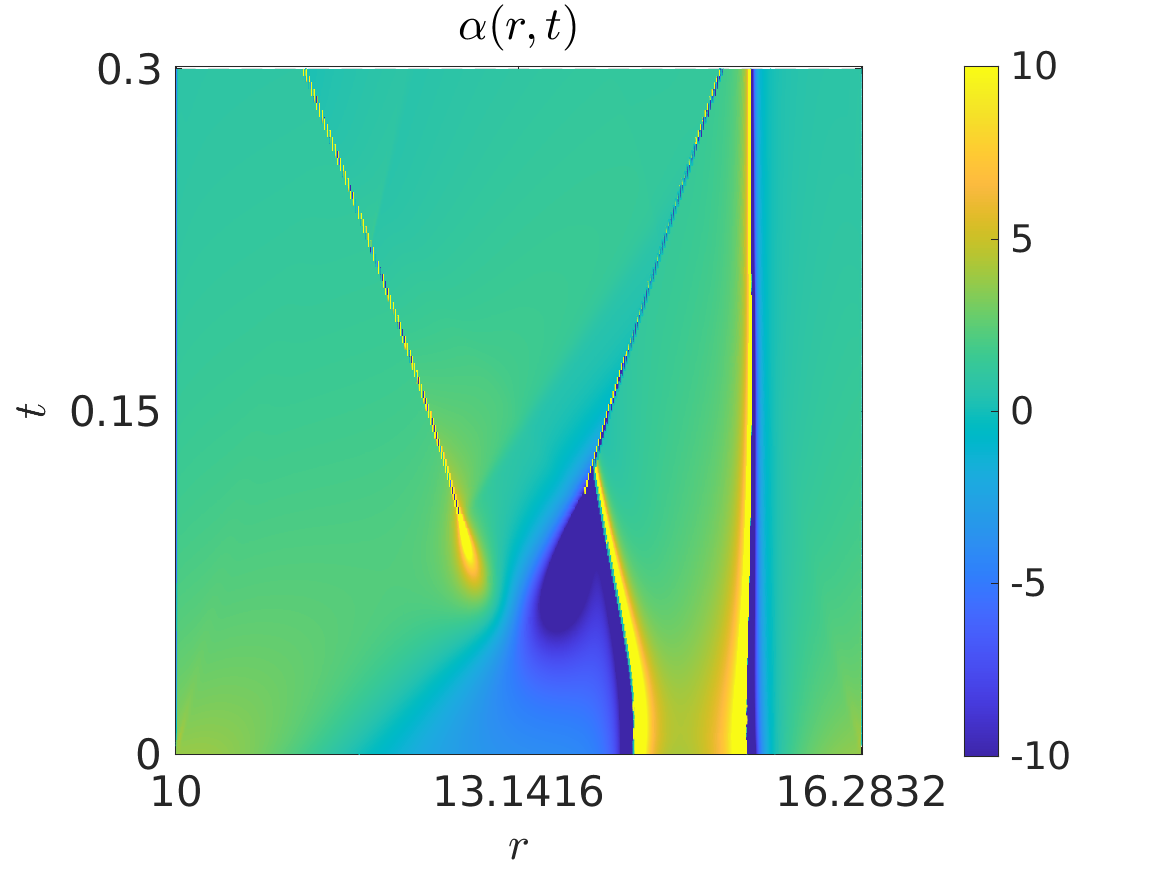}
  \end{subfigure}
  \begin{subfigure}{0.49\textwidth}
    \centering
	\includegraphics[width=1.0\textwidth]{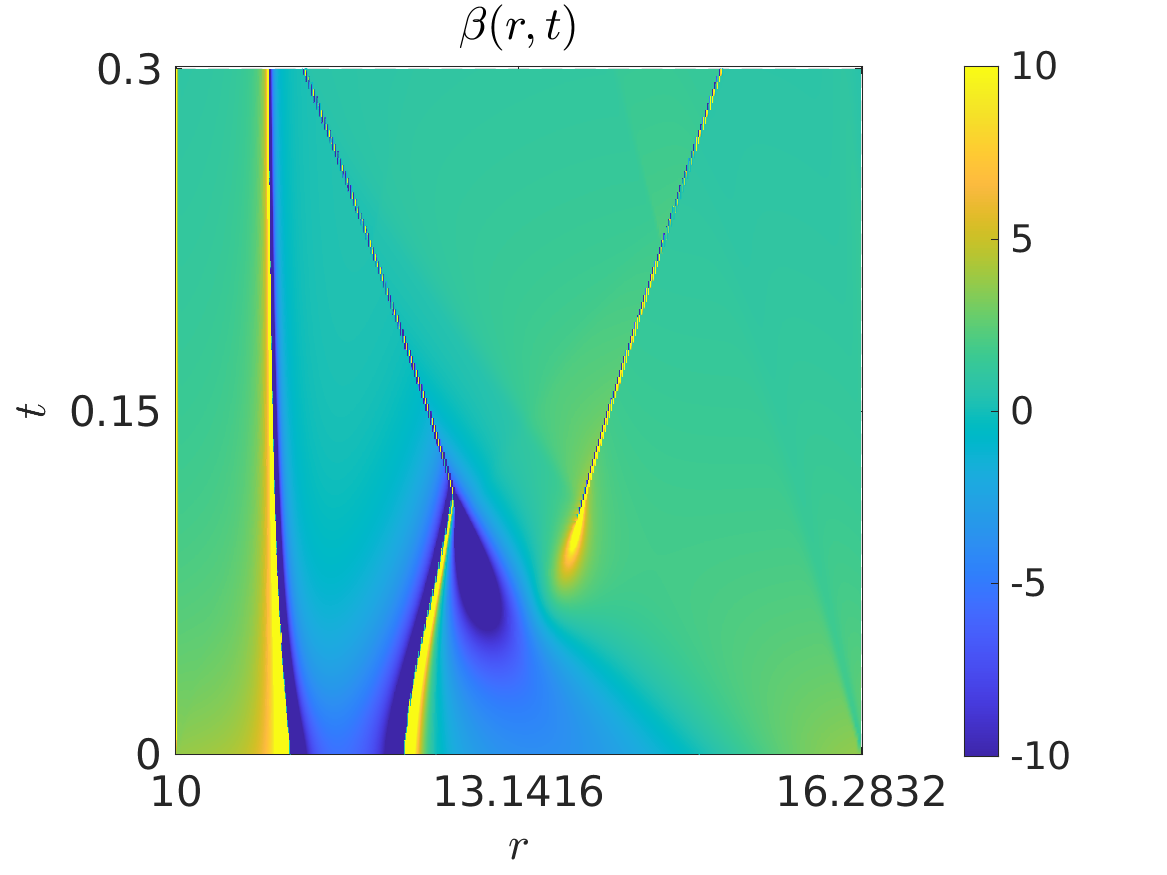}
  \end{subfigure}
  \caption{The \textit{heat map of $\alpha$ in \((r,t)-\)plane} is shown on the left and the heat map of $\beta$ on the right.}
  \label{fig:heat-map_case3c}
\end{figure}

% ------------------------------------------------------------------------

\subsection*{Case 4: realistic compression (sea-level scaling)}

To probe the subsonic mechanisms and the near-periodic behavior characteristic of restricted domains, we examine a periodic configuration with:
\[
K=7.75\times 10^{4},\quad \gamma=1.4,\quad h_{c}=343\ \mathrm{m/s},\quad v_a=3400 \ \mathrm{m/s},\quad r\in[1,5].
\]
We investigate the structural stability of the solution across three perturbation scales, $\epsilon \in {10.0,1.0,0.1}$, to assess the influence of the nonlinearity on the global regularity of the flow.

For the small-to-moderate amplitude regimes, $\epsilon=10$ (Case 3.1) and $\epsilon=1$ (Case 3.2), the system exhibits sustained oscillatory dynamics. The numerical diagnostics (Figures \ref{fig:density_case3a}--\ref{fig:invariant-curve_case3b}) confirm the persistence of smooth functional states, while the spatio-temporal heat maps for $\alpha$ and $\beta$ reveal bounded, well-structured patterns consistent with a stable subsonic equilibrium (Figures \ref{fig:heat-map_case3a}--\ref{fig:heat-map_case3b}).

Conversely, for the large-amplitude regime where $\epsilon=0.1$ (Case 3.3), the enhanced nonlinearity overwhelms the dispersive-like effects of the subsonic transport, precipitating a rapid steepening of the profile. This culminates in a gradient catastrophe and finite-time shock-like breakdown (see Figures \ref{fig:density_case3c}--\ref{fig:invariant-curve_case3c}). This transition is rigorously captured by the gradient variables, which exhibit localized concentration and blow-up in the negative domain, as illustrated in the phase-space diagnostics in Figure \ref{fig:heat-map_case3c}. This case underscores the existence of an amplitude threshold for regularity in the presence of periodic boundary constraints and geometric effects.

% ------------------------------------------------------------------------

\begin{figure}[H]
  \centering
  \begin{subfigure}{0.32\textwidth}
    \centering
    \includegraphics[width=1.0\textwidth]{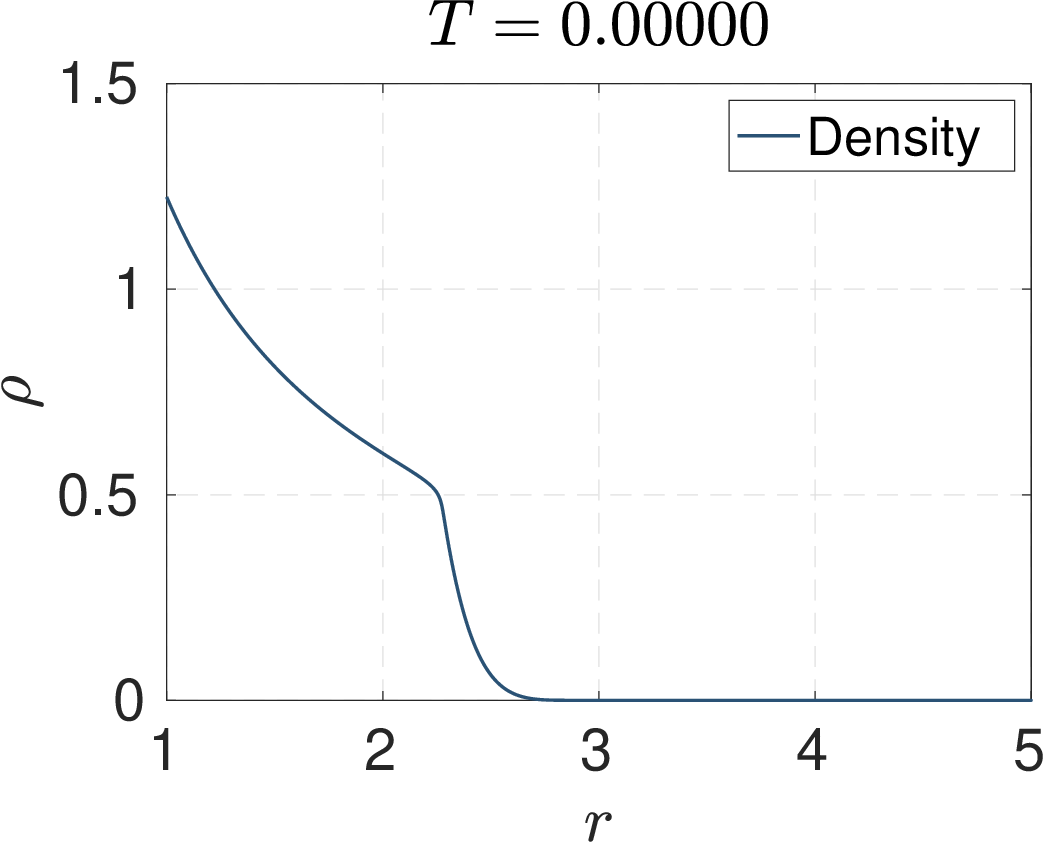}
  \end{subfigure}
    \begin{subfigure}{0.32\textwidth}
    \centering
    \includegraphics[width=1.0\textwidth]{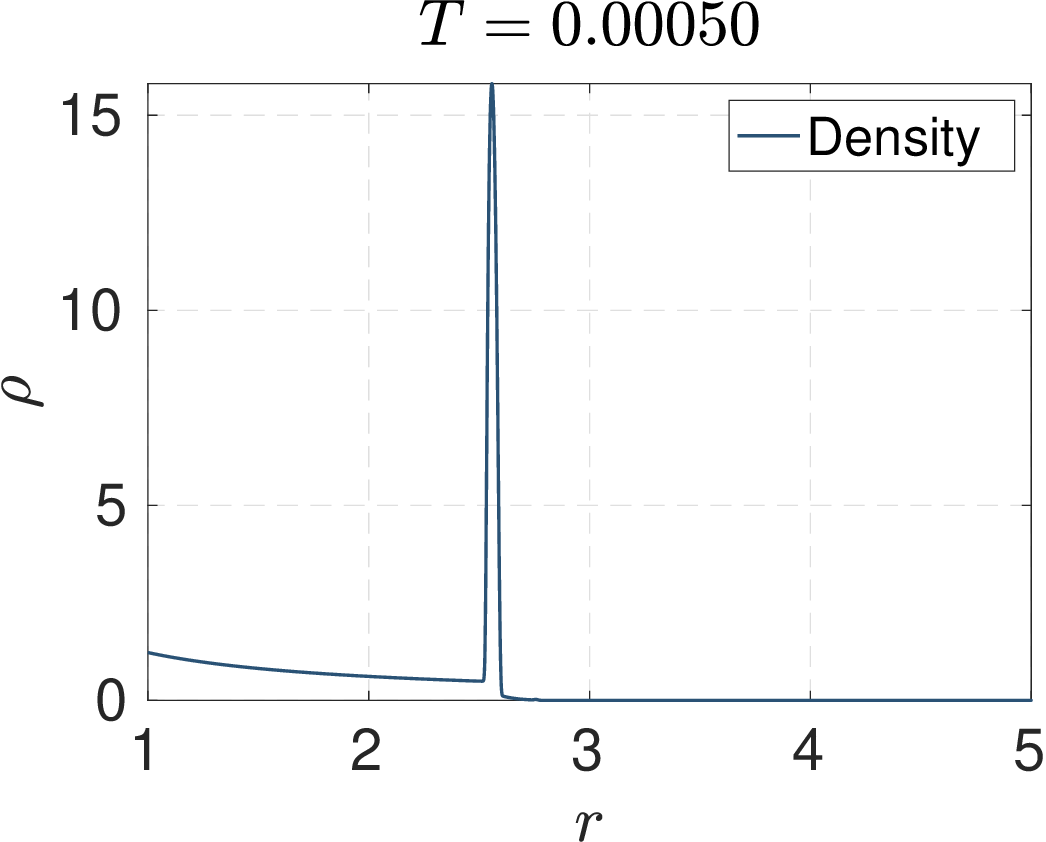}
  \end{subfigure}
  \begin{subfigure}{0.32\textwidth}
    \centering
    \includegraphics[width=1.0\textwidth]{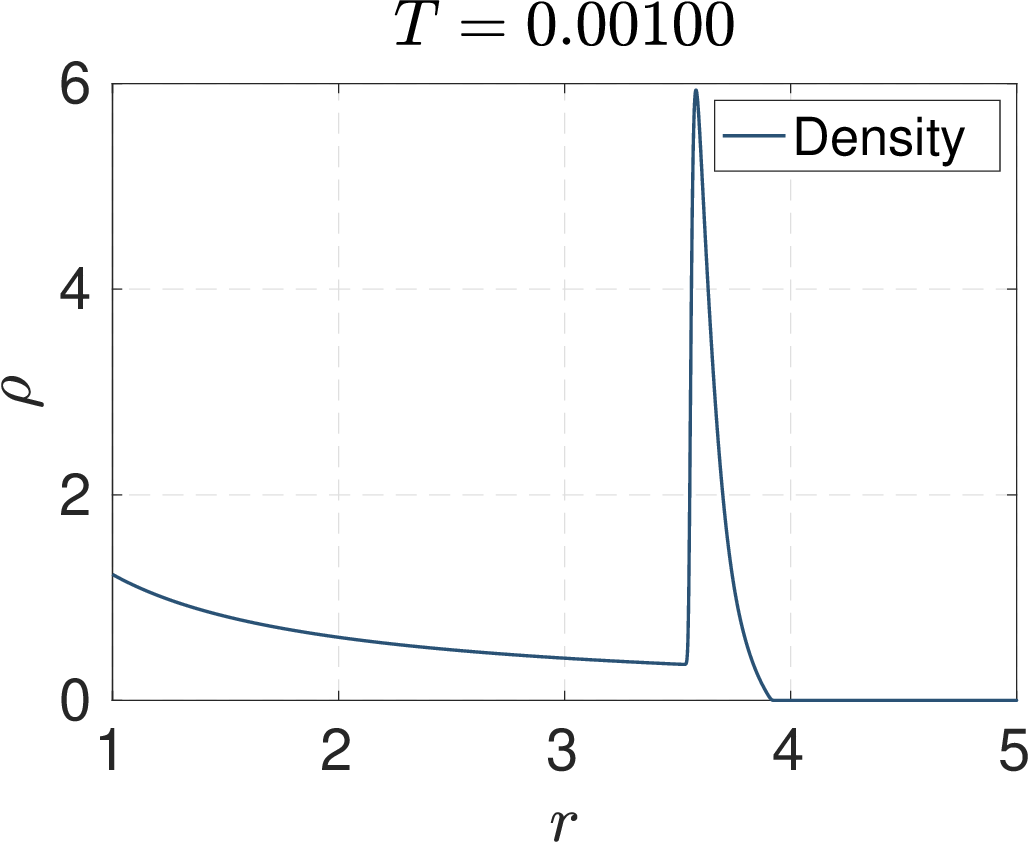}
  \end{subfigure}
  \caption{The initial \textit{density} is shown on the left and its time-evolved state on the center and on the right.}
  \label{fig:density_case4}
\end{figure}

% ------------------------------------------------------------------------

\begin{figure}[H]
  \centering
  \begin{subfigure}{0.32\textwidth}
    \centering
    \includegraphics[width=1.0\textwidth]{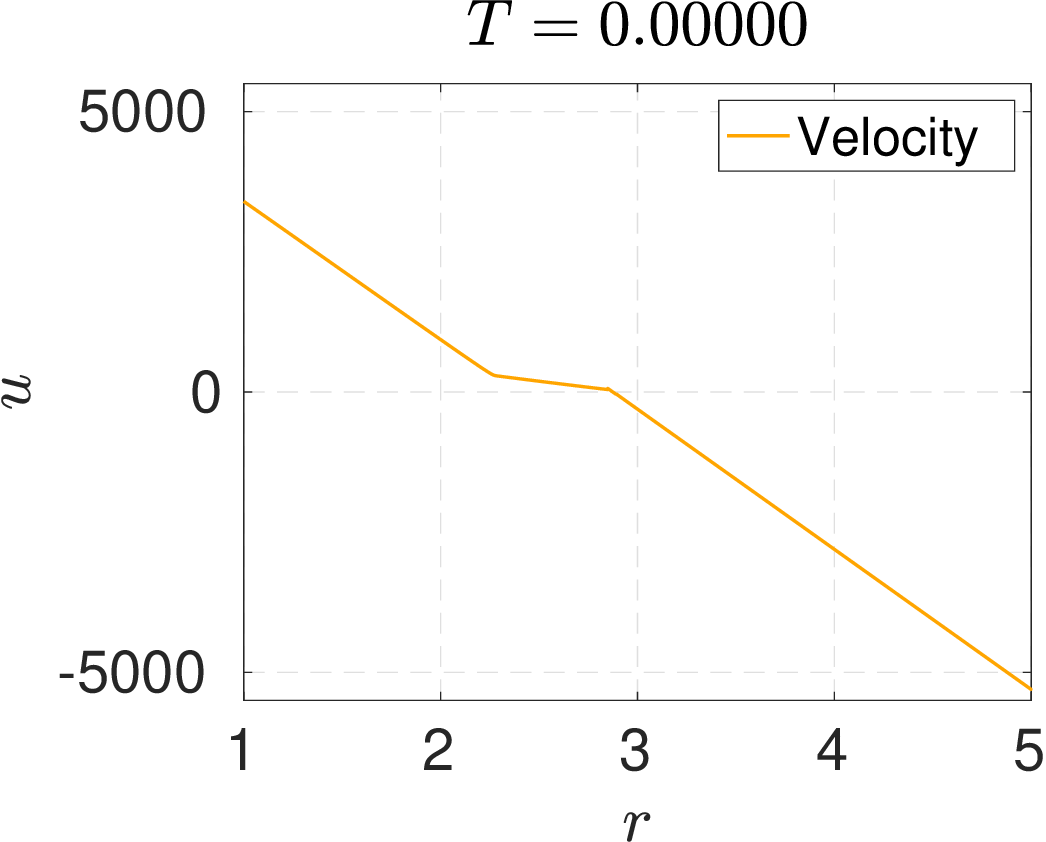}
  \end{subfigure}
    \begin{subfigure}{0.32\textwidth}
    \centering
    \includegraphics[width=1.0\textwidth]{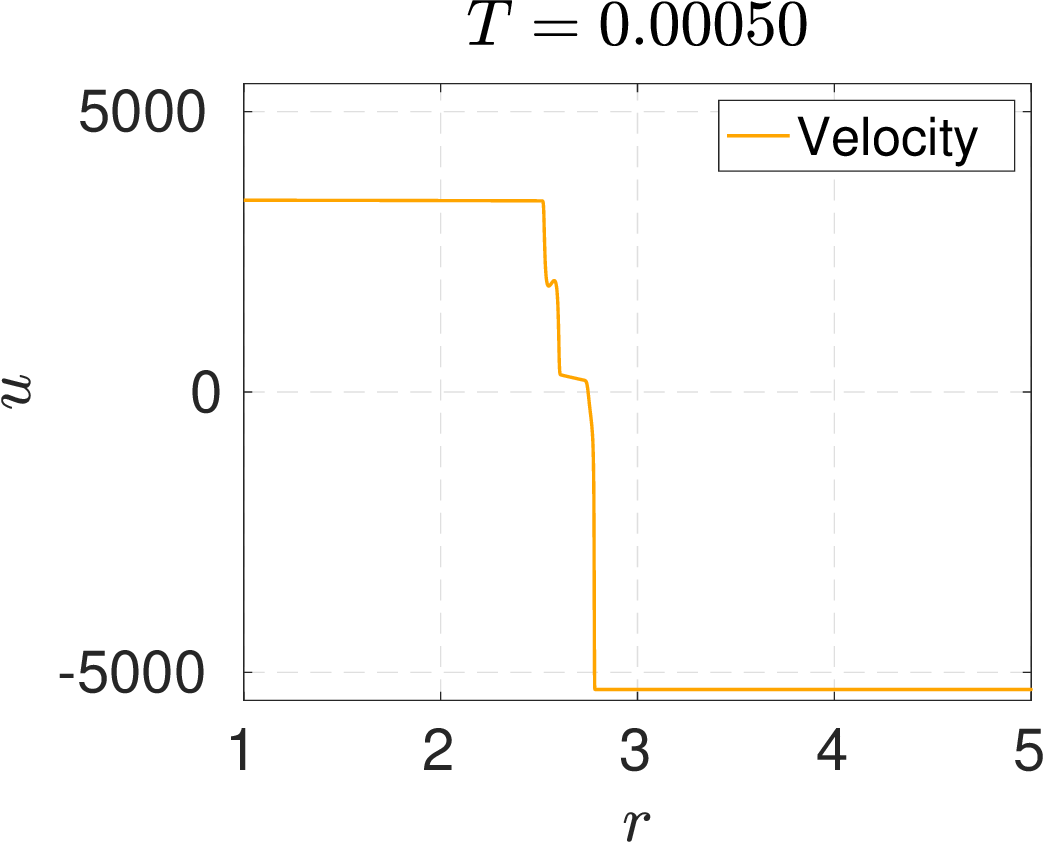}
  \end{subfigure}
  \begin{subfigure}{0.32\textwidth}
    \centering
    \includegraphics[width=1.0\textwidth]{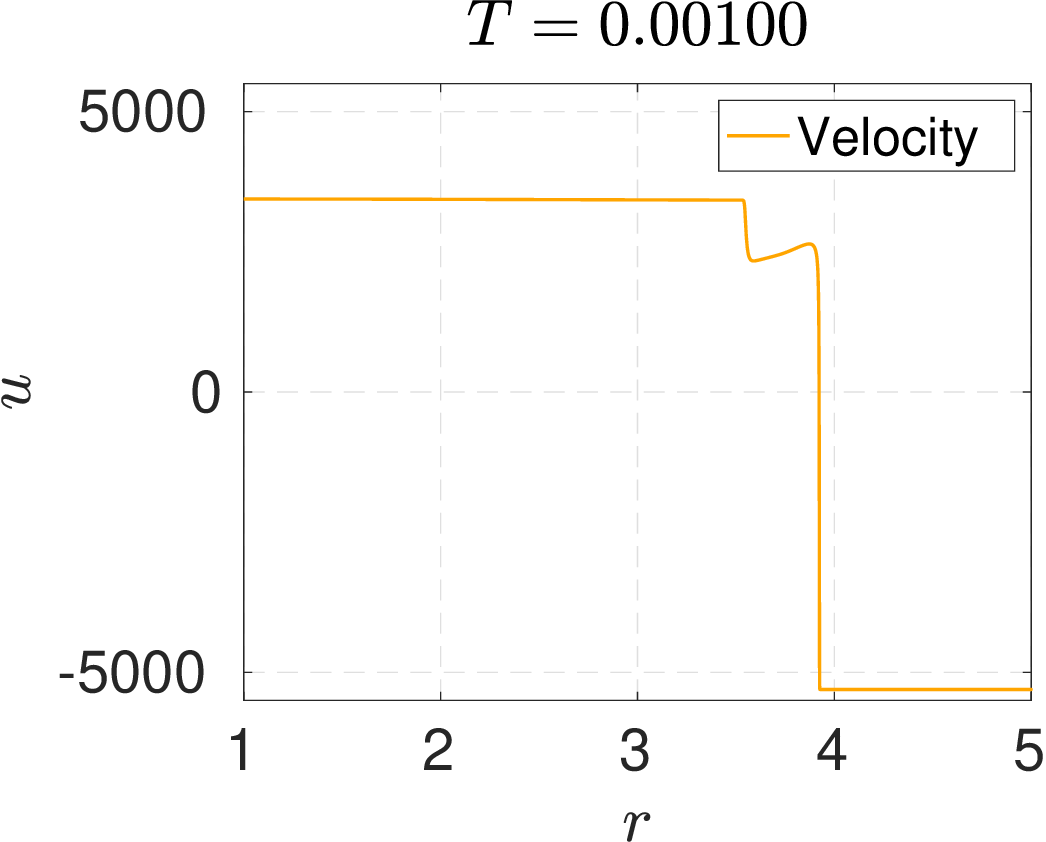}
  \end{subfigure}
  \caption{The initial \textit{velocity} is shown on the left and its time-evolved state on the center and on the right.}
  \label{fig:velocity_case4}
\end{figure}

% ------------------------------------------------------------------------

\begin{figure}[H]
  \centering
  \begin{subfigure}{0.32\textwidth}
    \centering
    \includegraphics[width=1.0\textwidth]{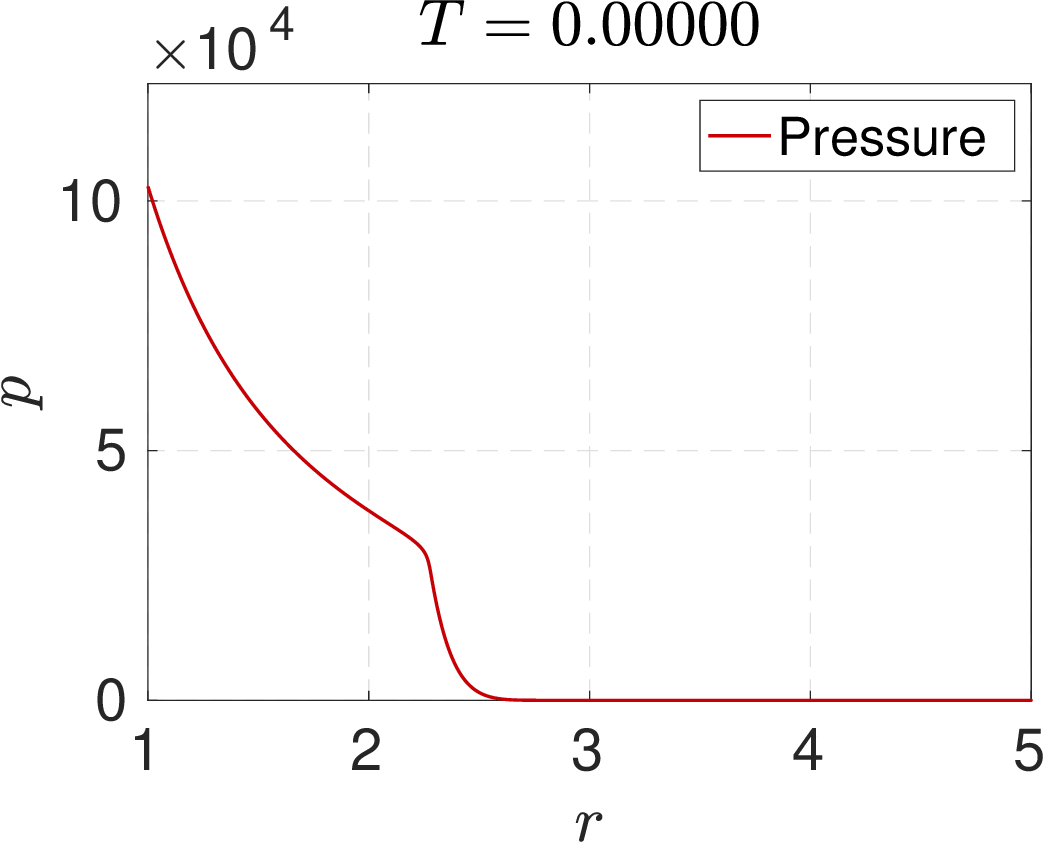}
  \end{subfigure}
    \begin{subfigure}{0.32\textwidth}
    \centering
    \includegraphics[width=1.0\textwidth]{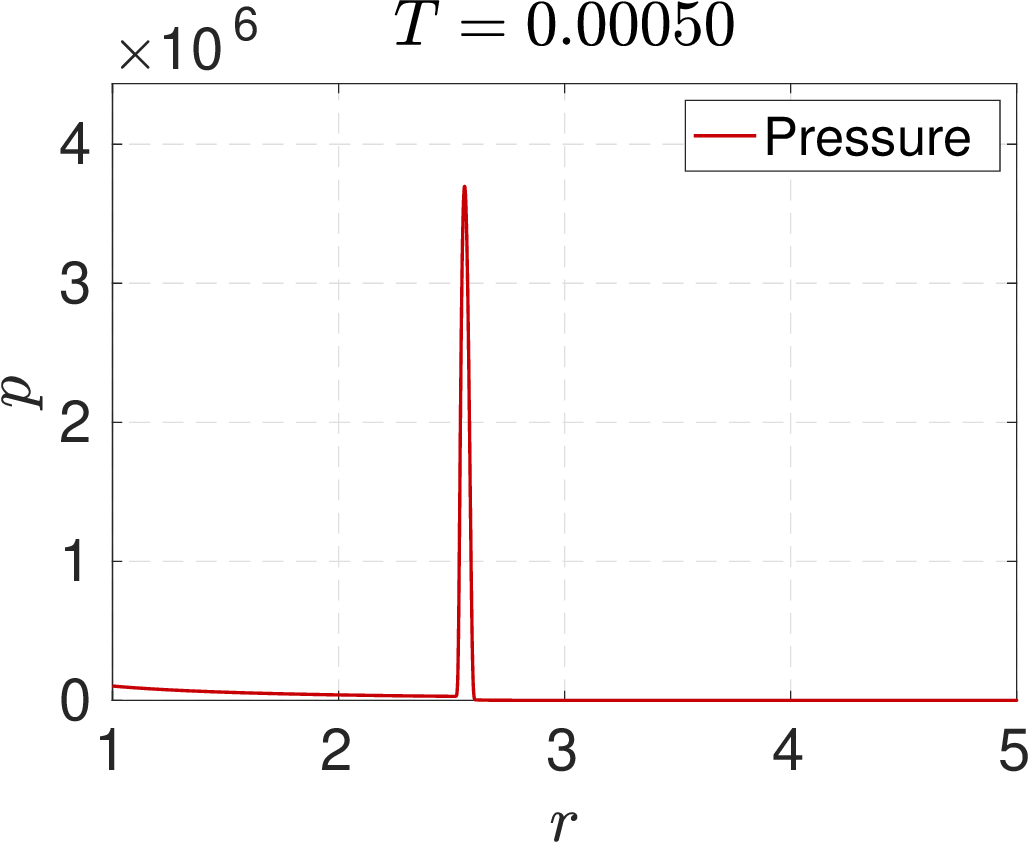}
  \end{subfigure}
  \begin{subfigure}{0.32\textwidth}
    \centering
    \includegraphics[width=1.0\textwidth]{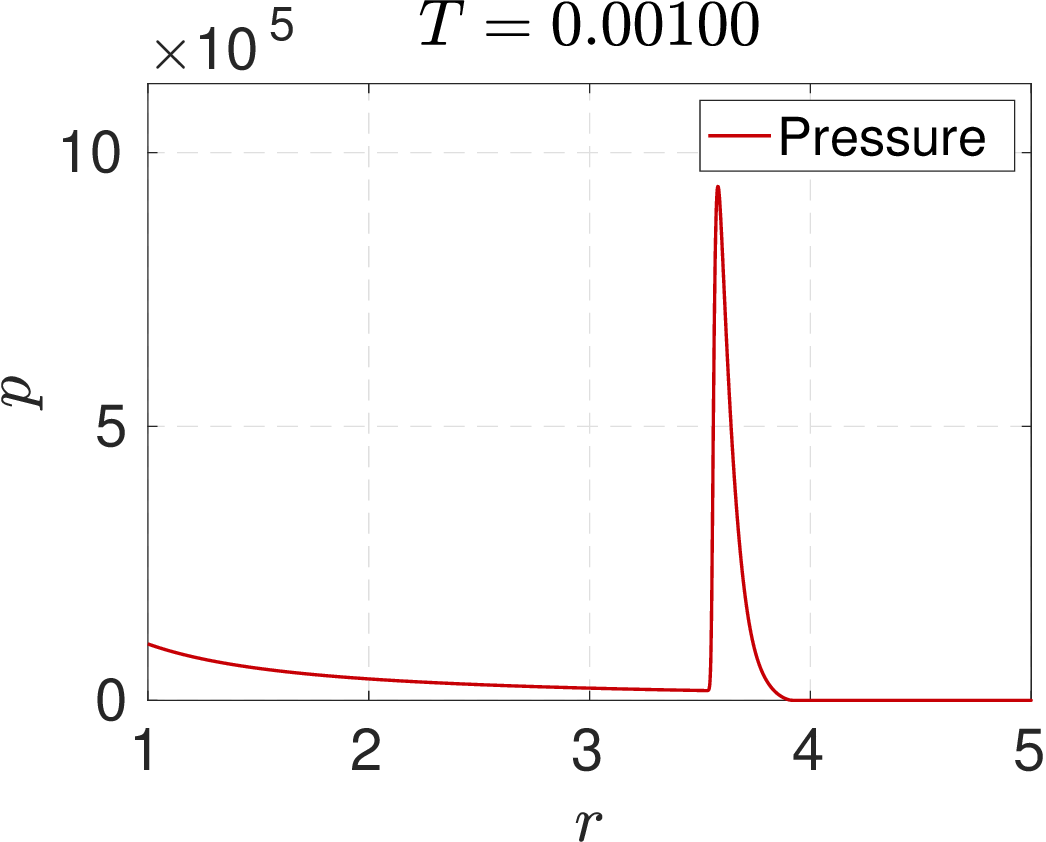}
  \end{subfigure}
  \caption{The initial \textit{pressure} is shown on the left and its time-evolved state on the center and on the right.}
  \label{fig:pressure_case4}
\end{figure}

% ------------------------------------------------------------------------

\begin{figure}[H]
  \centering
  \begin{subfigure}{0.32\textwidth}
    \centering
    \includegraphics[width=1.0\textwidth]{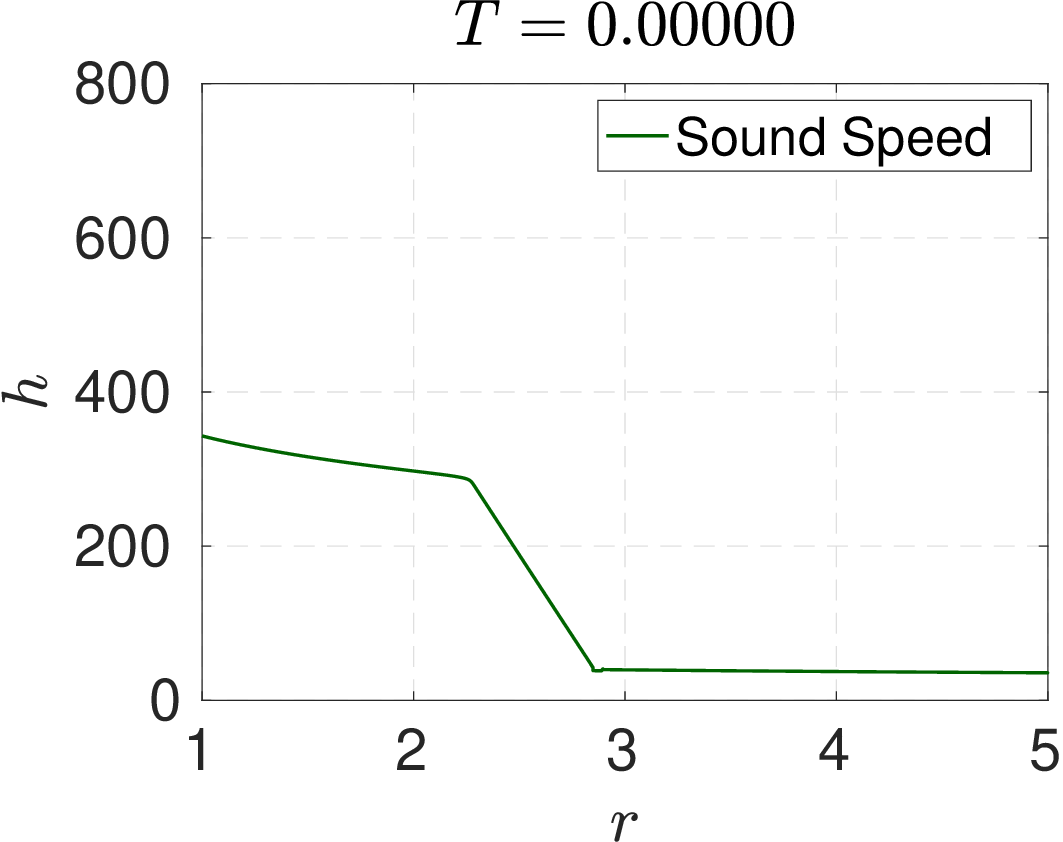}
  \end{subfigure}
    \begin{subfigure}{0.32\textwidth}
    \centering
    \includegraphics[width=1.0\textwidth]{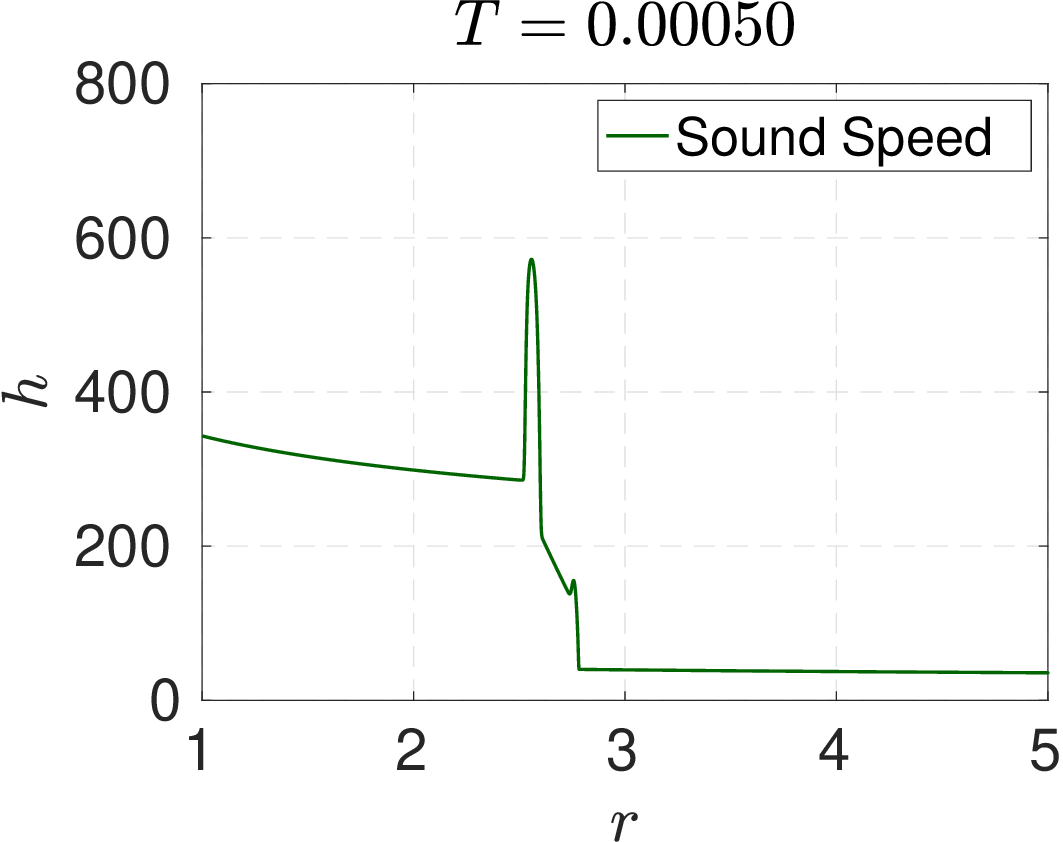}
  \end{subfigure}
  \begin{subfigure}{0.32\textwidth}
    \centering
    \includegraphics[width=1.0\textwidth]{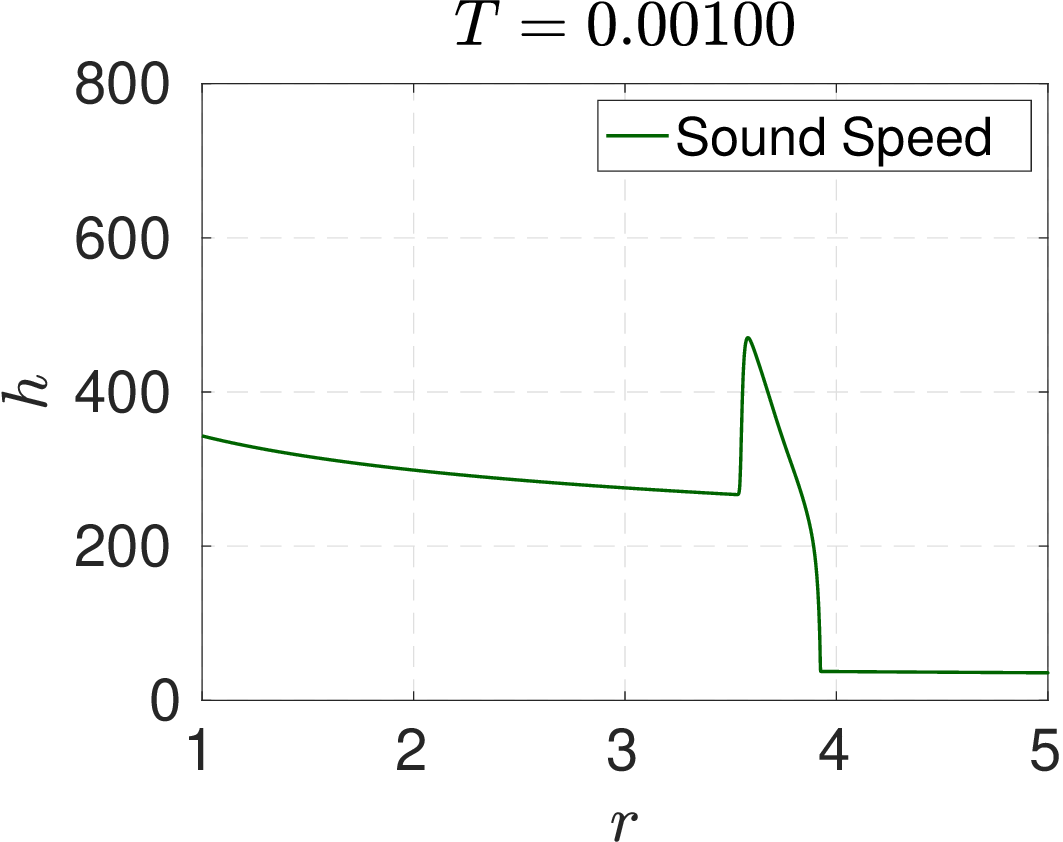}
  \end{subfigure}
  \caption{The initial \textit{sound speed} is shown on the left and its time-evolved state on the center and on the right.}
  \label{fig:sound-speed_case4}
\end{figure}

% ------------------------------------------------------------------------

\begin{figure}[H]
  \centering
  \begin{subfigure}{0.32\textwidth}
    \centering
    \includegraphics[width=1.0\textwidth]{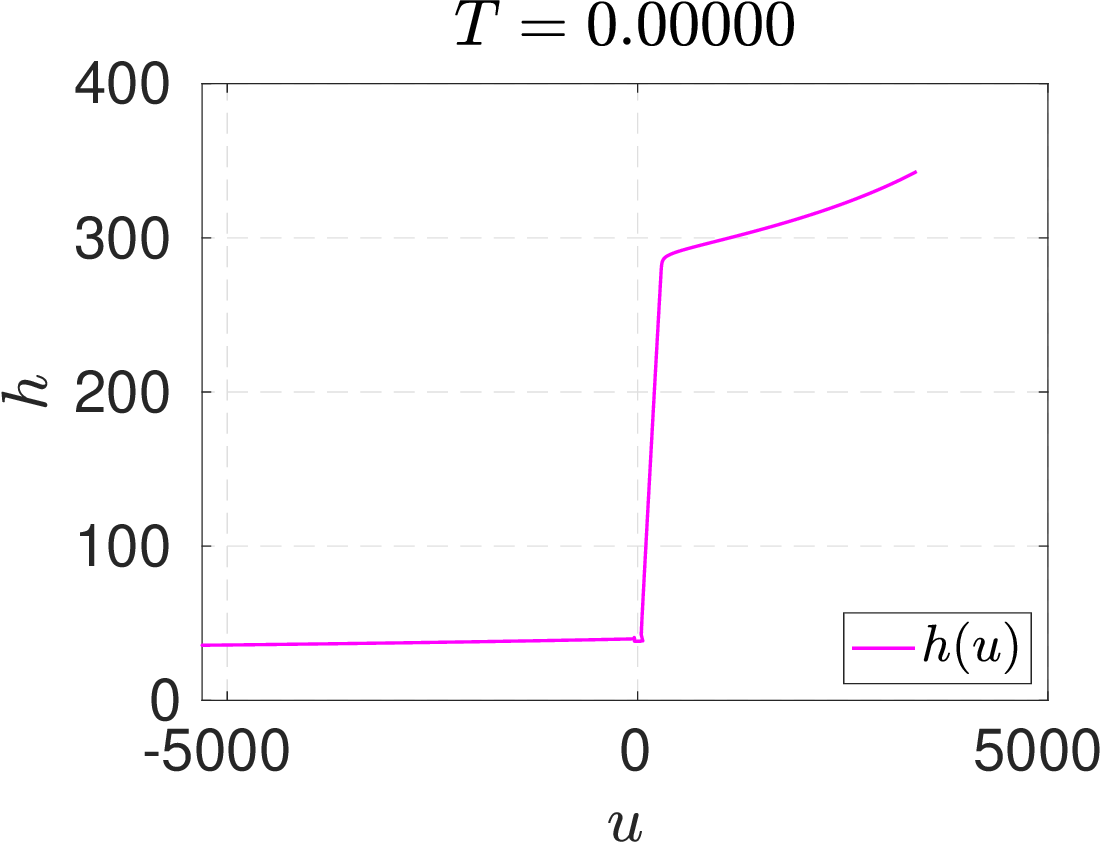}
  \end{subfigure}
    \begin{subfigure}{0.32\textwidth}
    \centering
    \includegraphics[width=1.0\textwidth]{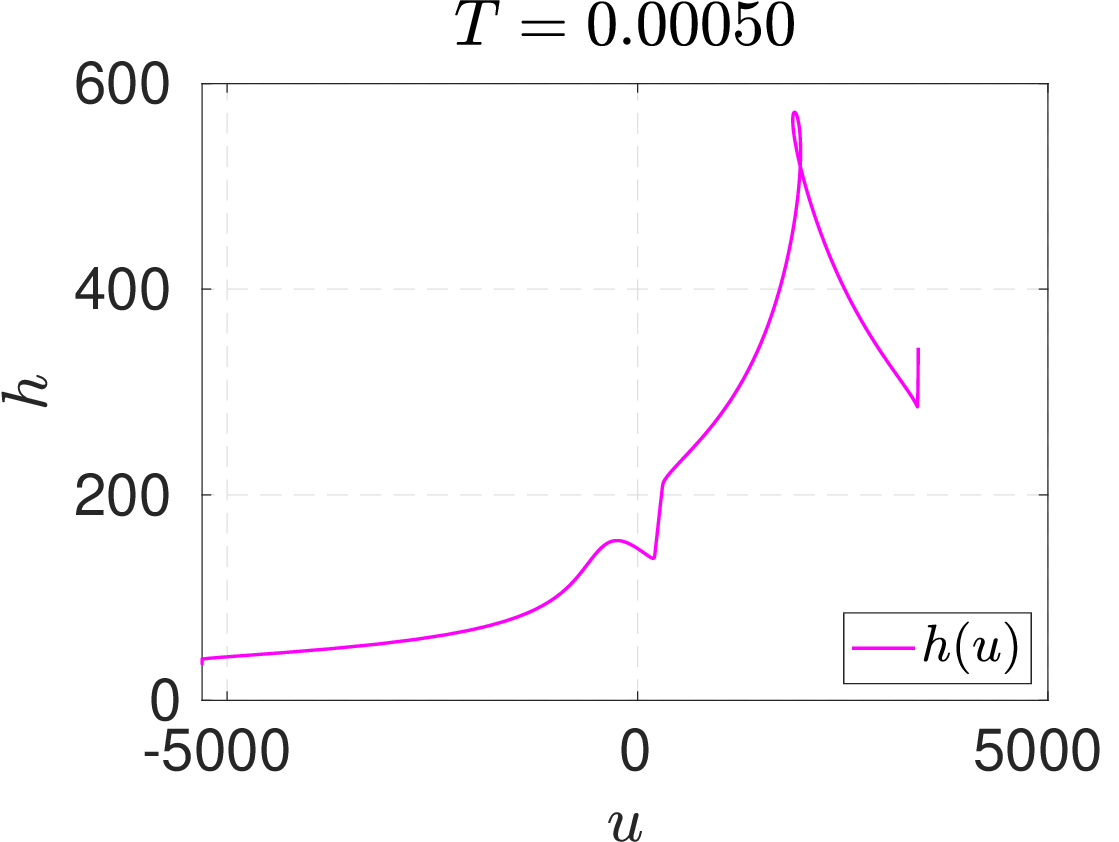}
  \end{subfigure}
  \begin{subfigure}{0.32\textwidth}
    \centering
    \includegraphics[width=1.0\textwidth]{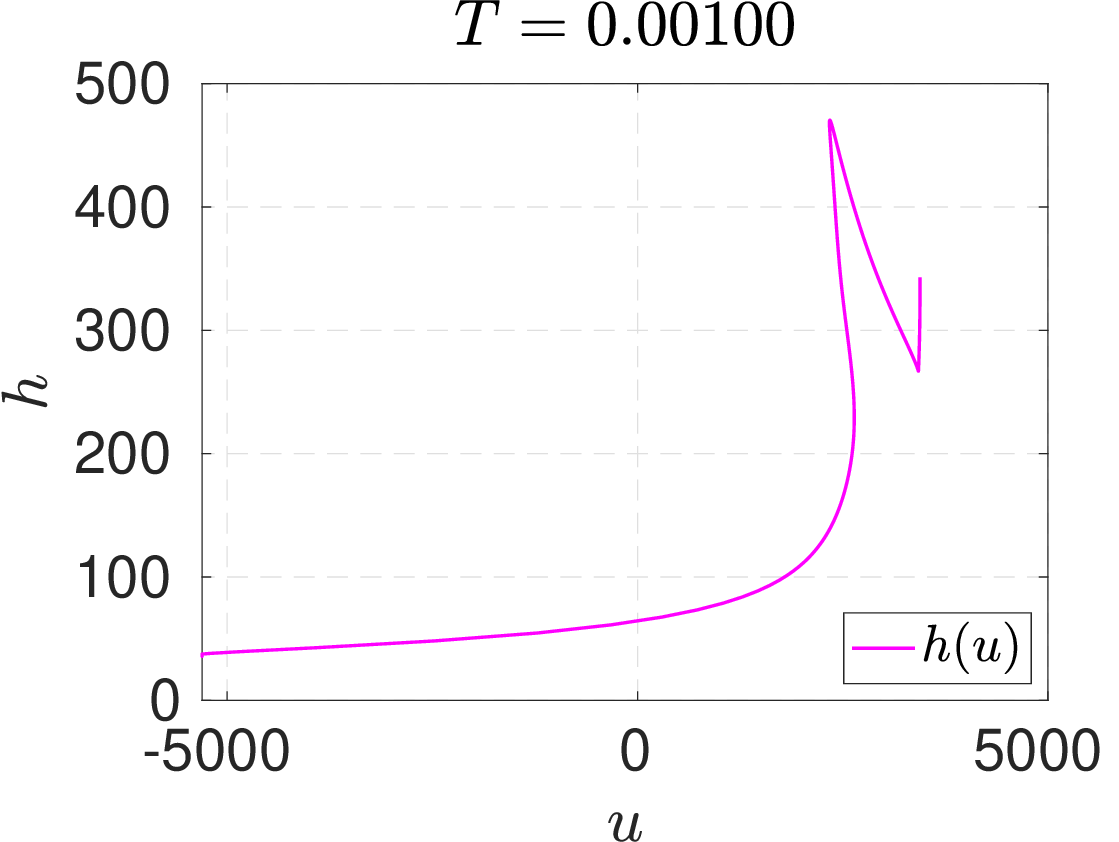}
  \end{subfigure}
  \caption{The initial \textit{invariant curve in \((u,h)-\)plane} is shown on the left and its time-evolved state on the center and on the right.}
  \label{fig:invariant-curve_case4}
\end{figure}

% ------------------------------------------------------------------------

\begin{figure}[H]
  \centering
  \begin{subfigure}{0.49\textwidth}
    \centering
    \includegraphics[width=1.0\textwidth]{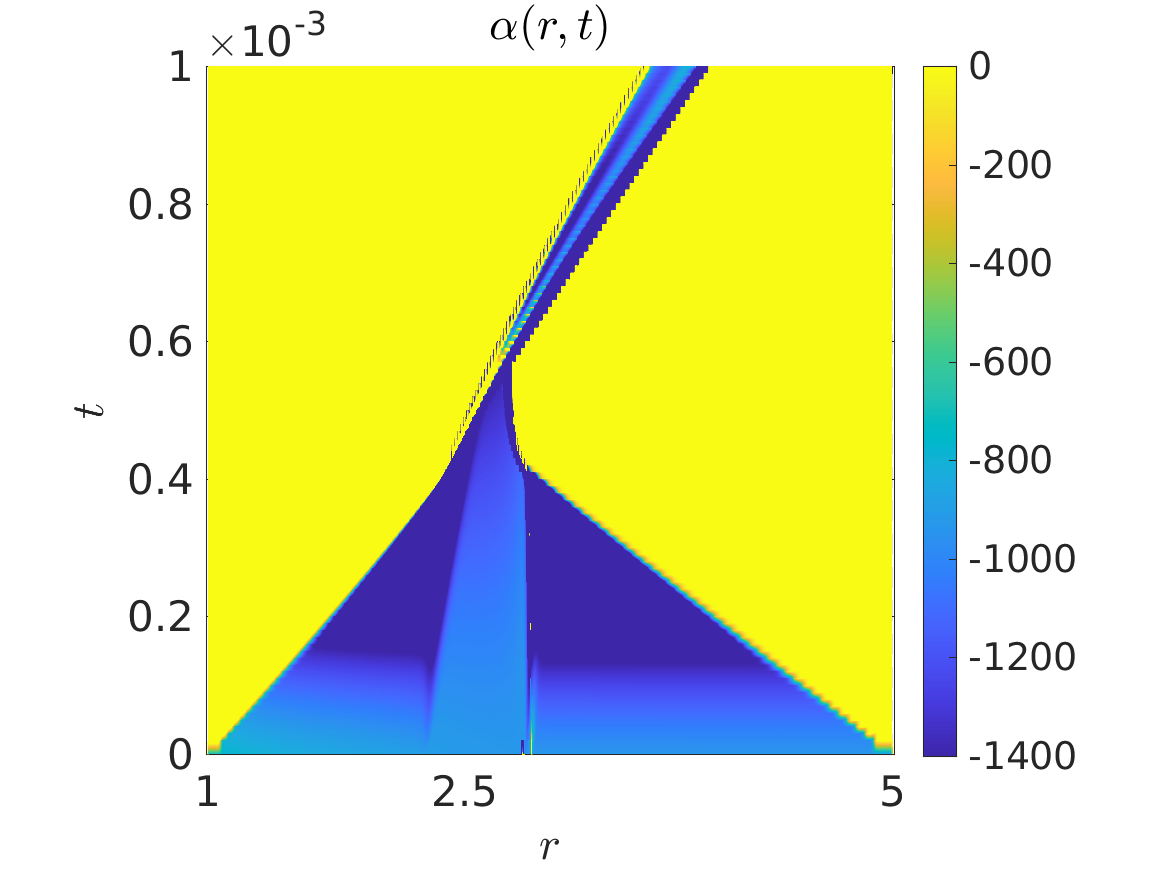}
  \end{subfigure}
  \begin{subfigure}{0.49\textwidth}
    \centering
	\includegraphics[width=1.0\textwidth]{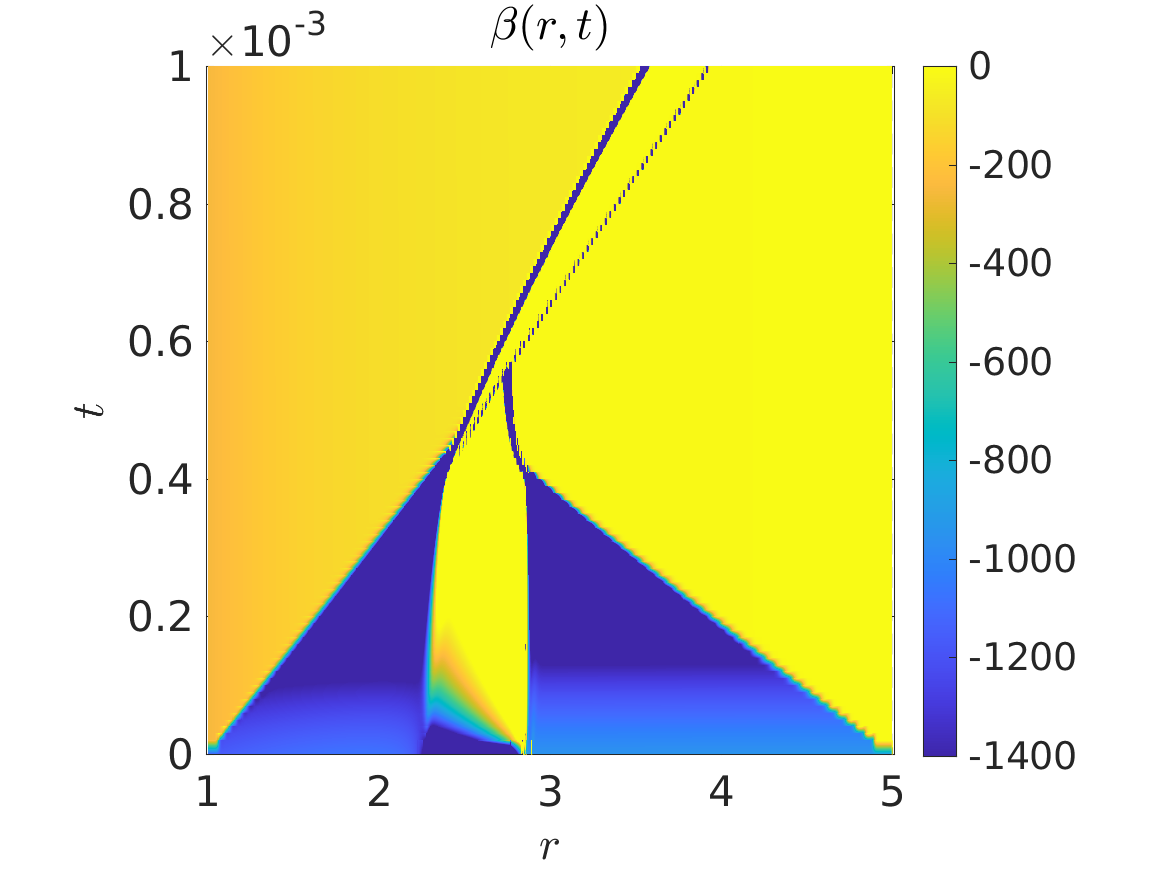}
  \end{subfigure}
  \caption{The \textit{heat map of $\alpha$ in \((r,t)-\)plane} is shown on the left and the heat map of $\beta$ on the right.}
  \label{fig:heat-map_case4}
\end{figure}

% ------------------------------------------------------------------------

\subsection*{Case 5: realistic rarefaction}

Keeping the same physical scaling and domain as in Case 4, we switch to rarefactive initial data by setting
\[
\alpha=\beta=1300.
\]
The evolution is qualitatively analogous to Case 2: the solution remains smooth on the simulated interval, the invariant curve evolves without the compressive folding typical of shock development, and the gradient variables remain predominantly nonnegative (see Figures \ref{fig:density_case5}-\ref{fig:heat-map_case5}).

% ------------------------------------------------------------------------

\begin{figure}[H]
  \centering
  \begin{subfigure}{0.32\textwidth}
    \centering
    \includegraphics[width=1.0\textwidth]{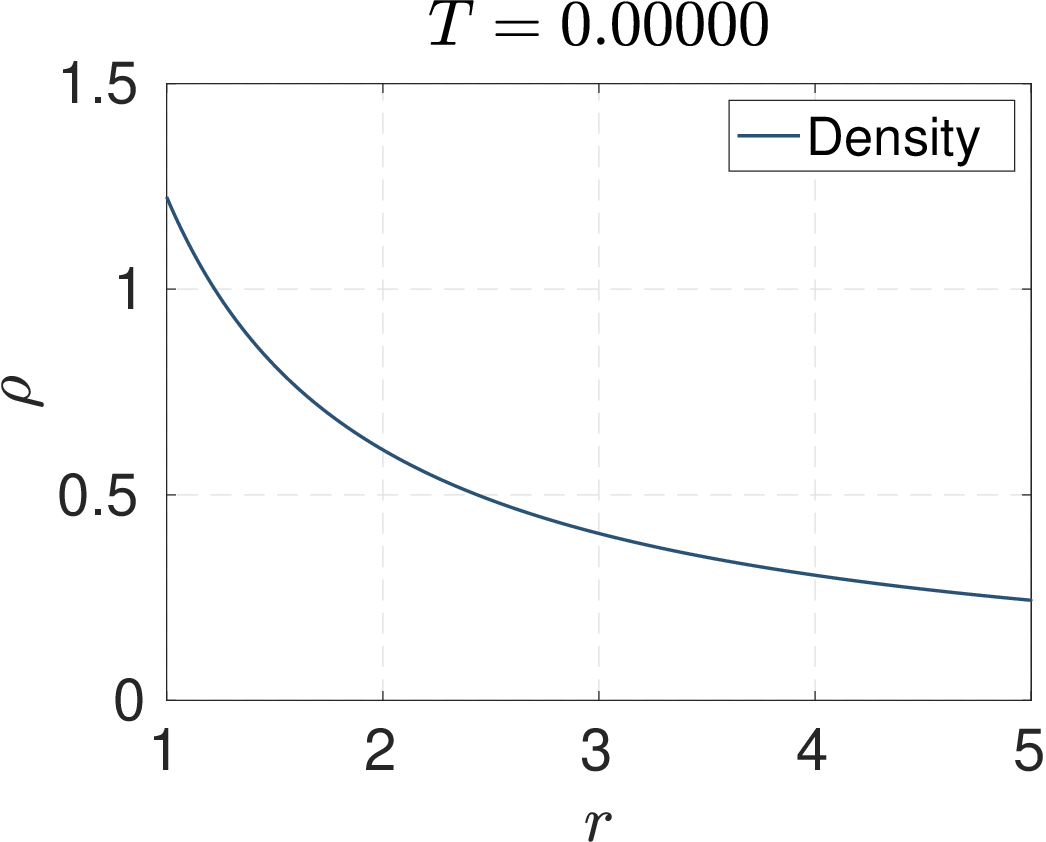}
  \end{subfigure}
    \begin{subfigure}{0.32\textwidth}
    \centering
    \includegraphics[width=1.0\textwidth]{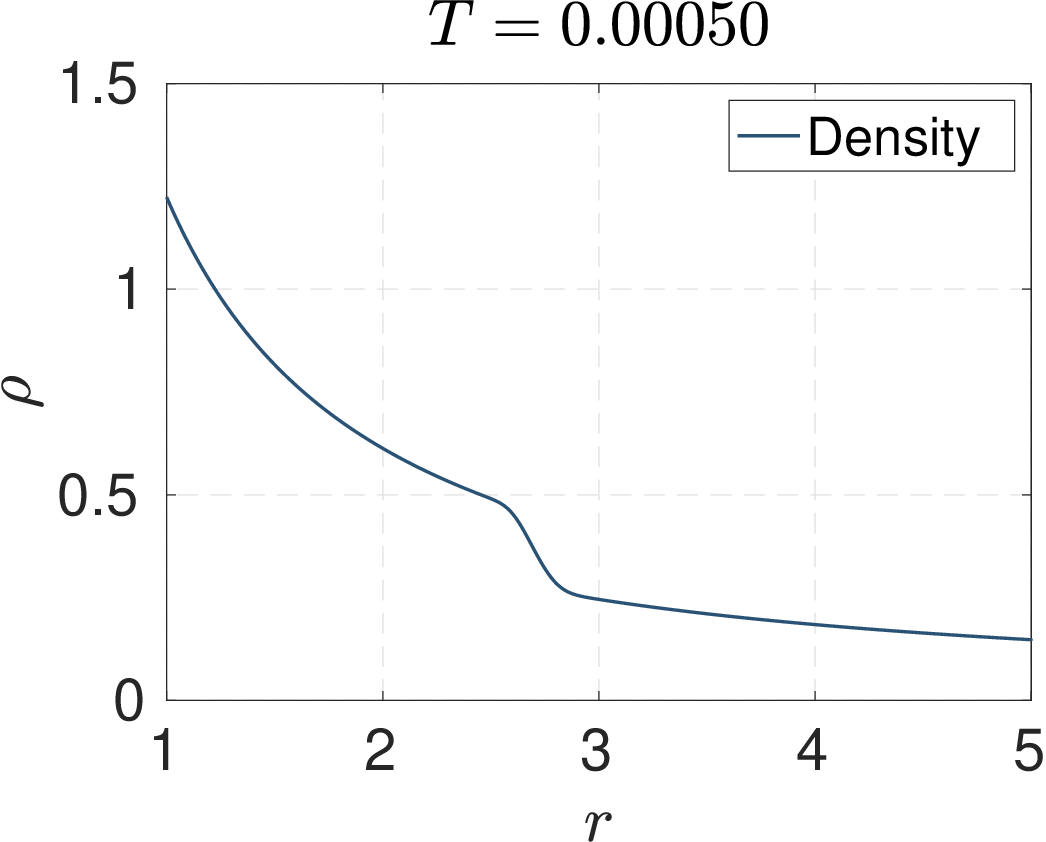}
  \end{subfigure}
  \begin{subfigure}{0.32\textwidth}
    \centering
    \includegraphics[width=1.0\textwidth]{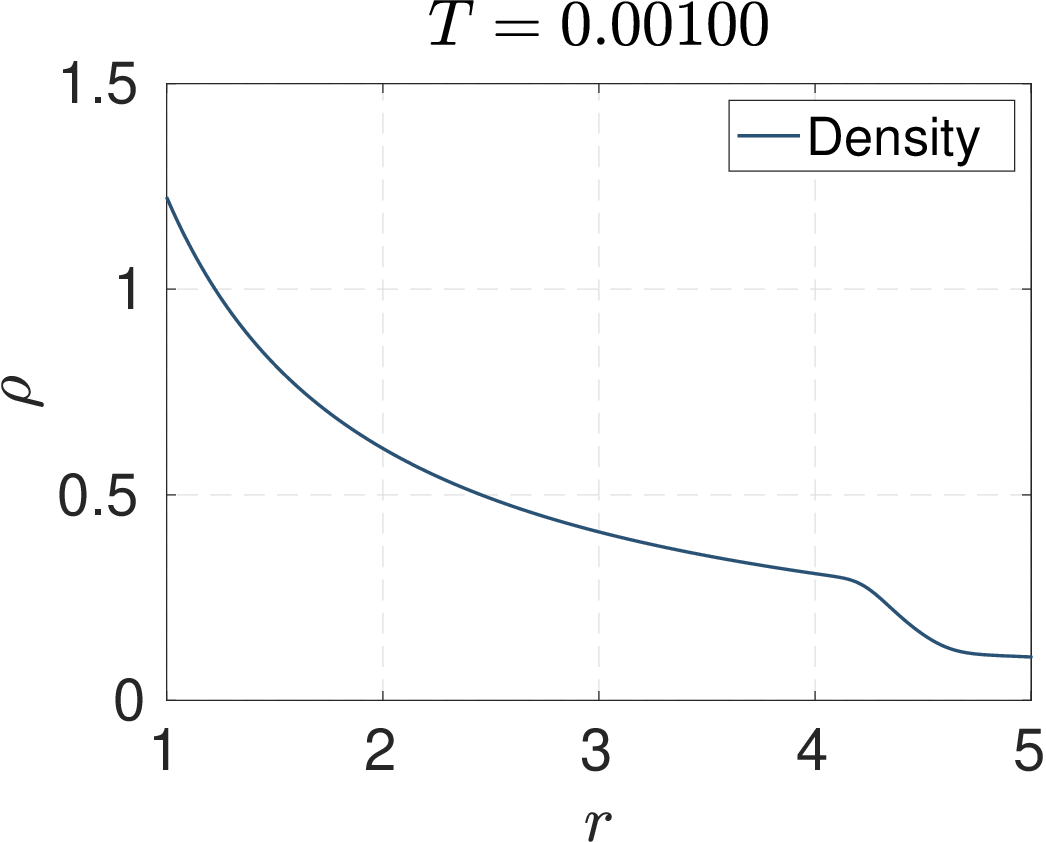}
  \end{subfigure}
  \caption{The initial \textit{density} is shown on the left and its time-evolved state on the center and on the right.}
  \label{fig:density_case5}
\end{figure}

% ------------------------------------------------------------------------

\begin{figure}[H]
  \centering
  \begin{subfigure}{0.32\textwidth}
    \centering
    \includegraphics[width=1.0\textwidth]{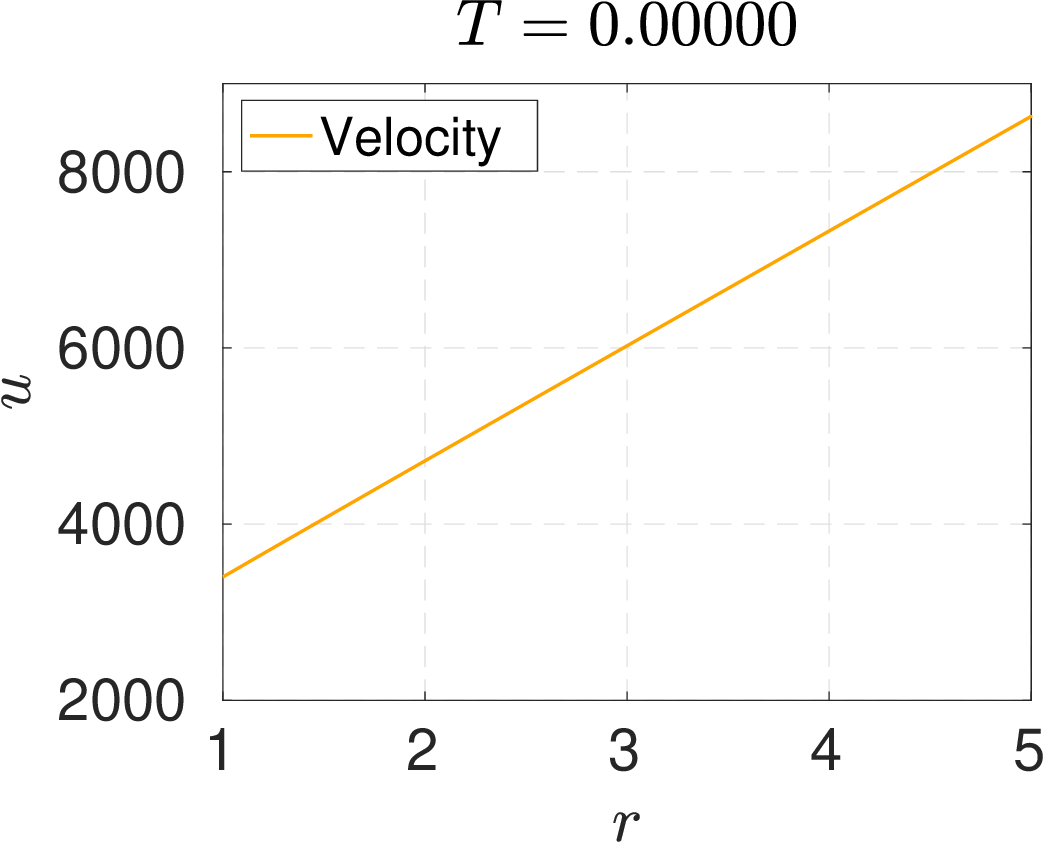}
  \end{subfigure}
    \begin{subfigure}{0.32\textwidth}
    \centering
    \includegraphics[width=1.0\textwidth]{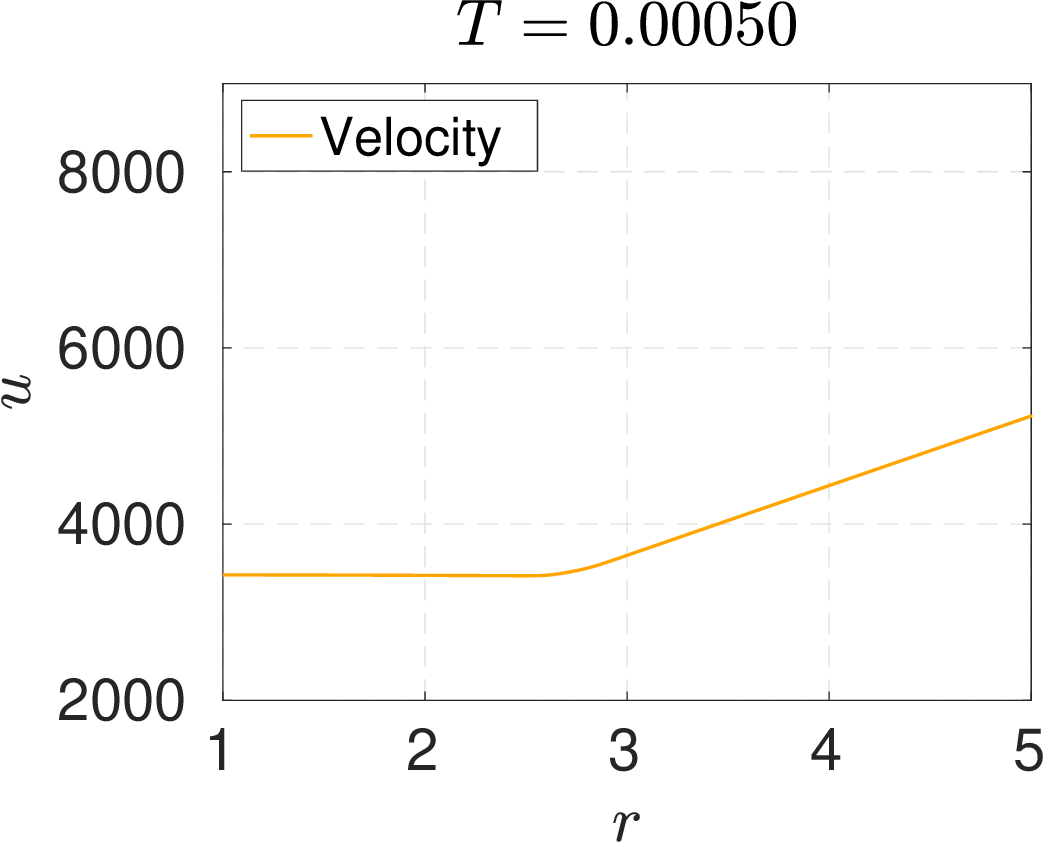}
  \end{subfigure}
  \begin{subfigure}{0.32\textwidth}
    \centering
    \includegraphics[width=1.0\textwidth]{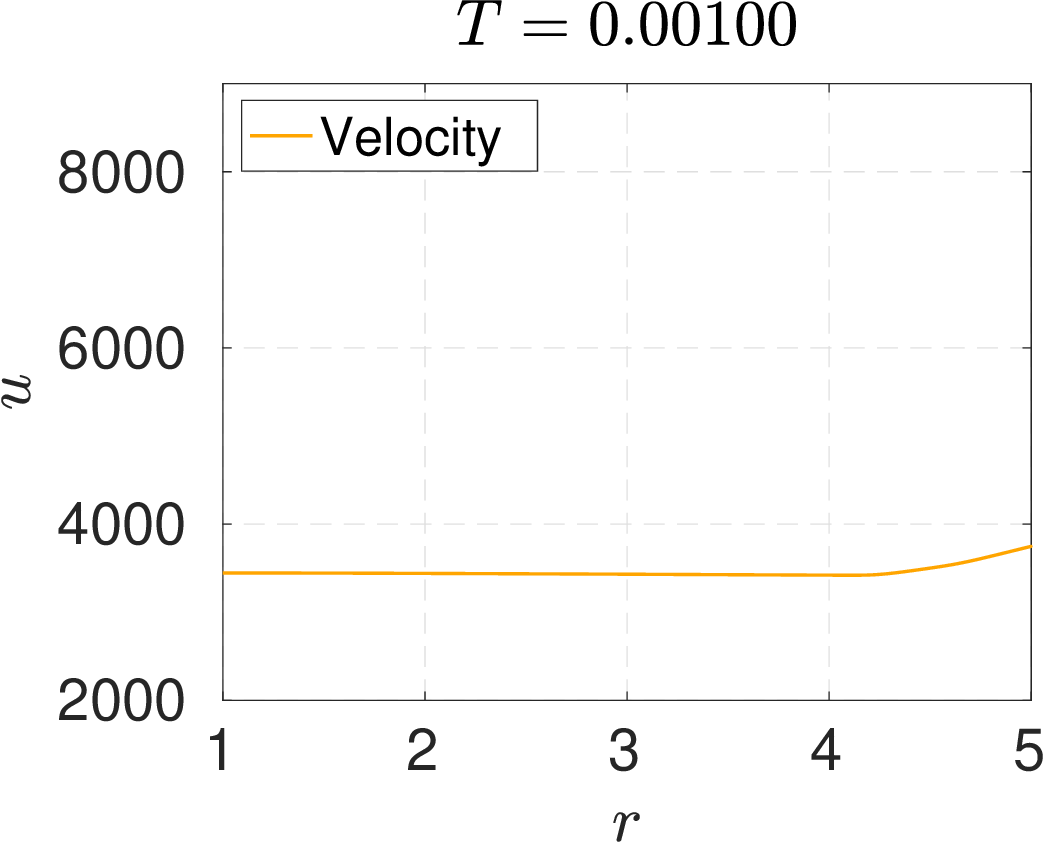}
  \end{subfigure}
  \caption{The initial \textit{velocity} is shown on the left and its time-evolved state on the center and on the right.}
  \label{fig:velocity_case5}
\end{figure}

% ------------------------------------------------------------------------

\begin{figure}[H]
  \centering
  \begin{subfigure}{0.32\textwidth}
    \centering
    \includegraphics[width=1.0\textwidth]{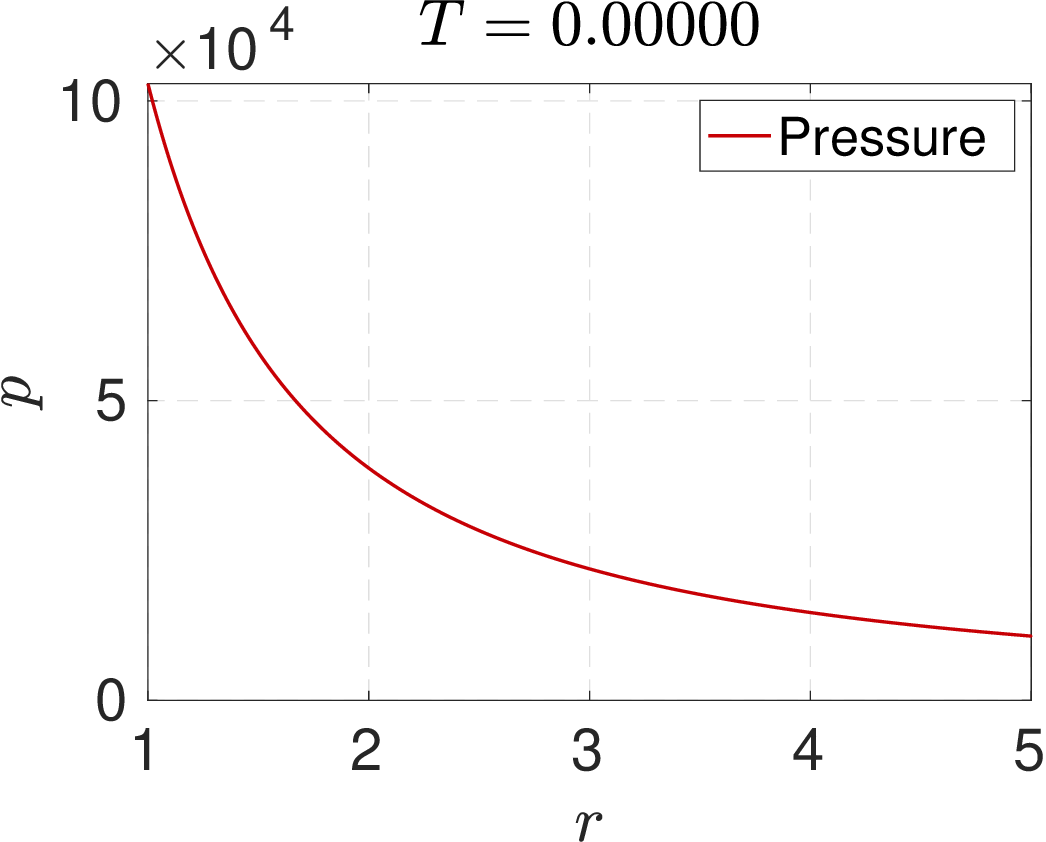}
  \end{subfigure}
    \begin{subfigure}{0.32\textwidth}
    \centering
    \includegraphics[width=1.0\textwidth]{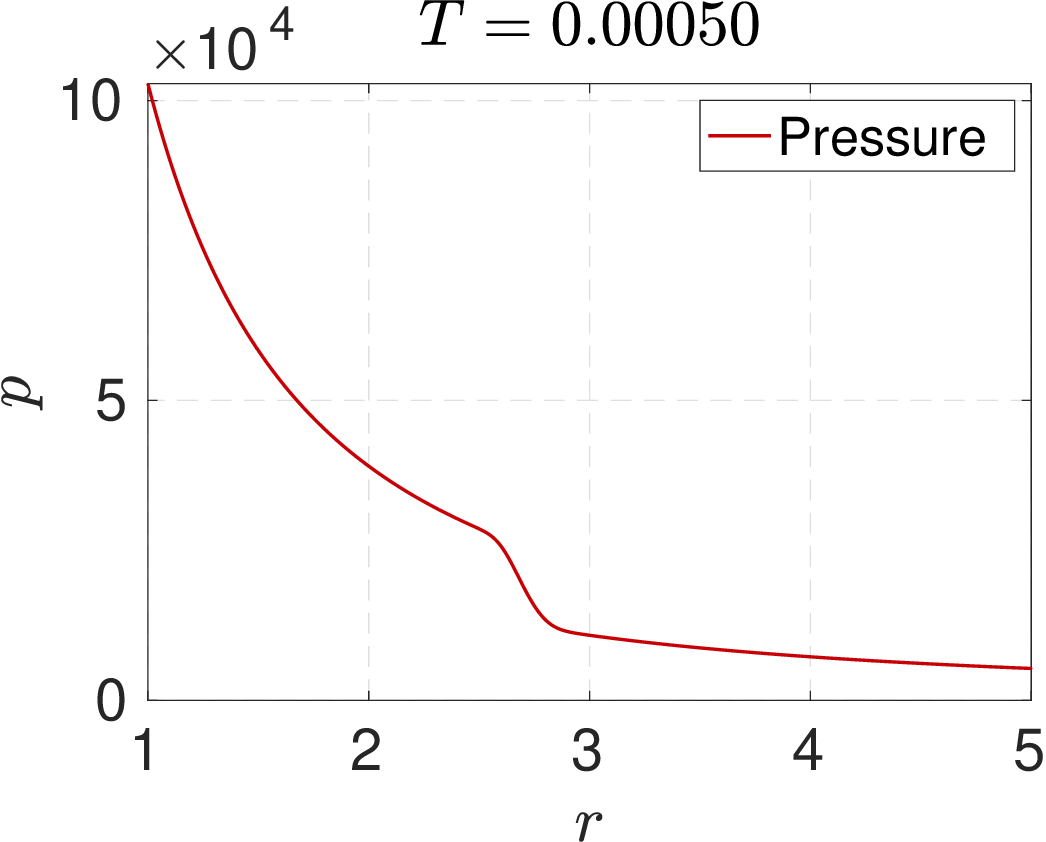}
  \end{subfigure}
  \begin{subfigure}{0.32\textwidth}
    \centering
    \includegraphics[width=1.0\textwidth]{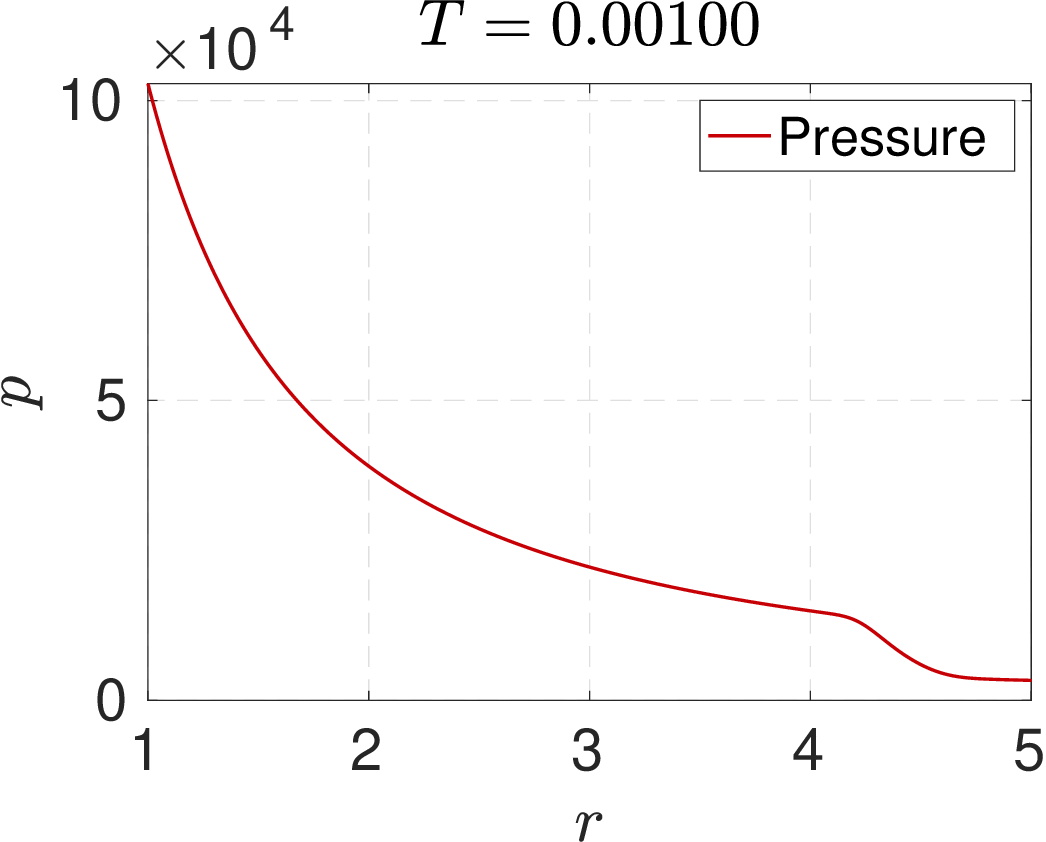}
  \end{subfigure}
  \caption{The initial \textit{pressure} is shown on the left and its time-evolved state on the center and on the right.}
  \label{fig:pressure_case5}
\end{figure}

% ------------------------------------------------------------------------

\begin{figure}[H]
  \centering
  \begin{subfigure}{0.32\textwidth}
    \centering
    \includegraphics[width=1.0\textwidth]{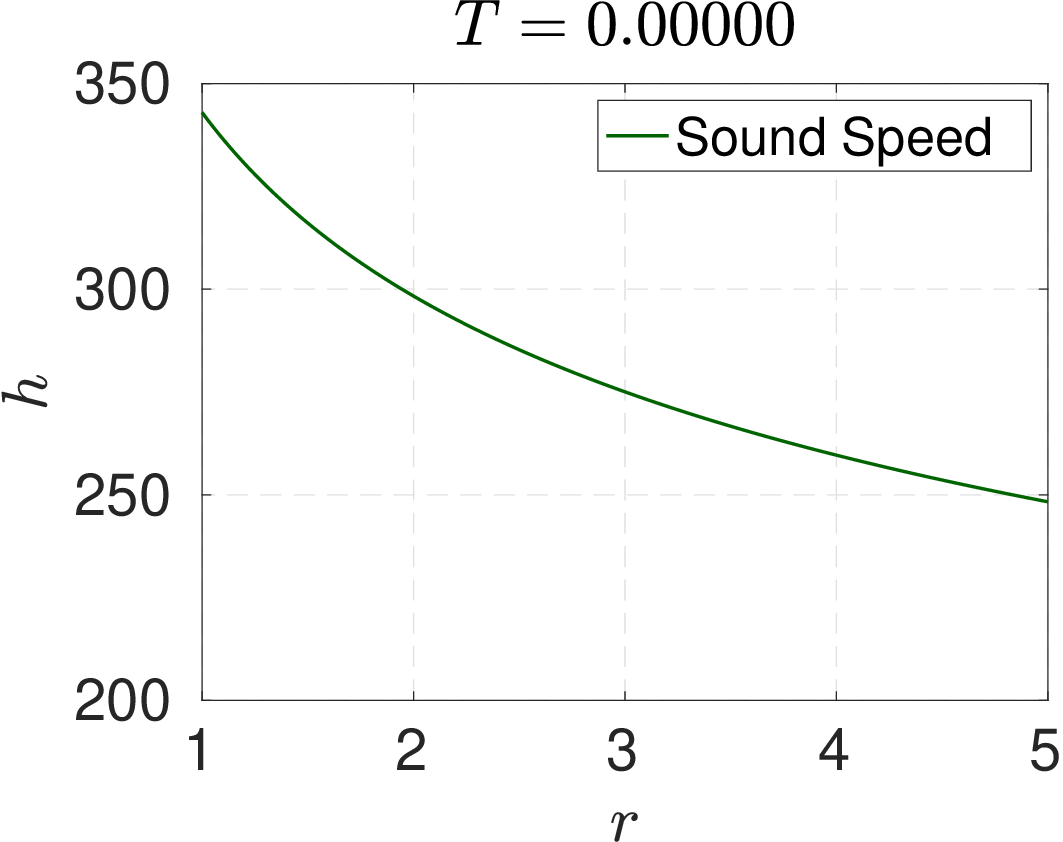}
  \end{subfigure}
    \begin{subfigure}{0.32\textwidth}
    \centering
    \includegraphics[width=1.0\textwidth]{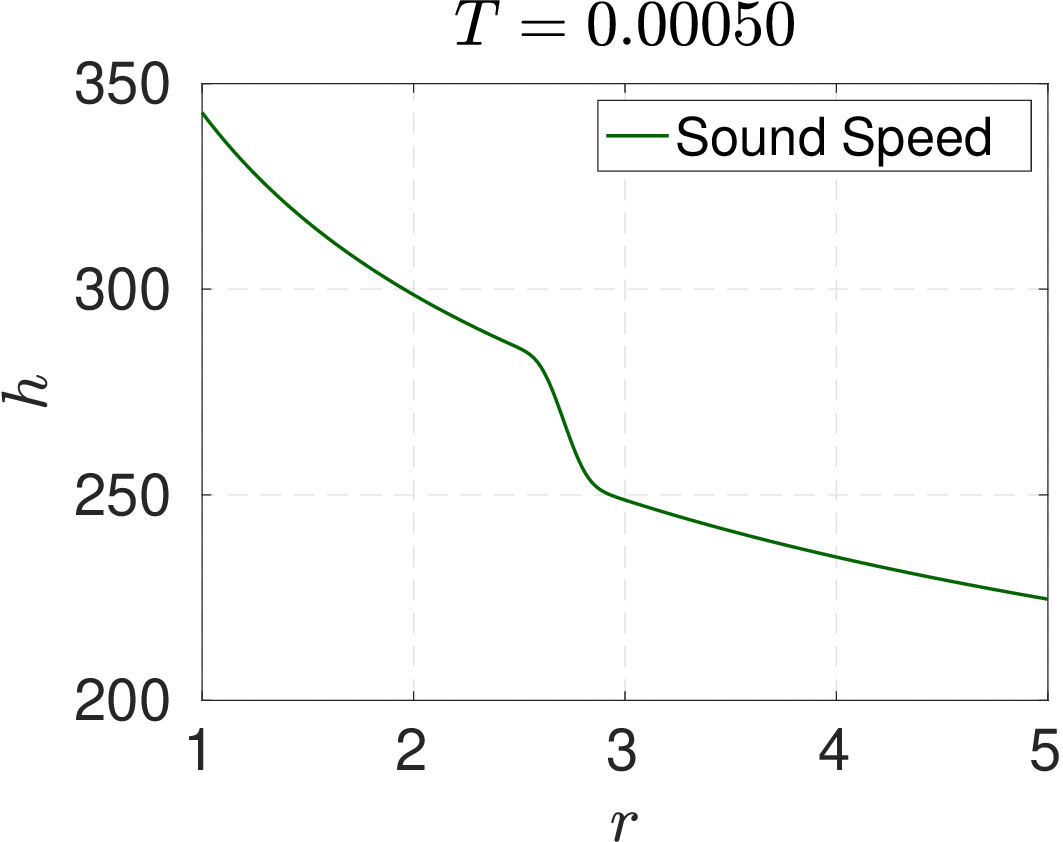}
  \end{subfigure}
  \begin{subfigure}{0.32\textwidth}
    \centering
    \includegraphics[width=1.0\textwidth]{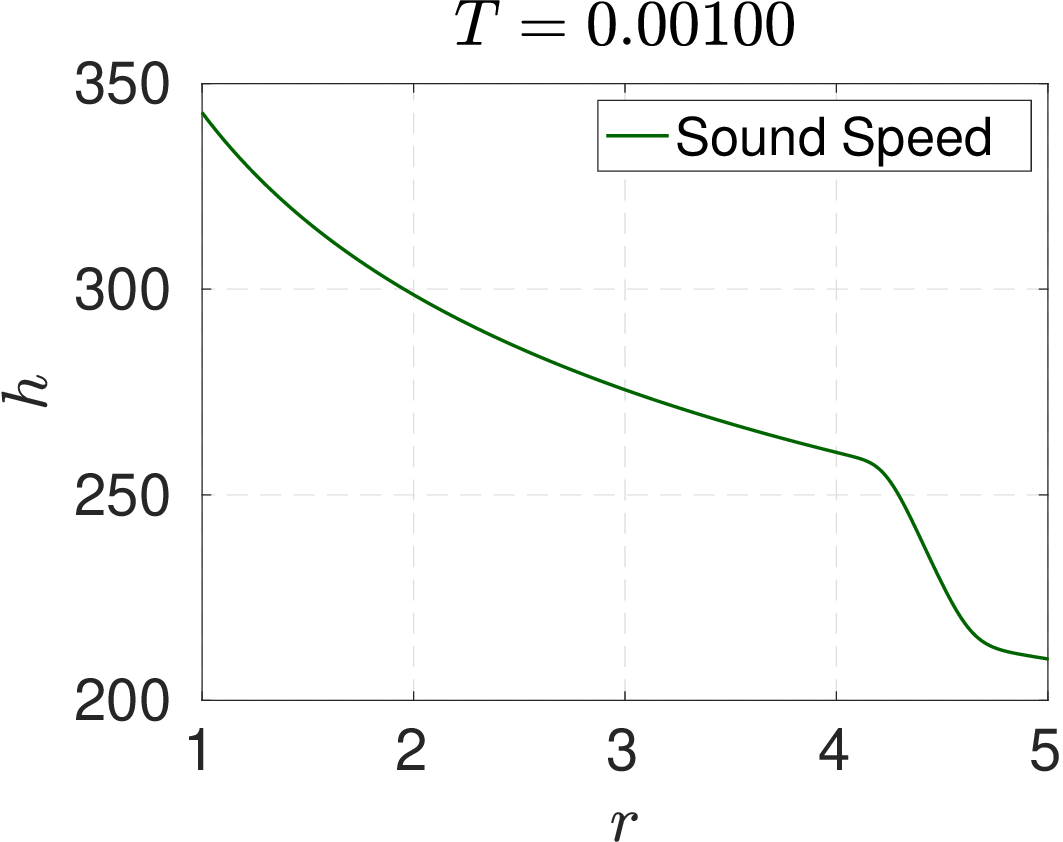}
  \end{subfigure}
  \caption{The initial \textit{sound speed} is shown on the left and its time-evolved state on the center and on the right.}
  \label{fig:sound-speed_case5}
\end{figure}

% ------------------------------------------------------------------------

\begin{figure}[H]
  \centering
  \begin{subfigure}{0.32\textwidth}
    \centering
    \includegraphics[width=1.0\textwidth]{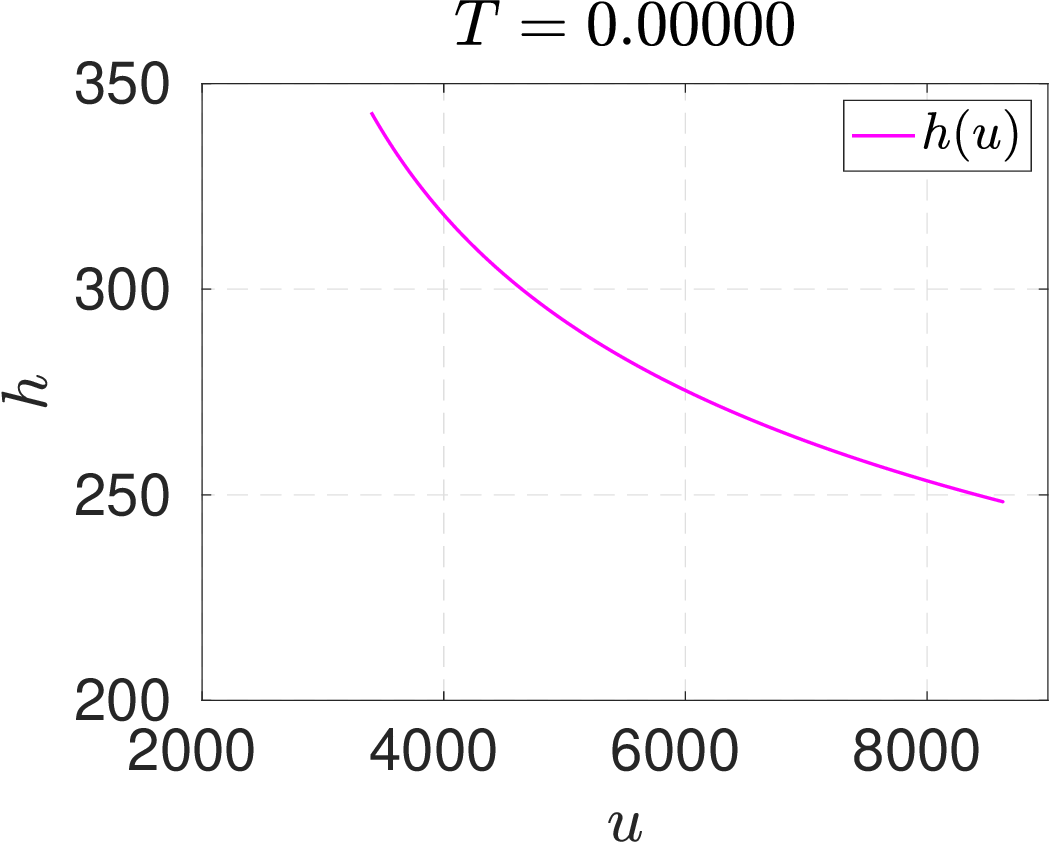}
  \end{subfigure}
    \begin{subfigure}{0.32\textwidth}
    \centering
    \includegraphics[width=1.0\textwidth]{new_cases/Case_5/graficos/state/mesh_8192/EPS_0001.eps}
  \end{subfigure}
  \begin{subfigure}{0.32\textwidth}
    \centering
    \includegraphics[width=1.0\textwidth]{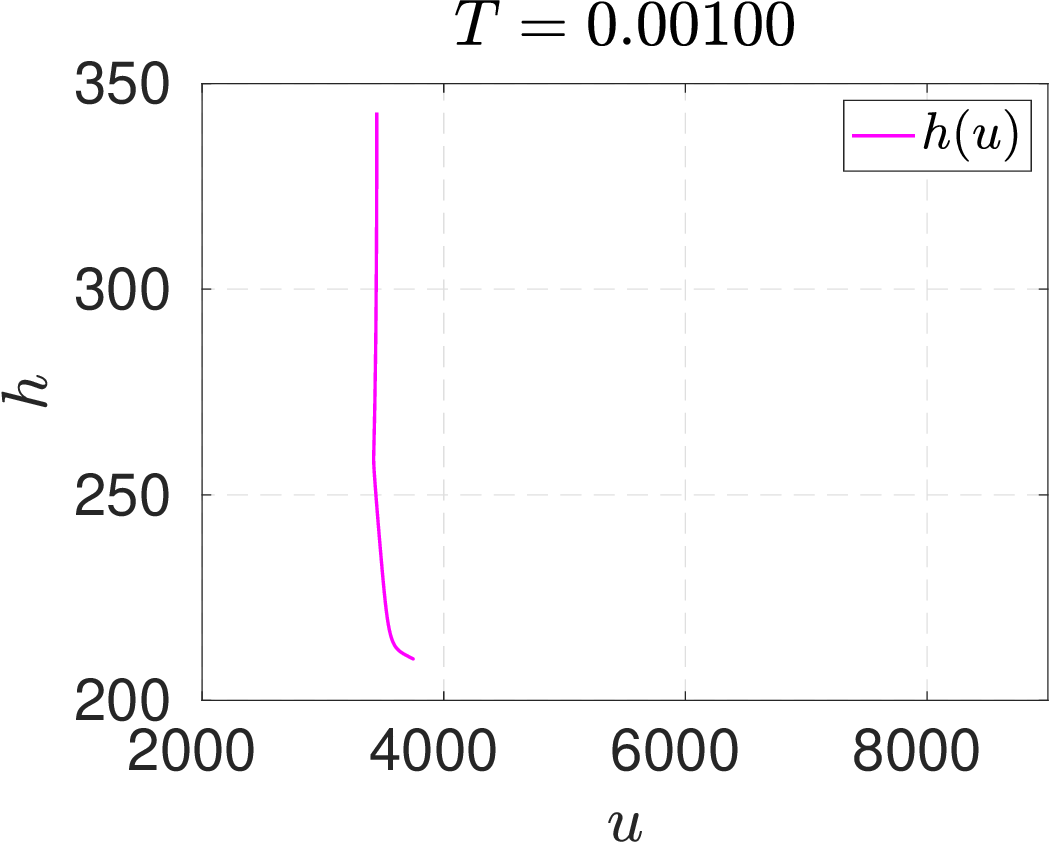}
  \end{subfigure}
  \caption{The initial \textit{invariant curve in \((u,h)-\)plane} is shown on the left and its time-evolved state on the center and on the right.}
  \label{fig:invariant-curve_case5}
\end{figure}

% ------------------------------------------------------------------------

\begin{figure}[H]
  \centering
  \begin{subfigure}{0.49\textwidth}
    \centering
    \includegraphics[width=1.0\textwidth]{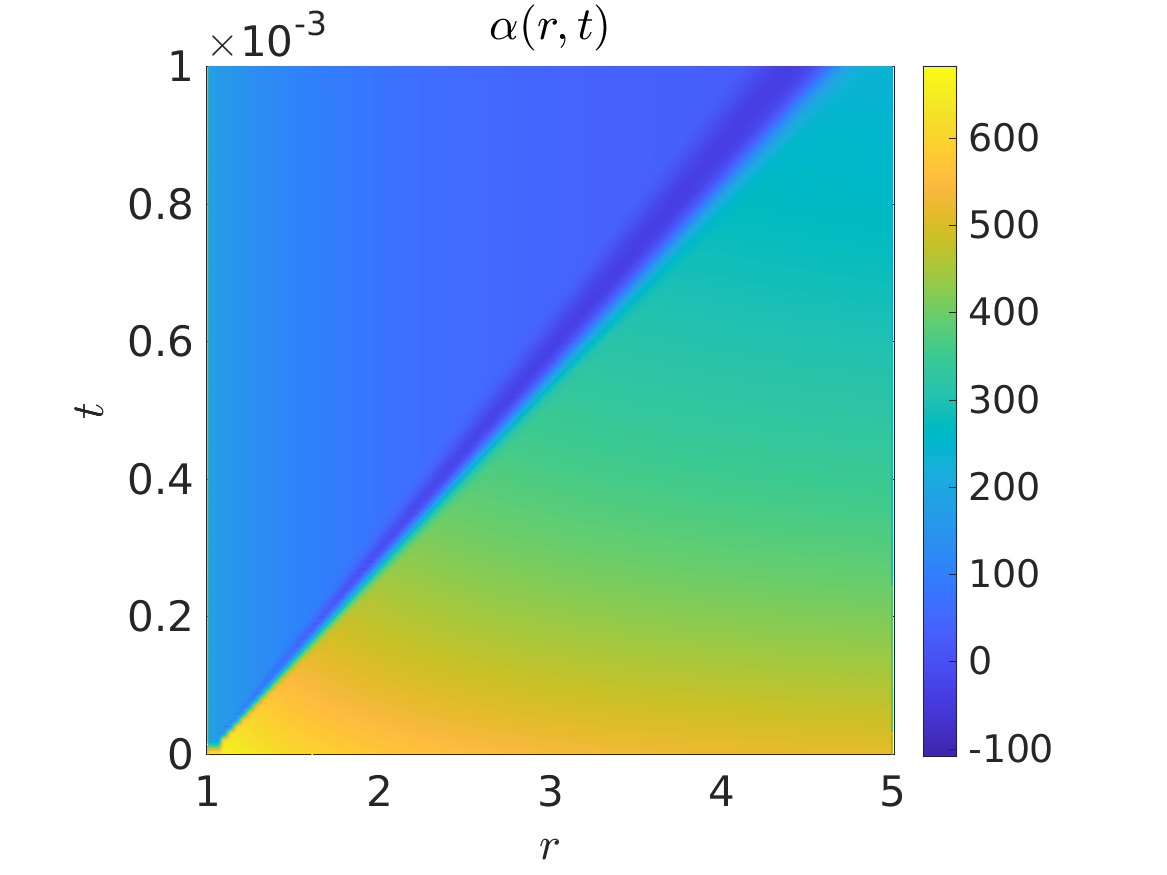}
  \end{subfigure}
  \begin{subfigure}{0.49\textwidth}
    \centering
	\includegraphics[width=1.0\textwidth]{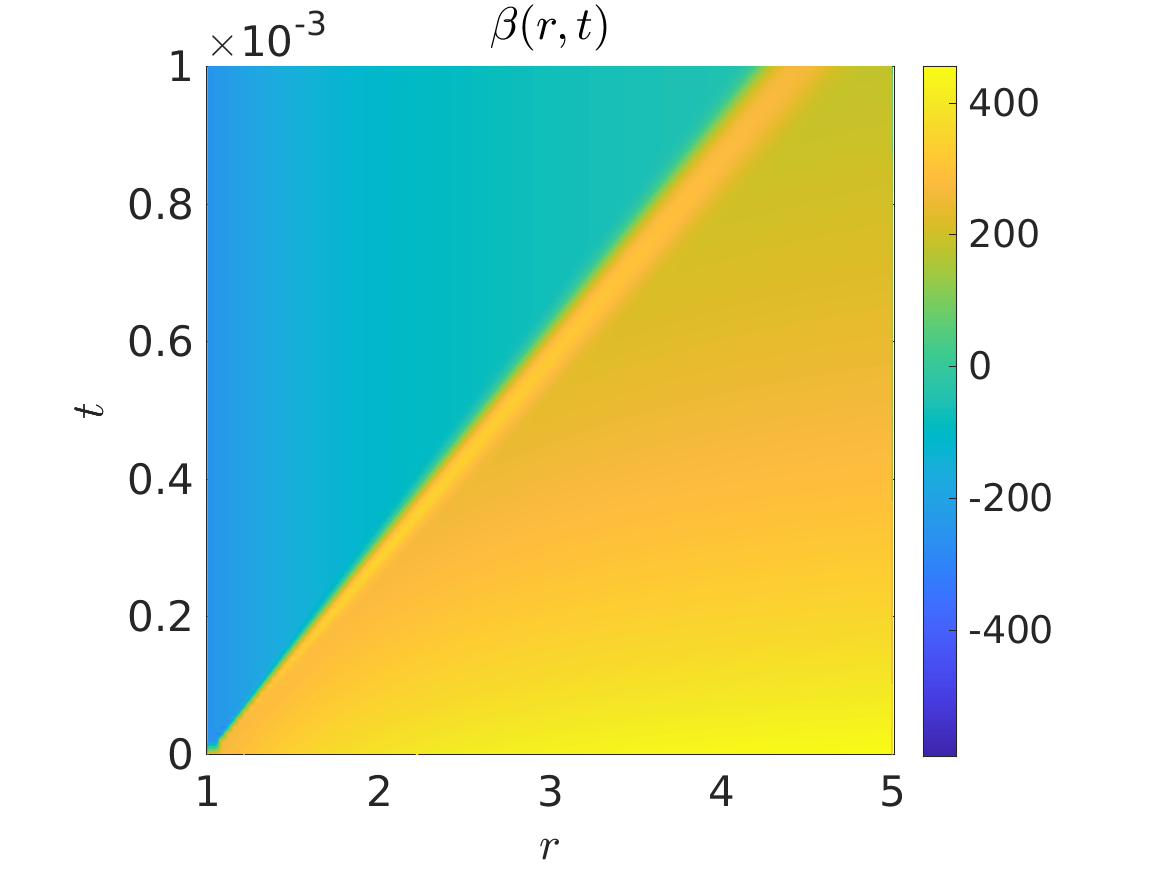}
  \end{subfigure}
  \caption{The \textit{heat map of $\alpha$ in \((r,t)-\)plane} is shown on the left and the heat map of $\beta$ on the right.}
  \label{fig:heat-map_case5}
\end{figure}

% ------------------------------------------------------------------------

\subsection*{Case 6: asymmetric implosion}

To investigate the dynamics of inward-directed transport dynamics, we consider a configuration characterized by a supersonic negative initial velocity field and mixed wave characters:
\[
K=7.75\times 10^{4},\ \ \gamma=1.4,\ \ h_{c}=343 \mathrm{m/s},\ \ v_a=-3400 \mathrm{m/s}, \ \ \alpha=1300, \ \ \beta=-1300,\ \ r\in[1,5].
\]
This initial state prescribes a flow field advecting toward the coordinate origin, inducing a complex asymmetric interplay between the rarefactive $(\alpha>0)$ and compressive $(\beta < 0)$ characteristic families. This setup drives the flow toward the center and produces an asymmetric interplay between compression and rarefaction. Indeed, the numerical solution exhibits inward displacement consistent with an implosive scenario; see Figures (\ref{fig:density_case6}-\ref{fig:invariant-curve_case6}).

The numerical evolution captures the resulting implosive scenario, where the inward displacement of the fluid mass is coupled with a geometric intensification of the gradients. The diagnostics presented in Figures (\ref{fig:density_case6}-\ref{fig:invariant-curve_case6}) reveal the persistence of this mixed-character structure, while the spatio-temporal heat maps (Figure \ref{fig:heat-map_case6}) delineate the distinct evolution of the gradient variables. This case highlights how the radial source terms modulate the stability of the wave families, demonstrating that the compressive character of $\beta$ is amplified by the geometric contraction as it approaches the center, while the rarefactive character of $\alpha$ acts to counteract total collapse.

% ------------------------------------------------------------------------

\begin{figure}[H]
  \centering
  \begin{subfigure}{0.32\textwidth}
    \centering
    \includegraphics[width=1.0\textwidth]{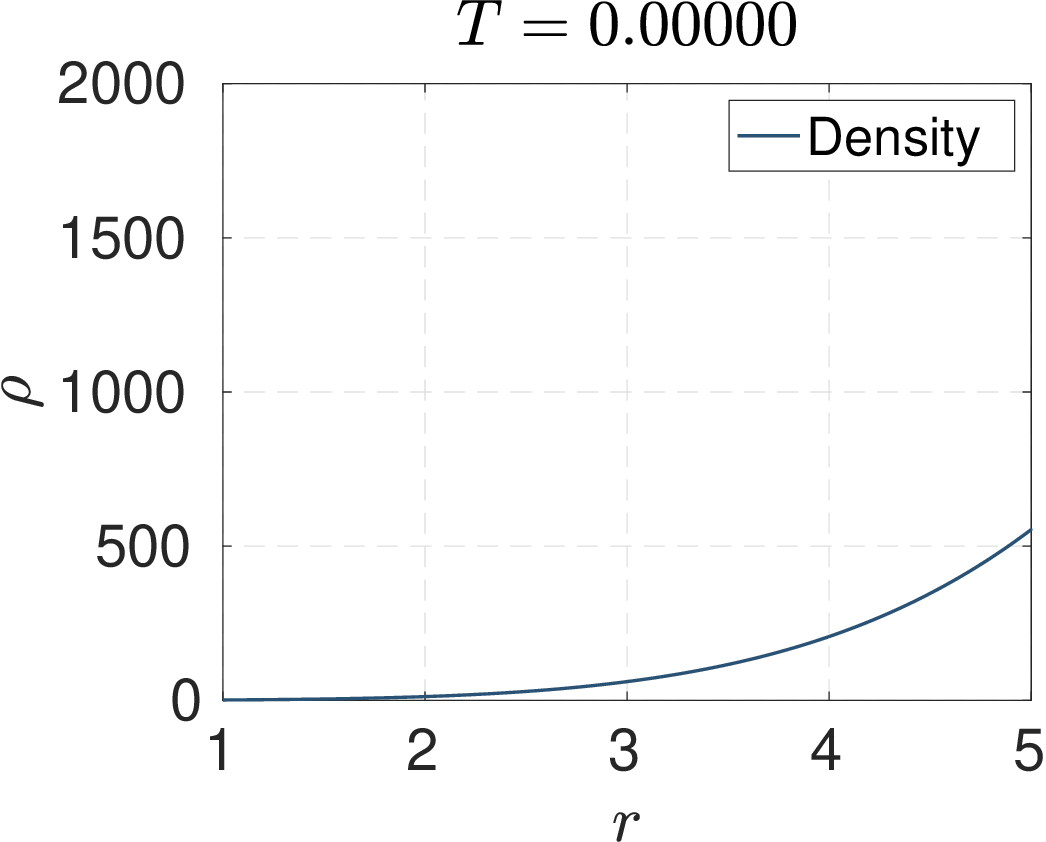}
  \end{subfigure}
    \begin{subfigure}{0.32\textwidth}
    \centering
    \includegraphics[width=1.0\textwidth]{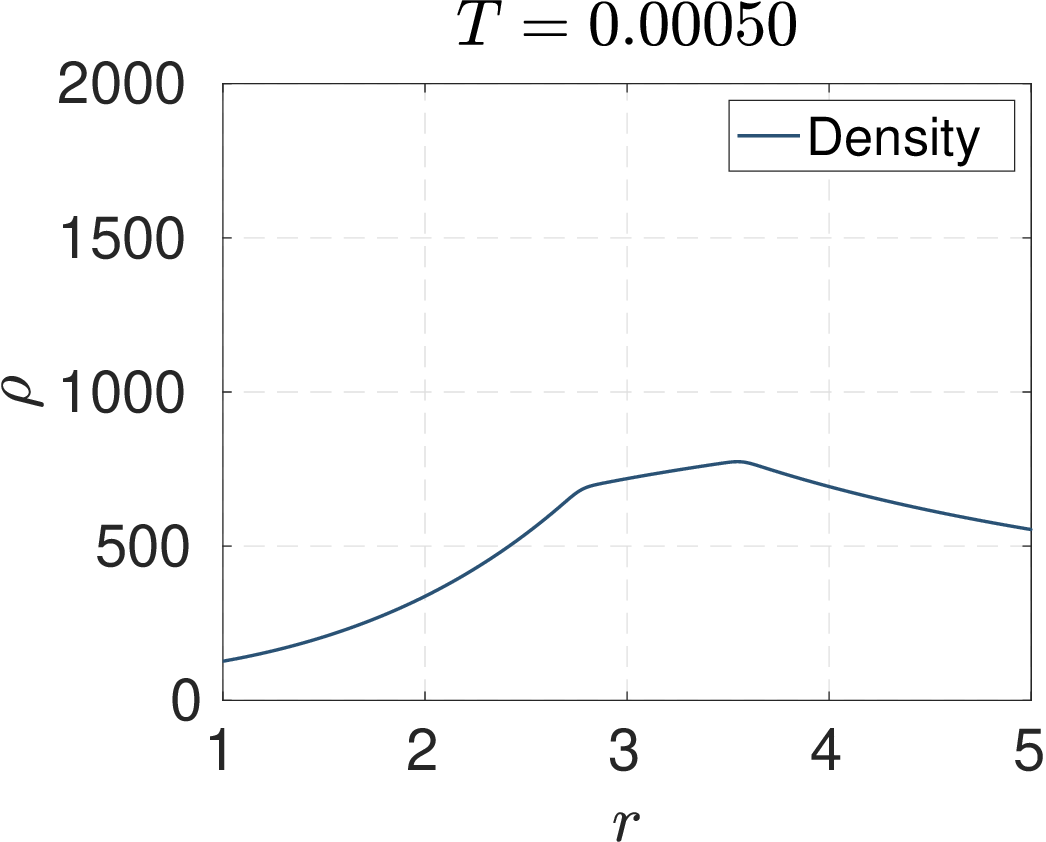}
  \end{subfigure}
  \begin{subfigure}{0.32\textwidth}
    \centering
    \includegraphics[width=1.0\textwidth]{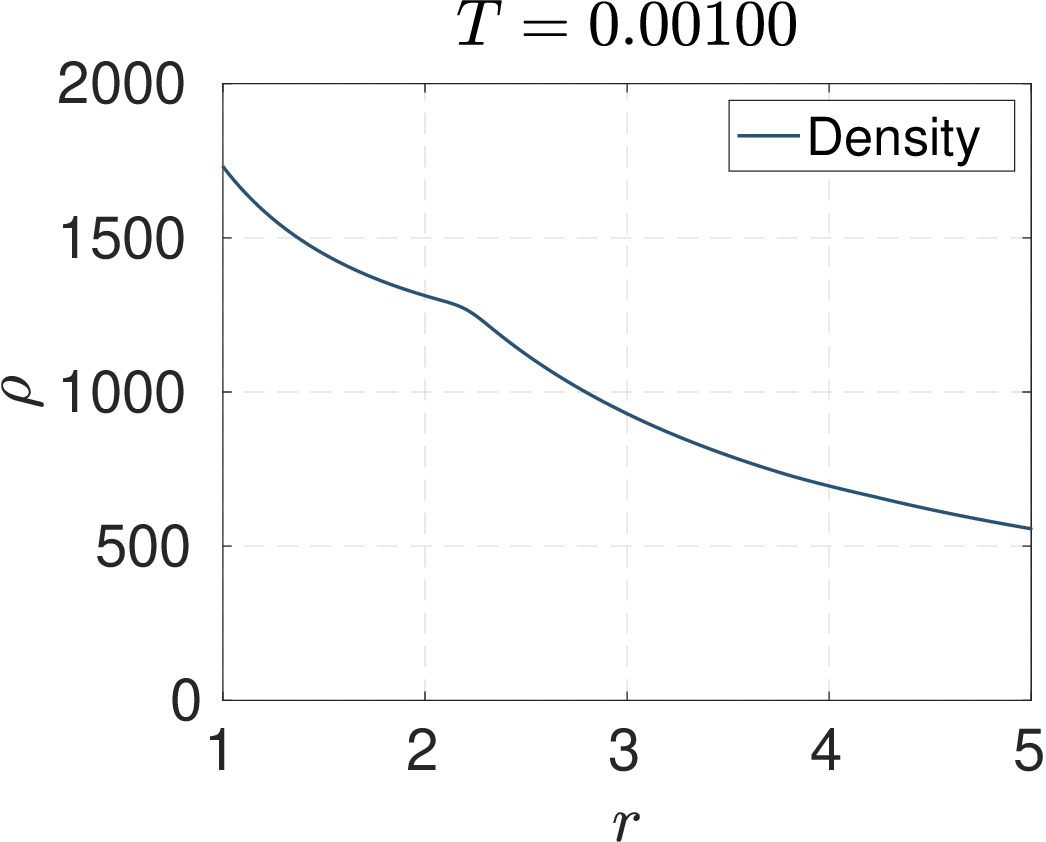}
  \end{subfigure}
  \caption{The initial \textit{density} is shown on the left and its time-evolved state on the center and on the right.}
  \label{fig:density_case6}
\end{figure}

% ------------------------------------------------------------------------

\begin{figure}[H]
  \centering
  \begin{subfigure}{0.32\textwidth}
    \centering
    \includegraphics[width=1.0\textwidth]{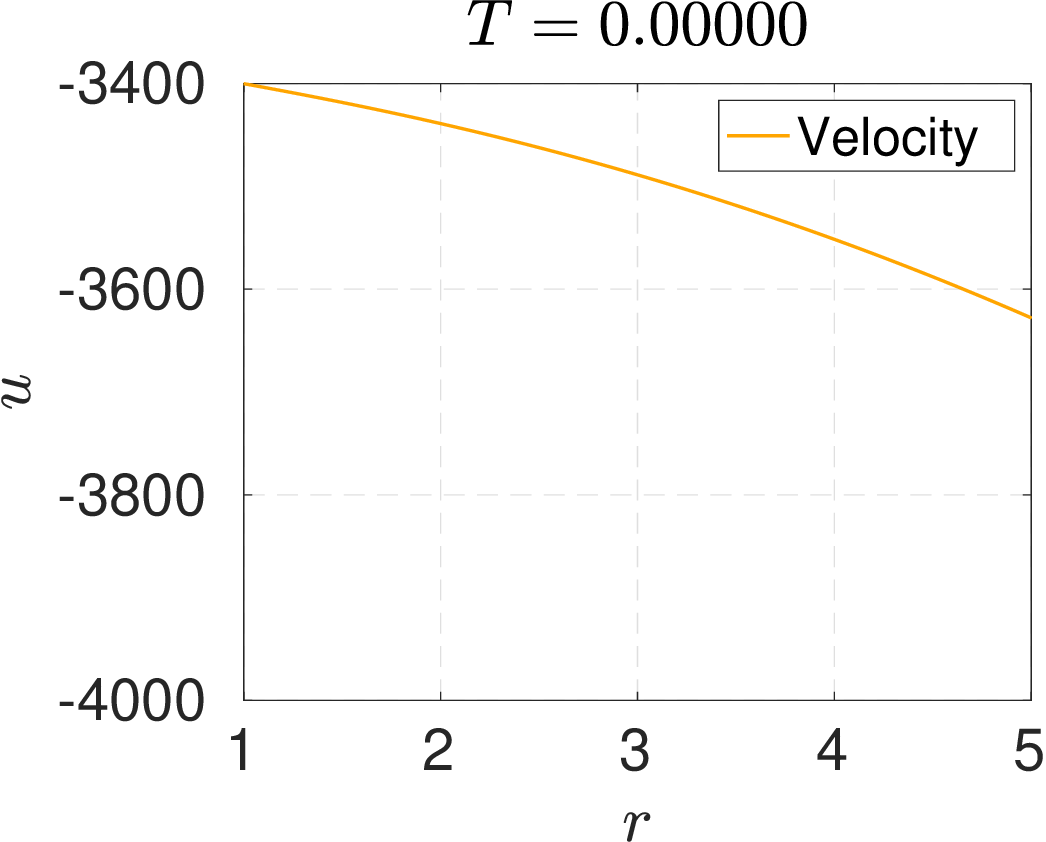}
  \end{subfigure}
    \begin{subfigure}{0.32\textwidth}
    \centering
    \includegraphics[width=1.0\textwidth]{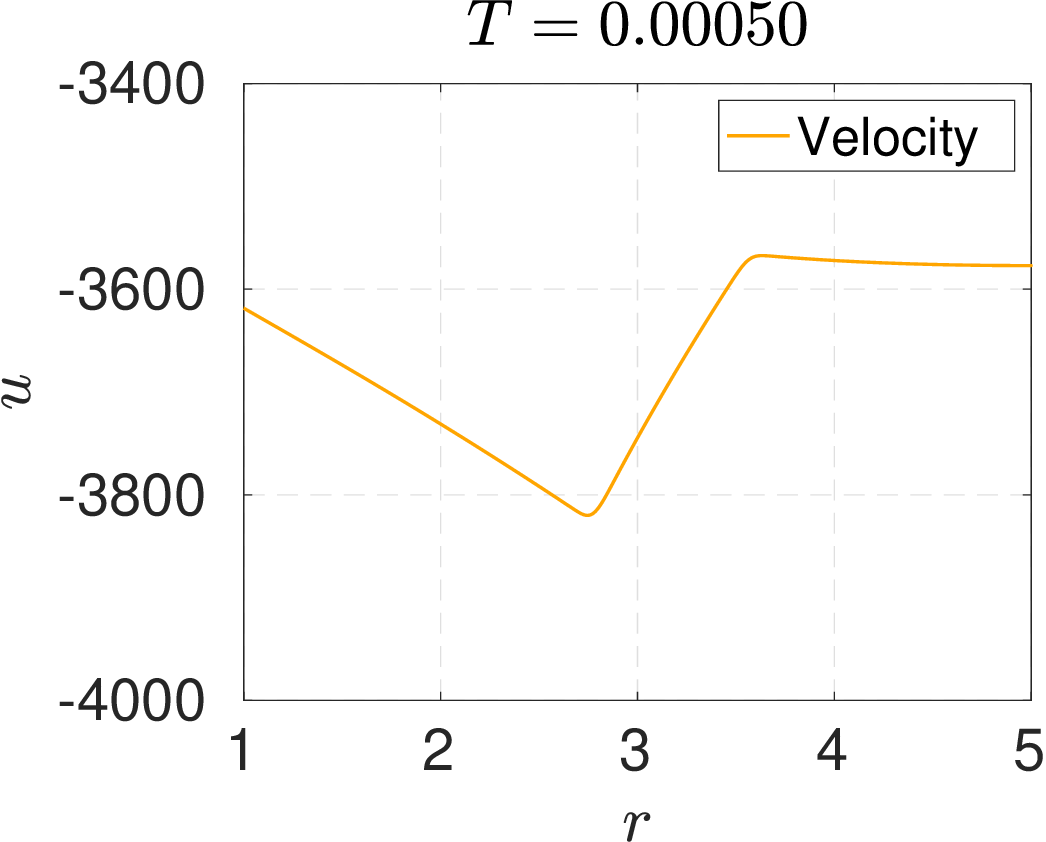}
  \end{subfigure}
  \begin{subfigure}{0.32\textwidth}
    \centering
    \includegraphics[width=1.0\textwidth]{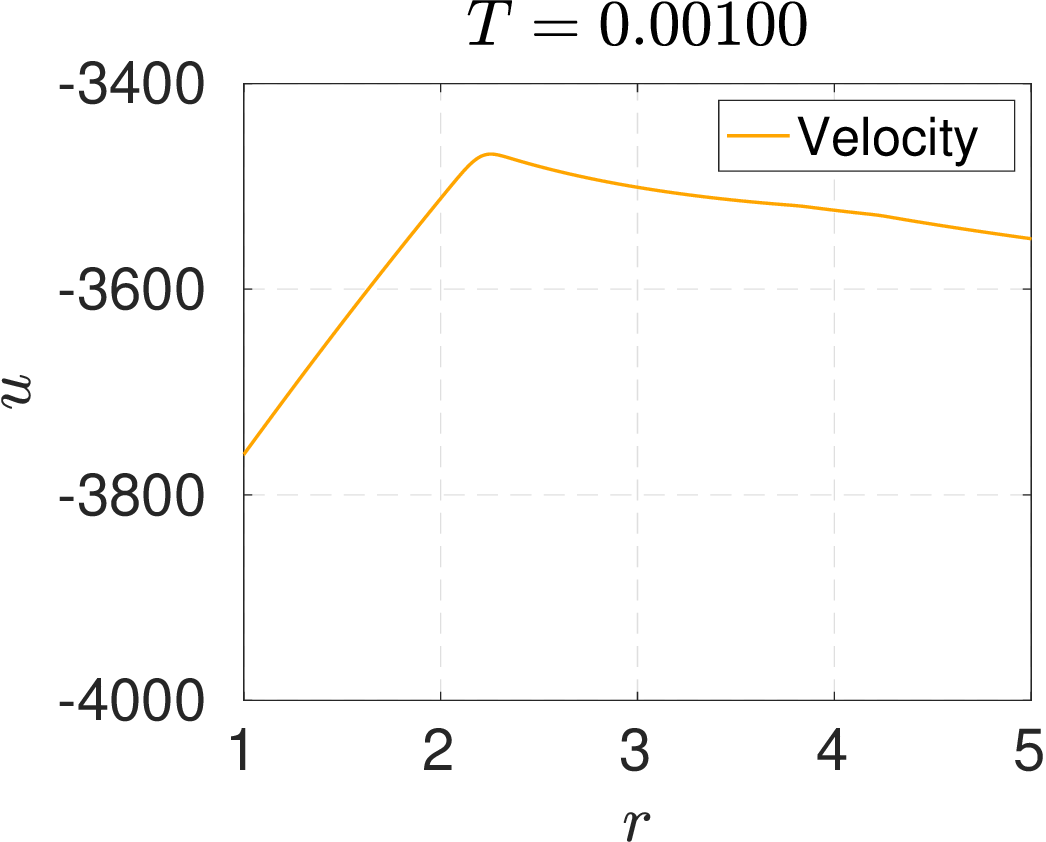}
  \end{subfigure}
  \caption{The initial \textit{velocity} is shown on the left and its time-evolved state on the center and on the right.}
  \label{fig:velocity_case6}
\end{figure}

% ------------------------------------------------------------------------

\begin{figure}[H]
  \centering
  \begin{subfigure}{0.32\textwidth}
    \centering
    \includegraphics[width=1.0\textwidth]{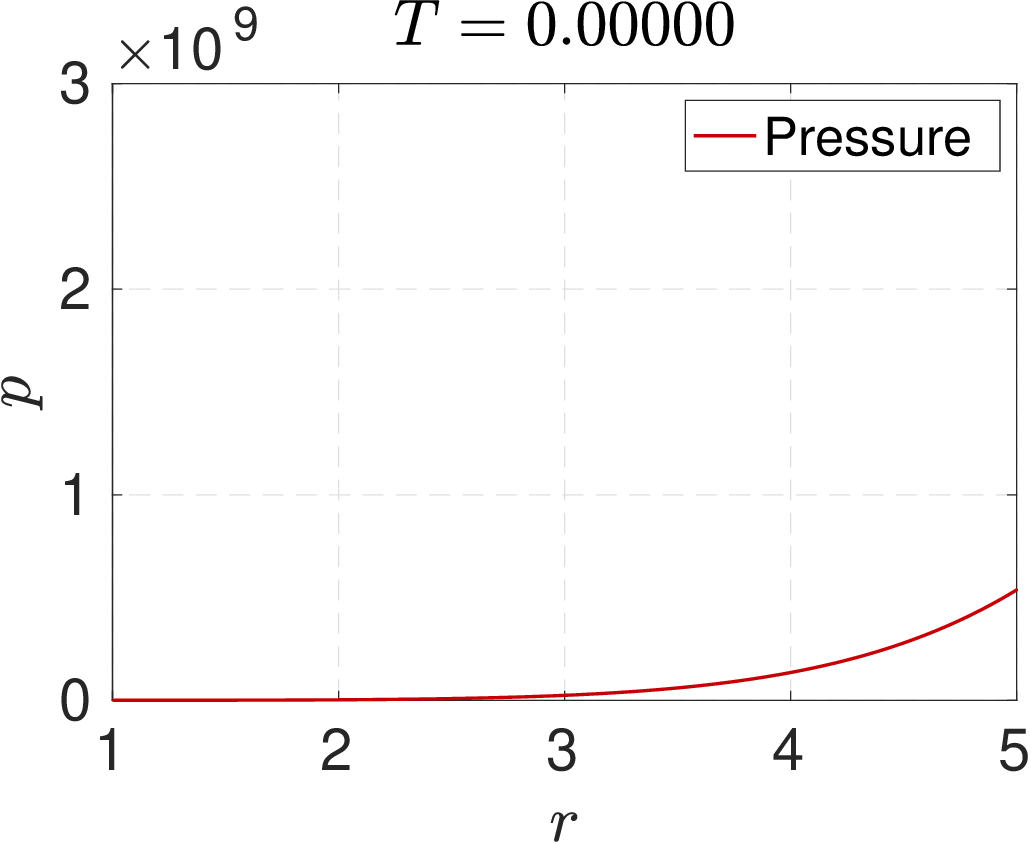}
  \end{subfigure}
    \begin{subfigure}{0.32\textwidth}
    \centering
    \includegraphics[width=1.0\textwidth]{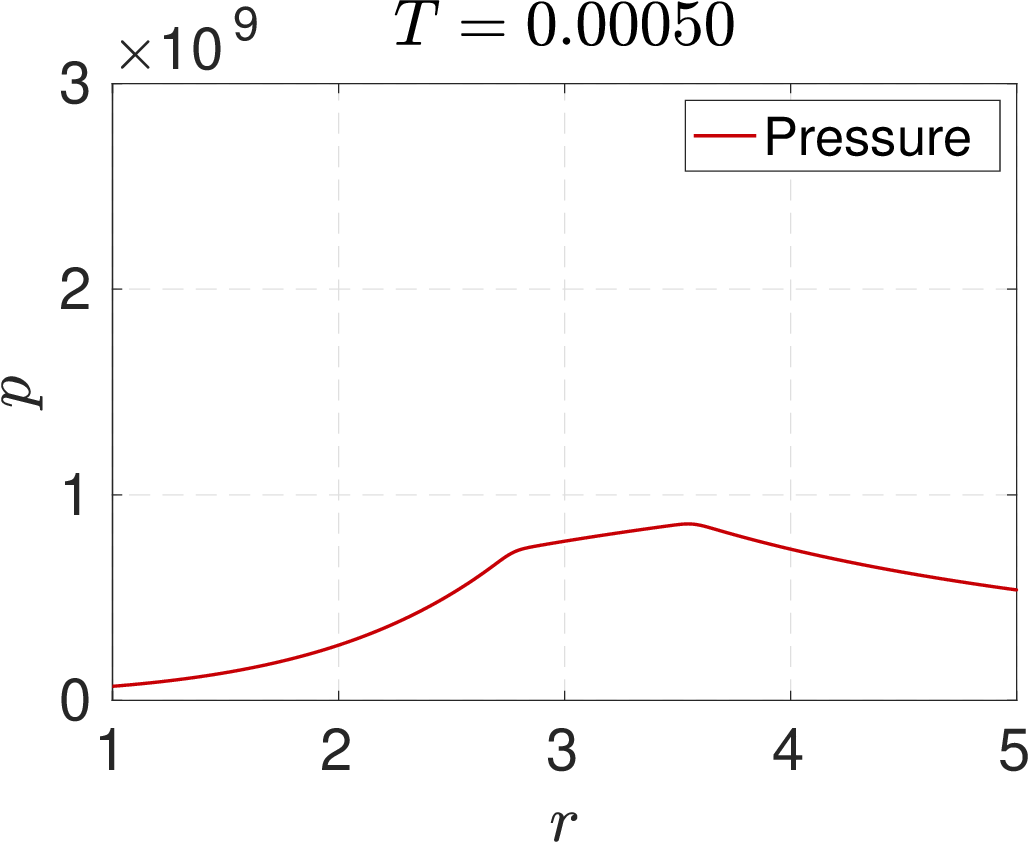}
  \end{subfigure}
  \begin{subfigure}{0.32\textwidth}
    \centering
    \includegraphics[width=1.0\textwidth]{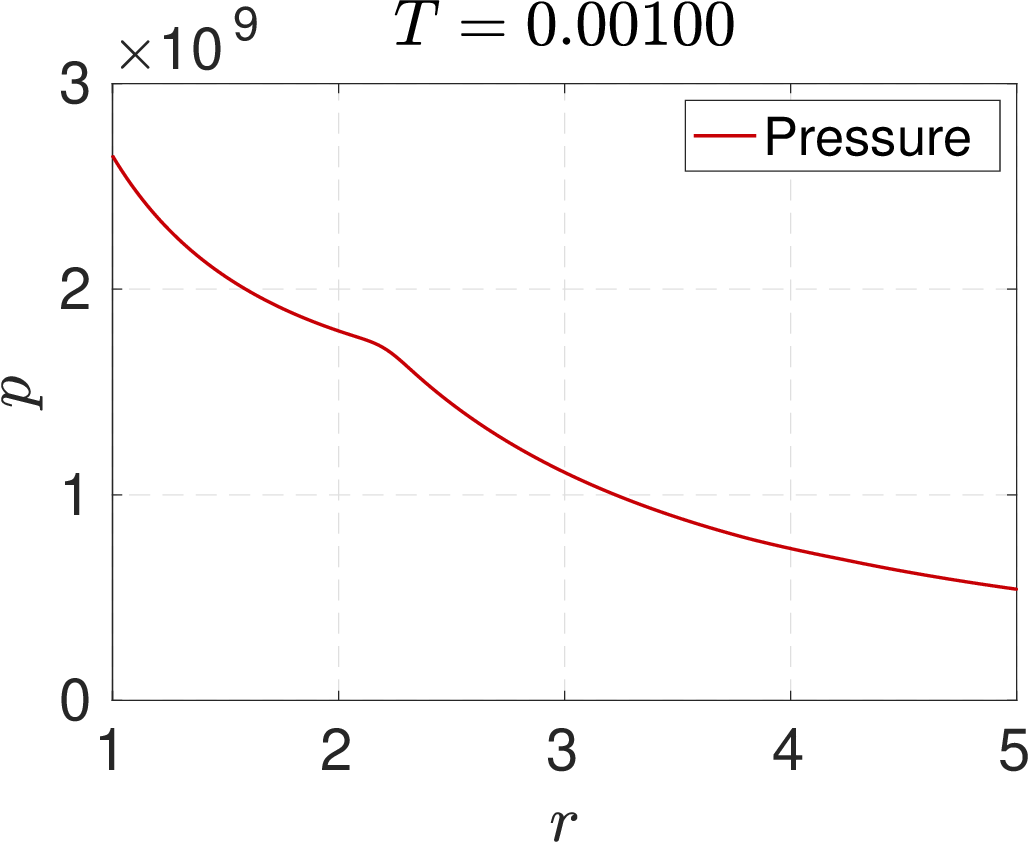}
  \end{subfigure}
  \caption{The initial \textit{pressure} is shown on the left and its time-evolved state on the center and on the right.}
  \label{fig:pressure_case6}
\end{figure}

% ------------------------------------------------------------------------

\begin{figure}[H]
  \centering
  \begin{subfigure}{0.32\textwidth}
    \centering
    \includegraphics[width=1.0\textwidth]{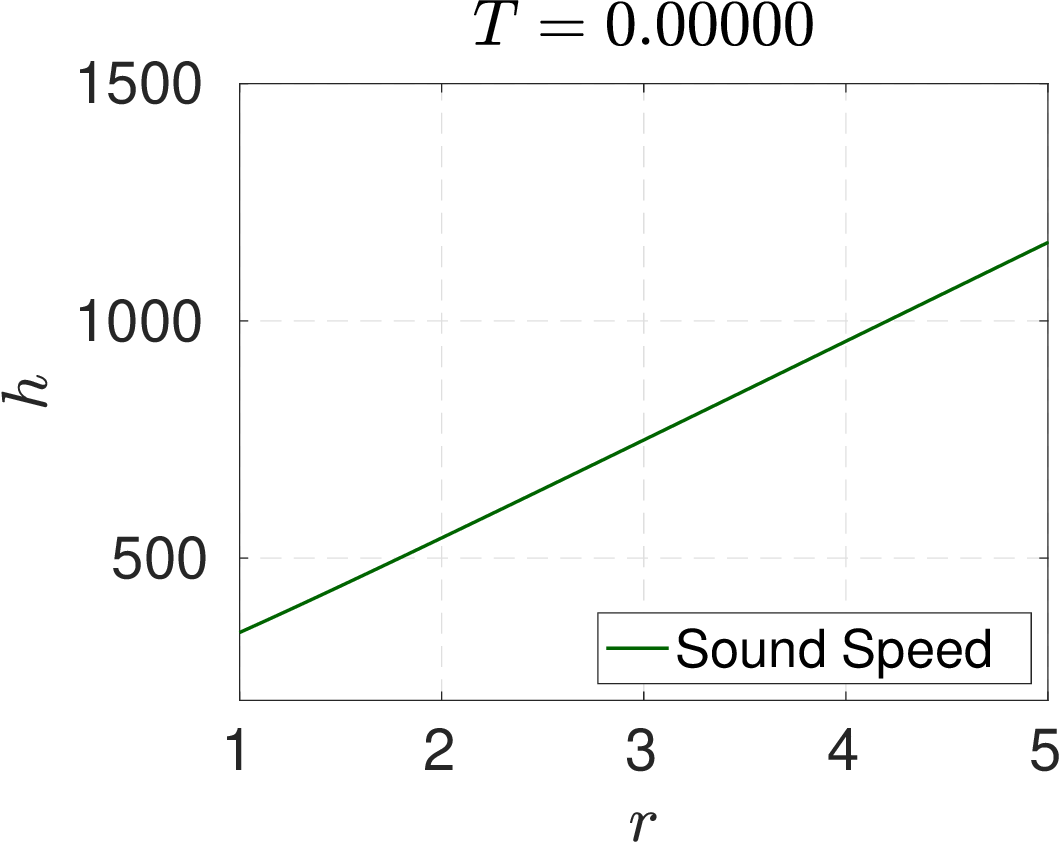}
  \end{subfigure}
    \begin{subfigure}{0.32\textwidth}
    \centering
    \includegraphics[width=1.0\textwidth]{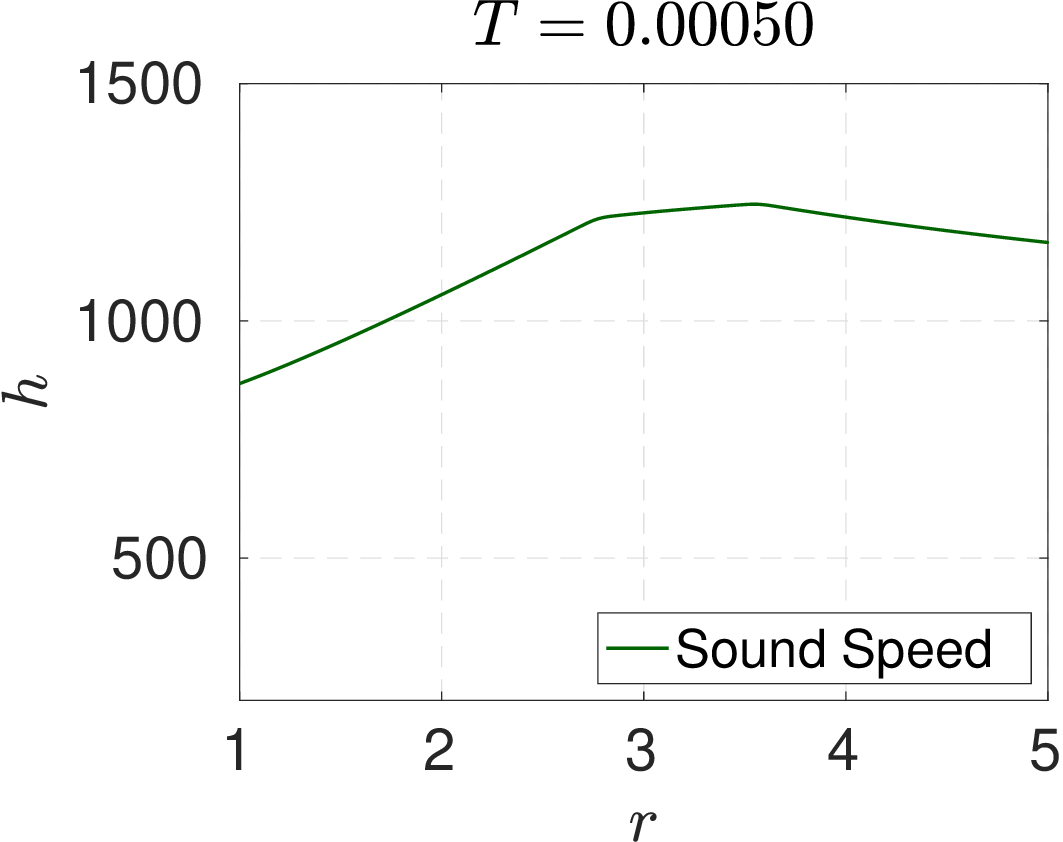}
  \end{subfigure}
  \begin{subfigure}{0.32\textwidth}
    \centering
    \includegraphics[width=1.0\textwidth]{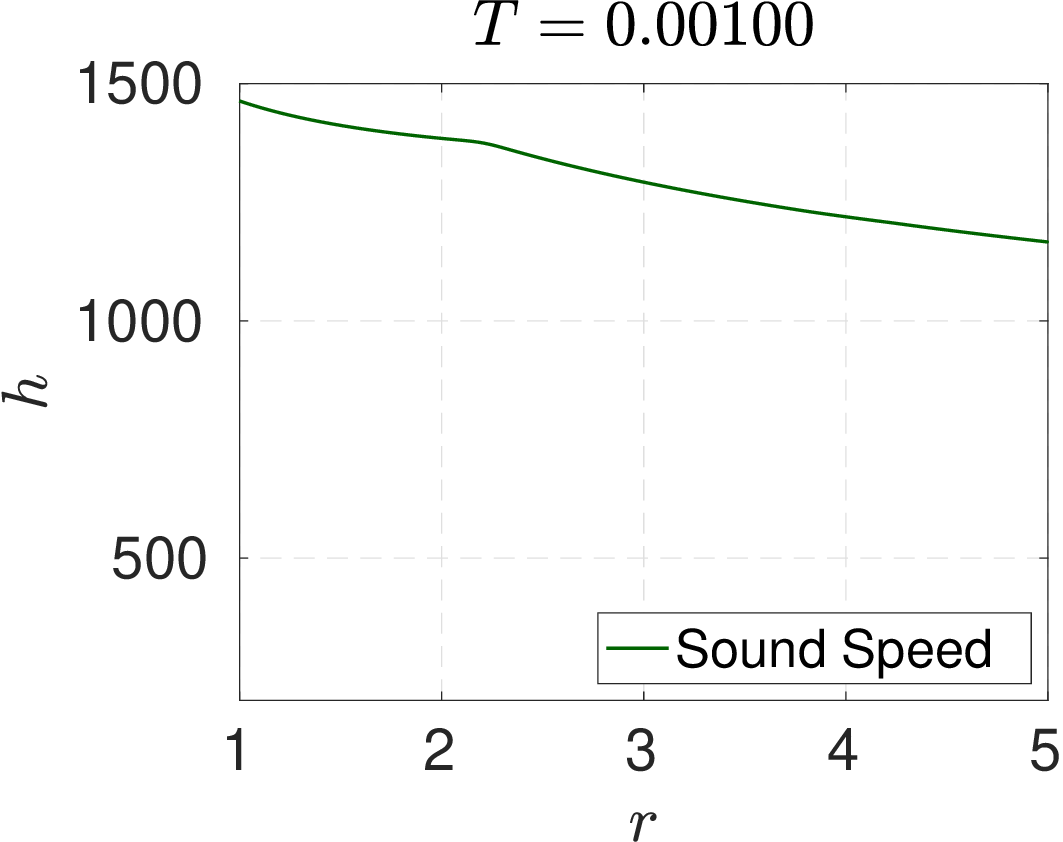}
  \end{subfigure}
  \caption{The initial \textit{sound speed} is shown on the left and its time-evolved state on the center and on the right.}
  \label{fig:sound-speed_case6}
\end{figure}

% ------------------------------------------------------------------------

\begin{figure}[H]
  \centering
  \begin{subfigure}{0.32\textwidth}
    \centering
    \includegraphics[width=1.0\textwidth]{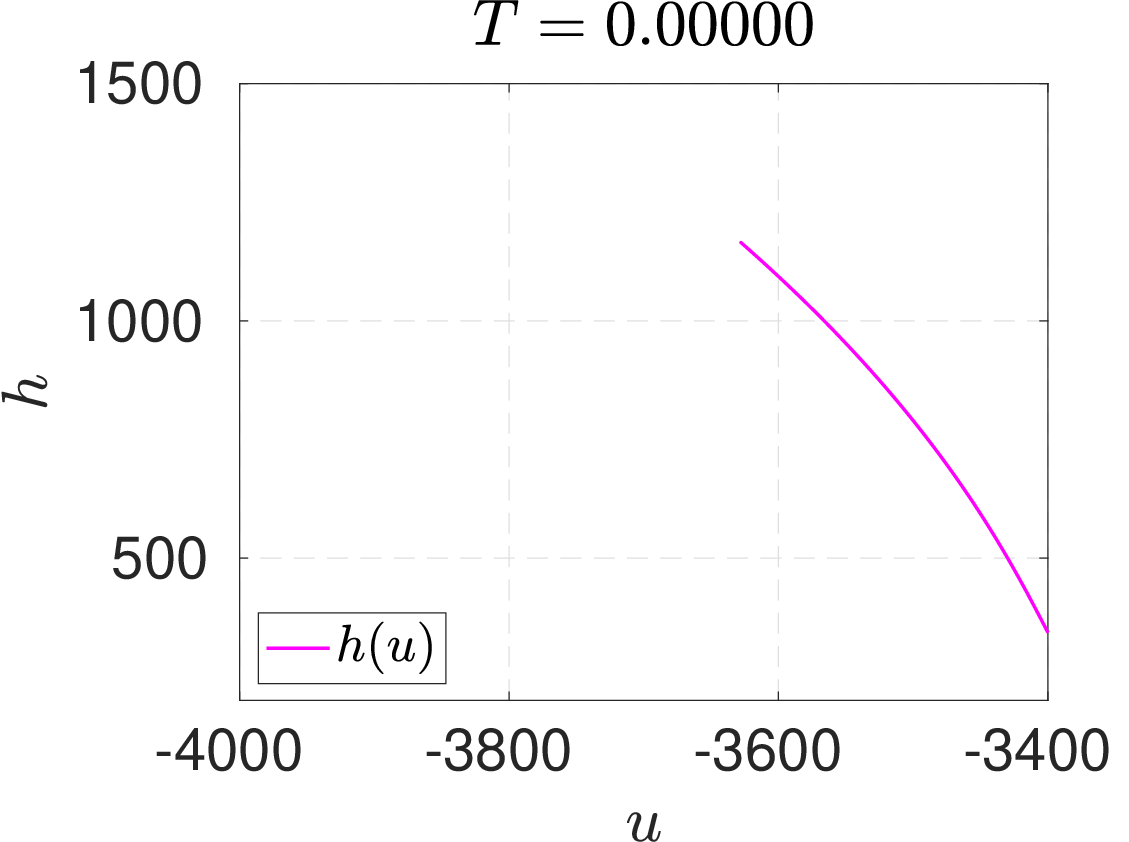}
  \end{subfigure}
    \begin{subfigure}{0.32\textwidth}
    \centering
    \includegraphics[width=1.0\textwidth]{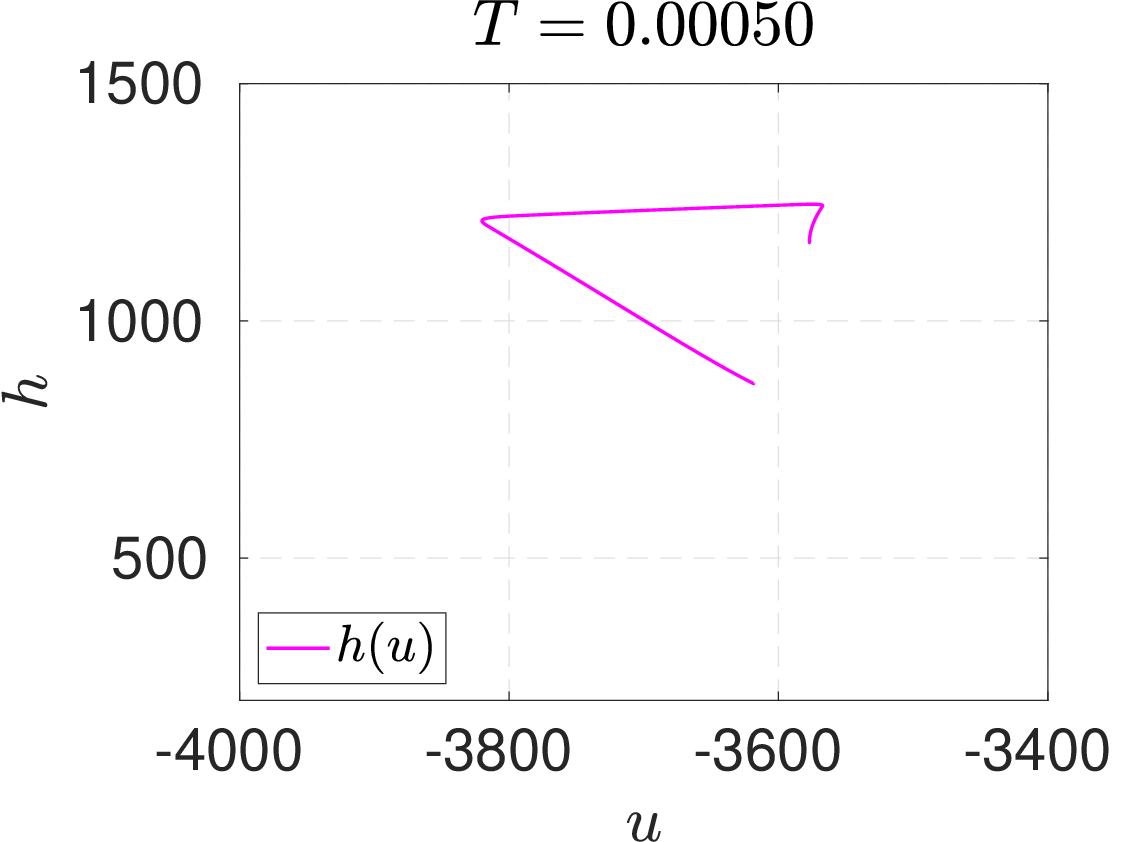}
  \end{subfigure}
  \begin{subfigure}{0.32\textwidth}
    \centering
    \includegraphics[width=1.0\textwidth]{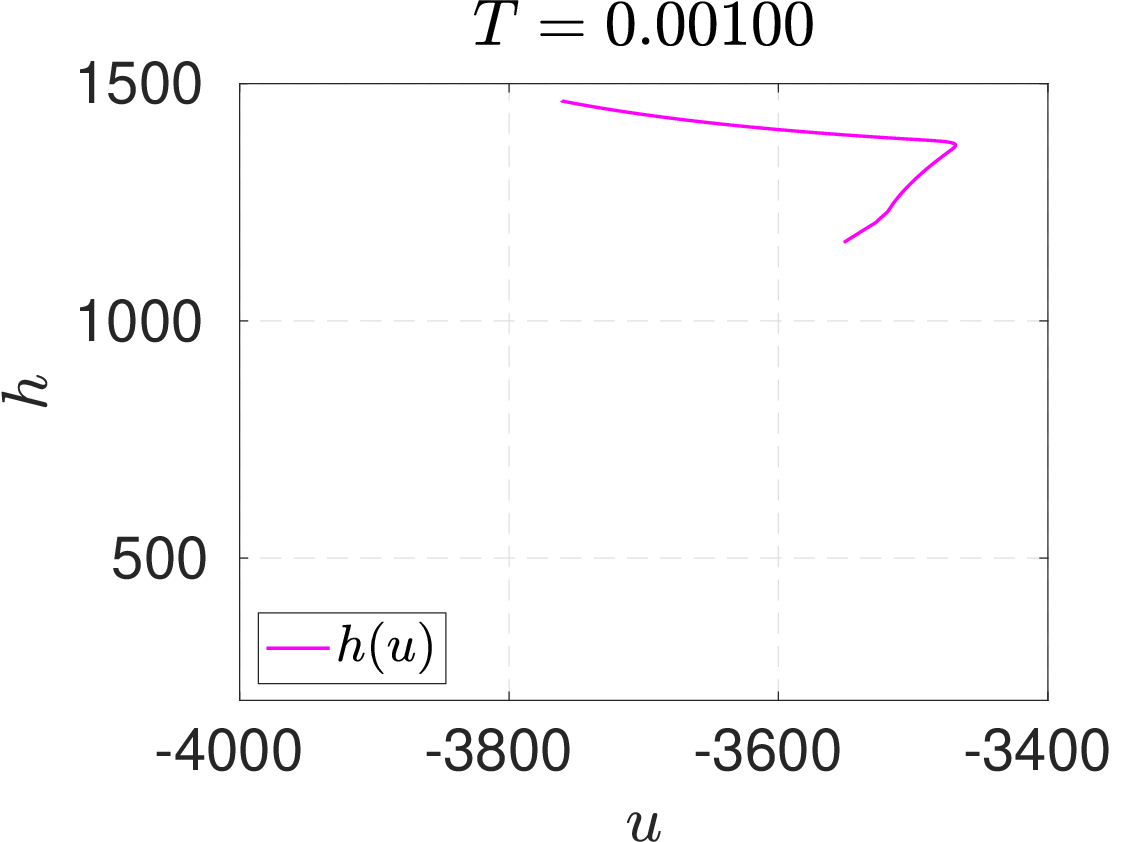}
  \end{subfigure}
  \caption{The initial \textit{invariant curve in \((u,h)-\)plane} is shown on the left and its time-evolved state on the center and on the right.}
  \label{fig:invariant-curve_case6}
\end{figure}

% ------------------------------------------------------------------------

\begin{figure}[H]
  \centering
  \begin{subfigure}{0.49\textwidth}
    \centering
    \includegraphics[width=1.0\textwidth]{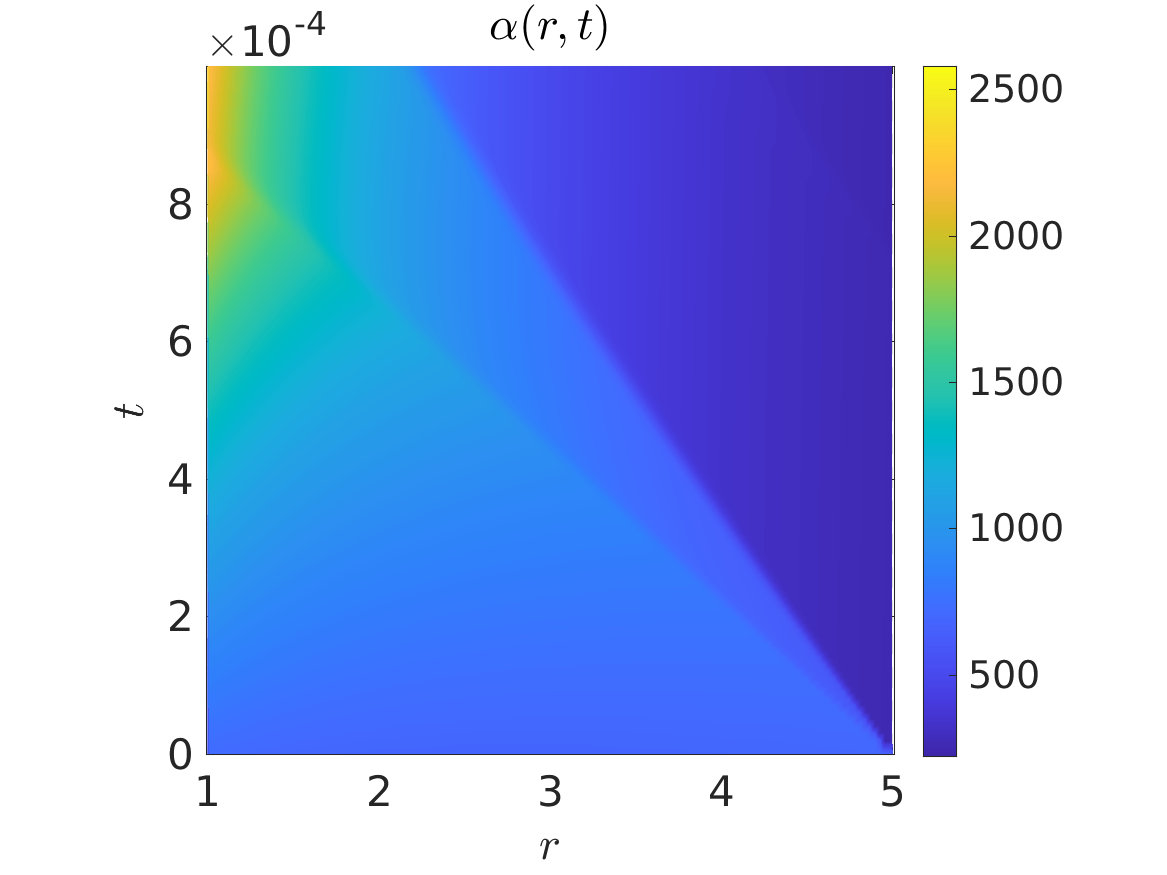}
  \end{subfigure}
  \begin{subfigure}{0.49\textwidth}
    \centering
	\includegraphics[width=1.0\textwidth]{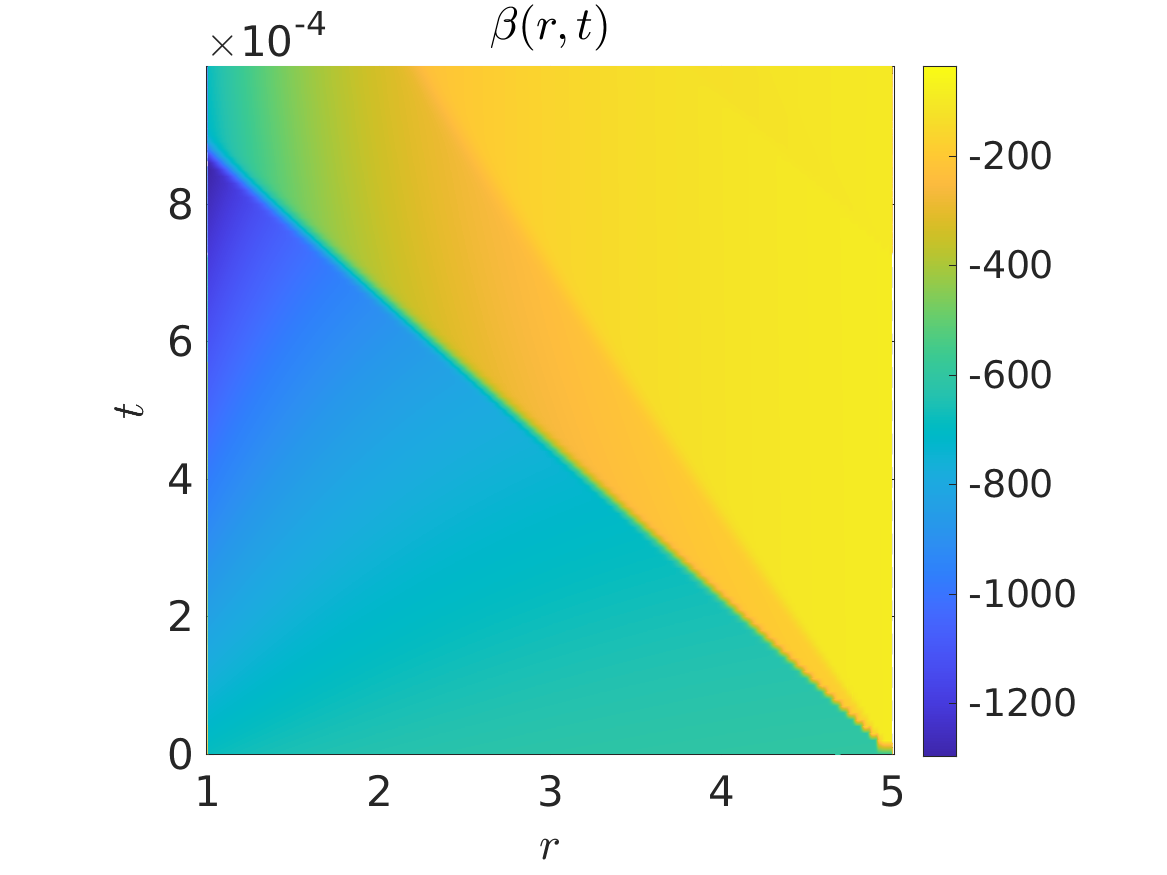}
  \end{subfigure}
  \caption{The \textit{heat map of $\alpha$ in \((r,t)-\)plane} is shown on the left and the heat map of $\beta$ on the right.}
  \label{fig:heat-map_case6}
\end{figure}

% ------------------------------------------------------------------------

\subsection*{Case 7: rarefactive implosion with sign change in $u$}

Finally, we examine an inward-directed rarefactive configuration characterized by,
\[
\alpha=\beta=1300,
\]
maintaining parametric consistency with Case 6. This configuration is distinguished by a velocity field that intersects the stagnant state $u=0$, inducing a topological bifurcation in the advective transport: the subdomain where $u>0$ is advected toward the coordinate origin, while the $u<0$ region propagates outward.

This case serves as a critical numerical probe into the persistence of regularity. Specifically, we investigate whether the confluence of geometric source terms and the bidirectional interaction of inward/outward characteristic families can precipitate a gradient catastrophe despite the strictly rarefactive initial character $(\alpha,\beta)$. Such a transition would highlight a fundamental departure from the one-dimensional Cartesian setting, where rarefactive waves typically preclude shock formation. The spatio-temporal evolution and phase-space diagnostics for the state variables $(\rho,u,p,h)$ and the gradient variables $\alpha,\beta)$, illustrated in figures \ref{fig:density_case7}--\ref{fig:heat-map_case7}, elucidate the nonlinear interplay between radial geometry and wave-character stability.
% ------------------------------------------------------------------------

\begin{figure}[H]
  \centering
  \begin{subfigure}{0.32\textwidth}
    \centering
    \includegraphics[width=1.0\textwidth]{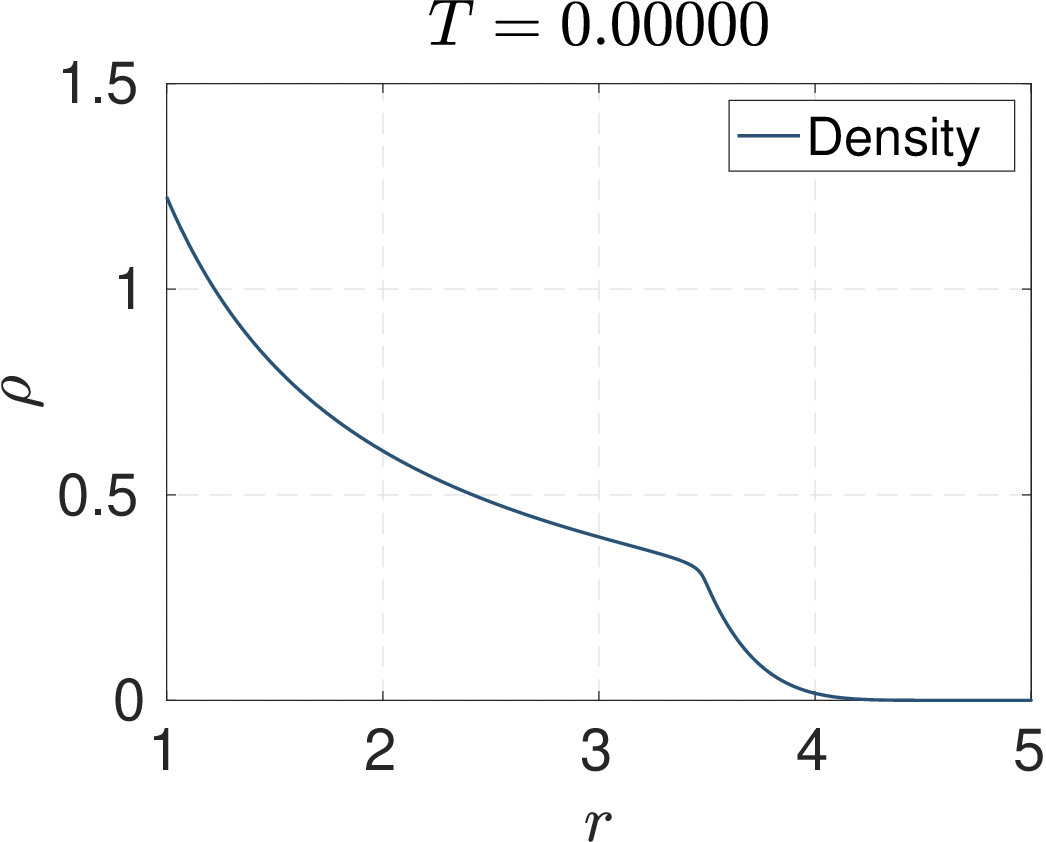}
  \end{subfigure}
    \begin{subfigure}{0.32\textwidth}
    \centering
    \includegraphics[width=1.0\textwidth]{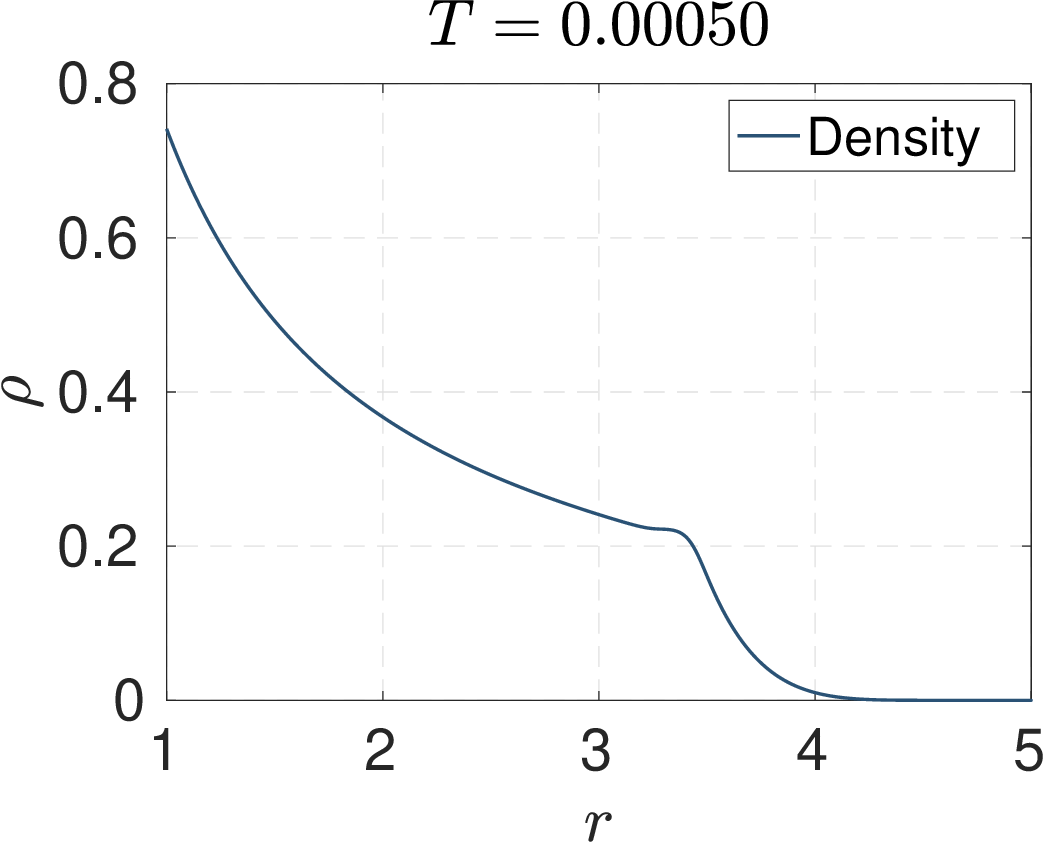}
  \end{subfigure}
  \begin{subfigure}{0.32\textwidth}
    \centering
    \includegraphics[width=1.0\textwidth]{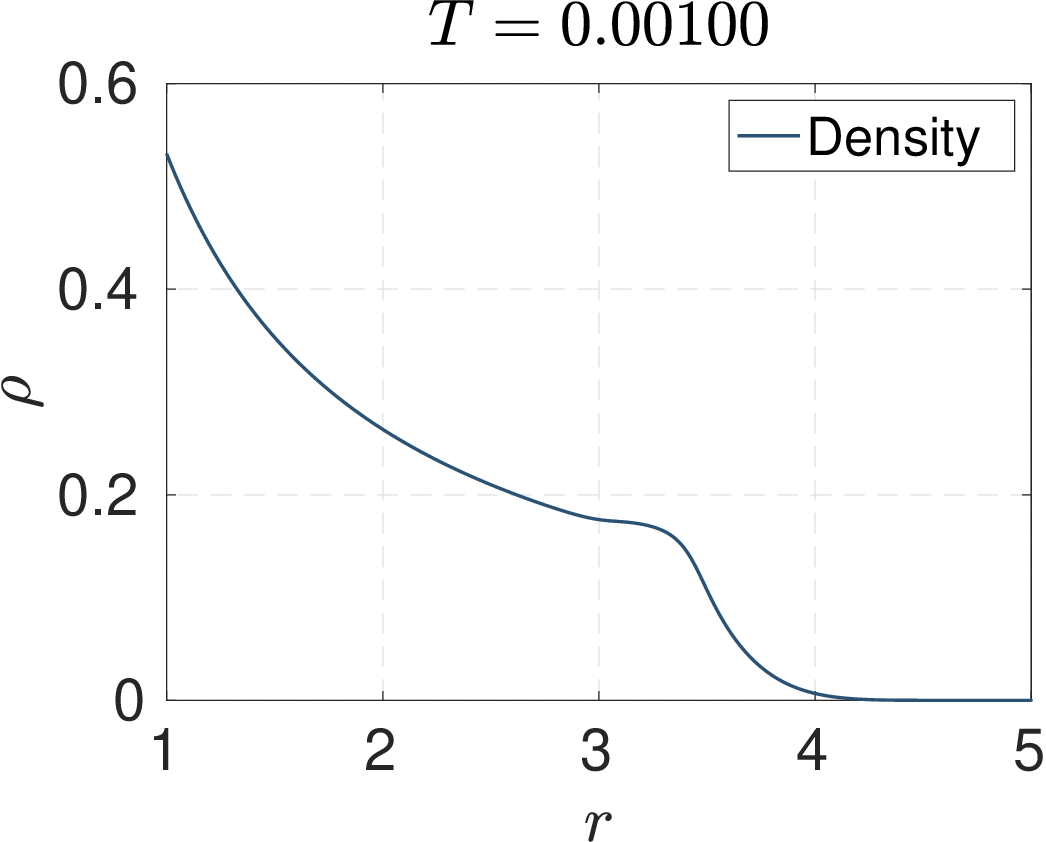}
  \end{subfigure}
  \caption{The initial \textit{density} is shown on the left and its time-evolved state on the center and on the right.}
  \label{fig:density_case7}
\end{figure}

% ------------------------------------------------------------------------

\begin{figure}[H]
  \centering
  \begin{subfigure}{0.32\textwidth}
    \centering
    \includegraphics[width=1.0\textwidth]{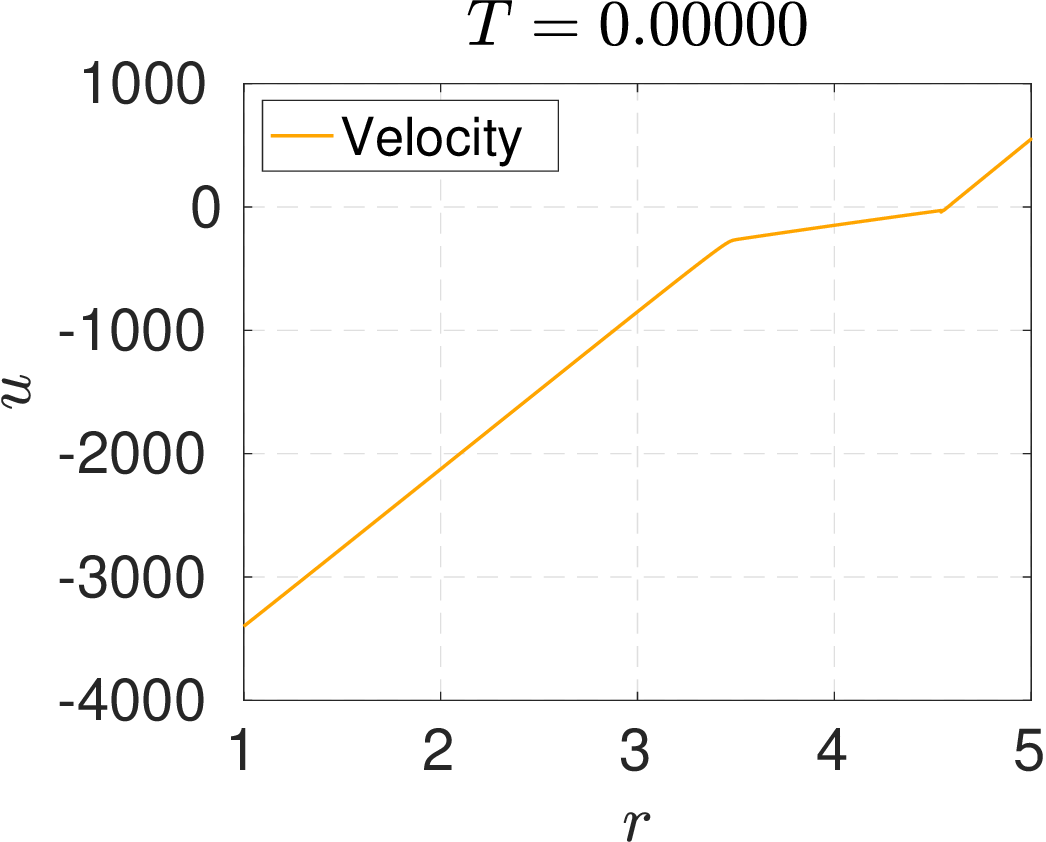}
  \end{subfigure}
    \begin{subfigure}{0.32\textwidth}
    \centering
    \includegraphics[width=1.0\textwidth]{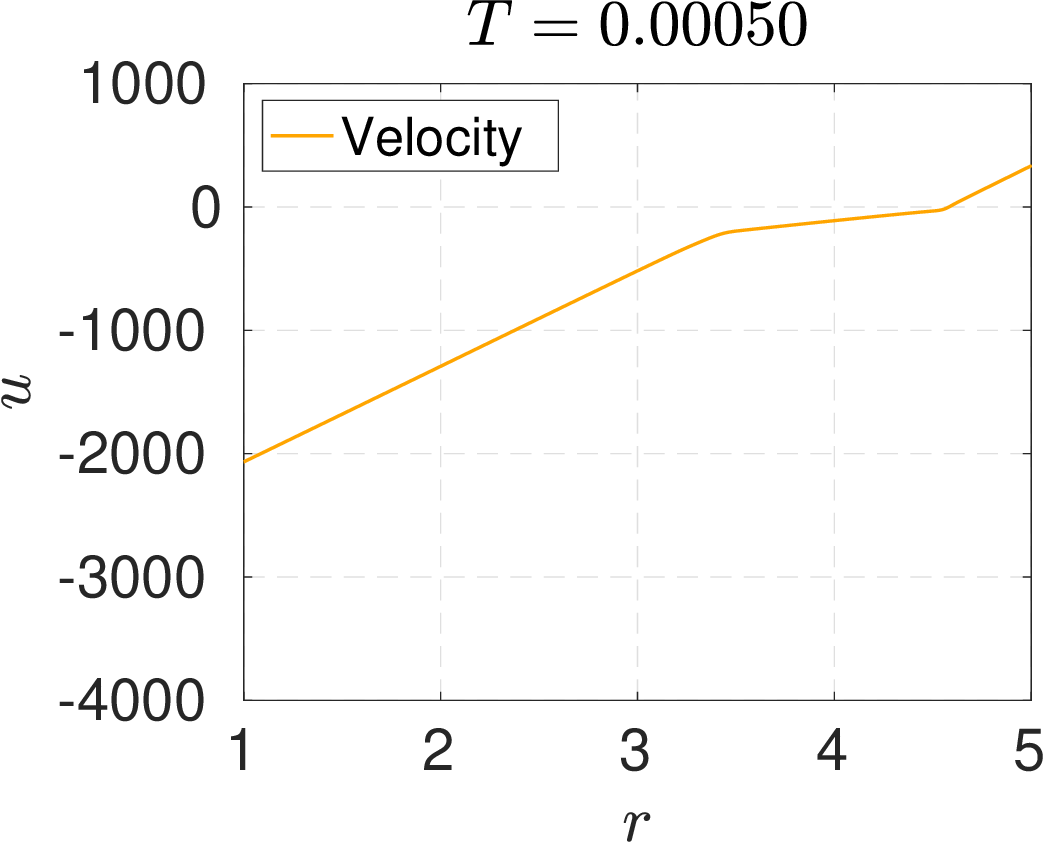}
  \end{subfigure}
  \begin{subfigure}{0.32\textwidth}
    \centering
    \includegraphics[width=1.0\textwidth]{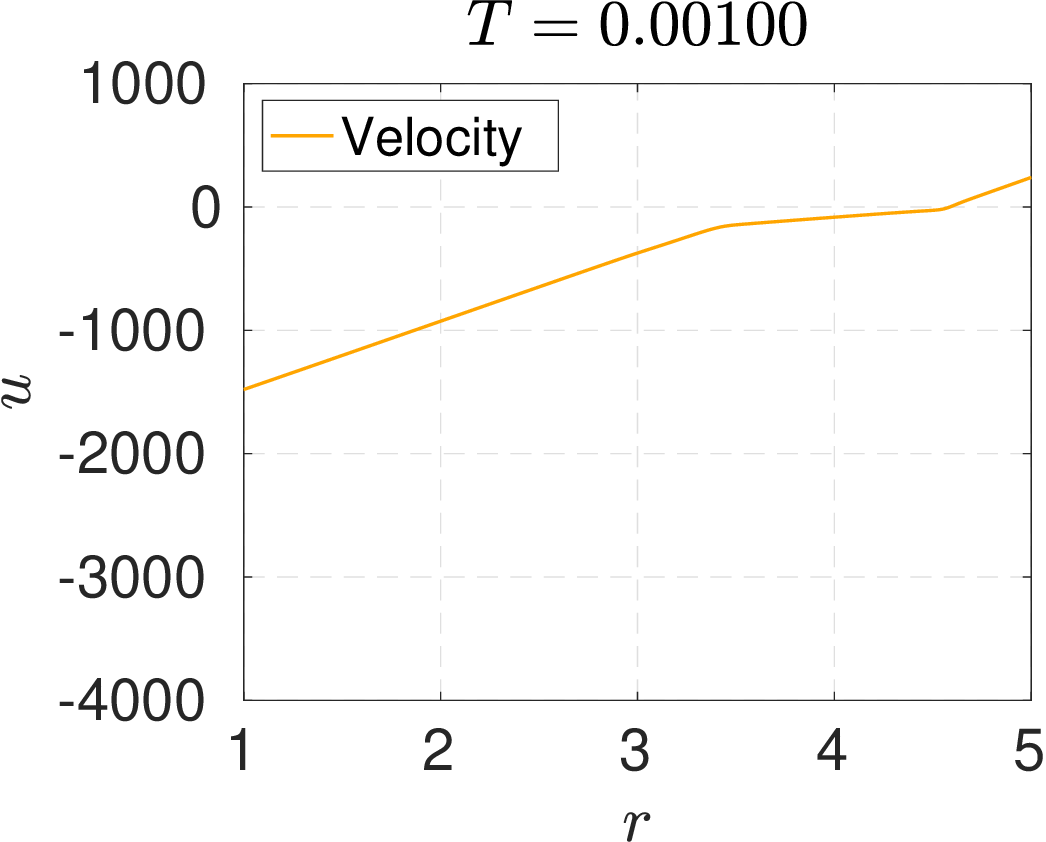}
  \end{subfigure}
  \caption{The initial \textit{velocity} is shown on the left and its time-evolved state on the center and on the right.}
  \label{fig:velocity_case7}
\end{figure}

% ------------------------------------------------------------------------

\begin{figure}[H]
  \centering
  \begin{subfigure}{0.32\textwidth}
    \centering
    \includegraphics[width=1.0\textwidth]{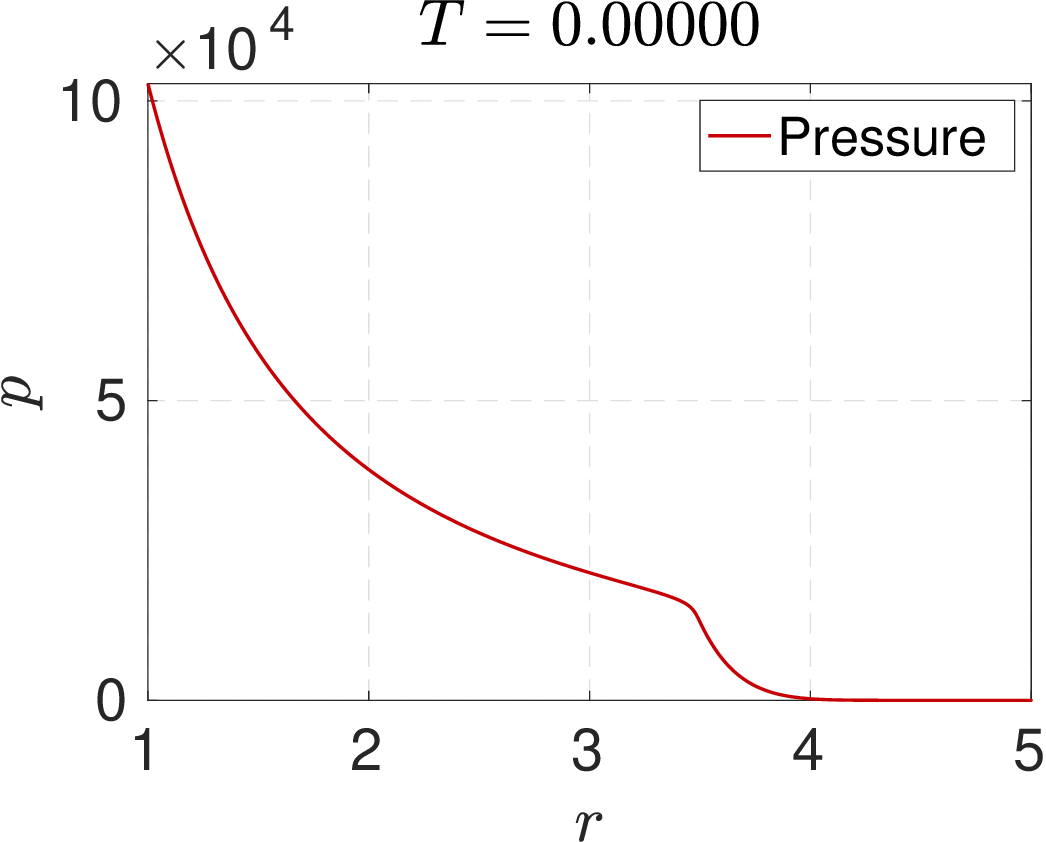}
  \end{subfigure}
    \begin{subfigure}{0.32\textwidth}
    \centering
    \includegraphics[width=1.0\textwidth]{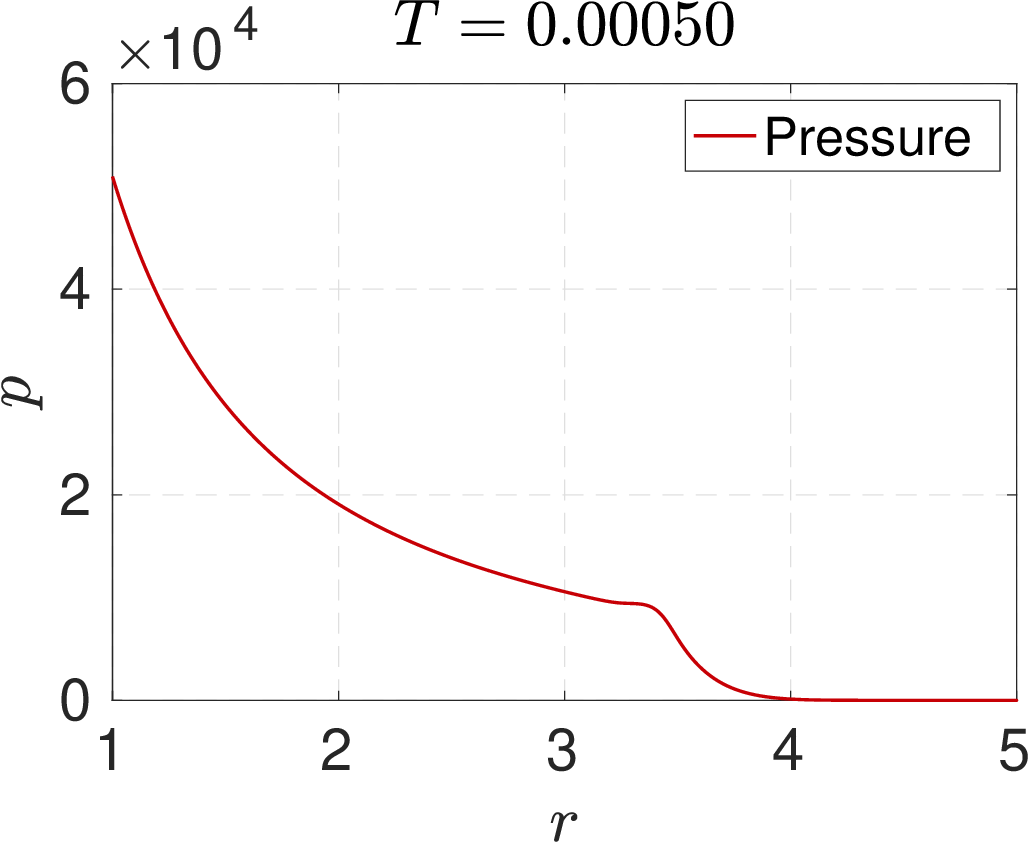}
  \end{subfigure}
  \begin{subfigure}{0.32\textwidth}
    \centering
    \includegraphics[width=1.0\textwidth]{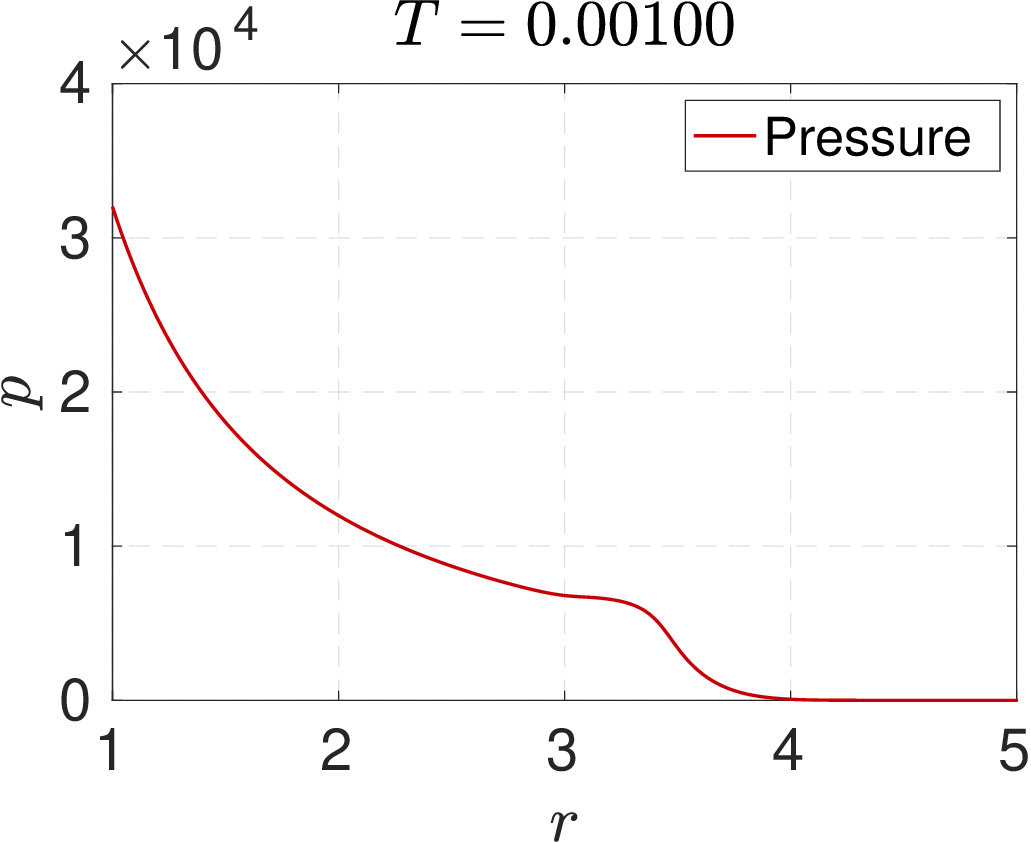}
  \end{subfigure}
  \caption{The initial \textit{pressure} is shown on the left and its time-evolved state on the center and on the right.}
  \label{fig:pressure_case7}
\end{figure}

% ------------------------------------------------------------------------

\begin{figure}[H]
  \centering
  \begin{subfigure}{0.32\textwidth}
    \centering
    \includegraphics[width=1.0\textwidth]{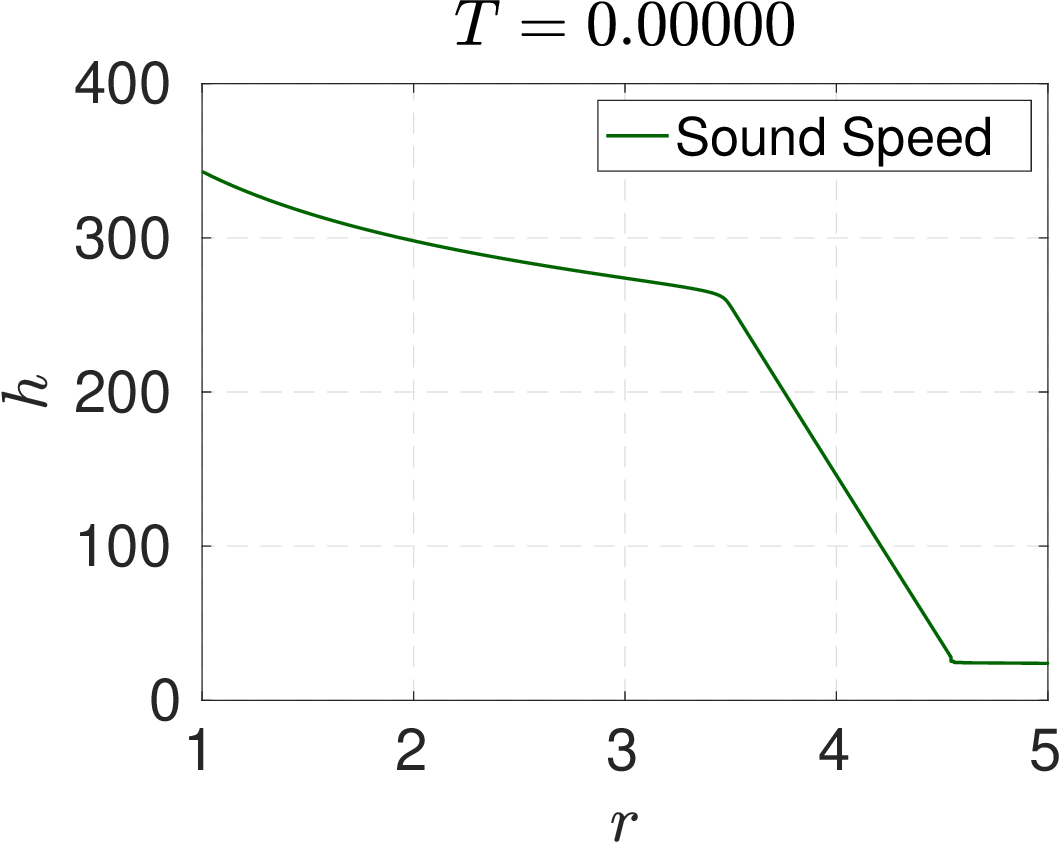}
  \end{subfigure}
    \begin{subfigure}{0.32\textwidth}
    \centering
    \includegraphics[width=1.0\textwidth]{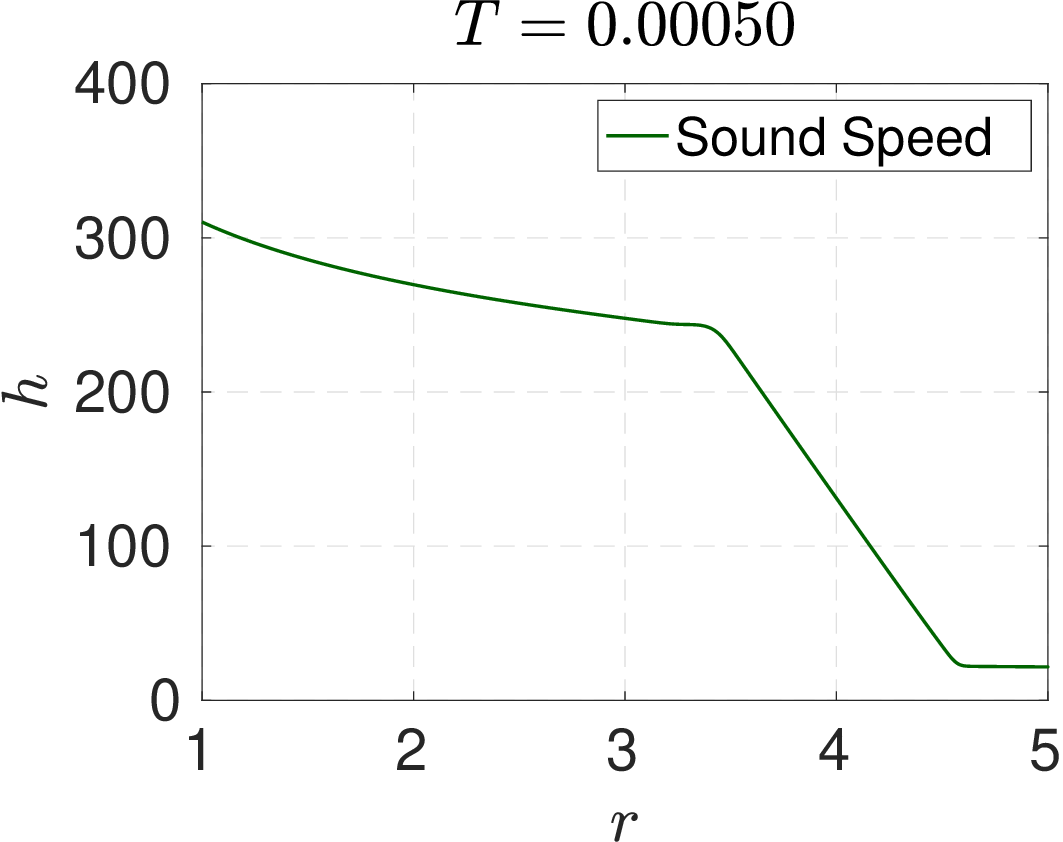}
  \end{subfigure}
  \begin{subfigure}{0.32\textwidth}
    \centering
    \includegraphics[width=1.0\textwidth]{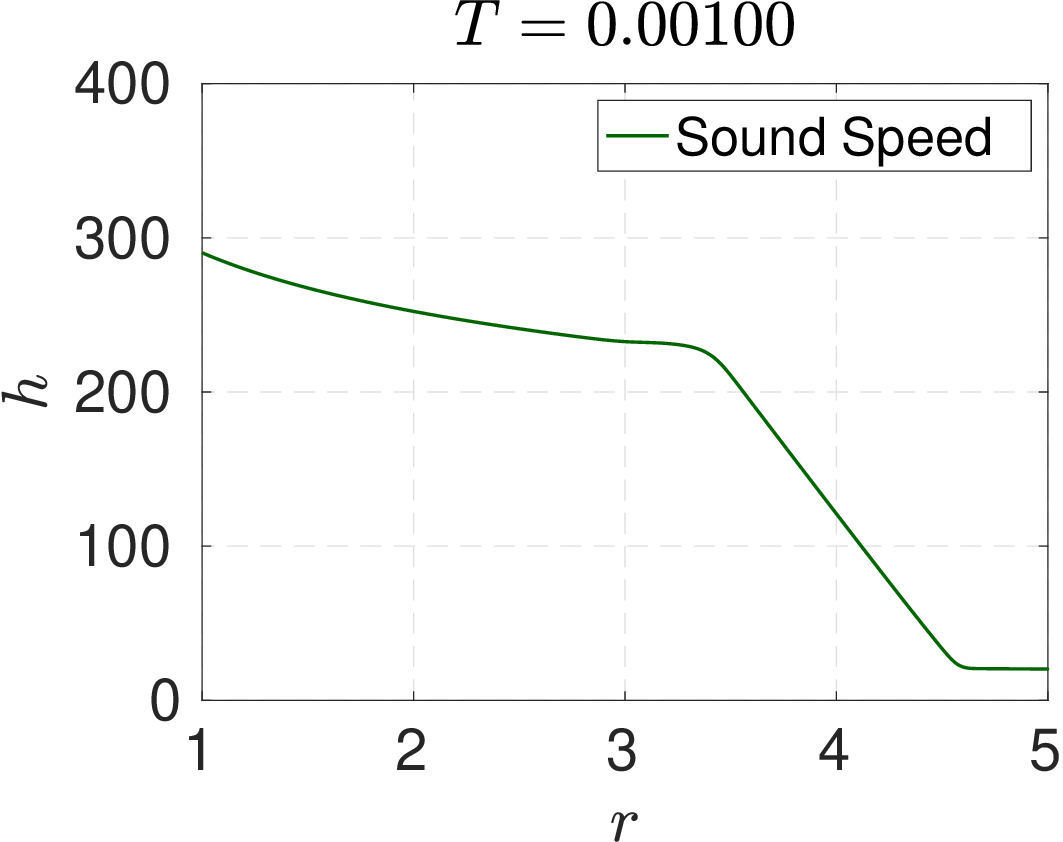}
  \end{subfigure}
  \caption{The initial \textit{sound speed} is shown on the left and its time-evolved state on the center and on the right.}
  \label{fig:sound-speed_case7}
\end{figure}

% ------------------------------------------------------------------------

\begin{figure}[H]
  \centering
  \begin{subfigure}{0.32\textwidth}
    \centering
    \includegraphics[width=1.0\textwidth]{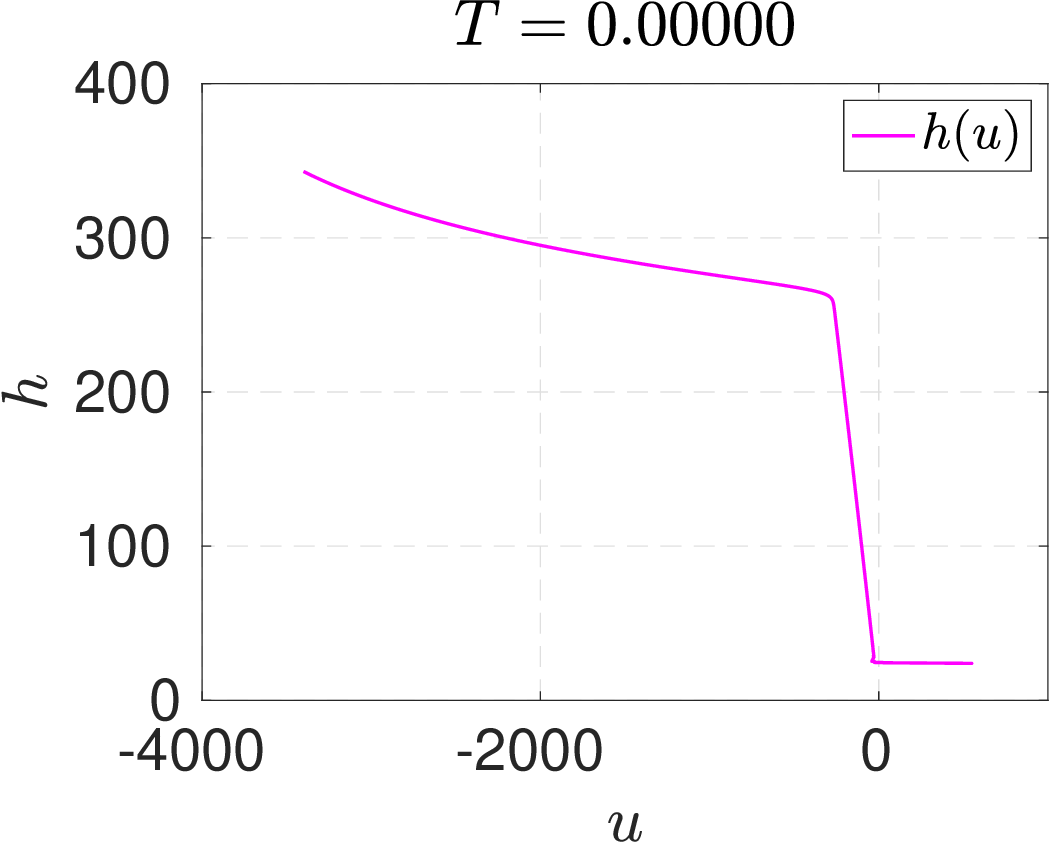}
  \end{subfigure}
    \begin{subfigure}{0.32\textwidth}
    \centering
    \includegraphics[width=1.0\textwidth]{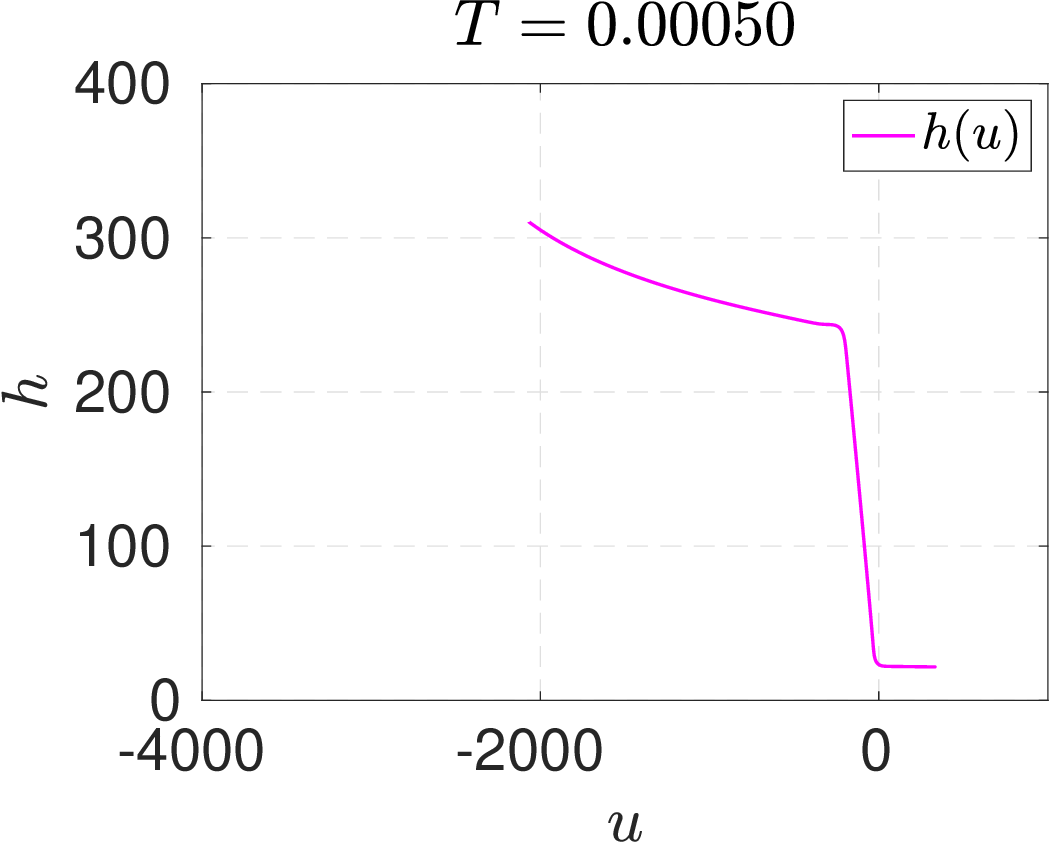}
  \end{subfigure}
  \begin{subfigure}{0.32\textwidth}
    \centering
    \includegraphics[width=1.0\textwidth]{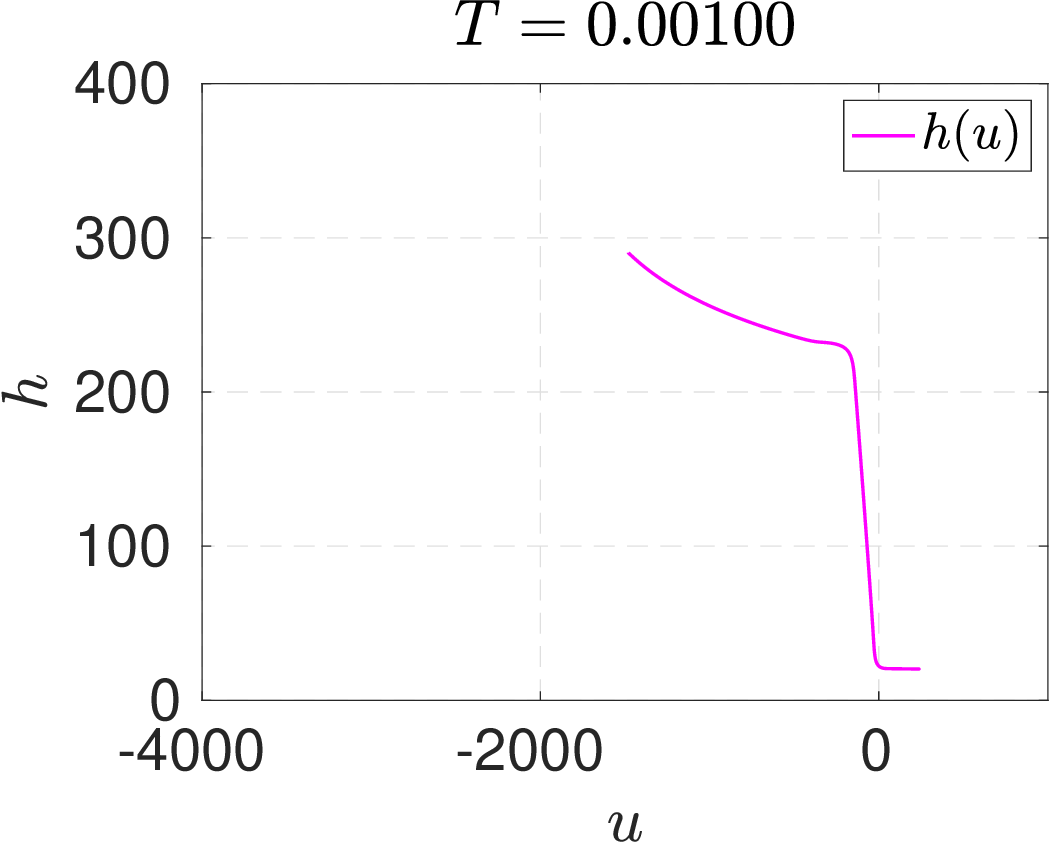}
  \end{subfigure}
  \caption{The initial \textit{invariant curve in \((u,h)-\)plane} is shown on the left and its time-evolved state on the center and on the right.}
  \label{fig:invariant-curve_case7}
\end{figure}

% ------------------------------------------------------------------------

\begin{figure}[H]
  \centering
  \begin{subfigure}{0.49\textwidth}
    \centering
    \includegraphics[width=1.0\textwidth]{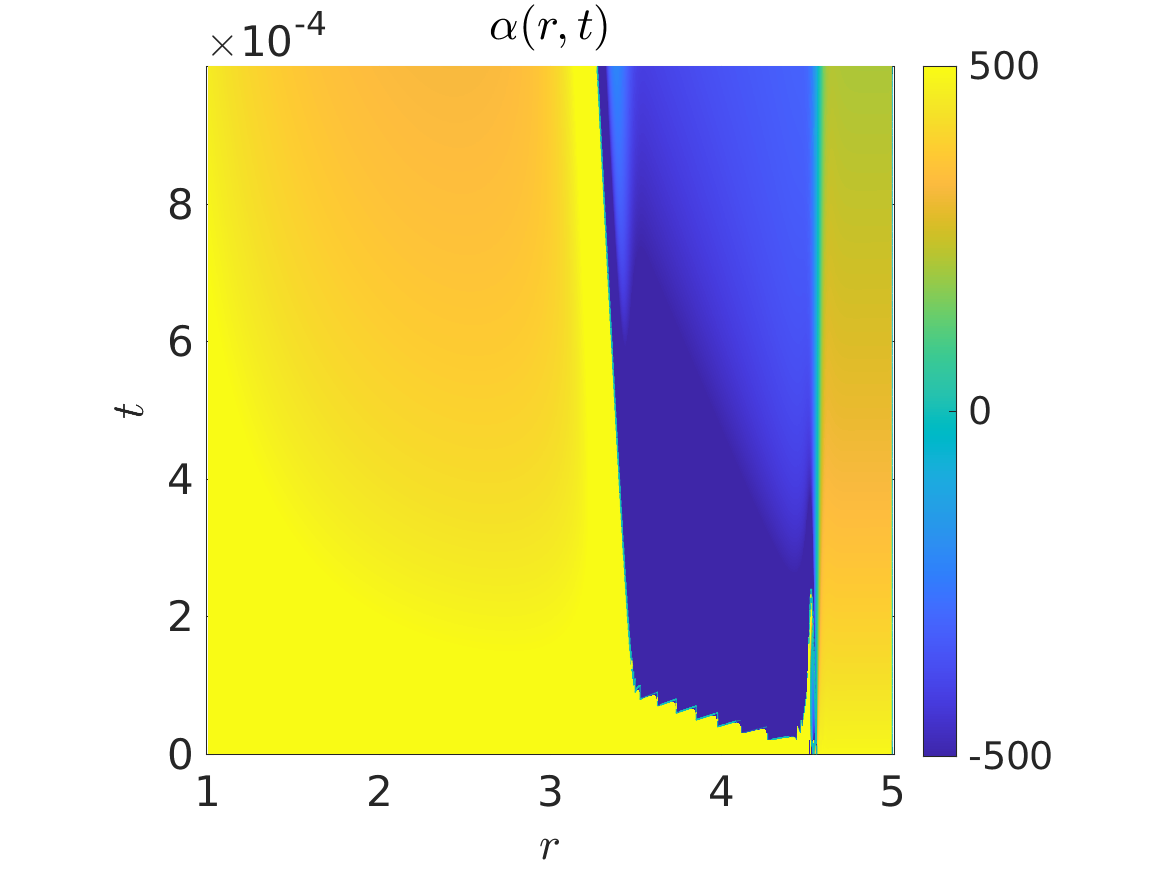}
  \end{subfigure}
  \begin{subfigure}{0.49\textwidth}
    \centering
	\includegraphics[width=1.0\textwidth]{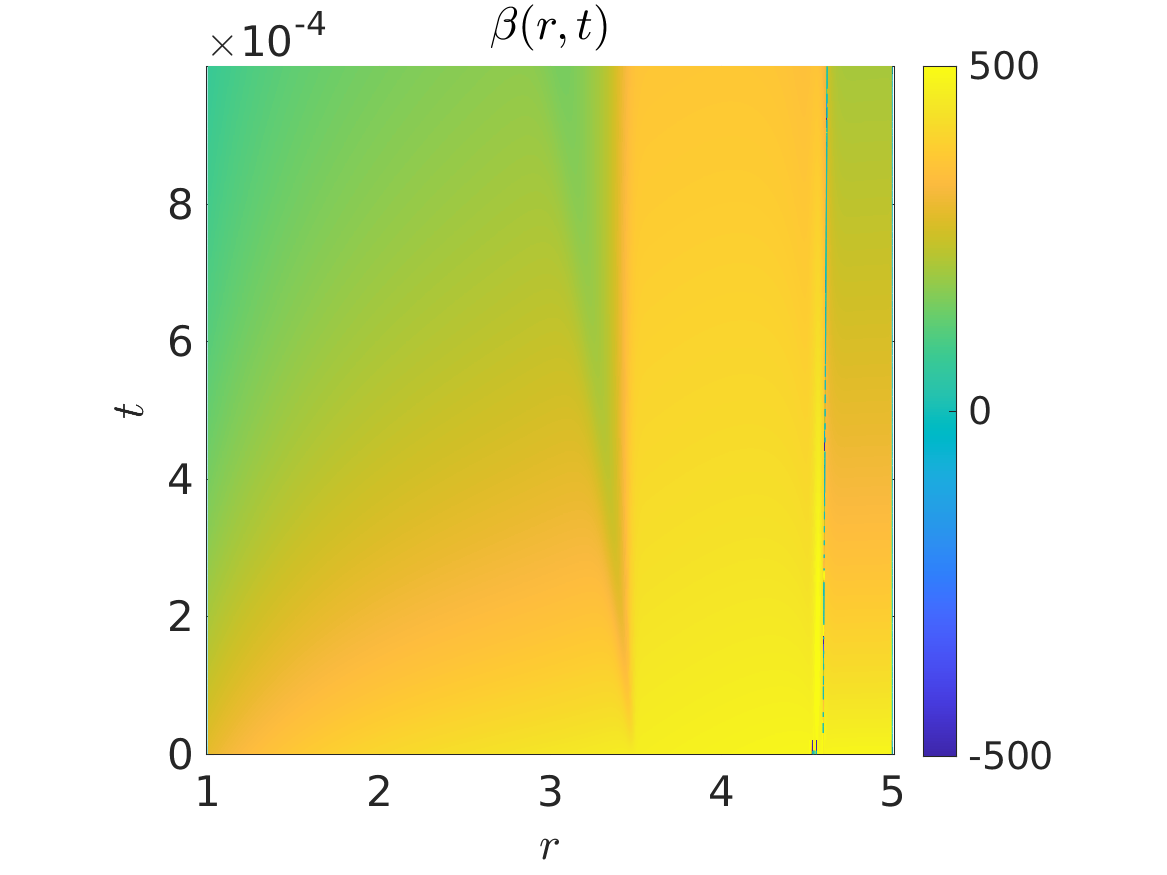}
  \end{subfigure}
  \caption{The \textit{heat map of $\alpha$ in \((r,t)-\)plane} is shown on the left and the heat map of $\beta$ on the right.}
  \label{fig:heat-map_case7}
\end{figure}

\medskip
%Overall, across the seven configurations, the SDLE simulations reproduce the expected qualitative dichotomy: strongly compressive supersonic data lead to finite-time shock formation (Cases~1 and 4), while outward supersonic rarefaction yields smooth evolution (Cases~2 and 5). The subsonic periodic setting (Case~3) exhibits a clear dependence on amplitude, with smooth oscillations for moderate perturbations and breakdown for sufficiently large amplitude. The inward-directed tests (Cases~6--7) highlight the asymmetric wave-character interactions emphasized in \cite{F1}, and illustrate how geometry and directionality influence the propagation of rarefactive/compressive domains in the $(r,t)$-plane.

Across the seven examined configurations, the SDLE simulations consistently recover the predicted qualitative dichotomy inherent in nonlinear hyperbolic systems. In the outward supersonic regimes, the results corroborate the theoretical threshold for regularity: strongly compressive initial data inevitably precipitate finite-time shock formation (Cases 1 and 4), whereas purely rarefactive data remain within smooth functional spaces, yielding globally well-posed evolution (Cases 2 and 5).

The subsonic periodic configuration (Case 3) demonstrates a critical dependence on the initial perturbation amplitude. For moderate amplitudes, the system exhibits stable, smooth oscillations; however, for data exceeding a specific threshold, the nonlinearity dominates, leading to a breakdown of the smooth solution. Finally, the inward-directed simulations (Cases 6-7) explicitly manifest the asymmetric wave-character interactions characteristic of radial geometry. These cases illustrate the profound influence of geometric source terms and radial directionality on the propagation and morphology of rarefactive and compressive domains within the $(r,t)$-plane.

% ------------------------------------------------------------------------

\section{Conclusions}\label{sec:conclusions}

In this study, we have investigated the qualitative dynamics of smooth solutions to the radially symmetric isentropic compressible Euler equations through a unified framework of characteristic analysis and numerical simulation. By employing a gradient-variable formulation, we characterized the evolution of rarefactive and compressive wave structures across supersonic and subsonic regimes, identifying the invariant manifolds and the mechanisms governing transitions in wave character.

From an analytical perspective, we established precise criteria under which the rarefactive or compressive nature of a wave is preserved along its respective characteristic family. Our results elucidate the structural disparities between outward supersonic, subsonic, and inward supersonic regimes, revealing inherent asymmetries in the interaction between characteristic families - complexities that do not manifest in the one-dimensional Cartesian limit. Furthermore, we derived sufficient conditions for the onset of a gradient catastrophe, providing a rigorous bound for finite-time singularity formation under specific initial data configurations.

On the numerical front, we implemented a Semi-Discrete Lagrangian-Eulerian (SDLE) scheme specifically adapted to the radial geometry. The formulation ensures the preservation of the system's conservative structure and accurately resolves the nonlinear transport of high-gradient wave structures. The computational results demonstrate a high degree of consistency with our analytical predictions, offering robust empirical validation for the invariant-domain analysis and the stability of smooth solutions within the identified regimes.

The synergy of rigorous analytical insight and high-fidelity numerical validation presented here offers a coherent description of the multidimensional effects induced by radial geometry in compressible gas dynamics. These findings open several avenues for further investigation, notably the long-time asymptotic behavior of subsonic solutions and the influence of geometric source terms on the global regularity of solutions in the presence of wave-front interactions.

\begin{itemize}
	\item[] {\bf DECLARATIONS}
	
	\vspace{-2mm}
	\item[] {\bf Conflicts of interest/Competing interests:} The authors have no competing interests to declare that are relevant to the content of this article.
	
	\vspace{-2mm}
	\item[] {\bf Acknowledgments:} E. Abreu thanks the University of Campinas - Unicamp (Brazil), E. Lima is grateful for the support provided by the Universidade Federal de Roraima and Unicamp. G. Chen and Faris El-Katri thank University of Kansas, EUA.
	
	\vspace{-2mm}
	\item[] {\bf Funding:} E. Abreu was supported by Grant No. 307641/2023-6 of the Brazilian National Council for Scientific and Technological Development (CNPq).
	
	\vspace{-2mm}
	\item[] {\bf Availability of data and material:} Not applicable.
	
	\vspace{-2mm}
	\item[] {\bf Code availability:} The authors commit to providing the code upon reasonable request and providing reasonable assistance for replication.
	
	\vspace{-2mm}
	\item[] {\bf Consent to participate:} Not applicable.
	
	\vspace{-2mm}
	\item[] {\bf Consent for publication:} Not applicable.
	
\end{itemize}

% ------------------------------------------------
\bibliography{ref}

\end{document}